\newtheorem{thm}{Theorem}[section]
\newtheorem{thm*}{Theorem}
\newtheorem{lem}[thm]{Lemma}
\newcounter{counter_conj-lem}
\newtheorem{conj-lem}[thm]{Conjecture-Lemma}
\newtheorem{prop}[thm]{Proposition}
\newcounter{counter_conj-prop}
\newtheorem{conj-prop}[thm]{Conjecture-Proposition}
\newcounter{counter_conj-thm}
\newtheorem{conj-thm}[thm]{Conjecture-Theorem}
\newtheorem{cor}[thm]{Corollary}
\theoremstyle{remark}
\newtheorem{defi}[thm]{Definition}
\newtheorem{rem}[thm]{Remark}
\newtheorem{ex}[thm]{Example}
\newcommand{\C}{\mathbb{C}}
\newcommand{\R}{\mathbb{R}}
\newcommand{\Q}{\mathbb{Q}}
\newcommand{\N}{\mathbb{N}}
\newcommand{\A}{\mathbb{A}}
\newcommand{\Z}{\mathbb{Z}}
\newcommand{\D}{\mathbb{D}}
\newcommand{\bP}{\mathbb{P}}
\newcommand{\pan}{1, \mathrm{an}}
\newcommand{\Hy}{\mathbb{H}}
\newcommand{\PGL}{\mathrm{PGL}}
\newcommand{\GL}{\mathrm{GL}}
\newcommand{\g}{\gamma}
\newcommand{\La}{\Lambda}
\newcommand{\Ga}{\Gamma}
\newcommand{\Hom}{\mathrm{Hom}}
\newcommand{\Po}{\mathcal{P}}
\newcommand{\la}{\lambda}
\newcommand{\eS}{\mathcal{S}}
\newcommand{\M}{\mathcal{M}}
\newcommand{\Af}{\mathbb{A}}
\newcommand{\Hr}{\mathcal{H}}
\newcommand{\cP}{\mathcal{P}}
\DeclareMathOperator{\diam}{diam}
\DeclareMathOperator{\ch}{ch}
\DeclareMathOperator{\Hdim}{\mathrm{Hdim}}
\DeclareMathOperator{\modu}{mod}
\DeclareMathOperator{\ord}{ord}
\DeclareMathOperator{\an}{\mathrm{an}}
\title{Variation of the Hausdorff dimension and degenerations of Schottky groups}
\author{Nguyen-Bac Dang, Vlerë Mehmeti}
\date{}
\begin{document}

\maketitle

\begin{abstract}
We show that the Hausdorff dimension of the limit set of a Schottky group varies continuously over the moduli space of Schottky groups defined over any complete valued field constructed by Poineau and Turchetti.
To obtain this result, we first study the non-Archimedean case in a setting of Berkovich analytic spaces, where we make use of Poincaré series. We show that the latter can be extended meromorphically over $\C$ and admit a special value at $0$ which is a purely topological invariant.

As an application, we prove results on the asymptotic behavior of the Hausdorff dimension of degenerating families of Schottky groups over $\C$.
For certain families, including Schottky reflection groups, we obtain an exact formula for the asymptotic logarithmic decay rate of the Hausdorff dimension. This generalizes a theorem of McMullen.
\end{abstract}

\newcommand*{\nsection}[1]{
    \section*{#1}
    \addcontentsline{toc}{section}{#1}
}
\setcounter{tocdepth}{2}
\tableofcontents

\nsection{Introduction}

Koebe proved a uniformization theorem for compact Riemann surfaces using a particular class of subgroups of Möbius transformations, known as \emph{Schottky groups}. A major advantage of this specific approach to uniformization theory is that,  unlike the classical one (\emph{cf.} {\cite[\S~4.4]{jost_compact}}),
it can be extended to the non-Archimedean setting.  For example, the notion of Schottky groups was adapted to non-Archimedean fields by Mumford in \cite{mumford} (\emph{cf.} also \cite{gerritzen_van_der_put}, \cite{berkovich_spectral}, \cite{poineau_turcheti_2}), who showed a similar result for an analogue of Riemann surfaces, known as \emph{Mumford curves}. In \cite{poineau_turcheti_universal}, Poineau and Turchetti constructed a moduli space $\eS_g$ of rank $g$ Schottky groups defined over complete valued fields by using \emph{Berkovich spaces over $\Z$}, for which they also proved a uniformization theorem. This setting encompasses both Koebe's and Mumford's results, so that they are not only analogues but parts of a common  theory.
In all cases, a crucial role is played by the \emph{limit set} of the group, \emph{i.e.} the set of accumulation points of orbits for the action on the projective line. It is a fractal set, and over $\C$, the non-integrality of its \emph{Hausdorff dimension} is a manifestation of its self-similar nature.  

The main goal of this paper is the study of variation of the Hausdorff dimension of these limit sets over the moduli space $\eS_g$. A necessary first step is understanding this invariant for a group over a fixed complete valued field. Over $\C$, %
Schottky groups and the  dimension of their limit sets have been extensively studied, \emph{e.g.} in %
\cite{maskit_characterization_schottky}, \cite{chuckrow}, \cite{akaza_example}, \cite{marden}, \cite{hejhal_schottky_teichmuller}. For this purpose, Patterson introduced a special function in \cite{patterson}, the \emph{Poincaré series}, which encodes properties of the group action. By working with \emph{Berkovich analytic spaces},
we show that this is the case over non-Archimedean fields as well. We also investigate the properties of the non-Archimedean  Poincaré series. When the field is locally compact, this has been studied in  \cite{paulin_degen}, \cite{hersonsky_hubbard}, and \cite{quint_overview}.   

There are several approaches to non-Archimedean analytic geometry (\emph{cf.} \cite{tate}, \cite{raynaud}, \cite{berkovich_spectral},  \cite{huber}). We use the approach of Berkovich (\cite{berkovich_spectral}), who provides a theory of analytic varieties over any complete valued field.
For non-Archimedean fields $k$ (such as $\Q_p$ or $\C((t))$), Berkovich analytic curves have the structure of an \emph{infinitely branched} real graph. For example, the Berkovich projective line~$\bP^{\pan}_k$ is a real tree (though the Berkovich topology is not the tree one, and is locally compact). If~${k=\C}$, then we recover the usual theory of complex analytic varieties, and in particular~$\bP^{\pan}_k$ is the Riemann sphere $\bP^1(\C)$. There is a natural action of $\PGL_2(k)$ on $\bP^{\pan}_k$ via Möbius transformations.

Let $k$ be a complete valued field. A Schottky group $\Ga $ over $k$ is a free and finitely generated subgroup of $\PGL_2(k)$, whose non-trivial elements are \emph{loxodromic} (\emph{i.e.} with exactly two fixed points on $\bP^{\pan}_k$), and which acts freely and properly on an open invariant connected subset of~$\bP^{\pan}_k$. Let $\La_{\Ga}$ denote its associated limit set. It is contained in $\bP^{1}(k):=k \cup \{\infty\}$, and is a fractal set. The Hausdorff dimension of $\La_{\Ga}$ is a conformal invariant.

Lemanissier and Poineau \cite{lemanissier_poineau} develop a theory of \emph{global} Berkovich analytic spaces over~$\Z$ or more general Banach rings; \emph{cf.} also \cite{poineau_berkovich_Z}. The moduli space $\eS_g$ from \cite{poineau_turcheti_universal} of rank~$g$ Schottky groups is an open and connected analytic subset of the affine space $\Af^{3g-3, \an}_{\Z}$ if~${g \geqslant 2}$  (and of $\Af^{1, \an}_{\Z}$ if~${g=1}$). It naturally admits a structural morphism $\pi: \eS_g \rightarrow \M(\Z)$, where~$\M(\Z)$ consists of multiplicative semi-norms on $\Z$ bounded by the absolute value $|\cdot|_{\infty}$. We will say that $x \in \eS_g$ is \emph{non-Archimedean} if $\pi(x)$ is a non-Archimedean semi-norm on $\Z$ (\emph{e.g.} $|\cdot|_p^{\alpha}, \alpha \in (0, +\infty)$, $p$-prime), and \emph{Archimedean} otherwise (\emph{e.g.} $|\cdot|_{\infty}^{\varepsilon}, \varepsilon\in (0,1]$).  
The moduli space $\eS_{g, \C}$, constructed by  Bers~\cite{bers}, of complex rank $g$ Schottky groups, shares similar properties (\emph{cf.} \cite{hejhal_schottky_teichmuller}) and can be obtained from~$\eS_g$ via $\pi^{-1}(|\cdot|_{\infty})$.  
Similarly, for a non-Archimedean field $k$, such a space~$\eS_{g,k}$ was constructed by Gerritzen \cite{gerritzen_80,gerritzen_81}. 

More generally, in \cite{poineau_turcheti_universal}, the authors construct in a similar way a moduli space for rank~$g$ Schottky groups over \emph{any} Banach ring $A$, denoted $\eS_{g, A}$. 
Let $\Ga_x, x \in \eS_{g, A}$, be the family of Schottky groups parametrized by $\eS_{g, A}$, and $\La_x$ their corresponding limit sets. With this notation, our main result is the following:

\begin{thm*}[Theorem \ref{thm_contbanach}] \label{thm_continuity_intro}  Let $g \geqslant 1.$ The function $d_A: \eS_{g, A} \rightarrow \R_{\geqslant 0}$, $x \mapsto \Hdim  \La_x$, is continuous, where $\Hdim$ denotes the Hausdorff dimension.   
\end{thm*}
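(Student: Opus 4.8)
The plan is to reduce the statement to the continuity of a single real-valued ``critical exponent'' function and then to control that function by finite approximations. I take as established earlier in the paper, via the Poincaré series $\cP_x$, that for every $x \in \eS_{g,A}$ the Hausdorff dimension $\Hdim \La_x$ equals the abscissa of convergence $\scrit(x)$ of $\cP_x(s)$ — the common Archimedean and non-Archimedean analogue of the Patterson--Sullivan/Bowen identity. Since continuity is a local property, I would fix a base point $x_0 \in \eS_{g,A}$ and argue on a small neighborhood $U$ of $x_0$. The input that makes this feasible is that the generators $\gamma_1(x), \dots, \gamma_g(x) \in \PGL_2(\Hr(x))$ of $\Ga_x$, together with the radii of their associated Schottky domains, depend continuously on $x$ by the very construction of $\eS_{g,A}$.

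At the core I would introduce, for $x \in U$ and $s \geqslant 0$, the level-$n$ partition functions
\[
Z_n(x,s) \;=\; \sum_{|\gamma| = n} \big(\diam \gamma D_x\big)^s,
\]
the sum being over reduced words $\gamma$ of length $n$ in the free generators, $D_x$ a fundamental Schottky domain, and $\diam$ measured in a suitable (continuously varying) metric on $\bP^{\pan}_{\Hr(x)}$. By the nested, tree-like arrangement of the images $\gamma D_x$ together with bounded distortion one obtains submultiplicativity $Z_{n+m} \leqslant C\, Z_n(x,s) Z_m(x,s)$, so Fekete's lemma yields a pressure function $\cP(x,s) = \lim_n \tfrac{1}{n}\log Z_n(x,s)$ that is strictly decreasing in $s$ and changes sign exactly at $s = \scrit(x)$. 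Continuity of $\Hdim$ then reduces to joint continuity of $\cP$ together with an abstract lemma: the unique zero of a jointly continuous, strictly monotone-in-$s$ family of functions depends continuously on the parameter.

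The technical heart is the joint continuity of $\cP$, which I would extract from a uniform-approximation argument. Each $Z_n(x,s)$ is a \emph{finite} sum of terms $(\diam \gamma D_x)^s$, hence jointly continuous in $(x,s)$ once the diameters are, and the submultiplicativity constant $C$ can be chosen uniformly on $U$. This forces the convergence $\tfrac{1}{n}\log Z_n \to \cP$ to be uniform on $U \times [0,M]$, so $\cP$ is a uniform limit of continuous functions and is therefore continuous. The strict monotonicity and sign change in $s$ follow from the fact that the single-generator diameters $\diam \gamma D_x$, $|\gamma|=1$, are bounded away from $0$ and $1$ uniformly on $U$, which gives the two-sided exponential control needed to trap $\scrit(x)$ between two explicit expressions of the form $\log(2g-1)/\log(1/r)$ built from the extremal single-generator diameters.

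The main obstacle I anticipate is obtaining these diameter estimates \emph{uniformly and continuously across the Archimedean/non-Archimedean transition} in $\eS_{g,A}$. Over a general Banach ring $A$ the completed residue field $\Hr(x)$, the metric on $\bP^{\pan}_{\Hr(x)}$, and even the analytic nature of $\cP_x$ (a genuine Dirichlet series with meromorphic continuation in the non-Archimedean case, versus the classical complex picture) all vary with the point $x$. One must therefore compare these metrics and partition functions along degenerating families — for instance $|\cdot|_\infty^\varepsilon$ as $\varepsilon \to 0$, or $|\cdot|_p^{\alpha}$ — and verify that no discontinuity is introduced as the base seminorm passes between the two regimes. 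I expect this step to require the explicit description of the Schottky domains and multipliers furnished by the Poineau--Turchetti construction, together with a careful choice of metric on $\bP^{\pan}_{\Hr(x)}$ that makes $\diam \gamma D_x$ manifestly continuous in $x$, including at the Archimedean points where $\Hr(x) = \C$.
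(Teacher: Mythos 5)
Your skeleton (critical exponent $=$ Hausdorff dimension, finite-level approximations, continuity of the zero of a family monotone in $s$) matches the paper's strategy in spirit — the paper uses McMullen-type matrices $T_m^{(s)}$ where you use partition functions $Z_n(x,s)$, and these are parallel devices — but your proposal has a genuine gap at exactly the step where the paper does its real work. Your central analytic claim is false as stated: submultiplicativity $Z_{n+m}\leqslant C\,Z_nZ_m$ with $C$ uniform on $U$ does \emph{not} force uniform convergence of $\frac{1}{n}\log Z_n$ to the pressure. Subadditivity of $\log(CZ_n)$ gives, via Fekete, $\cP(x,s)=\inf_n\frac{1}{n}\log\bigl(CZ_n(x,s)\bigr)$, hence a uniform bound in one direction only, namely $\frac{1}{n}\log Z_n\geqslant \cP-\frac{\log C}{n}$; the approach from above can be arbitrarily slow and non-uniform in the parameter (e.g.\ $a_n=n+n^{0.9}$ is subadditive with $a_n/n\to 1$ at rate $n^{-0.1}$). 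To get uniform convergence you need two-sided quasi-multiplicativity $c\,Z_nZ_m\leqslant Z_{n+m}\leqslant C\,Z_nZ_m$ with $c,C$ uniform on $U$, and the lower bound is precisely where uniform bounded distortion enters. That uniformity is the crux: near a non-Archimedean point of $\eS_g$ the nearby Archimedean points have residue field $(\C,|\cdot|_\infty^{\varepsilon})$ with $\varepsilon\to 0$, and the naive distortion constant for the metric $|\cdot|_\infty^{\varepsilon}$ blows up like $\exp\bigl(2\varepsilon C^{1/\varepsilon}R^{1/\varepsilon}\bigr)$; the paper's Lemma~\ref{lem_uniform_bound_locus} (uniform geometric decay of the radii of the Schottky disks) and Corollary~\ref{cor_distortion_ball_bis} (distortion controlled by $e^{\pm r_{\beta,y}/4}$) exist exactly to beat this blow-up, and Theorem~\ref{thm_uniform_convergence_mcmullen} turns them into uniform convergence of the approximations. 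You correctly flag this transition as ``the main obstacle,'' but you offer no mechanism to overcome it — and without it your pressure argument collapses.

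There is a second gap: you propose to argue directly on $\eS_{g,A}$ for an arbitrary Banach ring $A$, asserting that the Schottky domains and their radii vary continuously ``by the very construction of $\eS_{g,A}$.'' That is not part of the construction. Near a non-Archimedean point, the existence of a \emph{continuously varying} Schottky figure (after possibly changing the basis by an automorphism of the free group) is a substantive theorem of Gerritzen and Poineau--Turchetti, available over $\Z$ and over integer rings of number fields, not over a general Banach ring. The paper sidesteps this by first proving the theorem over $\Z$ and then deducing the general case from the identity $d_A=d_{\Z}\circ\pi_A$, using that the projection $\pi_A:\eS_{g,A}\to\eS_g$ is continuous (Lemma~\ref{lem_basechangemoduli}) and that the Hausdorff dimension of the limit set is invariant under complete valued field extension (Lemma~\ref{cor_hausdorff_dim_extension}). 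Your plan needs either this reduction or a theory of relative Schottky figures over arbitrary Banach rings; as written it has neither.
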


In particular, Theorem \ref{thm_continuity_intro} holds for the moduli space $\eS_g$ over the Banach ring $\mathbb{Z}$ (as well as for the integer rings of number fields, complete valued fields, complete rings, \emph{etc.}). 
For $\eS_{g, \C}$, the function $d_{\C}$ is real-analytic by \cite{ruelle}. If $k$ is non-Archimedean, we deduce a similar result for $\eS_{g,k}$ (\emph{cf.} Remark~\ref{rem_analytic}).

Theorem \ref{thm_continuity_intro} is obtained as a consequence of the special case where $A=\Z$. For the strategy of proof over $\eS_g$, we start by remarking that
the set of Archimedean points is open. Hence, the continuity of $d_{\Z}$ on this open can be obtained as an application of Ruelle's result. 
The main point of difficulty is the continuity on opens containing both types of points. To treat this case, we first employ a result of Poineau-Turchetti assuring the local existence of a continuous \emph{Schottky figure} (a configuration of disks encoding the group action). This is obtained by expanding a Schottky figure over the fiber of a non-Archimedean point, whose existence is a crucial result of Gerritzen. We then deduce a uniform bound on the Hausdorff dimension. The second major ingredient is a method of McMullen (\emph{cf.}  \cite{mcmullen_hausdorff_3})  for certain groups over~$\C$, through which he approximates the Hausdorff dimension,  and which we adapt to non-Archimedean fields. The bound on the Hausdorff dimension allows us to prove the continuous variation of these approximations. We also establish \emph{distortion estimates} over complete valued fields, and then prove that they hold uniformly. Combining all these elements, we show that McMullen's approximations converge \emph{uniformly}, and are able to conclude. This is the core part of our proof.  Over opens containing only non-Archimedean points $x$, we
exploit a simpler characterization of $\Hdim \La_x$, as follows.

\medskip

Let $k$ be a complete non-Archimedean field, and  let $\Ga$ be a Schottky group over $k$. Let $\La_{\Ga}$ denote its limit set. One can associate to $\Ga$ its \emph{Poincaré series}, defined for $s \in \R_{\geqslant 0}$ as $$\Po_{\eta}(s):=\sum_{\g \in \Ga} e^{-s\rho(\eta, \g \eta)},$$  
where $\eta \in \bP^{\pan}_k$ is a well-chosen point, and $\rho$ is a (hyperbolic) metric defined on a $\PGL_2(k)$-invariant subset $\Hy_k$ of $\bP^{\pan}_k$. 

The Hausdorff dimension of $\La_{\Ga}$ is usually the \emph{critical exponent} of $\Po_{\eta}(s)$, \emph{i.e.} the infimum of its locus of convergence. 
As conjectured and studied by Fried  \cite{fried_analytic_torsion}, and by analogy with abundant examples in analytic number theory,  the  meromorphic extensions of certain special functions often encode geometric or topological information. This is illustrated by: the Riemann zeta function, Eisenstein series and special $L$-functions of elliptic curves (see \emph{e.g.} \cite{weil}). 
In the case of Poincaré series this was recently established in  \cite{dang_riviere_poincare} (for surfaces),  \cite{benard_chaubet_dang} (for triangulations),  N. Anantharaman (course at Collège de France; for finite graphs). 

We prove one more occurrence of this phenomenon, this time in a non-Archimedean setting (\emph{cf.} Corollary \ref{cor_critical_exponent_hausdorff}, Corollary \ref{cor_spectral_formula} and Theorem \ref{analyticpoincare}, respectively):

\begin{thm*} \label{analyticpoincare_intro} 
\begin{enumerate}
\item[(i)] The critical exponent of $\Po_{\eta}(s)$ is equal to $\Hdim \La_{\Ga}$ and to the spectral radius of a matrix $M(s) \in M_{2g \times 2g}(\R)$ whose non-zero entries are of the form $e^{-sl}, l \in \R_{>0}$. 
\item[(ii)] (Meromorphic extension) The Poincaré series can be extended meromorphically to $\C$ in the form $$\dfrac{P(e^{-sl_1}, e^{-sl_2}, \dots, e^{-sl_g})}{Q(e^{-sl_1}, e^{-sl_2}, \dots, e^{-sl_g})},$$ with $P,Q$ polynomials over $\Z$, and $l_i \in \R_{>0}$ for all~$i$.

\item[(iii)] (Special value) When $g\geqslant 2$, the extension of $\Po_{\eta}(s)$ is analytic at $s=0$, and $$\Po_{\eta}(0)=\frac{1}{1-g}.$$
\end{enumerate}
\end{thm*}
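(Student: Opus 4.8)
The plan is to reduce all three statements to a single combinatorial identity made available by the tree structure of $\bP^{\pan}_k$. Writing $\g_1, \dots, \g_g$ for a system of free generators and $n_i(w)$ for the number of occurrences of $\g_i^{\pm 1}$ in a reduced word $w$, the first and crucial step is to establish that, for the well-chosen point $\eta$, the displacement is additive along the word structure: $\rho(\eta, w\eta) = \sum_{i=1}^{g} n_i(w)\, \ell_i$ for suitable lengths $\ell_i \in \R_{>0}$. This is the genuinely non-Archimedean input: since $\eta$ sits at the center of the Schottky figure and $\Ga$ acts by isometries on the real tree $\Hy_k$, reaching $w\eta$ forces a descent through a nested chain of disks whose contributions depend only on the color of each letter. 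Consequently $e^{-s\rho(\eta, w\eta)} = \prod_{i=1}^{g} x_i^{n_i(w)}$ with $x_i := e^{-s\ell_i}$, so that $\Po_{\eta}(s)$ becomes the complete growth series of the free group weighted by the $x_i$.

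For (i), I would encode reduced words by a transfer matrix $M(s) \in M_{2g\times 2g}(\R)$ indexed by the letters $\g_i^{\pm 1}$, with $M(s)_{ab} = e^{-s\ell_{c(b)}}$ when $b \neq a^{-1}$ and $0$ otherwise, where $c(b) \in \{1,\dots,g\}$ is the index of the generator underlying $b$; its non-zero entries are thus of the required form $e^{-sl}$. Summing over word length gives $\Po_{\eta}(s) = 1 + \mathbf 1^{\mathsf T}(I - M(s))^{-1}\mathbf v(s)$ for an explicit initial vector $\mathbf v(s)$, a series of positive terms that converges precisely when the spectral radius of $M(s)$ is $< 1$. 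Since this spectral radius is continuous and strictly decreasing in $s$, the critical exponent is the unique $s$ at which it equals $1$. It remains to identify this value with $\Hdim \La_{\Ga}$: because the limit set is a self-similar Cantor set whose pieces are images under exact (locally constant derivative) non-Archimedean similarities with ratios $e^{-\ell_i}$, its dimension is governed by the Moran--Bowen equation, which is exactly the condition that the spectral radius of $M(s)$ equal $1$.

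For (ii) and (iii), I would resum the series in closed form. Letting $u_a(s)$ be the contribution of reduced words ending in the letter $a$, the no-backtracking rule yields $u_a = x_{c(a)}(\Po_{\eta}(s) - u_{a^{-1}})$; pairing $a$ with $a^{-1}$ forces $u_a = u_{a^{-1}} = x_{c(a)}\Po_{\eta}(s)/(1 + x_{c(a)})$, and substituting into $\Po_{\eta} = 1 + \sum_a u_a$ gives the single identity
\[ \Po_{\eta}(s) = \Bigl(1 - 2\sum_{i=1}^{g} \frac{x_i}{1 + x_i}\Bigr)^{-1}, \qquad x_i = e^{-s\ell_i}. \]
Clearing denominators exhibits $\Po_{\eta}$ as a ratio of integer polynomials in $x_1, \dots, x_g$, proving the meromorphic extension of (ii). For (iii), setting $s = 0$ gives $x_i = 1$, whence $x_i/(1+x_i) = 1/2$ and the denominator equals $1 - g$; this is non-zero for $g \geq 2$, so the extension is analytic at $0$ with $\Po_{\eta}(0) = (1-g)^{-1}$, and one recognizes $1 - g$ as the Euler characteristic of the quotient graph, which is the announced topological invariant.

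The main obstacle is the first step: establishing the exact additive displacement formula $\rho(\eta, w\eta) = \sum_i n_i(w)\ell_i$, together with the correct choice of $\eta$, and then matching the resulting spectral-radius condition with the Hausdorff dimension via the self-similarity of $\La_{\Ga}$. Once these geometric facts are in place, the transfer-matrix encoding, the meromorphic continuation, and the special value follow from the closed-form resummation above; in particular the apparent pole of $(I - M(s))^{-1}$ at $s = 0$ --- where $1$ is an eigenvalue of $M(0)$ of multiplicity $g$ --- is automatically cancelled by the numerator, as the closed form makes transparent.
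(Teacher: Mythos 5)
Your proposal has a genuine gap at its foundation. The ``crucial first step'' --- the additive displacement formula $\rho(\eta, w\eta) = \sum_{i} n_i(w)\,\ell_i$ with one length per generator --- is false for a general Schottky group. The correct expansion (Lemma \ref{lem_length_expansion} of the paper) is
\begin{equation*}
\rho(\eta, \g \eta)=\rho(\eta, x_{a_1}) + \sum_{m=2}^{n}\rho(x_{a_{m-1}^{-1}}, x_{a_{m}}) + \rho(x_{a_{n}^{-1}}, \eta), \qquad \g=a_1\cdots a_n \ \text{reduced},
\end{equation*}
where the increments are distances between boundary points of the Schottky disks and depend on consecutive \emph{pairs} of letters, not on letters individually. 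For your formula to hold, every pair of distinct boundary points $x_u, x_v$ would have to have $\eta$ on the geodesic $[x_u,x_v]$, i.e. the $2g$ boundary points would have to form a star centered at $\eta$. When $g\geqslant 2$ there are $2g \geqslant 4$ such points, and four points in a real tree generically span an ``H''-shaped subtree admitting no such common center; so no ``well-chosen'' $\eta$ exists in general, and the tree geometry of the figure (encoded in the paper by the graph $G$ and the overlaps $L_{ij}$ of the axes, Corollary \ref{birap}) cannot be compressed into $g$ numbers.

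Everything downstream inherits this defect. The weights are not of the form $\prod_i x_i^{n_i(w)}$, the transfer matrix cannot have entries depending only on the target letter (the paper's matrix, Definition \ref{matrixM}, has entries $e^{-s\rho(\partial D_{\sigma(j)^{-1}}, \partial D_{\sigma(l)})}$ indexed by pairs), and your closed form $\Po_{\eta}(s)=\bigl(1-2\sum_i x_i/(1+x_i)\bigr)^{-1}$ is the free-group growth series, valid only in the star-symmetric case --- in general $\Po_\eta$ depends on all $O(g^2)$ pairwise distances (Theorem \ref{thm_meromorphic_extension}). Likewise for (i): the limit set is not self-similar with per-generator contraction ratios (the local derivative of a letter $a$ on $D_\beta$ involves $\rho(\partial D_{a^{-1}},\partial D_{b})$, $b$ the first letter of $\beta$), so the Moran--Bowen equation does not apply; the dimension is instead a Perron-root condition for the pair-indexed matrix (Corollary \ref{cor_spectral_formula}), and the identification with the critical exponent goes through the Bishop--Jones/Das--Simmons--Urbanski theorem rather than self-similarity. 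Finally, for (iii) the cancellation of the pole of $(I-M(s))^{-1}$ at $s=0$ (where $1$ is an eigenvalue of $M(0)$ of multiplicity $g$) is not ``automatic'': proving that the pole is simple and is killed by the vectors $X_\eta(s), Y_\eta(s)$ is the technical heart of the paper, requiring the invertibility of $D'(0)$ via a non-degenerate pairing on the tree spanned by the boundary points (Proposition \ref{prop_invertible_Dprime}) and a Schur-complement analysis. Your computation is a correct and instructive verification in the special star-shaped case (and does recover $\Po_\eta(0)=1/(1-g)$ there), but it does not prove the theorem.
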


 For $g=1$, $\Po_{\eta}(s)$ has a pole of order one at $s=0$, 
while for $g\geqslant 2$,  $\Po_{\eta}(0)$ depends uniquely on the Euler characteristic of the  Mumford curve over $k$ that $\Ga$ uniformizes. As this curve can be retracted to a finite metric graph, Theorem \ref{analyticpoincare_intro} can also be interpreted in such a setting.
The connection between $\Hdim \La_{\Ga}$ and $\Po_{\eta}(s)$ over $\C$ was established in \cite{patterson} and~\cite{sullivan_density}.  We deduce it by applying the results of \cite{das_simmons_urbanski}. For the rest, we start by establishing a connection between $\Po_{\eta}(s)$ and a well-chosen matrix $M(s)$. 
 We prove that the Poincaré series can be obtained as a function of $(I-M(s))^{-1}$, from where the meromorphic behavior of $\Po_{\eta}(s)$ follows. Finally, we conclude through a further study of the poles of the resolvent $(I-M(s))^{-1}$. The key point is a technical combinatorial lemma related to the geometry of a fundamental domain of the action. 

\medskip

We now provide an application of this framework where the base field can vary, \emph{i.e.} over~$\eS_g$, to degenerating families $(\Ga_n)_n,$ of \emph{complex} Schottky groups. Heuristically speaking, through an appropriate choice of a sequence $(\varepsilon_n)_n$ which tends to $0$ as $n \rightarrow +\infty$, one can interpret~$\Ga_n$ as a Schottky group $\Ga_n'$ over $(\C, |\cdot|_{\infty}^{\varepsilon_n}),$ hence as a point $x_n$ of $\eS_{g, (\C, |\cdot|_{\infty}^{\varepsilon_n})} \subseteq \eS_g$.
The idea is to extend~${(x_n)_n}$ \emph{continuously} through a limit point in $\eS_g$. When this is possible, the limit point must be non-Archimedean, because the Archimedean part of $\eS_g$ is open and the family is assumed degenerating.   See also the figure below. 

\resizebox{15cm}{!}{%
\centering
\input{degen_scheme}   
 }
 
In the literature, this continuation idea through \emph{scaling} with Berkovich \emph{hybrid spaces or some analog} has been prolific, see \emph{e.g.}  \cite{kiwi}, \cite{arfeux3}, \cite{boucksom_jonsson_degeneration}, \cite{nie_rescaling}, \cite{dujardin_favre_degen}, \cite{favre_degen}, \cite{luo_trees}. 
More generally, studying families of transformation groups by rescaling the metric and then taking a limit which can be interpreted as an action on a real tree dates back to works of Culler, Morgan and Shalen \cite{morgan_shalen1}, \cite{morgan-shalen2}, \cite{culler_morgan}, Paulin \cite{paulin_topo_gromov}, and Bestvina \cite{bestvina}.

In our setting, we obtain the following asymptotic behavior of the Hausdorff dimension. 
Let~$k$ be a complete valued field.
A loxodromic element $\g \in \PGL_2(k)$ is uniquely determined by its attractive and repelling points $\alpha, \beta \in \bP^1(k)$, and its \emph{multiplier} $y \in k$ which satisfies $0<|y|<1$, and for which $l=-\log |y|,$ where $l$ is the \emph{translation length} of $\g$. The data $(\alpha, \beta, y)$ are called the \emph{Koebe coordinates} of $\g$ and they are used to parametrize Schottky groups (\emph{cf.} \cite{bers}, \cite{gerritzen_81}, \cite{poineau_turcheti_universal}). 

\begin{thm*}[Theorem \ref{degeneration}] \label{degeneration_intro} Assume $g \geqslant 2.$ 
Let $p: D \rightarrow \C^{3g-3}$ be a function defined on the open unit disk $D$, meromorphic on a neighborhood $U$ of $0$ and  holomorphic on $U \backslash \{0\}$. Let us denote its coordinate functions by $\alpha_3, \dots, \alpha_g, \beta_2, \dots, \beta_g, y_1, \dots, y_g$. Assume that for every $i=1,2,\dots, g,$
\begin{enumerate}
\item[(i)]  $y_i(z)$ are not constant and $y_i(0)=0$,
\item[(ii)]  for all $u(z), v(z) \in \{\alpha_1(z), \beta_1(z), \dots, \alpha_g(z), \beta_g(z)\} \backslash \{\alpha_i(z), \beta_i(z)\}$, 
$$\mathrm{ord}_{0} \left(y_i(z) \cdot [u(z), v(z); \alpha_i(z), \beta_i(z)] \right) >0,$$
where $\alpha_1(z):=0$, $\beta_1(z):=\infty$, $\alpha_2(z):=1$ for all $z \in D$, and $[a, b; c, d]$ denotes the cross-ratio of $a,b,c,d \in \bP^1(\C)$,
\item[(iii)]  the functions $\alpha_i(z), \beta_i(z)$, $i=1,2,\dots, g$, are all different. 
\end{enumerate}
Then there exists $\eta > 0 $ such that $p(z) \in \eS_{g, \C}$  for $0<|z| < \eta$ and the Hausdorff dimension $s(z)$ of the limit set $\Lambda_z$ associated to $p(z)$ satisfies: 
\begin{equation*}
s(z) \sim - \dfrac{s_0}{\log|z|},
\end{equation*}
as $z \rightarrow 0$, where $s_0>0$ is the Hausdorff dimension of the limit set in $\mathbb{P}^{\pan}_{\C((z))}$ of the Schottky group with generators whose Koebe coordinates are determined by the Laurent series induced by~$p(z)$.
\end{thm*}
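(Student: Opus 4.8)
The plan is to convert the Archimedean degeneration into convergence toward a non-Archimedean point of $\eS_g$ by \emph{rescaling} the absolute value, and then to invoke the continuity of $d_\Z$ furnished by Theorem~\ref{thm_continuity_intro}. Concretely, I would set $\varepsilon(z) := -1/\log|z|$, so that $\varepsilon(z) \to 0^+$ as $z \to 0$, and reinterpret the group $\Ga_z$ with Koebe coordinates $p(z)$ as a Schottky group over the field $(\C, |\cdot|_\infty^{\varepsilon(z)})$. Since the Schottky property for groups over the fixed field $\C$ depends only on the topology of the action on $\bP^1(\C)$ and not on the chosen scaling of the metric, this produces a point $x(z) \in \eS_{g,(\C,|\cdot|_\infty^{\varepsilon(z)})} \subseteq \eS_g$.

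The first point to establish is the identity of the limit. For a coordinate $a(z)$ with $\ord_0 a = m$, the rescaled norm satisfies $-\log|a(z)|_\infty^{\varepsilon(z)} = \log|a(z)|/\log|z| \to m$, so that as $z \to 0$ the Koebe coordinates, measured in the rescaled norm, converge to their $z$-adic valuations. These valuations are exactly those of the generators of the Schottky group over $\C((z))$ appearing in the statement, whose limit set has Hausdorff dimension $s_0$. In particular, hypothesis~(i) ensures that the rescaled translation lengths $\varepsilon(z)\,(-\log|y_i(z)|)$ converge to $\ord_0 y_i > 0$, so the limiting generators are loxodromic over $\C((z))$.

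The core step, and the main obstacle, is to show that the path $z \mapsto x(z)$ extends continuously to $z = 0$, with limit the point $x_\infty \in \eS_g$ lying over the $z$-adic semi-norm and corresponding to the $\C((z))$-group above. For this one must verify that the rescaled disk configurations converge to a genuine Schottky figure over $\C((z))$: hypothesis~(ii), through a cross-ratio estimate in the spirit of Gerritzen's criterion, guarantees that the limiting Berkovich disks are pairwise disjoint, while hypothesis~(iii) ensures that the limiting group has full rank $g$. Feeding the resulting non-Archimedean Schottky figure into the Poineau--Turchetti theorem on the local existence of a continuous Schottky figure (obtained by expanding the one over the non-Archimedean fiber) simultaneously identifies $x_\infty$ as a valid point of $\eS_g$, proves the continuity of $z \mapsto x(z)$ up to $z = 0$, and yields the claimed $\eta > 0$ with $p(z) \in \eS_{g,\C}$ for $0 < |z| < \eta$.

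It remains to track the Hausdorff dimension under the rescaling. Passing from $|\cdot|_\infty$ to $|\cdot|_\infty^{\varepsilon(z)}$ multiplies every translation length, and hence the hyperbolic metric $\rho$, by $\varepsilon(z)$; the Poincaré series $\sum_{\g \in \Ga_z} e^{-s\rho(\eta,\g\eta)}$ is thereby replaced by $\sum_{\g \in \Ga_z} e^{-s\varepsilon(z)\rho(\eta,\g\eta)}$, whose critical exponent is $s(z)/\varepsilon(z)$. Since the critical exponent equals the Hausdorff dimension, this gives
$$d_\Z(x(z)) = \frac{s(z)}{\varepsilon(z)} = -s(z)\,\log|z|.$$
Applying Theorem~\ref{thm_continuity_intro} along the extended path then yields $d_\Z(x(z)) \to d_\Z(x_\infty) = s_0$ as $z \to 0$, so that $-s(z)\log|z| \to s_0$, which is precisely the asymptotic $s(z) \sim -s_0/\log|z|$. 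Finally, $s_0 > 0$ because the limit set of a rank $g \geqslant 2$ Schottky group over $\C((z))$ is infinite.
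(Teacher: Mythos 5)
Your outline is the same as the paper's: rescale by $\varepsilon(z)=-1/\log|z|$, extend the resulting path of points continuously through a non-Archimedean limit, check via Gerritzen's cross-ratio criterion (Corollary \ref{cor_charac_Schottky}) that hypotheses (i)--(iii) make the limit a Schottky point, apply the continuity theorem for the Hausdorff dimension, and undo the scaling (Remark \ref{rem_scaling}). But the two central steps are justified incorrectly as written. First, the continuity of $z\mapsto x(z)$ at $z=0$: convergence in the Berkovich topology means $|P(p(z))|_\infty^{\varepsilon(z)}\rightarrow |P|_{x_\infty}$ for \emph{every} polynomial $P$ over the base ring, not only for the coordinate functions. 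Coordinate convergence does not determine, say, the limit of $|u(z)-v(z)|_\infty^{\varepsilon(z)}$ when two coordinates collide (compare $p_1(z)=z$, $p_2(z)=z+z^2$ with $p_1(z)=z$, $p_2(z)=2z$: same coordinate limits, different limits on the difference), and collisions are exactly the situation of interest here. Your proposed justification --- convergence of rescaled disk configurations fed into the Poineau--Turchetti relative Schottky figure theorem (Theorem \ref{Gerritzenrelative}) --- does not bear on this: that theorem enters the proof of Theorem \ref{thm_contbanach}, not the continuity of a specific path of semi-norms. The correct and elementary argument (the paper's Proposition \ref{prop_continuous_section}) is that $P(p(z))$ is meromorphic at $0$ for every $P$, hence $P(p(z))=z^{m}u(z)$ with $u(0)\neq 0$, so $|P(p(z))|_\infty^{\varepsilon(z)}\rightarrow e^{-m}$; one then gets $\eta>0$ from the openness of the moduli space together with membership of the limit point.

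Second, the identification of the limit. Working over $\Z$, there is no point of $\eS_g$ ``lying over the $z$-adic semi-norm'': the structural morphism lands in $\M(\Z)$, your path converges to a point $x_\infty$ over the \emph{trivial} norm $|\cdot|_0$, and $\Hr(x_\infty)$ is the completion of the countably generated field $\Q(\alpha_3,\dots,y_g)\subseteq \C((z))$ --- never $\C((z))$ itself. So $d_\Z(x_\infty)=s_0$ is not automatic; it needs the invariance of the Schottky property and of the Hausdorff dimension under complete valued base change (Lemmas \ref{lem_schottky_field_extension} and \ref{cor_hausdorff_dim_extension}) applied to $\C((z))/\Hr(x_\infty)$, which you never invoke. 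The paper avoids this issue altogether by working over the hybrid disk $A_r$, where $\M(A_r)\cong \overline{D}_r$, the fiber over $0$ is exactly the analytic space over $(\C((t)),|\cdot|_t)$, and Theorem \ref{thm_contbanach} (valid for any Banach ring) applies directly. Your route over $\Z$ is repairable with the meromorphy argument and the two base-change lemmas, but as stated both the continuity step and the evaluation of the limit would fail.
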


To prove this result, the key point is to use the scaling $\varepsilon(z)=-\log (|z|^{-1})$ for $z \neq 0$, and show that this gives an appropriate family in $\eS_{g, A}$, where $A$ is a Banach ring, known as the \emph{hybrid disk}.

\medskip

We provide explicit examples to which Theorem \ref{degeneration_intro} applies. In particular, this is the case for families of \emph{Schottky reflection groups}.
We present a simplified case here, see Theorem \ref{thm_explicit} for the stronger version we prove.

Let $g, k, l$ be integers such that $g \geqslant 2,$ and $k, l\geqslant 1$. Fix $c_1, c_2, \dots, c_{g+1}$ distinct points on the  unit circle. For $z \neq 0$, let $C_{i,z}:=C(c_i, |z|^k)$ for $i=1,2,\dots, g$, and $C_{g+1, z}:=C(c_{g+1}, |z|^l)$, where $C(a, r) \subseteq \C$ denotes the circle centred at $a$ and of radius $r$. 

The group $\Gamma_z$ generated by the reflections with respect to the circle $C_{i,z}$, $i=1,2,\dots, g+1$, is called a \emph{Schottky reflection group} with respect to this configuration of circles. 
Figure \ref{fig_degen_intro} illustrates the degeneration of the family $(\Ga_z)_z$ toward a non-Archimedean Schottky figure. 

\begin{thm*}(Theorem \ref{thm_explicit})
The Hausdorff dimension $s(z)$ of  the limit set of $\Ga_z$ satisfies $$s(z) \sim \frac{-\log u}{2 \log |z^{-1}|}$$
as $z \rightarrow 0$, and where $u$ is the largest root of $X^{k+l}+(1-1/g)X^k-1/g$.
\end{thm*}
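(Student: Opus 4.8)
The plan is to pass to the orientation-preserving subgroup, place it in Koebe coordinates that depend holomorphically on $z$, apply Theorem~\ref{degeneration_intro} to reduce everything to a single non-Archimedean dimension $s_0$ over $\C((z))$, and then compute $s_0$ through the length matrix of Theorem~\ref{analyticpoincare_intro}(i).

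Write $r_i$ for the reflection in $C_{i,z}$. The group $\Ga_z$ has an index-two orientation-preserving subgroup $\Ga_z^+$, freely generated by $\g_i:=r_i r_{g+1}$ for $i=1,\dots,g$, and $\La_{\Ga_z}=\La_{\Ga_z^+}$, so that $\Hdim \La_{\Ga_z}=\Hdim \La_{\Ga_z^+}$. Replacing each antiholomorphic reflection by the holomorphic inversion $w\mapsto c_i+z^{2k}/(w-c_i)$ (respectively with $z^{2l}$ for $i=g+1$), which carries the same ultrametric contraction data, I regard the radii as the holomorphic parameters $z^{k},z^{l}$, so that the Koebe data $(\alpha_i,\beta_i,y_i)$ of $\g_i$ become holomorphic in $z$. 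A direct computation of $\g_i\in\PGL_2$ as a Möbius transformation gives the multiplier $y_i(z)\sim z^{2(k+l)}/(c_i-c_{g+1})^4$, while its fixed points $\alpha_i(z),\beta_i(z)$ tend to the distinct centers $c_i,c_{g+1}$ as $z\to 0$.

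In particular each $y_i$ is non-constant with $y_i(0)=0$, which is hypothesis~(i). Since the $c_i$ are distinct nonzero constants they are units of $\C((z))$, so every cross-ratio $[u,v;\alpha_i,\beta_i]$ has valuation $0$; hypothesis~(ii) therefore reduces to $\ord_0 y_i=2(k+l)>0$, and hypothesis~(iii) holds because the limiting fixed points are the distinct points $c_i,c_{g+1}$. After the normalization $\alpha_1=0,\beta_1=\infty,\alpha_2=1$ by a Möbius conjugation, Theorem~\ref{degeneration_intro} applies and yields $s(z)\sim s_0/\log|z^{-1}|$, where $s_0=\Hdim \La$ is the Hausdorff dimension of the limit set of the induced Schottky group over $\C((z))$. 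It then remains to compute $s_0$, which by Theorem~\ref{analyticpoincare_intro}(i) is the value where the spectral radius of the length matrix equals $1$. Over $\C((z))$ the limit set is described by reduced words in $r_1,\dots,r_{g+1}$ with no immediate repetition, and a single inversion contracts diameters by a factor of absolute value $|z|^{2k}$ when entering one of the small circles $C_1,\dots,C_g$ and $|z|^{2l}$ when entering $C_{g+1}$, the denominators $|c_i-c_j|$ being units. Hence the relevant transfer matrix $T(s)$, which has the same critical exponent as the matrix $M(s)$ of the $\g_i$ since both groups share a limit set, is the $(g+1)\times(g+1)$ matrix with zero diagonal whose off-diagonal entries equal $e^{-2ks}$ in the first $g$ rows and $e^{-2ls}$ in the last. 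Using the symmetry permuting $C_1,\dots,C_g$, the Perron eigenvector has the form $(x,\dots,x,w)$, and imposing eigenvalue $1$ gives, with $X=e^{-2s}$, the relations $X^k\big((g-1)x+w\big)=x$ and $gX^{l}x=w$. Eliminating $x,w$ yields $gX^{k+l}+(g-1)X^k-1=0$, i.e.\ $X=u$; therefore $s_0=-\tfrac12\log u$ and $s(z)\sim -\log u/\big(2\log|z^{-1}|\big)$.

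The \emph{main obstacle} is the penultimate step: faithfully translating the analytic contraction data of the inversions into the combinatorial length matrix of Theorem~\ref{analyticpoincare_intro}(i), that is, reading off the edge lengths of the non-Archimedean Schottky figure and verifying that the compact $(g+1)$-state reflection transfer matrix $T(s)$ computes the same critical exponent as the $2g\times 2g$ generator matrix $M(s)$. A secondary difficulty, in the verification of the hypotheses, is checking uniformly that the holomorphic family genuinely lands in $\eS_{g,\C}$ for small $|z|$ and that the degenerating configuration satisfies condition~(ii). Once the matrix is pinned down, the symmetric eigenvalue reduction above is routine.
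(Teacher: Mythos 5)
Your overall strategy is the same as the paper's (reduce via Theorem \ref{degeneration} to a dimension $s_0$ over $\C((z))$, then compute $s_0$ as the parameter where a length matrix has spectral radius $1$), and your final equation is correct, but there is a genuine error in the middle. You claim that since the $c_i$ are distinct, ``every cross-ratio $[u,v;\alpha_i,\beta_i]$ has valuation $0$,'' so that hypothesis (ii) reduces to $\ord_0 y_i>0$. This is false: the repelling fixed point $\beta_i$ of $\g_i=r_ir_{g+1}$ lies in $r_{g+1}(D_{i,z})$, so \emph{all} of $\beta_1,\dots,\beta_g$ collide at $c_{g+1}$, with $\beta_i=c_{g+1}+z^{2l}/(c_i-c_{g+1})+O(z^{2l+k})$ and hence $\ord_0(\beta_i-\beta_j)=2l>0$. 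Cross-ratios with $\beta_j-\beta_i$ in the denominator therefore have valuation $-2l$, and (ii) holds only through the compensation $\ord_0\bigl(y_i\,[u,v;\alpha_i,\beta_i]\bigr)\geqslant 2(k+l)-2l=2k>0$; this bookkeeping (orders $\ord_0 y_i=2l+2k_i$, $\ord_0(v_i-v_j)=2l$, $\ord_0(\alpha_i-\alpha_j)=0$, etc.) is exactly what the paper's proof carries out. For the same reason your justification of (iii) fails: the limits $\beta_i(0)$ are \emph{not} distinct, all equal $c_{g+1}$; distinctness of the functions comes from the distinct leading coefficients $1/(c_i-c_{g+1})$. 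Moreover the error is not cosmetic: if the fixed points really had pairwise unit differences, the non-Archimedean limit would be the ``no collision'' degeneration, with all edge lengths $2(k+l)$ and $s_0=\log(2g-1)/(2(k+l))$, not $-\tfrac12\log u$. It is precisely the order-$2l$ collision of the repelling points that produces the $e^{-2sk}$ entries in the length matrix, i.e.\ the $X^k$ term of $X^{k+l}+(1-1/g)X^k-1/g$; so your stated valuation claim contradicts the very transfer matrix you write down two sentences later.

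The second hole is the one you flag yourself: the passage from the $(g+1)\times(g+1)$ reflection transfer matrix $T(s)$ to the framework of Corollary \ref{cor_spectral_formula}. Sharing a limit set with the index-two Schottky subgroup does not by itself give that $T(s)$ computes the same critical exponent as the $2g\times 2g$ matrix $M(s)$; Corollary \ref{cor_spectral_formula} applies to Schottky groups equipped with Schottky figures, not to Moran-type matrices on the reflection alphabet, and no comparison argument is supplied. The paper sidesteps this entirely: from the valuations above it constructs an explicit Schottky figure for $(\g_1,\dots,\g_g)$ over $\C((z))$ by twisted Ford disks, $D_i=D(u_i,e^{-k})$ and $D_i'=D(v_i,e^{-(2l+k)})$, reads off $M(s)$ (diagonal $2\times2$ blocks with entries $e^{-2s(k+l)}$, off-diagonal blocks with entries $e^{-2s(k+l)}$ and $e^{-2sk}$), observes that the row sums are constant so that $(1,\dots,1)$ is the Perron eigenvector, and solves $g\,e^{-2s_0(k+l)}+(g-1)e^{-2s_0k}=1$ directly. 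Your symmetric-eigenvector reduction of $T(s)$ yields the same equation, so the formula you obtain is right, but as written the proof has two unfilled gaps: the false valuation claim underlying (ii)--(iii), and the missing equivalence between $T(s)$ and $M(s)$.
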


In the special case when $g=2, k=l=1$ and $c_1, c_2, c_3$ form an equilateral triangle, we recover McMullen's \cite[Theorem 3.5]{mcmullen_hausdorff_3}.

\begin{figure}[h!]
    \centering
    \includegraphics[scale=0.3]{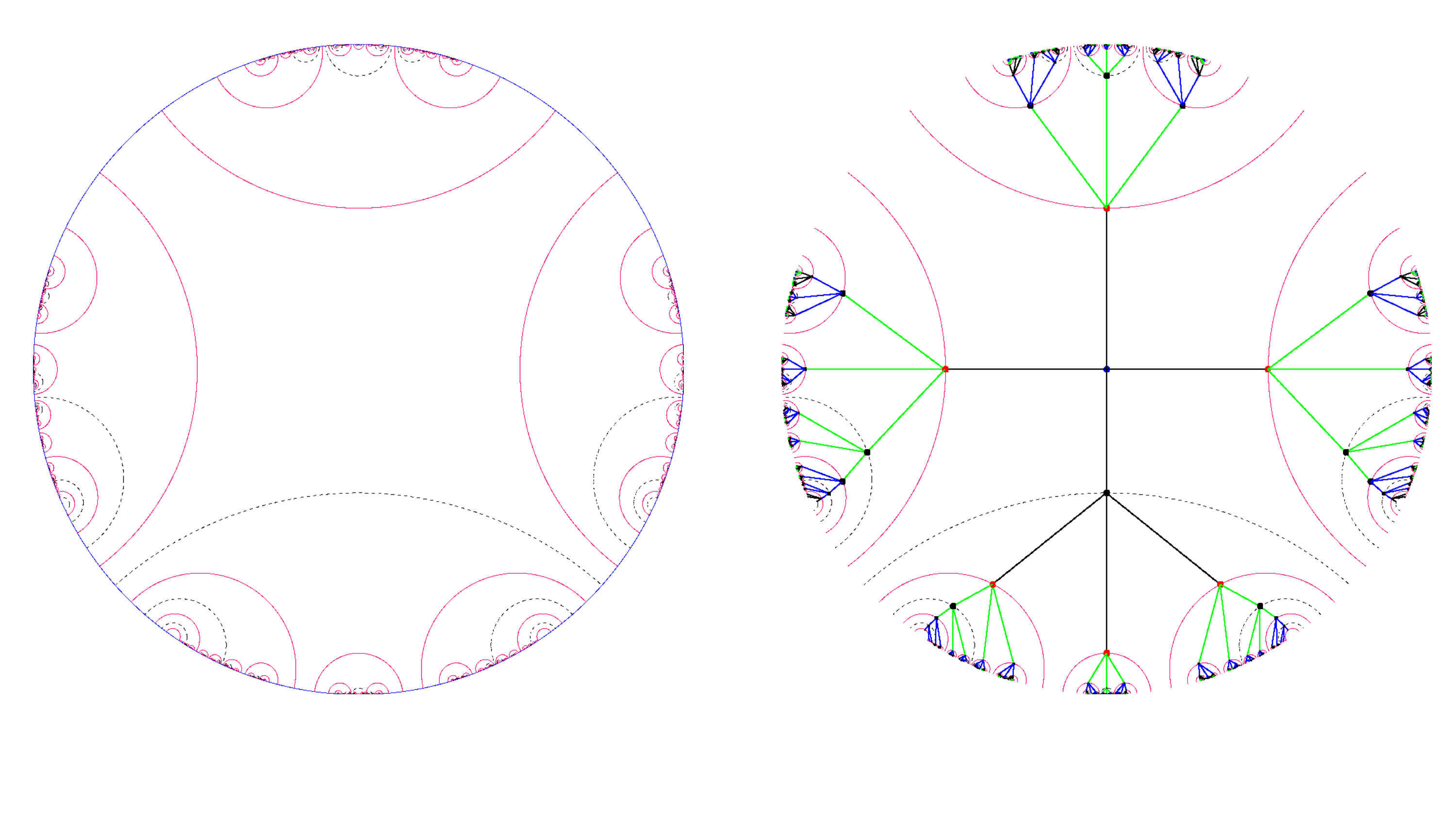}
    \caption{On the left is the image of the first circles centered at $1,i,-1,-i$ by the Schottky reflection group with $g=3$, $z=1.3$, $k=1, l=2$; on the right its non-Archimedean degeneration in $\bP^{\pan}_{\C((z))}$ and a few fundamental domains for the action.}
    \label{fig_degen_intro}
\end{figure}

\subsection*{Structure of the manuscript}
The text is divided into six sections. 

In the first two sections, we introduce some elements and properties of our framework. 
In Section \ref{sect_1}, we briefly present some notions from the theory of Berkovich analytic spaces, as well as Schottky groups defined over any complete valued field. We conclude by comparing the Berkovich topology to the classical Gromov topology on the compactification of a hyperbolic space.

In Section \ref{sect_3}, we define certain Poincaré series over non-Archimedean fields, and establish their connection with the Hausdorff dimension of limit sets of Schottky groups. We also prove that the Poincaré series can be studied through the resolvant of a matrix. We conclude with some bounds on the Hausdorff dimension.  
The main results of this section are Theorem \ref{thm_meromorphic_extension} and Corollary \ref{cor_spectral_formula}.

In Section \ref{sect_4}, we continue the study of Poincaré series in the non-Archimedean setting, and prove that they can be meromorphically extended over $\C$ with a special value at $0$ which is of purely topological nature. The main result of this section is Theorem \ref{analyticpoincare}. 

{In Section \ref{sect_2}, we recall some basic definitions and properties of Berkovich spaces over~$\Z$, as well as the construction of the moduli space $\eS_g$ of Schottky groups of rank $g$.} We also study the \emph{modulus function} over $\Z$.

In Section \ref{section_unifoorm}, we study the variation of the Hausdorff dimension of limit sets of Schottky groups over the moduli space $\eS_g$ for $g \geqslant 1.$ We prove that said variation is continuous. The main result of this section is Theorem \ref{thm_contbanach}. We start by first developing the necessary tools, which comes down to studying the uniformity of the behavior of approximations of the Hausdorff dimension.

In Section \ref{section_degen}, we provide applications of Theorem \ref{thm_contbanach} to the study of Hausdorff dimensions for degenerating families of Schottky groups. We first obtain a result on their asymptotic behavior for a family of Schottky groups over $\C$ (Theorem \ref{degeneration}). Then we give explicit examples to which this result applies, which includes all collision possibilities when the rank is $2$. A particular case is that of Schottky reflection groups (Theorem~\ref{thm_explicit}). 

 \subsection*{Acknowledgements}
 We are very thankful to Gilles Courtois and Frédéric Paulin for many inspiring and clarifying discussions. Our gratitude also goes to Nguyen-Viet Dang and Gabriel Rivière for enthusiastically sharing their results with us and patiently answering our questions, leading us to additional investigations. We are indebted to Jérôme Poineau for thoughtful comments which improved the quality of our manuscript, and to Jialun Li for spotting a flaw in our previous draft.  
Many thanks as well to Omid Amini, Sébastien Boucksom, Laura DeMarco, Antoine Ducros, Charles Favre, Antonin Guilloux, and Daniele Turchetti for their thoughts and suggestions.

\section{Schottky groups over complete fields} \label{sect_1}

We recall here some elements from the theory of Schottky groups in the language of Berkovich spaces developed  by Berkovich (\cite[4.4]{berkovich_spectral}) and by Poineau and Turchetti (\cite{poineau_turcheti_2}).

A field endowed with a multiplicative norm with respect to which it is complete will be called a \emph{complete valued field}.
Given a complete valued field $(k, |\cdot|)$, we will say it is \emph{non-Archimedean} if the ultrametric inequality $|x+y| \leqslant \max(|x|, |y|)$ is satisfied for all $x,y \in k$. Otherwise, $k$ is said to be \emph{Archimedean}, in which case it is isomorphic to either $\C$ or $\R$. 

\subsection{The Berkovich projective line} \label{section_berkoline}

We briefly recall and introduce some notions from the theory of Berkovich spaces (with foundations in \cite{berkovich_spectral}) which we will use (see \emph{e.g.} \cite[Part I]{poineau_turcheti_1} for more details). 

Let $(k, |\cdot|_k)$ be a complete valued field. 

\begin{defi}
Let $n \geqslant 1.$ The Berkovich \emph{analytic affine space}~$\mathbb{A}_k^{n,\an}$ as a set consists of multiplicative semi-norms on $k[T_1, T_2, \dots, T_n]$ which extend the norm on~$k$. Its topology is the coarsest one such that the maps $\varphi_P: \Af_k^{\pan} \rightarrow \R_{\geqslant 0}, |\cdot| \mapsto |P|,$ are continuous for all~${P \in k[T_1, T_2, \dots,T_n]}.$
\end{defi}

The topological space $\Af_k^{n,\an}$ is locally compact and Hausdorff. When $k=\C$, it is homeomorphic to the usual analytic $\Af^{n}(\C)$, and when $k=\R$, it is $\Af^n(\C)/\mathrm{conjugation},$ \emph{i.e.}  conjugated elements of $\C^n$ are identified.

Set $n=1$. If $k$ is non-Archimedean, for $a \in k$ and $r \in \R_{\geqslant 0}$, one can define the point $\eta_{a,r} \in \Af^{\pan}_k$ via $\sum_n a_n(T-a)^n \mapsto \max_{n} |a_n|_k r^n$. It satisfies $|T-a|_{\eta_{a,r}}=r$. Moreover, $\eta_{a,r}=\eta_{b,s}$ if and only if $s=r$ and $|a-b|_k \leqslant r$. The point $\eta_{0,1}$ is called the \emph{Gauss point}.
We note that $\eta_{a,r}$ depends on the choice of a coordinate function. 
Taking $r=0$ induces an embedding ${k \hookrightarrow \Af^{\pan}_k, a \mapsto \eta_{a,0}}.$  When restricted to $k$, the topology of $\Af_k^{\pan}$ is the same as that induced by the norm of $k$. 

The Berkovich \emph{analytic projective line} $\bP^{\pan}_k$ can be obtained from gluing two copies of~$\Af_k^{\pan}$, or equivalently, taking $\bP^{\pan}_k=\Af_k^{\pan} \cup \{\infty\}$, where the point $\infty$ corresponds to the semi-norm~$|\cdot|_{\infty}$ on~$k[\frac{1}{T}]$ such that $|\frac{1}{T}|_{\infty}=0$. It is a compact and Hausdorff topological space containing~${k \cup \{\infty\}:=\bP^{1}(k)}$.

When $k$ is non-Archimedean, $\bP^{\pan}_k$ has the structure of an infinitely branched real tree (\emph{cf.}~\mbox{\cite[3.4.20]{ducros}}). For instance, the unique injective path $[\eta_0, \infty]$ in $\bP^{\pan}_k$ joining $\eta_{a,0}$ and $\infty$ is $\{\eta_{a,r}: r \geqslant 0\} \cup \{\infty\}.$ 
We emphasize that the Berkovich topology on $\bP^{\pan}_k$ does \emph{not} coincide with the tree topology (\emph{cf.} Lemma \ref{finer}).
The points of $\bP_k^{\pan}$ are classified into four types. Let~$\widehat{k^{\mathrm{alg}}}$ denote the completion of an algebraic closure of~$k$. There is a projection morphism of analytic spaces $\pi: \bP^{\pan}_{\widehat{k^{\mathrm{alg}}}} \rightarrow \bP^{\pan}_k$, which amounts to restricting semi-norms from $\widehat{k^{\mathrm{alg}}}[T]$ to $k[T]$.  

The \emph{type 1} points of $\bP^{\pan}_k$ are the points of the set $\pi(\bP^{1}(\widehat{k^{\mathrm{alg}}})).$ They lie in the extremities of the real tree $\bP^{\pan}_k$, though in general there can be other points at the extremities as well. 
In particular, the points of $\bP^{1}(k)$ are all of type 1.

A \emph{closed disk} in $\bP^{\pan}_k$ is a Berkovich analytic space isomorphic to a set $$D(a,r):=\{|\cdot| \in \Af_k^{\pan}: |T-a| \leqslant r\},$$ where $a \in k$ and $r \in \R_{>0}.$ It is a compact subtree of $\bP^{\pan}_k$ containing all the points that retract to $[\eta_{a,0}, \eta_{a,r}]$ when retracting $\bP^{\pan}_k$ to $[\eta_{a,0}, \infty]$. Its boundary is a singleton: $\partial{D}(a,r)=\{\eta_{a,r}\}$. The set $D(a,r) \cap k$ is the closed ball centered at $a$ and of radius $r$ in $(k, |\cdot|_k).$ One can similarly define \emph{open disks} in $\bP^{\pan}_k$.   

\

 We denote by $\PGL_2(k)$ the quotient of $\GL_2(k)$ by its center. Let us fix a coordinate function~$T$ on $\bP^{\pan}_k$.
 The action via \emph{Möbius tranformations} of $\PGL_2(k)$ on $\bP^1(k)$ can be extended by pulling-back the points of $\bP^{\pan}_k$ as semi-norms on $k[T]$ or $k[T^{-1}]$; \emph{cf.} \cite[II.1.3]{poineau_turcheti_2} for more details on said extension.
This action preserves closed (resp. open) disks (\emph{cf.} \cite[Lemma~II.1.8]{poineau_turcheti_1}), as well as the types of points on $\bP^{\pan}_k$.  

\subsection{The Berkovich hyperbolic space} \label{section_hyperbolic}

Let $(k, |\cdot|_k)$ be a complete non-Archimedean field. The following object plays an important role in this manuscript. We also study it further in Section \ref{gromov}.

\begin{defi} \label{hyperbolic} The set 
$\Hy_k:=\{x \in \bP^{\pan}_k: x \ \text{is not of type 1}\}$
will be called the \emph{hyperbolic space over $k$}.
\end{defi}

We remark that $\Hy_k=\bP^{\pan}_k \backslash \bP^1(k)$ if and only if $k$ is algebraically closed.
Both $\Hy_k$ and $\bP^{\pan}_k \backslash \Hy_k$ are dense in $\bP^{\pan}_k.$ As Möbius transformations preserve the types of points, $\PGL_2(k)$ acts on both $\Hy_k$ and its complement. 

\begin{rem}
Other non-Archimedean versions of hyperbolic spaces have been studied in \cite{letelier_hyperbolic}, \cite{baker_rumely}, and \cite{dujardin_favre_degen}. When $k$ is algebraically closed, our definition coincides with that of \cite{letelier_hyperbolic} and \cite{baker_rumely}. It does not coincide with that of the Drinfeld upper half-plane, another candidate for non-Archimedean hyperbolic spaces.
\end{rem}

The space  $\Hy_k$ is endowed with a metric $\rho$, which plays the role of a hyperbolic distance and is interpreted as  
the length of intervals in $\Hy_k$ (\emph{cf.} \cite[\S~I.8]{poineau_turcheti_1}). When $k=k^{\mathrm{alg}}$, this is the path-distance metric in \cite[\S~2.7]{baker_rumely}. We briefly recall its definition.

For $a, b \in k$ and $r,s \in \R_{>0}$, if $\max(s,r)\geqslant |a-b|_k,$ then $\rho(\eta_{a, r}, \eta_{b,s}):=|\log\frac{r}{s}|$.
Otherwise, $[\eta_{a,r}, \eta_{b,s}] = [\eta_{a, r}, \eta_{a, |a-b|_k}] \sqcup (\eta_{b, |a-b|_k}, \eta_{b,s}]$, and we set 
$\rho(\eta_{a, r}, \eta_{b,s}):=|\log\frac{|a-b|_k}{r}|+|\log\frac{s}{|a-b|_k}|$.

Given $x, y \in \bP^{\pan}_k$ of type 2 or 3 (\emph{i.e.} points which don't lie in the extremities of the real tree), {through a finite base change}, the unique path $[x,y]$ can be lifted to a finite disjoint union of paths of type $[\eta_{a,r}, \eta_{b,s}]$. We set $\rho(x,y):=\rho(\eta_{a,r}, \eta_{b,s})$.  
Given arbitrary points $u,v \in \bP^{\pan}_k$, let $\rho(u,v):=\sup \{\rho(x,y): x,y \in [u,v] \ \text{of type 2 or 3}\}.$ Here $[u,v]$ denotes the unique path joining~$u$ and $v$. 

One can show that $\rho$ does not depend on the choice of a coordinate function in $\bP^{\pan}_k$ (\emph{cf.} \cite[Proposition I.4.14]{poineau_turcheti_1}).

\begin{prop} The following properties hold.
\begin{enumerate}
\item \cite[Lemma I.8.12]{poineau_turcheti_1} For two points $x,y \in \bP^{\pan}_k$, one has $\rho(x,y)= +\infty$ if and only if~$x$ or $y$ is a type 1 point. 

\item \cite[Lemma I.8.12, Theorem I.9.17]{poineau_turcheti_1} The interval length function $\rho$ is a $\PGL_2(k)$-invariant metric on $\Hy_k$. 
\end{enumerate}
\end{prop}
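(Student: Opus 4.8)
The plan is to reduce everything to the explicit description of $\rho$ on type 2 and 3 points recalled above, where a point $\eta_{a,r}$ carries a positive ``radius'' $r$, and to exploit that $\rho$ is, by construction, the total logarithmic variation of this radius along geodesics of the tree $\bP^{\pan}_k$. I would work first over $\widehat{k^{\mathrm{alg}}}$, where every point is either of the form $\eta_{a,r}$ or a type 4 limit of such, and carries a well-defined diameter $\diam(x)=\inf_n r_n \geqslant 0$ coming from a nested family of disks $D(a_n,r_n)$ representing it; the general case then follows by pushing forward along the projection $\pi\colon \bP^{\pan}_{\widehat{k^{\mathrm{alg}}}} \to \bP^{\pan}_k$, using both the coordinate-independence of $\rho$ recalled above and its independence of the finite base change used to lift $[x,y]$.

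For (1), the decisive observation is that the type 1 points are exactly those with $\diam(x)=0$: a type 1 point is an endpoint $\eta_{a,0}$, whereas type 2 and 3 points manifestly have positive radius, and a type 4 point is the limit of disks whose radii decrease to a strictly positive value (otherwise the disks would have nonempty intersection, forcing a single type 1 point, since $k$ is complete). Taking $b=a$ in the first branch of the formula gives $\rho(\eta_{a,r},\eta_{a,s})=|\log(r/s)|$, which tends to $+\infty$ as $s\to 0$ with $r$ fixed. Hence if $y$ is type 1, choosing type 2 or 3 points $y_n\in[x,y]$ whose radius tends to $0$ forces $\rho(x,y_n)\to+\infty$, so the supremum defining $\rho(x,y)$ is $+\infty$. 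Conversely, if $x,y\in\Hy_k$, then along the finite geodesic $[x,y]$ the radius stays bounded below by $\min(\diam(x),\diam(y))>0$ and is piecewise monotone with finitely many turning points, so the explicit formula yields a finite total variation and $\rho(x,y)<+\infty$.

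For (2), symmetry and non-negativity are immediate, since both branches of the formula are symmetric in $(\eta_{a,r},\eta_{b,s})$ and manifestly non-negative. Non-degeneracy follows because $x\neq y$ in $\Hy_k$ makes the geodesic $[x,y]$ non-trivial, and along any such geodesic the radius is non-constant (two distinct points at equal radius have $|a-b|_k>r$, so the path must rise and fall), whence $[x,y]$ contains a subinterval $[\eta_{a,r},\eta_{a,s}]$ with $r\neq s$ and $\rho(x,y)\geqslant|\log(r/s)|>0$. The triangle inequality I would deduce from the additivity of $\rho$ along geodesics together with the tree structure of $\bP^{\pan}_k$: since $[x,z]\subseteq[x,y]\cup[y,z]$ and $\rho$ restricted to a geodesic is an additive length, one gets $\rho(x,z)\leqslant\rho(x,y)+\rho(y,z)$, with equality exactly when $y\in[x,z]$. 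Finiteness on $\Hy_k$ is part (1). Finally, for $\PGL_2(k)$-invariance I would argue that a Möbius transformation $\gamma$ is precisely a change of coordinate: computing $\rho(\gamma x,\gamma y)$ in the coordinate $T$ equals computing $\rho(x,y)$ in the coordinate $\gamma^{-1}\cdot T$, and since $\rho$ does not depend on the chosen coordinate, the two agree.

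The main obstacle is making the additivity of $\rho$ along geodesics rigorous, since $\rho$ is first defined only on type 2, 3 pairs and then extended by a supremum: one must verify that for $w$ lying between $x$ and $y$ on the tree one genuinely has $\rho(x,y)=\rho(x,w)+\rho(w,y)$, including when $x$ or $y$ is type 4 and only approximable by type 2, 3 points. This requires a careful limiting argument, together with the check that the value of $\rho$ is independent of the finite base change used to realize $[x,y]$ as a union of intervals of the form $[\eta_{a,r},\eta_{b,s}]$. Once this independence and additivity are established, the remaining metric axioms and the $\PGL_2(k)$-invariance become formal.
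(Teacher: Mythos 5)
First, a structural remark: the paper does not prove this proposition at all --- it is imported verbatim from Poineau--Turchetti, with the citations [Lemma I.8.12, Theorem I.9.17] standing in place of an argument. So your proposal is necessarily a different route: it tries to reconstruct from first principles exactly the content those citations carry. The picture you use --- $\rho$ as the total logarithmic variation of the diameter along geodesics of the tree, computed over $\widehat{k^{\mathrm{alg}}}$ and pushed down through the projection --- is the right one, and your key observations are correct and well chosen: over the completed algebraic closure (which is what must be complete here, rather than $k$ itself, a minor slip in your parenthetical) type 4 points have strictly positive diameter by your Cauchy-sequence argument, so type 1 points are exactly the diameter-zero points; the diameter is unimodal along a geodesic (increasing to the join, then decreasing), which simultaneously gives the divergence $\rho(x',y_n)\to+\infty$ toward a type 1 endpoint (take a fixed type 2/3 point $x'\in[x,y]$ here, since $\rho(x,\cdot)$ itself need not be defined if $x$ is type 1 or 4) and the finite bound $2\log\diam(\mathrm{join})-\log\diam(x)-\log\diam(y)$ when $x,y\in\Hy_k$; non-degeneracy indeed follows from the sup definition alone; and deducing $\PGL_2(k)$-invariance from coordinate-independence of $\rho$ is legitimate and clean, since that independence is recalled in the paper (from Proposition I.4.14 of Poineau--Turchetti) before the proposition in question.

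That said, as written your proposal is a reduction rather than a proof. Everything load-bearing --- the finiteness assertion in part (1), the identification of $\rho$ between type 2/3 points with total variation of the log-diameter (used for your upper bound), and the triangle inequality in part (2) --- rests on the additivity of $\rho$ along geodesics, including when endpoints are type 4 and can only be exhausted by type 2/3 points, together with the check that the value obtained by lifting $[x,y]$ through a finite base change is independent of the chosen extension and agrees with the computation over $\widehat{k^{\mathrm{alg}}}$. You flag this yourself as ``the main obstacle'' and defer it; but this deferred lemma is precisely the technical content of the cited Lemma I.8.12 and Theorem I.9.17, so what you have is a correct strategy that reduces the proposition to the very results the paper cites. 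To close the argument you would need to (i) verify additivity directly from the two-case formula for $\rho(\eta_{a,r},\eta_{b,s})$ when all three points are of type 2 or 3 (a finite case check); (ii) show that for type 4 endpoints the supremum in the definition is the monotone limit of $\rho(x_n,y_n)$ along type 2/3 points exhausting the path; and (iii) establish invariance of the lifted computation under finite base change (e.g.\ via Galois-invariance of the relevant radii). None of these steps would fail, but they are where the actual work lies.
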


\begin{rem}
For $a\in k$, and $s,r \in \R_{>0}$ with $s>r$, the distance $\rho(\eta_{a,r}, \eta_{a,s})=\log \frac{s}{r}$ coincides with the modulus of the annulus  $\{x \in \bP^{\pan}_k : r \leqslant |T-a|_x \leqslant s\}.$
\end{rem}

\begin{rem}\label{rem_radius}
When $\eta_{a,r}$ is contained in the open unit disk in $\bP^{\pan}_k$, then $\rho(\eta, \eta_{a,r})=\log \frac{1}{r}$, where~$\eta:=\eta_{0,1}$ is the Gauss point. Consequently, the radius of the disk $D(a,r)$ can be computed as~$r=e^{-\rho(\eta, \partial{D}(a,r))}$.
\end{rem} 

\begin{lem} \label{finer}
The $\rho$-topology on $\Hy_k$ is \emph{strictly} finer than the Berkovich topology induced on $\Hy_k$ from $\bP^{\pan}_k$.
\end{lem}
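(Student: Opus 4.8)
The plan is to prove the two halves of ``strictly finer'' separately: first that every Berkovich-open subset of $\Hy_k$ is $\rho$-open (so the $\rho$-topology is at least as fine), and then that some $\rho$-open set fails to be Berkovich-open.

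For the first inclusion, recall that the Berkovich topology on $\bP^{\pan}_k$ is by definition the initial topology for the evaluation maps $\varphi_P \colon x \mapsto |P|_x$, $P \in k[T]$. Hence it suffices to check that each restriction $\varphi_P|_{\Hy_k}$ is $\rho$-continuous, i.e. that the identity map $(\Hy_k, \rho) \to (\Hy_k, \mathrm{Berkovich})$ is continuous. Since a point $x \in \Hy_k$ is not of type $1$, we have $|P|_x > 0$ for every nonzero $P$, so it is enough to show that $x \mapsto \log|P|_x$ is $\rho$-Lipschitz. The key point is that $\log|P|$ is piecewise affine along geodesic segments of the tree $\bP^{\pan}_k$, with integer slopes bounded by $\deg P$ in the arc-length parametrization given by $\rho$. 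Concretely, I would factor $P = c\prod_l (T - \theta_l)$ over $\widehat{k^{\mathrm{alg}}}$ and reduce, using that $\rho$ is intrinsic (independent of the coordinate, \emph{cf.} \cite[Proposition I.4.14]{poineau_turcheti_1}) and compatible with the projection $\pi$, to a single factor $T - \theta$. For this factor one checks directly from the definition of $\rho$ that $x \mapsto \log|T-\theta|_x$ moves with slope $\pm 1$ along the direction pointing towards $\theta$ and with slope $0$ in every other direction, hence is $1$-Lipschitz; summing over the $\deg P$ factors shows $\log|P|$ is $(\deg P)$-Lipschitz. This makes $\varphi_P|_{\Hy_k}$ continuous and proves that the $\rho$-topology contains the Berkovich topology.

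For strictness I would exhibit an explicit $\rho$-open set that is not Berkovich-open. A convenient candidate is the open metric ball $B := \{x \in \Hy_k : \rho(\eta, x) < 1\}$ around the Gauss point $\eta = \eta_{0,1}$; I claim $\eta$ is not a Berkovich-interior point of $B$. Let $U \ni \eta$ be any Berkovich-open set; by definition of the topology it contains a basic neighborhood $V = \{x : |P_i|_x \in I_i,\ i = 1,\dots, m\}$ with $\eta \in V$ and the $I_i$ open intervals. Let $\theta_1, \dots, \theta_N \in \widehat{k^{\mathrm{alg}}}$ be the collection of all roots of the $P_i$, and let $\Sigma$ be the (finite) convex hull in the tree of $\{\eta, \infty, \theta_1, \dots, \theta_N\}$. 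The crucial observation, dual to the slope computation above, is that in any tangent direction at a point $p'$ which leaves $\Sigma$, every $|P_i|$ is locally constant, since no root and not $\infty$ lies on that side. Choosing a type $2$ point $p'$ on the edge of $\Sigma$ at $\eta$ with $\rho(\eta, p') < 1$, close enough to $\eta$ that $p' \in V$, and a direction $\vec d$ at $p'$ leaving $\Sigma$, the entire branch $T_{\vec d}$ hanging off $p'$ then satisfies $|P_i| \equiv |P_i|_{p'} \in I_i$, so $T_{\vec d} \subseteq V \subseteq U$. Since $T_{\vec d}$ reaches a type $1$ point it has infinite $\rho$-length, hence contains points $q \in \Hy_k$ with $\rho(\eta, q) \geq 1$. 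Thus $U \not\subseteq B$, proving that $B$ is $\rho$-open but not Berkovich-open.

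The main obstacle is ensuring the strictness argument is uniform in $k$, in particular that a free direction $\vec d$ leaving $\Sigma$ always exists. When the residue field of $k$ is infinite this is immediate, as $\eta$ already has infinitely many tangent directions while $\Sigma$ uses only finitely many. When the residue field is finite (\emph{e.g.} $k = \Q_p$), $\eta$ may have all of its directions occupied by $\Sigma$, but one still finds the required $p'$ and $\vec d$ by exploiting the density of type $2$ branch points along the edge of $\Sigma$ at $\eta$: any interior point $p'$ of this edge has $|\bP^1(\tilde k)| \geq 3$ tangent directions, of which at most two lie in $\Sigma$, leaving a free one. This settles all cases. Alternatively, one can run the whole strictness argument through the standard description of the Berkovich topology by connected components of complements of finitely many points (\emph{cf.} \cite{baker_rumely}), but the root and convex-hull formulation above keeps it self-contained.
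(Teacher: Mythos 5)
Your proof is correct, and on both halves it takes a genuinely different route from the paper's. For the inclusion of topologies, the paper does not compute slopes: it quotes Ducros' structure result (\cite[3.4.24.3]{ducros}), which provides a finite graph $G'$ off which $\varphi_P$ is locally constant and through whose retraction it factors, and then verifies sequential $\rho$-continuity using that retraction. Your factorization of $P$ over $\widehat{k^{\mathrm{alg}}}$ together with the slope-$\pm 1$ analysis of $\log|T-\theta|$ is a self-contained, quantitative version of the same geometry (your convex hull of the roots and $\infty$ is in essence Ducros' graph), and it yields the stronger statement that $\log|P|$ is $(\deg P)$-Lipschitz; the price is that you must actually carry out the descent from $\widehat{k^{\mathrm{alg}}}$ to $k$ --- paths and branches lift, $|P|_x=|P|_{\tilde x}$ for any lift $\tilde x$ of $x$, and $\rho$ on $\Hy_k$ is defined through base change --- which you only flag; since $k$ is not assumed algebraically closed, this step should be written out. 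For strictness the difference is more substantial: the paper settles it in one sentence by opposing local compactness of the Berkovich topology to its failure for the $\rho$-topology, whereas you produce an explicit $\rho$-ball around the Gauss point and show it is not Berkovich-open via infinite-length branches on which the finitely many defining functions $|P_i|$ are constant. Your argument is in fact the more robust one: the Berkovich topology induced on $\Hy_k$ is itself not locally compact ($\Hy_k$ is dense with dense complement in the compact space $\bP^{\pan}_k$, so no Berkovich neighborhood contained in $\Hy_k$ can be closed in $\bP^{\pan}_k$, hence none is compact), so the paper's one-line contrast needs a more careful formulation, while your explicit witness --- including the free-direction argument when the residue field is finite --- establishes strictness directly. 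The only residual caveat besides the descent step is the trivially valued case, where type-2 points are not dense on edges; there one takes $p'=\eta$ itself, since $\Sigma$ occupies only finitely many of the infinitely many closed-point directions at $\eta$.
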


\begin{proof}
It suffices to show that for any $P \in k[T]$, the function $\varphi_P: \Hy_k \rightarrow \R_{\geqslant 0}, x \mapsto |P|_x$, is $\rho$-continuous. By \cite[3.4.24.3]{ducros}, there exists a finite graph $G'$ in $\bP^{\pan}_k$ such that $P$ is continuous with respect to the Berkovich topology on $G'$, and locally constant elsewhere. Set $G:=G'\cap \Hy_k$. Let $r_G: \Hy_k \rightarrow G$ denote the deformation retraction of $\Hy_k$ to $G$ obtained by restricting the deformation retraction $\bP^{\pan}_k \rightarrow G'$ (\emph{cf.} \cite[1.5.14]{ducros}). We note that $|P|_x=|P|_{r_G(x)}$ for all $x \in \Hy_k$ (\emph{cf.} \cite[3.4.23.8]{ducros}). 

Let $(x_n)_n$ be a sequence in $\Hy_k$ converging with respect to the $\rho$-metric to a point $x \in \Hy_k$. Set~$y:=r_G(x)$. If $\rho(x,y)>0$, then $x \not \in G$, and for all but finitely many $n \in \N$, one has $\rho(x,x_n) < \rho(x,y)$, implying $x_n$ is contained in the same connected component of $\Hy_k \backslash G$ as $x$. Since $\varphi_P$ is constant on said component, $|P|_{x_n} \rightarrow |P|_x$ as $n  \rightarrow +\infty$. 

Otherwise, assume $\rho(x,y)=0$, \emph{i.e.} that $x \in G$. Set $y_n:=r_G(x_n)$ for $n \in \N$, so that~${|P|_{y_n}=|P|_{x_n}}$. Then $\rho(y_n,x) \leqslant \rho(x_n,x)$, seeing as the injective path $[x_n, x]$ contains~$y_n$ (\emph{cf.} \cite[1.5.15.1]{ducros}). Thus $y_n \rightarrow x$ as $n \rightarrow +\infty$ and $y_n \in G$ for all $n$. We can conclude by the continuity of $\varphi_P$ on $G$ that $|P|_{y_n} \rightarrow |P|_x$ as $n \rightarrow +\infty$. In conclusion, $\varphi_P$ is $\rho$-continuous. 

While the Berkovich topology is locally compact, this is no longer the case with the $\rho$-metric topology, so $\rho$ is strictly finer than the Berkovich topology.
\end{proof}

\begin{prop} \label{complete} The metric space $(\Hy_k, \rho)$ is complete. 
\end{prop}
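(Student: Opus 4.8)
The plan is to prove completeness by combining the compactness of $\bP^{\pan}_k$ for the Berkovich topology with the $\R$-tree structure of $\bP^{\pan}_k$ (\emph{cf.} \cite[3.4.20]{ducros}). Let $(x_n)_n$ be a $\rho$-Cauchy sequence in $\Hy_k$; it is $\rho$-bounded, say $\rho(x_1, x_n) \leqslant R$ for all $n$. Since $\bP^{\pan}_k$ is compact, some subsequence $(x_{n_j})_j$ converges, \emph{in the Berkovich topology}, to a point $x \in \bP^{\pan}_k$; write $y_j := x_{n_j}$, so that $(y_j)_j$ is still $\rho$-Cauchy with $\rho(x_1, y_j) \leqslant R$. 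It remains to show that $x \in \Hy_k$ and that $y_j \to x$ for $\rho$; the original sequence will then converge to $x$, since a Cauchy sequence admitting a convergent subsequence converges to the same limit.

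First I would rule out that $x$ is of type $1$. If it were, the unique path $[x_1, x]$ would have infinite $\rho$-length (by the characterization $\rho(u,v) = +\infty$ iff $u$ or $v$ is of type $1$), so it could be parametrized by arclength as $\gamma \colon [0, +\infty) \to \Hy_k$ with $\gamma(0) = x_1$. For each $t > 0$, the connected component $U_t$ of $\bP^{\pan}_k \setminus \{\gamma(t)\}$ containing $x$ is a Berkovich-open disk, and these disks form a neighbourhood basis of the end $x$ (their radii tend to $0$ by Remark \ref{rem_radius}). Every $z \in U_t \cap \Hy_k$ satisfies $\gamma(t) \in [x_1, z]$, hence $\rho(x_1, z) \geqslant t$. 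Berkovich convergence $y_j \to x$ would then force $\rho(x_1, y_j) \to +\infty$, contradicting $\rho(x_1, y_j) \leqslant R$. Thus $x \in \Hy_k$.

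The main obstacle is the last step: upgrading Berkovich convergence to $\rho$-convergence. This is genuine work rather than a formality, because by Lemma \ref{finer} the $\rho$-topology is \emph{strictly} finer; indeed one can produce sequences converging in the Berkovich topology to a type $2$ point while spreading over infinitely many branches, whose $\rho$-distance to the limit stays bounded away from $0$. The Cauchy hypothesis is precisely what forbids such spreading. Concretely, as $x \in \Hy_k$ the real sequence $\bigl(\rho(x, y_j)\bigr)_j$ is Cauchy, hence converges to some $L \geqslant 0$, and I would argue by contradiction that $L = 0$. Assuming $L > 0$, the tree identity $\rho\bigl(x, m(x, y_j, y_l)\bigr) = \tfrac12\bigl(\rho(x, y_j) + \rho(x, y_l) - \rho(y_j, y_l)\bigr)$ for the median shows that the medians lie at $\rho$-distance tending to $L$ from $x$; since $\rho(y_j, y_l) \to 0$, for $j, l$ large the geodesics $[x, y_j]$ share a common initial segment of $\rho$-length $> L/2$. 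Letting $z$ be the point at distance $L/2$ from $x$ along this common segment, every $y_j$ (for $j$ large) lies in the closed disk $D$ bounded by $z$ on the side away from $x$. As $D$ is closed in the Berkovich topology and $x \notin D$, the Berkovich limit of $(y_j)_j$ cannot be $x$, a contradiction. Hence $L = 0$, that is $y_j \to x$ for $\rho$, and completeness follows.
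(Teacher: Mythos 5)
Your overall strategy is the one the paper itself relies on: its proof of Proposition \ref{complete} simply defers to \cite[Proposition 2.28]{baker_rumely} (``mutatis mutandis''), and that argument is likewise a compactness argument in the Berkovich topology followed by an upgrade to $\rho$-convergence. Your two substantive steps --- ruling out a type $1$ limit, and the median argument forcing $L=0$ --- are correct, and neither of them actually needs your claim that the sets $U_t$ form a neighbourhood basis at $x$: openness of each $U_t$ suffices.

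There is, however, a genuine gap at the very first step: compactness of $\bP^{\pan}_k$ does \emph{not} allow you to extract a convergent \emph{subsequence}. Compactness yields sequential compactness only under countability hypotheses (first countability, metrizability), and $\bP^{\pan}_k$ fails these for general $k$. Already for $k=\C((z))$ --- the field the paper uses in Section \ref{section_degen} --- the Gauss point has one branch per element of the residue field $\C$, and since every neighbourhood of the Gauss point contains all but finitely many of the open residue disks in their entirety, no countable family of neighbourhoods of the Gauss point can be a basis. Sequential compactness of compact Berkovich spaces is in fact true, but it is a nontrivial theorem (Poineau's angelicity theorem, ``Les espaces de Berkovich sont ang\'eliques''), not a consequence of compactness alone. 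Fortunately your proof needs much less: in any compact space the sequence $(x_n)_n$ has a \emph{cluster point} $x$, i.e.\ a point every Berkovich neighbourhood of which contains $x_n$ for infinitely many $n$, and both of your steps use only this weaker property. In the type-$1$ exclusion, each $U_t$ is an open neighbourhood of $x$, so infinitely many indices satisfy $\rho(x_1,x_n)\geqslant t$, which contradicts $\rho(x_1,x_n)\leqslant R$ once $t>R$. In the median argument, run it for the full sequence: $L:=\lim_n \rho(x,x_n)$ exists because $(\rho(x,x_n))_n$ is Cauchy (no subsequence needed), and if $L>0$ your point $z$ produces a Berkovich-closed set $D$ containing all terms of sufficiently large index with $x\notin D$, so that $\bP^{\pan}_k\setminus D$ is an open neighbourhood of $x$ meeting the sequence in only finitely many indices --- contradicting the cluster-point property. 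Hence $L=0$ and $x_n\to x$ for $\rho$ directly; with this repair you also no longer need the closing ``Cauchy plus convergent subsequence'' remark.
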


\begin{proof} The proof of \cite[Proposition 2.28]{baker_rumely} in the case where $k$ is algebraically closed works \emph{mutatis mutandis} in this general case as well, using Lemma \ref{finer}. 
\end{proof}

We will need the following later on.

\begin{lem} \label{wedge} 
Given $p, q \in k$ and $x \in \bP^{\pan}_k$, let $p \wedge_{x} q$ denote the retraction of $x$ on the unique injective path $[p,q]$ in $\bP^{\pan}_k$ connecting $p$ and $q$. Given any coordinate function on $\bP^{\pan}_k$ such that $p,q$ are in the open unit disk, one has
$$|p-q|_k=e^{-\rho(\eta, p \wedge_{\eta} q)},$$
where $\eta$ denotes the Gauss point.  
\end{lem}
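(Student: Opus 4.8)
The plan is to identify the point $p \wedge_\eta q$ explicitly as a point of the form $\eta_{a,r}$, and then read off its $\rho$-distance to the Gauss point via Remark \ref{rem_radius}. First I would recall the description of the injective path $[p,q]$ in the real tree $\bP^{\pan}_k$. Since $p, q \in k$ are type 1 points sitting at extremities of the tree, the branch ascending from $p$ is $\{\eta_{p,r} : 0 \leqslant r \leqslant |p-q|_k\}$ and the branch descending to $q$ is $\{\eta_{q,s} : 0 \leqslant s \leqslant |p-q|_k\}$; these meet at the single point $m := \eta_{p, |p-q|_k} = \eta_{q,|p-q|_k}$, using the identity $\eta_{a,t} = \eta_{b,t}$ whenever $|a-b|_k \leqslant t$. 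Thus $[p,q]$ has a unique highest point $m$, attained at radius $|p-q|_k$.

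Next I would show that the retraction $p \wedge_\eta q$ of the Gauss point onto $[p,q]$ equals $m$. Because $p, q$ lie in the open unit disk, $|p|_k, |q|_k < 1$, whence $|p-q|_k \leqslant \max(|p|_k, |q|_k) < 1$, so the path $[p,\eta] = \{\eta_{p,r} : 0 \leqslant r \leqslant 1\}$ contains the ascending branch $\{\eta_{p,r} : 0 \leqslant r \leqslant |p-q|_k\}$ of $[p,q]$. A direct comparison then gives $[p,\eta] \cap [p,q] = [p,m]$: no point $\eta_{q,s}$ with $s < |p-q|_k$ on the descending branch can lie on $[p,\eta]$, since $\eta_{q,s} = \eta_{p,s}$ would force $|p-q|_k \leqslant s$. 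By symmetry $[q,\eta] \cap [p,q] = [q,m]$ as well, so the three paths $[p,q]$, $[p,\eta]$, $[q,\eta]$ meet exactly at $m$; that is, $m$ is the median of $p, q, \eta$, which is precisely the retraction $p \wedge_\eta q$.

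Finally, since the open unit disk is a connected subtree containing both $p$ and $q$, it contains the entire path $[p,q]$, and in particular $m = \eta_{p,|p-q|_k}$ (equivalently $\max(|p|_k, |p-q|_k) < 1$). Remark \ref{rem_radius} then yields $\rho(\eta, m) = \log(1/|p-q|_k)$, so that $e^{-\rho(\eta,\, p \wedge_\eta q)} = |p-q|_k$, as claimed. The degenerate case $p = q$ is immediate: there $p \wedge_\eta q = p$ is of type 1, $\rho(\eta, p) = +\infty$, and both sides vanish.

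I expect the only genuine subtlety to be the second step, namely justifying that the retraction of $\eta$ lands exactly at the apex $m$ of $[p,q]$ rather than somewhere along one of the two branches. This is where the hypothesis that $p,q$ lie in the open unit disk (equivalently $|p-q|_k < 1$, so that the Gauss point sits \emph{above} the whole path) is used essentially; once $p \wedge_\eta q = m$ is established, the remainder is a direct application of the definition of $\rho$ together with Remark \ref{rem_radius}.
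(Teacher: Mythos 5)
Your proof is correct and follows essentially the same route as the paper: identify $p \wedge_{\eta} q$ as $\eta_{p,|p-q|_k}$, observe $\eta = \eta_{p,1}$, and read off the distance. The paper's proof is simply a compressed version of this (it asserts the identification of the retraction in one line), whereas you justify it by the median argument; the mathematics is the same.
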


\begin{proof}
If $p,q$ are in the open unit disk, then $p \wedge_{\eta} q=\eta_{p, |p-q|_k}$, and $\eta=\eta_{p,1}$, implying $\rho(p \wedge_{\eta} q, \eta)=\log  \frac{1}{|p-q|_k}$.
\end{proof}

\subsection{Definitions of Schottky groups} \label{section_schottky}

In this section, $(k, |\cdot|)$ is an arbitrary complete valued field.

\begin{defi}[{\cite[Definition II.1.9]{poineau_turcheti_2}}] \label{loxodromic}
A transformation $\g \in \PGL_2(k)$ is said to be \emph{loxodromic} if given a representative in $\GL_2(k)$, its eigenvalues in $k^{\mathrm{alg}}$ have different absolute values (by \cite[\S~II.1.4]{poineau_turcheti_2}, the eigenvalues are in $k$).
\end{defi}

Let $\g \in \PGL_2(k)$ be loxodromic, and $A \in \GL_2(k)$ a representative whose eigenvalues we denote by $\lambda, \lambda'$. Assume $|\lambda|< |\lambda'|$, and set $y:=\frac{\lambda}{\lambda'}$. Then $0<|y|<1$. Let $\alpha, \beta \in \bP^1(k)$ be the eigenspaces of $\lambda, \lambda'$, respectively. Note that $\alpha \neq \beta$.

\begin{defi} \label{Koebe coordinates}
The coordinates of the triple $(\alpha, \beta, y)$ are called the \emph{Koebe coordinates} of $\g$, and we denote $\g=M(\alpha, \beta, y).$
\end{defi}

As over $\C$, a loxodromic transformation $\g \in \PGL_2(k)$  has 
exactly two fixed points in $\bP^1(k) \subseteq \bP^{\pan}_k$, one attractive and 
one repelling. If $\g=M(\alpha, \beta, y)$, then $\alpha$ 
is the attractive, and~$\beta$ the repelling point of $\g$. 
Taking the unique injective path $c:=[\alpha, \beta]$ 
connecting $\alpha$ and~$\beta$ in $\bP^{\pan}_k$, for any $x 
\in c \backslash \{\alpha,\beta\}$, one has $\rho(x, \g x)=-\log |y|$, which is the \emph{translation length} of~$\g$. The same holds when replacing $(\Hy_k, \rho)$ by $(\Hy^3, d_{\Hy^3})$ when $k=\C$, respectively $(\Hy^2, d_{\Hy^2})$ when~${k=\R}$, where $d_{\Hy^n}$ denotes the hyperbolic distance.

Inversely, given $\alpha \neq  \beta \in \bP^1(k)$, and $y \in k$ such that $0<|y|<1$, there exists a unique Möbius transformation $\g$ such that $\g=M(\alpha, \beta, y)$ (see \cite[\S~II.1.4]{poineau_turcheti_2} for explicit formulas). 

\begin{rem} For a classification of Möbius transformations in the context of Berkovich spaces, we refer the reader to \cite{dujardin_favre_degen}. For more classical references about general isometric actions on real trees, see  \emph{e.g.} \cite{culler_morgan}, \cite{paulin_bourbaki}, \cite{bridson_haefliger}, \cite{kapovich}.
\end{rem}
 
We study the actions of a particular type of subgroup of $\PGL_2(k).$ 
\begin{defi}[{\cite[Def. II.3.21]{poineau_turcheti_2}}] \label{schottky}
A subgroup $\Gamma$ of $\PGL_2(k)$ is said to be a \emph{Schottky group over $k$} if:
\begin{enumerate}
\item it is free and finitely generated,
\item \label{loxo} its non-trivial elements are loxodromic,
\item there exists a non-empty connected open subset of $\bP^{\pan}_k$ which is $\Gamma$-invariant and on which the action is free and proper. 
\end{enumerate}
\end{defi}
\begin{rem}  When $k=\C$, then $\bP^{\pan}_k=\bP^1(\C),$ and this coincides with the classical notion of Schottky group.
When $k$ is a local field, the above definition is equivalent to assuming $\Ga$ finitely generated and satisfying \eqref{loxo}, \emph{cf.} \cite[Corollary II.3.35]{poineau_turcheti_1}.  \end{rem}

 The minimal number of generators of a Schottky group is called its \textit{rank} and
a minimal set of generators is called a \emph{basis}. 
\begin{defi} \label{schottkyfig} Let $\Ga \subseteq \PGL_2(k)$ be a group generated by $\g_1, \g_2, \dots, \g_g \in \PGL_2(k)$. The group $\Ga$ is said to admit a \emph{Schottky figure adapted to $(\g_1, \ldots , \g_g)$} if there exist pairwise disjoint closed disks $D_{\g_i}, D_{\g_{i}^{-1}}$ in $\bP^{\pan}_k$, $i=1,2, \dots, g,$ such that 
\begin{equation} \label{eq_pingpong}
\g_i(D_{\g_i^{-1}}^c)=D^{\circ}_{\g_i} \ \mathrm{and} \ 
\g_i^{-1}(D_{\g_i}^c)=D^{\circ}_{\g_i^{-1}},
\end{equation} 
where $D^{\circ}_{\g_i}$, resp. $D^{\circ}_{\g_i^{-1}}$, is a maximal open disk inside $D_{\g_i},$ resp. $D_{\g_i^{-1}}$, for any~$i$.  
\end{defi}
 
\begin{rem} When $k$ is non-Archimedean, a closed disk in $\bP^{\pan}_k$ can contain infinitely many maximal (disjoint) open disks.   
\end{rem}

The following result was first proven by Gerritzen, and was then extended to the setting of Berkovich spaces in \cite[Proposition II.3.24, Theorem II.3.26]{poineau_turcheti_2}.

\begin{thm} \label{Gerritzen} Assume $k$ is non-Archimedean. 
A group $\Ga \subseteq \PGL_2(k)$ is a Schottky group over $k$ if and only if there exists a basis of $\Ga$ and a Schottky figure in $\bP^{\pan}_k$ adapted to it. 
\end{thm}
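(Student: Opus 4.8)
The plan is to prove the two implications separately: the forward direction (a Schottky figure $\Rightarrow$ a Schottky group) by a ping-pong argument adapted to the Berkovich tree, and the converse by analyzing the action of $\Ga$ on the real tree $\bP^{\pan}_k$. For the forward direction, suppose $\Ga = \langle \g_1, \ldots, \g_g\rangle$ admits a Schottky figure with pairwise disjoint disks $D_{\g_i}, D_{\g_i^{-1}}$ and the relations \eqref{eq_pingpong}. Writing $s$ for a symbol in $\{\g_1^{\pm 1}, \ldots, \g_g^{\pm 1}\}$ and $D_s$ for the associated disk, the key observation is that, because the disks are pairwise disjoint, each letter $s$ sends $D_{s^{-1}}^c$ --- which contains every disk except $D_{s^{-1}}$ --- into the open disk $D_s^{\circ}$. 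Given a reduced word $w = s_1 \cdots s_n$, reading right to left and using reducedness ($s_{j+1} \neq s_j^{-1}$) shows inductively that $w$ maps $D_{s_n^{-1}}^c$ into $D_{s_1}^{\circ}$; evaluating at a point outside all disks forces $w \neq \mathrm{id}$, so $\Ga$ is free on $(\g_1, \ldots, \g_g)$. For loxodromicity, after cyclic reduction a non-trivial $w$ begins and ends with letters $a, b$ satisfying $b \neq a^{-1}$, whence $D_a \subseteq D_{b^{-1}}^c$ and $w(D_a) \subseteq D_a^{\circ}$; the nested compact disks $w^n(D_a)$ shrink to a point (their radii tend to $0$, \emph{cf.} Remark \ref{rem_radius}), producing an attracting fixed point in $D_a^{\circ}$ and, applying the same to $w^{-1}$, a repelling one, so $w$ is loxodromic in the sense of Definition \ref{loxodromic}. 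Finally, taking $\Omega := \bP^{\pan}_k \setminus \La_{\Ga}$, where $\La_{\Ga}$ is the set of accumulation points of $\Ga$-orbits (contained in the nested intersections of the disks, hence in $\bP^1(k)$), the complement $F$ of $\bigcup_s D_s^{\circ}$ serves as a fundamental domain: ping-pong shows $\g F \cap F = \emptyset$ for $\g \neq \mathrm{id}$ and that only finitely many translates meet any compact subset of $\Omega$, giving a free and proper action on the connected invariant open $\Omega$.

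For the converse, suppose $\Ga$ is a Schottky group. Since every non-trivial element is loxodromic, it translates along its axis and so acts without fixed points on $\Hy_k$; thus $\Ga$ acts freely on the real tree $\Hy_k$ (using that $\rho$ is $\PGL_2(k)$-invariant). I would then pass to the convex hull $T_{\Ga} \subseteq \Hy_k$ of the limit set $\La_{\Ga}$, a $\Ga$-invariant subtree on which the action is free, proper and cocompact. As $\Ga$ is free of rank $g$, the quotient graph $T_{\Ga}/\Ga$ is a finite graph with free fundamental group of rank $g$, i.e. homotopy equivalent to a rose with $g$ petals. Lifting a maximal subtree of the quotient picks out a connected fundamental domain $\Delta \subseteq T_{\Ga}$ bounded by $2g$ points, paired by the $g$ basis elements $\g_1, \ldots, \g_g$ corresponding to the edges outside the chosen subtree. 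Cutting $\bP^{\pan}_k$ at these $2g$ boundary points and taking, for each, the half-tree on the side opposite to $\Delta$ yields $2g$ closed disks; their pairwise disjointness reflects that the boundary points are distinct, and the relation $\g_i(D_{\g_i^{-1}}^c) = D_{\g_i}^{\circ}$ expresses exactly that $\g_i$ translates so as to carry the complement of one half-tree onto the interior of its partner. This produces the required Schottky figure adapted to $(\g_1, \ldots, \g_g)$.

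The main obstacle is the converse, and within it the passage from the abstract free action on $T_{\Ga}$ to an honest Schottky figure of disjoint disks satisfying \eqref{eq_pingpong}. Two points require genuine care. First, one must show that the convex hull of $\La_{\Ga}$ is a locally finite subtree on which $\Ga$ acts cocompactly --- this is where properness of the action (hypothesis (3) of Definition \ref{schottky}) is essential, and where the fact that $\La_{\Ga}$ consists of type-$1$ points at the extremities of the tree must be exploited. Second, the combinatorial choice of basis matters: not every free basis of $\Ga$ has a fundamental domain whose boundary points yield \emph{disjoint} disks, so one must select the basis (equivalently, the maximal subtree in the quotient) compatibly with the tree geometry, and then verify that the half-trees at the $2g$ boundary points are pairwise disjoint and correctly paired by the generators. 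Establishing these facts rigorously --- reconstructing the disk configuration from the metric tree while keeping track of the discrepancy between the Berkovich and $\rho$-topologies noted in Lemma \ref{finer} --- is the technical heart of the argument, and is precisely the content of Gerritzen's original theorem.
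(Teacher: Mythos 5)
The paper gives no proof of this theorem: it is quoted as Gerritzen's result, extended to Berkovich spaces in \cite[Proposition II.3.24, Theorem II.3.26]{poineau_turcheti_2}. So your attempt must stand on its own. Your forward direction (figure $\Rightarrow$ Schottky group) is essentially correct: the ping-pong argument for freeness is sound, and loxodromicity does follow once one justifies that the radii of $w^n(D_a)$ tend to $0$ --- which they do, since the boundary points $\partial w^n(D_a)$ are aligned along a path and $\rho(\partial w^n(D_a), \partial w^{n+1}(D_a)) = \rho(\partial D_a, \partial w(D_a)) > 0$ by $\PGL_2(k)$-invariance, so $\rho(\eta, \partial w^n(D_a)) \to \infty$ and Remark \ref{rem_radius} applies; completeness of $k$ then produces a type-1 fixed point with multiplier of absolute value $<1$. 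One slip: for your choice $F = \bP^{\pan}_k \setminus \bigcup_s D_s^{\circ}$, it is \emph{false} that $\g F \cap F = \emptyset$ for all $\g \neq \mathrm{id}$: by Remark \ref{rem_coding} (ii), $\g_i$ carries the annulus $D_{\g_i^{-1}} \setminus D_{\g_i^{-1}}^{\circ} \subseteq F$ onto $D_{\g_i} \setminus D_{\g_i}^{\circ} \subseteq F$. This is exactly why Lemma \ref{domain} removes the \emph{closed} disks $D_{\g_i}$ but only the \emph{open} disks $D_{\g_i^{-1}}^{\circ}$; the slip is fixable (translates of your $F$ have disjoint interiors, which suffices for properness), but as written the claim is wrong.

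The converse --- the actual content of Gerritzen's theorem --- contains a genuine gap, which you partly concede. The chain ``properness on an open subset of $\bP^{\pan}_k$ (Definition \ref{schottky} (3)) implies the action on the convex hull of $\La_{\Ga}$ is metrically proper and cocompact, hence the quotient graph is finite, hence there is a connected fundamental subtree with $2g$ boundary points correctly paired by some basis'' is asserted, not proved, and each arrow is nontrivial. Concretely: (i) metric discreteness of the action on $(\Hy_k,\rho)$ cannot be taken for granted --- in this paper it is Lemma \ref{stdis}, whose proof \emph{invokes} Theorem \ref{Gerritzen}, so appealing to anything of that kind here would be circular; it must be extracted from hypothesis (3), across the mismatch between the Berkovich topology and the strictly finer, non-locally-compact $\rho$-topology (Lemma \ref{finer}). (ii) Cocompactness on the convex hull genuinely needs hypothesis (3): over a general complete non-Archimedean field, a finitely generated purely loxodromic subgroup of $\PGL_2(k)$ need not be Schottky at all (this is precisely why condition (3) appears in Definition \ref{schottky}; the paper notes its redundancy only over local fields), so no purely tree-theoretic argument from freeness and loxodromicity alone can work. (iii) The verification that the half-trees attached to the $2g$ boundary points satisfy the exact relations \eqref{eq_pingpong}, and not merely pairwise disjointness, is Bass--Serre bookkeeping that you describe but do not carry out. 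Since you yourself defer all of this as ``precisely the content of Gerritzen's original theorem,'' what you have is a correct proof of the easy implication together with a plausible roadmap --- not a proof --- of the hard one.
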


\begin{rem} 
Over $\C$, we know that there exist Schottky groups that don't admit  Schottky figures. One should instead take the closed subsets $D_{\g}$ in Definition~\ref{schottkyfig} to be bounded by Jordan curves in~$\bP^{1}(\C)$.
\end{rem}

In the non-Archimedean case, one can use \emph{twisted Ford disks} to explicitly construct Schottky figures for a well-chosen basis of $\Ga$, \emph{cf.} \cite[\S II.3.3]{poineau_turcheti_1}.

We include an explicit characterization of these groups, needed in Section~\ref{section_degen}.

\begin{cor} (\cite[Proposition 4.4.2]{poineau_turcheti_universal}) \label{cor_charac_Schottky} Let $k$ be a complete non-Archimedean field. For $g \geqslant 2,$ let $\alpha_1, \beta_1, \dots, \alpha_g, \beta_g$ be distinct points in $\bP^1(k)$. Let $y_1, y_2, \dots, y_g \in k$ be such that $0< |y_i|<1$ for all $i$. Set $\g_i=M(\alpha_i, \beta_i, y_i) \in \PGL_2(k), i=1,2,\dots, g$. The following conditions are equivalent:
\begin{enumerate}
\item[(i)] there exists a Schottky figure adapted to the basis $(\g_1, \g_2, \dots, \g_g),$
\item[(ii)] for all $ i \in \{1,2,\dots, g\}$ and for all $u,v \in \{\alpha_1, \beta_1, \ldots, \alpha_g, \beta_g\} \backslash \{\alpha_i, \beta_i\}$, one has 
\begin{equation*}
|y_i| \cdot |[u,v; \alpha_i, \beta_i]|<1
\end{equation*}
 where $[u,v; \alpha_i, \beta_i]$ denotes the cross-ratio of these points. 
\end{enumerate}
\end{cor}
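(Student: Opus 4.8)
The plan is to prove the equivalence directly and geometrically inside the real tree $\bP^{\pan}_k$, relating a Schottky figure adapted to $(\g_1, \dots, \g_g)$ to the positions of the fixed points on the axes of the generators (so Gerritzen's Theorem \ref{Gerritzen} is not needed for the equivalence itself, only to interpret the output). First I would fix $i$ and pick a coordinate $T_i$ sending $\alpha_i \mapsto 0$ and $\beta_i \mapsto \infty$; in this coordinate $\g_i$ is the homothety $z \mapsto y_i z$, its axis is $[0,\infty]=\{\eta_{0,s}: s\geq 0\}\cup\{\infty\}$, and its translation length is $l_i=-\log|y_i|$. Any closed disk adapted to $\g_i$ and containing the attractive point $\alpha_i=0$ must be of the form $D_{\g_i}=D(0,r_i)$, and the one containing the repelling point $\beta_i=\infty$ of the form $D_{\g_i^{-1}}=\{|z|\geq R_i\}\cup\{\infty\}$; the ping-pong relations \eqref{eq_pingpong} then force $r_i=|y_i|R_i$, so that $\partial D_{\g_i}=\eta_{0,r_i}$ and $\partial D_{\g_i^{-1}}=\eta_{0,R_i}$ are the endpoints of a sub-segment of the axis of length $\rho(\eta_{0,r_i},\eta_{0,R_i})=\log(R_i/r_i)=l_i$. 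The dictionary I would set up next, using Lemma \ref{wedge} and the $\PGL_2(k)$-invariance of the cross-ratio, is that any other fixed point $x\in\{\alpha_1,\beta_1,\dots,\alpha_g,\beta_g\}\setminus\{\alpha_i,\beta_i\}$ retracts onto this axis at the point $\eta_{0,s_x}$ with $s_x=|T_i(x)|$, and that $|[u,v;\alpha_i,\beta_i]|=s_u/s_v$ (the family being symmetric in the ordered pair $(u,v)$, so the convention for the cross-ratio is irrelevant).

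For the implication (i) $\Rightarrow$ (ii) I would start from a given Schottky figure and use that, for $j\neq i$, the disks $D_{\g_j}$ and $D_{\g_j^{-1}}$ are disjoint from both $D_{\g_i}$ and $D_{\g_i^{-1}}$. Hence every other fixed point $x$ lies strictly outside $D(0,r_i)$ and strictly outside $\{|z|\geq R_i\}$, which is exactly $s_x\in(r_i,R_i)$. Consequently, for any two such points $u,v$ one gets $s_u/s_v<R_i/r_i=1/|y_i|$, that is $|y_i|\cdot|[u,v;\alpha_i,\beta_i]|<1$, which is precisely condition (ii).

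For the converse (ii) $\Rightarrow$ (i), condition (ii) applied to the generator $i$ says exactly that $\max_x s_x/\min_x s_x<1/|y_i|$, so the interval $(|y_i|\max_x s_x,\ \min_x s_x)$ is non-empty; I would choose $r_i$ in it and set $R_i=r_i/|y_i|$, so that every other fixed point satisfies $s_x\in(r_i,R_i)$ and the disks $D_{\g_i}=D(0,r_i)$, $D_{\g_i^{-1}}=\{|z|\geq R_i\}\cup\{\infty\}$ satisfy \eqref{eq_pingpong} with all other fixed points strictly outside them. Carrying out this construction for every $i$, the last step is to verify that the resulting $2g$ disks are pairwise disjoint. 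Here I would invoke the tree dichotomy that two closed disks in $\bP^{\pan}_k$ are either disjoint or nested: since each generator's construction places all the other fixed points outside its two disks, no disk among $D_{\g_1},\dots,D_{\g_g^{-1}}$ contains the center of another, so nesting is impossible and every pair is disjoint (the pair $D_{\g_i},D_{\g_i^{-1}}$ being disjoint because $r_i<R_i$). I expect the main obstacle to be this final assembly: making the cross-ratio/retraction dictionary precise and checking that the per-generator ``fixed points lie strictly outside'' conditions globally glue into a genuine Schottky figure, which is exactly where the Berkovich tree geometry must be handled carefully.
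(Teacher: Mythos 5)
The paper itself does not prove this corollary: it is imported verbatim from Poineau--Turchetti, where the proof goes through their machinery of twisted Ford disks. Your argument is therefore a genuinely different, self-contained route, and most of it is sound. Normalizing $\alpha_i \mapsto 0$, $\beta_i \mapsto \infty$ so that $\g_i$ becomes $z \mapsto y_i z$, observing that the relations \eqref{eq_pingpong} force $D_{\g_i} = D(0,r_i)$ and $D_{\g_i^{-1}} = \{|z| \geqslant R_i\} \cup \{\infty\}$ with $r_i = |y_i| R_i$, and translating the cross-ratio condition into $s_u/s_v < R_i/r_i$ with $s_x = |T_i(x)|$ are all correct, and both implications then reduce cleanly to whether the other fixed points satisfy $s_x \in (r_i, R_i)$. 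One point you elide in (i) $\Rightarrow$ (ii): you must first check that $D_{\g_i}$ actually contains $\alpha_i$ (and $D_{\g_i^{-1}}$ contains $\beta_i$); this needs a one-line iteration argument ($\g_i(D_{\g_i}) \subseteq \g_i(D_{\g_i^{-1}}^c) = D_{\g_i}^{\circ}$, so the $\g_i$-orbit of a point of $D_{\g_i}$ stays in the closed set $D_{\g_i}$ and converges to $\alpha_i$).

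The genuine flaw is in the final assembly. The dichotomy you invoke, ``two closed disks in $\bP^{\pan}_k$ are either disjoint or nested,'' is false: $D(0,1)$ and $\{|z| \geqslant 1/2\} \cup \{\infty\}$ are both closed disks of $\bP^{\pan}_k$, neither contains the other, and they intersect in $\{1/2 \leqslant |z| \leqslant 1\}$. The dichotomy holds only for two disks that are bounded in a common affine coordinate, equivalently for two disks omitting a common point of $\bP^1(k)$. Fortunately, your construction provides exactly this: each of the $2g$ disks omits all $2g-1$ fixed points other than its own, so for $i \neq j$ the disks $D_{\g_i}$ and $D_{\g_j}$ (or $D_{\g_j^{-1}}$) both omit $\beta_i$, while $D_{\g_i^{-1}}$ and $D_{\g_j}$ (or $D_{\g_j^{-1}}$) both omit $\alpha_i$. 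Sending that common omitted point to $\infty$, both disks become affine closed disks, the ultrametric dichotomy does apply, and nesting is excluded exactly as you say, since each disk contains a fixed point that the other omits; finally the pair $D_{\g_i}$, $D_{\g_i^{-1}}$ is disjoint because $r_i < R_i$. With this repair substituted for the false general dichotomy, your proof is complete.
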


\subsection{Limit set and Schottky disks} \label{section_coding}
Let $k$ be a complete valued field. Recall that if $k=\C$, then $\bP^{\pan}_k=\bP^{1}(\C)$, and if $k=\R$, then $\bP^{\pan}_{\R}=\bP^{1}(\C)/\mathrm{conjugation}$ (\emph{i.e.} conjugated elements of $\C$ are identified). 
\begin{defi} \label{limitset} Let $\Ga \subseteq \PGL_2(k)$ be a Schottky group. 
The \emph{limit set $\La_{\Ga}$ of $\Ga$} is the set of accumulation points of a non-trivial orbit {(\emph{i.e.} not reduced to a point)} for the action of $\Ga$ on~$\bP^{\pan}_k.$
\end{defi}

\begin{thm}[{\cite[\S II.3.1]{poineau_turcheti_1}}] The limit set $\La_{\Ga}$ of a Schottky group $\Ga$ does not depend on the choice of the non-trivial orbit, and is a $\Ga$-invariant compact subset of $\bP^{\pan}_k$ contained in~$\bP^1(k).$ 
\end{thm}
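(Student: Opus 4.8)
The plan is to realize $\La_{\Ga}$ through the ping-pong coding furnished by a Schottky figure, and then to identify it with a set built purely from the group data, independent of the chosen orbit. Fix a basis $(\g_1, \dots, \g_g)$ and, in the non-Archimedean case, a Schottky figure adapted to it (Theorem \ref{Gerritzen}); write $\Sigma = \{\g_1^{\pm 1}, \dots, \g_g^{\pm 1}\}$ for the $2g$ letters and $D_s$, $s \in \Sigma$, for the corresponding pairwise disjoint closed disks. For a reduced word $w = s_1 \cdots s_n$ I would set $\Delta_w := w(D_{s_n^{-1}}^c)$, again a closed disk since Möbius maps preserve disks. Using the ping-pong relations \eqref{eq_pingpong} and induction on $n$, one checks that $\Delta_w \subseteq D_{s_1}^{\circ}$ and that, whenever $w' = ws$ is reduced, $\Delta_{w'} = w(D_s^{\circ}) \subsetneq \Delta_w$; hence along any infinite reduced word $\xi = s_1 s_2 \cdots$ the disks $\Delta_{w_n}$, with $w_n = s_1 \cdots s_n$, are strictly nested.

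The crucial point — and what I expect to be the main obstacle — is that these disks \emph{shrink to a point of $\bP^1(k)$}. I would argue this via the $\PGL_2(k)$-invariance of the metric $\rho$: the modulus of the annulus $\Delta_{w_n} \setminus \Delta_{w_{n+1}}$ equals $\rho(\partial \Delta_{w_n}, \partial \Delta_{w_{n+1}}) = \rho(\partial D_{s_n^{-1}}, \partial D_{s_{n+1}})$, and since reducedness forces $s_{n+1} \neq s_n^{-1}$, these are two of the finitely many pairwise disjoint fixed disks, so this modulus is bounded below by a uniform $\delta > 0$. Therefore $\sum_n \rho(\partial \Delta_{w_n}, \partial \Delta_{w_{n+1}}) = +\infty$, which by Remark \ref{rem_radius} forces the radii of $\Delta_{w_n}$ to tend to $0$. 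As $k$ is complete, the centers form a Cauchy sequence and converge, so $\bigcap_n \Delta_{w_n}$ is a single type-$1$ point $\xi^+ \in \bP^1(k)$. This lets me define $\La := \{\xi^+ : \xi \text{ an infinite reduced word}\} = \bigcap_{n \geq 1} \bigcup_{|w| = n} \Delta_w$. (In the Archimedean case $\bP^{\pan}_k = \bP^1(\C)$ and one replaces the $D_s$ by Jordan domains; the shrinking is the classical uniform contraction of ping-pong Möbius maps with respect to the spherical metric, the fixed domains being compactly nested.)

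Two of the three assertions are then quick. Since each level $\bigcup_{|w|=n}\Delta_w$ is a finite union of compact disks, hence closed, $\La$ is closed in the compact space $\bP^{\pan}_k$, so compact, and by construction $\La \subseteq \bP^1(k)$. For $\Ga$-invariance of the limit set: if $x$ is an accumulation point of an orbit $\Ga z$, witnessed by distinct $\g_n z \to x$, then applying the homeomorphism $\g \in \Ga$ gives distinct $\g \g_n z \to \g x$ with $\g \g_n z \in \g \Ga z = \Ga z$, so $\g x$ is again an accumulation point. The compactness of $\La_{\Ga}$ will follow once it is identified with the closed set $\La$.

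It remains to show that the accumulation set of \emph{every} non-trivial orbit $\Ga z$ equals $\La$, which simultaneously gives independence of the orbit and the inclusion $\La_{\Ga} \subseteq \bP^1(k)$. For the inclusion $\subseteq$: given $\g_n z \to x$ with $\g_n$ distinct, finiteness of the number of words of each length yields $|\g_n| \to \infty$, and after passing to a subsequence the length-$j$ prefixes stabilize to an infinite reduced word $\xi$. Writing $\g_n = w_j \rho_n$ for fixed $j$, one has $\g_n z = w_j(\rho_n z) \in \Delta_{w_j}$ as soon as $\rho_n z \notin D_{s_j^{-1}}$, which holds because $\rho_n$ begins with a letter $\neq s_j^{-1}$ — provided $z$ avoids the disk attached to the last letter of $\g_n$. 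When $z \notin \bigcup_s D_s$ this is automatic and $x \in \bigcap_j \Delta_{w_j} = \{\xi^+\} \subseteq \La$; when $z$ meets a disk, peeling off the offending final letters via \eqref{eq_pingpong} reduces in finitely many steps to the previous case, and if the peeling never stops it forces $z \in \La$, whence $\Ga z \subseteq \La$ and still $x \in \La$. For the reverse inclusion, and hence for independence, I would establish minimality of the action $\Ga \curvearrowright \La$: given $\tau = \xi^+ \in \La$, the prefixes satisfy $w_n z \in \Delta_{w_n} \to \tau$ (replacing $w_n$ by $w_n t$ if needed to meet the last-letter condition, which is harmless since $\Delta_{w_n t} \subseteq \Delta_{w_n}$), so $\tau$ accumulates $\Ga z$; equivalently the attractive fixed points of elements of $\Ga$ are dense in $\La$ and each is $\lim_m \g^m z$ for $z$ off the corresponding repelling point. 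Combining both inclusions identifies $\La_{\Ga}$ with $\La$ for every non-trivial orbit. The genuinely delicate points, beyond the shrinking estimate above, are this last bookkeeping when the base point lies inside the Schottky disks and the care with final letters needed to run the ping-pong approximation; everything else is formal.
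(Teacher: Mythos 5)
The paper itself gives no proof of this statement (it is quoted from Poineau--Turchetti, \S II.3.1), and your argument reconstructs precisely the coding machinery of that source, which the paper also reproduces as Proposition \ref{coding}, Remark \ref{rem_coding} and Lemma \ref{domain}: nested Schottky disks indexed by reduced words, a uniform lower bound on the moduli of consecutive annuli forcing the radii to $0$, and the identification of the accumulation set of every non-trivial orbit with the resulting coded set; so the approach is the same and the proof is essentially correct. One slip to fix: by the ping-pong relation \eqref{eq_pingpong}, your $\Delta_w = w(D_{s_n^{-1}}^c)$ is the \emph{open} disk $D_w^{\circ}$, not a closed one, so the levels $\bigcup_{|w|=n}\Delta_w$ are not closed as claimed; replacing $\Delta_w$ by the closed disk $D_w = s_1\cdots s_{n-1}D_{s_n}$ of Definition \ref{defi_coding} (equivalently $w\bigl((D_{s_n^{-1}}^{\circ})^c\bigr)$) repairs this, and every subsequent step, including the compactness of $\La$, goes through verbatim.
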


\begin{rem}
\begin{enumerate}
    \item    When $k$ is Archimedean, this is a classical result for arbitrary non-elementary subgroups of $\PGL_2(k)$  (see \emph{e.g.} \mbox{\cite[Corollaries 3.25, 3.26]{kapovich}}). 
    
\item  It was shown in \cite[Theorem 2.20]{dujardin_favre_degen} that, as in the Archimedean case, $\La_{\Ga}$ is the smallest non-empty $\Ga$-invariant closed subset of $\bP^{\pan}_k$. Equivalently, it is the closure of the fixed points of the non-trivial elements of $\Ga$.
 \item The non-triviality of the orbit in Definition \ref{limitset} is necessary \emph{only} when~$\Ga$ is of rank one; otherwise all orbits are non-trivial (two elements of a generating set fix different points).
\end{enumerate}
\end{rem}

Let $\mathcal{B}:=(\g_1, \dots, \g_g)$ be a basis of $\Ga$, and $\{D_{\g}, D_{\g}^{\circ}: l(\g)=1\}$ a Schottky figure adapted to it in $\bP^{\pan}_k$; here $l(\g)$ denotes the length of the reduced word $\g$ in the alphabet induced by $\mathcal{B}$. We recall that for $l(\g)=1$, the  set $D_{\g}$ is  a closed disk, and $D^{\circ}_{\g}$ a maximal open disk thereof. 

\begin{defi} \label{defi_coding}
Let $\g=wu \in \Ga$ be a reduced word of length $n \geqslant 2$ in this alphabet, with~${l(u)=1}$. To $\g$ we associate the closed disk $D_{\g}:=wD_u$, and a maximal open disk  $D_{\g}^{\circ}:=wD_u^{\circ}$ in it.
The disks $D_{\g}, D_{\g}^{\circ}, l(\g)=n$, are said to be \emph{$n$-th generation Schottky disks} for~${(\Ga, \mathcal{B})}$. 
\end{defi}

\begin{prop} \label{coding} The following properties are satisfied.
\begin{enumerate}
\item[(i)](\cite[Lemma II.3.6]{poineau_turcheti_2}) Given two reduced words $\g_1$ and $\g_2$ in $\Ga$, we have $D_{\g_1} \subseteq D_{\g_2}$ if and only if there exists $w \in \Ga$ such that $\g_2w$ is reduced and $\g_1=\g_2w.$ If, moreover, $\g_1 \neq \g_2$, then $D_{\g_1} \subseteq D^{\circ}_{\g_2}.$

\item[(ii)](\cite[ Corollary II.3.13]{poineau_turcheti_2}) Given a strictly decreasing sequence of Schottky disks $(D_{w_n})_n$, the intersection $\bigcap_n D_{w_n}$ is a singleton $\{l\}$ contained in $\La_{\Ga}$. Moreover, the disks $(D_{w_n})_n$ form a basis of neighborhoods of $l$. 

\item[(iii)] Conversely, given a point $l \in \La_{\Ga}$, there exists a sequence of reduced words $(w_n)_n$ in $\Ga$ such that the sequence of closed disks $(D_{w_n})_n$ is strictly decreasing and $\{l\}=\bigcap_{n} D_{w_n}.$
\end{enumerate}
\end{prop}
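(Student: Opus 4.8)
Parts (i) and (ii) are the cited results of Poineau--Turchetti, so the task is to prove the converse statement (iii), taking (i) and (ii) as given. Since $\La_{\Ga}$ is independent of the chosen non-trivial orbit, I would first fix a base point $x_0 \in \bP^{\pan}_k$ lying outside all $2g$ closed generation-one disks $D_{\g}$ with $l(\g)=1$; such a region is non-empty. By the third condition of Definition \ref{schottky} the action of $\Ga$ is free, so distinct group elements yield distinct orbit points, and $\La_{\Ga}$ is precisely the set of accumulation points of $\Ga x_0$.

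The dynamical core is the following estimate: for a reduced word $\g=s_1 s_2\cdots s_m$ with each $s_j$ of length one and any $n\leqslant m$, one has $\g x_0\in D_{s_1\cdots s_n}$. I would establish this by induction using the relations \eqref{eq_pingpong}. The point $s_{n+1}\cdots s_m x_0$ equals $x_0$ (when $n=m$) or lies in $D^{\circ}_{s_{n+1}}$ with $s_{n+1}\neq s_n^{-1}$; in either case it avoids $D_{s_n^{-1}}$, since distinct generation-one disks are disjoint. Hence $s_n\cdots s_m x_0\in D^{\circ}_{s_n}$ by \eqref{eq_pingpong}, and applying $s_1\cdots s_{n-1}$ together with $D_{s_1\cdots s_n}=s_1\cdots s_{n-1}D_{s_n}$ gives the claim. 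In particular, every orbit point whose word length is at least $n$ lies in the closed set $\bigcup_{l(w)=n} D_w$.

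Now fix $l\in\La_{\Ga}$. Being an accumulation point of the orbit, every neighborhood of $l$ meets $\Ga x_0$ in infinitely many points; by freeness these correspond to infinitely many group elements, and discarding the finitely many of length $<n$ leaves $l$ an accumulation point of $\{\g x_0 : l(\g)\geqslant n\}$. As this set is contained in the closed set $\bigcup_{l(w)=n} D_w$, we get $l\in D_w$ for some reduced word $w$ of length $n$. Therefore the set $S:=\{w\ \text{reduced} : l\in D_w\}$ contains a word of each length and, by part (i), is closed under passing to prefixes.

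Thus $S$ is an infinite, finitely branching rooted tree, and K\"onig's lemma yields an infinite branch: a sequence of reduced words $(w_n)_n$ with $w_n$ the length-$n$ prefix of $w_{n+1}$ and $l\in D_{w_n}$ for all $n$. By part (i) the disks strictly decrease, $D_{w_{n+1}}\subseteq D^{\circ}_{w_n}\subsetneq D_{w_n}$, so part (ii) applies: $\bigcap_n D_{w_n}=\{l'\}$ for a single $l'\in\La_{\Ga}$. Since $l$ lies in every $D_{w_n}$, we must have $l'=l$, which is exactly (iii). The two steps I expect to require the most care are the ping-pong bookkeeping of the second paragraph and the verification that a limit point is approximated by orbit points of \emph{unbounded} word length, which is what forces $S$ to reach every depth and hence, via K\"onig's lemma, to carry an infinite branch.
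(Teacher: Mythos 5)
Your proof is correct and takes essentially the same approach as the paper, whose own argument for (iii) is just the sketch in Remark \ref{rem_coding}(iii): the ping-pong estimate $\g x_0 \in D_{\g}$ for a base point $x_0$ outside the Schottky figure, combined with parts (i) and (ii) to produce the nested sequence of disks shrinking to $l$. Your K\"onig's-lemma bookkeeping simply fills in the details the paper leaves to the reader; note only that the appeal to freeness is dispensable, since discarding the finitely many orbit points coming from words of length $<n$ requires no injectivity of the orbit map (and injectivity at $x_0$ follows anyway from your own ping-pong estimate).
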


Essentially, the elements of $\La_{\Ga}$ can be identified with ``infinite reduced words" over the alphabet of $\Ga$ induced by $\mathcal{B}$. We mention some straightforward properties that we will use throughout the text.

\begin{rem} \label{rem_coding}
\begin{enumerate}
\item[(i)]
Given an $n$-th generation Schottky disk $D_{\g}$, there exist exactly $2g-1$ Schottky disks of generation $n+1$ contained in $D_{\g}$, each corresponding to the $2g-1$ reduced words of length $n+1$ starting with the word $\g.$ There are in total $2g(2g-1)^n$ Schottky disks of generation $n+1$.

\item[(ii)] Let $\g=a_1a_2\cdots a_n$ be a reduced word of length $n$ in $\Ga$. Then ${\g^{-1} D_{\g}=a_n^{-1} D_{a_n}=(D_{a_n^{-1}}^{\circ})^{c}},$ and similarly $\g^{-1}D_{\g}^{\circ}=D_{a_{n}^{-1}}^{c}$ (using the relations \eqref{eq_pingpong}). Moreover, $\g^{-1}(D_{\g} \backslash D_{\g}^{\circ})=D_{a_n^{-1}} \backslash D_{a_n^{-1}}^{\circ}$ and  $\g^{-1} \partial{D_{\g}}=\partial{D_{a_n^{-1}}}.$

\item[(iii)] Given $x \in \bP^{\pan}_k \backslash \bigcup_{l(\g)=1}{D_{\g}^{\circ}}$, one has $\g x \in D_{\g}$ for all $\g \in \Ga$. Let $\g=a_1\cdots a_n$ be reduced of length $n$. Then as $x \not \in D_{a_n^{-1}}^{\circ}$, using \eqref{eq_pingpong} we have that $a_nx \in D_{a_n}$, and so $\g x \in D_{\g}.$ If, moreover, $x \not \in \bigcup_{l(\g)=1} D_{\g}$, then $\g x \in D_{\g}^{\circ}$ for all $\g \in \Ga$. Using this, one can show 
Proposition~\ref{coding} (iii). 
\end{enumerate}
\end{rem}

\begin{rem} \label{rem_limitsetall}
{When $k$ is non-Archimedean},
the equality $\La_{\Ga}=\bP^1(k)$ is possible if and only if~$\bP^{1}(k)$ is compact, meaning if and only if~$k$ is locally compact. See also Remark \ref{rem_hdim_loccomp}. This is different from the Archimedean setting where  equality never occurs 
 because by definition the action admits a non-empty fundamental domain on $\bP^1(\C)$ (\emph{cf.} also \cite{maskit_characterization_schottky}).  
\end{rem}

We conclude with a result on the behaviour of Schottky groups with respect to base changes.
An extension of complete valued fields $K/k$ will be called a \emph{complete valued field extension} if the valuation on $K$ extends that of $k$. It induces a projection morphism $\pi_{K/k}: \bP^{\pan}_K \rightarrow \bP^{\pan}_k$, and for any $\alpha \in \bP^1(k)$, the set $\pi_{K/k}^{-1}(\alpha)$ is a singleton in $\bP^1(K) \subseteq \bP^{\pan}_K$. More precisely, the preimage $\pi_{K/k}^{-1}(\bP^{1}(k))$ can be \emph{homeomorphically} identified to the embedding $\bP^{1}(k) \subseteq \bP^1(K)$ induced by the field extension $K/k$.

\begin{lem} \label{lem_schottky_field_extension}
{Let $K/k$ be a complete valued field extension. A subgroup $\Ga \subseteq \PGL_2(k) \subseteq \PGL_2(K)$ is Schottky over $k$ if and only if it is Schottky over $K$. In that case, $\pi_{K/k}(\Lambda_K)=\Lambda_k$, where $\Lambda_K$ (respectively $\Lambda_k$) is the limit set of $\Gamma$ over~$K$ (respectively $k$), and the restriction of~$\pi_{K/k}$ to $\La_K$ is homeomorphic onto $\La_k$ with respect to the Berkovich topology.}  
\end{lem}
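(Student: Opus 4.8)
The plan is to establish the two assertions separately: the equivalence of the Schottky property over $k$ and over $K$, and then the identification of the limit sets. The two facts I will use repeatedly are that the M\"obius action commutes with the projection, $\pi_{K/k}\circ \g=\g\circ\pi_{K/k}$ for every $\g\in\PGL_2(k)\subseteq\PGL_2(K)$, and that for $a\in k$ and $r\in\R_{>0}$ the preimage under $\pi_{K/k}$ of the closed (resp.\ open) disk with these parameters in $\bP^{\pan}_k$ is the closed (resp.\ open) disk with the same parameters in $\bP^{\pan}_K$; both follow at once from the fact that restriction of a seminorm on $K[T]$ to $k[T]$ leaves $|T-a|$ unchanged.

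For the equivalence, conditions (1) and (2) of Definition~\ref{schottky} are immediate: freeness and finite generation are intrinsic to the abstract group $\Ga$, while an element of $\PGL_2(k)$ is loxodromic over $k$ if and only if over $K$, since its eigenvalues lie in $k\subseteq K$ and $|\cdot|_K$ extends $|\cdot|_k$. If $k$ is Archimedean, then $k$ and $K$ lie in $\{\R,\C\}$ and the equivalence reduces to the classical theory of complex Schottky groups, so I may assume $k$ (hence $K$) non-Archimedean and appeal to Theorem~\ref{Gerritzen}. For $g\geqslant 2$ I would run the criterion of Corollary~\ref{cor_charac_Schottky}: by Theorem~\ref{Gerritzen} a Schottky group admits a basis $(\g_1,\dots,\g_g)$ with an adapted Schottky figure, and the attractive and repelling points of the $\g_i$ then lie in the pairwise disjoint disks $D^{\circ}_{\g_i},D^{\circ}_{\g_i^{-1}}$, hence form $2g$ distinct points of $\bP^1(k)$. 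The inequalities $|y_i|\cdot|[u,v;\alpha_i,\beta_i]|<1$ of Corollary~\ref{cor_charac_Schottky} involve only the multipliers, the fixed points, and their cross-ratios, all lying in $k$; since $|\cdot|_K$ restricts to $|\cdot|_k$, they hold over $K$ if and only if over $k$. Combining the two implications of Corollary~\ref{cor_charac_Schottky} with Theorem~\ref{Gerritzen} over each field yields the equivalence. The case $g=1$ is direct: a rank one group is generated by a single loxodromic element, which always admits a Schottky figure, so here being Schottky is equivalent to being generated by a loxodromic transformation, a field-independent condition.

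For the limit sets, I fix a basis $\mathcal{B}$ and an adapted Schottky figure $\{D_{\g,k},D^{\circ}_{\g,k}\}$ over $k$, and take its pullback $D_{\g,K}:=\pi_{K/k}^{-1}(D_{\g,k})$, $D^{\circ}_{\g,K}:=\pi_{K/k}^{-1}(D^{\circ}_{\g,k})$. By the compatibility facts above these are pairwise disjoint closed, resp.\ maximal open, disks satisfying the relations~\eqref{eq_pingpong} (by equivariance of $\pi_{K/k}$), \emph{i.e.}\ an adapted Schottky figure over $K$; moreover $D_{w,K}=\pi_{K/k}^{-1}(D_{w,k})$ for every reduced word $w$, by Definition~\ref{defi_coding} and equivariance. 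Writing $w|_n$ for the length-$n$ prefix of an infinite reduced word $w$, Proposition~\ref{coding} identifies the points of $\La_k$ (resp.\ $\La_K$) with the singletons $\bigcap_n D_{w|_n,k}=:\{l_k(w)\}$ (resp.\ $\bigcap_n D_{w|_n,K}$). Since preimages commute with intersections, $\bigcap_n D_{w|_n,K}=\pi_{K/k}^{-1}(\bigcap_n D_{w|_n,k})=\pi_{K/k}^{-1}(\{l_k(w)\})$, which is a singleton contained in the embedded $\bP^1(k)\subseteq\bP^1(K)$ because $l_k(w)\in\bP^1(k)$ has a single preimage. Hence $\La_K$ is contained in the embedded copy of $\bP^1(k)$, $\pi_{K/k}$ sends $\bigcap_n D_{w|_n,K}$ to $l_k(w)$, and therefore $\pi_{K/k}(\La_K)=\La_k$.

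Finally, $\pi_{K/k}|_{\La_K}$ is continuous and surjective onto $\La_k$, and both are compact Hausdorff, so it suffices to prove injectivity, after which a continuous bijection from a compact space to a Hausdorff space is automatically a homeomorphism. Injectivity is precisely the injectivity of $\pi_{K/k}$ on the embedded $\bP^1(k)$, which holds because each fibre $\pi_{K/k}^{-1}(\alpha)$, $\alpha\in\bP^1(k)$, is a singleton. I expect this last point to be the main obstacle: on all of $\bP^1(K)$ the map $\pi_{K/k}$ need \emph{not} be injective, so the argument genuinely relies on first locating $\La_K$ inside the embedded $\bP^1(k)$ through the coding, rather than reasoning on $\bP^1(K)$ directly. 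In the remaining Archimedean case $k=\R$, $K=\C$, the generators are real loxodromic, so their fixed points are real and $\La_{\C}\subseteq\bP^1(\R)$, on which $\pi_{\C/\R}$ is injective; this yields the homeomorphism there as well.
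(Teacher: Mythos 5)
Your argument is correct in its main (non-Archimedean) case, but it takes a genuinely different route from the paper's. The paper verifies Definition~\ref{schottky} directly and uniformly in $k$: it uses that $\pi_{K/k}$ is continuous, surjective and proper, pulls back $\bP^{\pan}_k \setminus \La_k$ to produce the invariant connected open over $K$, and for the converse first shows $\La_K \subseteq \pi_{K/k}^{-1}(\bP^1(k))$ (this preimage is closed and contains an orbit), then pushes $\bP^{\pan}_K \setminus \La_K$ forward, letting properness descend along $\pi_{K/k}$ and handling freeness by a separate fixed-point argument. You instead argue combinatorially: loxodromy is field-independent, and over non-Archimedean fields you shuttle between Theorem~\ref{Gerritzen} and the cross-ratio criterion of Corollary~\ref{cor_charac_Schottky}, whose data (fixed points, multipliers, cross-ratios) all lie in $k$, so the defining inequalities are insensitive to the extension. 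Your limit-set argument -- pulling back the Schottky figure, noting $D_{w,K}=\pi_{K/k}^{-1}(D_{w,k})$ for every reduced word, and using Proposition~\ref{coding} to identify $\La_K$ with $\pi_{K/k}^{-1}(\La_k)$ inside the embedded $\bP^1(k)$ -- is more explicit than the paper's (which obtains $\pi_{K/k}(\La_K)=\La_k$ from equivariance and continuity alone), and your compact-to-Hausdorff argument for the homeomorphism matches the paper's conclusion. What your route buys is transparency: base-change invariance is visible in the Koebe data. What it costs is a case split, since Gerritzen's theorem and the coding are strictly non-Archimedean tools.

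The one genuine weak point is the Archimedean case, which you dispatch with ``reduces to the classical theory of complex Schottky groups.'' For $k=\R$, $K=\C$ this is not literally classical: being Schottky over $\R$ in this paper means acting freely and properly on a connected invariant open subset of $\bP^{\pan}_{\R}=\bP^1(\C)/\mathrm{conjugation}$, and relating that to the action on $\bP^1(\C)$ requires an argument -- for instance, freeness on the quotient must exclude relations $\g x=\bar{x}$ (this can be rescued: such a relation gives $\g^2 x=x$, and $\Ga$ is free, hence torsion-free), and the preimage of a connected invariant open may be disconnected. The quickest repair inside the paper's toolkit is the remark following Definition~\ref{schottky}: over a local field (so over $\R$ and $\C$), being Schottky is equivalent to being finitely generated with all non-trivial elements loxodromic (\emph{cf.} \cite[Corollary II.3.35]{poineau_turcheti_1}), a condition manifestly identical over $\R$ and $\C$; alternatively, the paper's uniform argument covers this case. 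As written, though, this step is left unproved, and it is actually needed elsewhere in the paper, since Lemma~\ref{lem_basechangemoduli} applies the present lemma to extensions $\Hr(x)/\Hr(y)$ that can be $\C/\R$; your sketch of the limit-set identification in that case (all fixed points are real, so $\La_{\C}\subseteq\bP^1(\R)$) is fine but should also record why $\pi_{\C/\R}(\La_\C)=\La_\R$.
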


\begin{proof}
We note that $\Ga$ being free and finitely generated does not depend on an ambient space. Being loxodromic is a property on the eigenvalues of a matrix representing a transformation, so the non-trivial elements of $\Ga$ are loxodromic in $\PGL_2(k)$ if and only if they are loxodromic in $\PGL_2(K)$. It remains to show that there is a non-empty connected open $\Ga$-invariant subset on which $\Ga$ acts freely and properly in $\bP^{\pan}_{k}$ if and only if there is one in $\bP^{\pan}_K$. By Proposition~I.5.1 of~\cite{poineau_turcheti_1}, $\pi_{K/k}$ is continuous, surjective, and proper. Using the extension of Möbius transformations on the entire Berkovich projective line (\emph{cf.} \cite[\S~II.1.3]{poineau_turcheti_2}), for any $x \in \bP^{\pan}_K$ and any $g \in \Ga$, one has $\pi_{k/k}(gx)=g \pi_{K/k}(x)$. 

Assume that $\Ga$ is Schottky over $k$. Then $\bP^{\pan}_k \backslash \La_k$ is a non-empty connected open $\Ga$-invariant subset of $\bP^{\pan}_k$ on which $\Ga$ acts properly and freely (\cite[Theorem~II.3.18]{poineau_turcheti_2}). As $\La_k \subseteq \bP^1(k)$, one has that~$\pi_{K/k}^{-1}(\La_k)$ is a subset of $\bP^1(k) \subseteq \bP^{1}(K)$ in $\bP^{\pan}_K$, and so~$\bP^{\pan}_K \backslash \pi_{K/k}^{-1}(\La_k)$ is connected. It is also non-empty, open and $\Ga$-invariant. As $\Ga$ acts freely and properly on~$\bP^{\pan}_k \backslash \Lambda_k$, it acts freely and properly on $\pi_{K/k}^{-1}(\bP^{\pan}_k \backslash \Lambda_k)=\bP^{\pan}_K \backslash \pi_{K/k}^{-1}(\La_k)$, so it is Schottky over $K$.

Assume {conversely} that $\Ga$ is Schottky over $K$. Let $\alpha \in \pi_{K/k}^{-1}(\bP^{1}(k))$, so $\Ga \alpha \subseteq \pi_{K/k}^{-1}(\bP^{1}(k))$. As $k$ is complete, $\pi_{K/k}^{-1}(\bP^{1}(k))$ is closed in $\bP^{1}(K)$ with respect to the Berkovich topology, implying  $\La_{K} \subseteq \pi_{K/k}^{-1}(\bP^{1}(k)).$ As $\La_K$ is compact in $\bP^{\pan}_K$,  its image $\pi_{K/k}(\La_K)$ is compact in $\bP^{\pan}_k$, so $\pi_{K/k}(\bP^{\pan}_K \backslash \Lambda_K)$ is a non-empty open subset of $\bP^{\pan}_k$. Morever, it is $\Ga$-invariant and connected as the same is true for $\bP^{\pan}_K \backslash \Lambda_K$. By the properness of $\pi_{K/k}$, as $\Ga$ acts properly on $\bP^{\pan}_K \backslash \Lambda_K$, it also acts properly on    
$\pi_{K/k}(\bP^{\pan}_K \backslash \Lambda_K)$. Assume there exist $g \in \Ga \backslash \{\mathrm{id}\}$ and $x \in \pi_{K/k}(\bP^{\pan}_K \backslash \Lambda_K)$ such that $gx=x$. As $g$ is loxodromic over $K$, it is also loxodromic over $k$. As $x$ is a fixed point of $g$, one has $x \in \bP^{1}(k)$. This implies that $\pi_{K/k}^{-1}(x)$ is a singleton contained in $\pi_{K/k}^{-1}(\bP^1(k)) \subseteq \bP^{1}(K)$, and so $g \pi_{K/k}^{-1}(x)=\pi_{K/k}^{-1}(x)$. Since the action of $\Ga$ on $\bP^{\pan}_K \backslash \Lambda_K$ is free, we obtain $g=\mathrm{id}$, contradiction. Thus, the action of $\Ga$ on $\pi_{K/k}(\bP^{\pan}_K \backslash \Lambda_K)$ is also free, implying $\Ga$ is Schottky over~$k$. 

Finally, for any $\alpha \in \pi_{K/k}^{-1}(\bP^{1}(k))$, we have $\pi_{K/k}(\Ga \alpha)=\Ga \pi_{K/k}(\alpha)$. As $\pi_{K/k}$ is continuous, we obtain that $\pi_{K/k}(\La_K)=\La_{k}$. Moreover, seeing as $\La_{K} \subseteq \pi_{K/k}^{-1}(\bP^{1}(k))$, the restriction of $\pi_{K/k}$ to~$\La_K$ is a homeomorphism onto its image.
\end{proof}

\subsection{Schottky and Mumford uniformizations}
It was shown by Koebe that any compact Riemann surface can be uniformized analytically via Schottky groups; this is known as the \emph{Schottky uniformization}. A similar result is true for \emph{Mumford curves} in the non-Archimedean case (see below). We recall that a Mumford curve is a projective algebraic curve with split degenerate stable reduction. Equivalently, its Berkovich analytification is locally isomorphic to certain opens of $\bP^{\pan}_k$ (\emph{cf.} \cite[Remark II.2.28]{poineau_turcheti_2}). We emphasize that, unlike the complex setting, over a non-Archimedean field, not all smooth curves satisfy this property. 

Let $k$ be a complete valued field.  Let $\Ga$ be a Schottky group over $k$ with a well-chosen basis $\mathcal{B}=\{\g_i: i=1,2,\dots, g\}$ and a Schottky figure $\{D_{\g}, D_{\g}^{\circ}: l(\g)=1\}$ adapted to it (recall that these are closed, respectively open, disks in $\bP^{\pan}_k$). We denote by $\La_{\Ga}$ its limit set. 

\begin{lem} \label{domain} 
The set $F:=\bP^{\pan}_k \backslash (\bigcup_{i=1}^g {D_{\g_i}} \cup \bigcup_{i=1}^g D_{\g_i^{-1}}^{\circ})$ is a fundamental domain for the action of $\Ga$ on $\bP^{\pan}_k \backslash \La_{\Ga}$. 
\end{lem}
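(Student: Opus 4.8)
The plan is to establish the two defining properties of a (strict) fundamental domain: that the $\Ga$-translates of $F$ cover $\Omega:=\bP^{\pan}_k\setminus\La_{\Ga}$, and that they are pairwise disjoint, i.e.\ $\g F\cap F=\emptyset$ for every $\g\neq\mathrm{id}$. I would work entirely with the combinatorics of the Schottky disks $D_w,D_w^{\circ}$, the coding of Proposition~\ref{coding} and Remark~\ref{rem_coding}, and the ping-pong identities \eqref{eq_pingpong}; write $S_+=\{\g_1,\dots,\g_g\}$ and $S_-=\{\g_1^{-1},\dots,\g_g^{-1}\}$. The conceptual point to keep in mind is that the definition of $F$ is deliberately asymmetric — it removes the \emph{closed} disks $D_{\g_i}$ but only the \emph{open} disks $D_{\g_i^{-1}}^{\circ}$ — and that \eqref{eq_pingpong} forces $\g_i$ to carry the retained region $D_{\g_i^{-1}}\setminus D_{\g_i^{-1}}^{\circ}$ exactly onto the discarded region $D_{\g_i}\setminus D_{\g_i}^{\circ}$. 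This is the mechanism that makes $F$ contain exactly one representative of each boundary orbit.

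For disjointness, suppose $z,\g z\in F$ with $\g=a_1\cdots a_n$ reduced, $n\geq 1$. Since $z\in F$ lies in no open generation-one disk, Remark~\ref{rem_coding}(iii) gives $\g z\in D_{\g}\subseteq D_{a_1}$; symmetrically $z=\g^{-1}(\g z)\in D_{\g^{-1}}\subseteq D_{a_n^{-1}}$. If $z$ lies in no \emph{closed} generation-one disk, Remark~\ref{rem_coding}(iii) upgrades this to $\g z\in D_{\g}^{\circ}\subseteq D_{a_1}^{\circ}$, which contradicts $\g z\in F$ whatever the sign of $a_1$; the analogous statement from the $\g z$ side disposes of the case where $\g z$ lies in no closed disk. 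In the remaining case both $z$ and $\g z$ sit in the annular part of a closed disk, and since $F$ excludes the positive closed disks this forces $z\in D_{a_n^{-1}}\setminus D_{a_n^{-1}}^{\circ}$ with $a_n\in S_+$ and $\g z\in D_{a_1}\setminus D_{a_1}^{\circ}$ with $a_1\in S_-$. This already rules out $n=1$; for $n\geq 2$ I would push $z$ forward by $a_n$ (landing in $D_{a_n}\setminus D_{a_n}^{\circ}$, hence in no open disk by \eqref{eq_pingpong}), apply the word $a_1\cdots a_{n-1}$ through Remark~\ref{rem_coding}(iii), and use Proposition~\ref{coding}(i) (which yields $D_{a_1\cdots a_{n-1}}\subseteq D_{a_1}^{\circ}$ for $n\geq 3$, the case $n=2$ following directly from \eqref{eq_pingpong}) to conclude $\g z\in D_{a_1}^{\circ}$, contradicting $\g z\in F$.

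For the covering, I would fix $x\in\Omega$ and consider the reduced words $w$ with $x\in D_w$. Any two such disks meet, hence are nested (Proposition~\ref{coding}(i), Remark~\ref{rem_coding}(i)), so this family is totally ordered by the prefix relation; it is finite, since an infinite nested chain would force $x\in\La_{\Ga}$ by Proposition~\ref{coding}(ii). Let $w=b_1\cdots b_m$ be the longest such word and set $y:=w^{-1}x$. Maximality together with Remark~\ref{rem_coding}(ii) shows $y$ avoids every open generation-one disk. If $x\in D_w^{\circ}$, or if $x\in D_w\setminus D_w^{\circ}$ with $b_m\in S_+$, then $y$ already lies in $F$ (using maximality to exclude the positive closed disks, and Remark~\ref{rem_coding}(ii) for the disk of the last letter). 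The single delicate case is $x\in D_w\setminus D_w^{\circ}$ with $b_m\in S_-$: there $y\in D_{b_m^{-1}}\setminus D_{b_m^{-1}}^{\circ}$ lands in a \emph{positive} closed disk and so is not in $F$, and the remedy is to use the shorter word $b_1\cdots b_{m-1}$, for which $b_m y\in D_{b_m}\setminus D_{b_m}^{\circ}$ lies in a negative disk and hence in $F$. In every case the orbit of $x$ meets $F$, which gives the covering; combined with disjointness, each orbit meets $F$ exactly once.

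The part I expect to be the genuine obstacle is the boundary bookkeeping forced by the non-Archimedean feature that a closed disk and its chosen maximal open sub-disk share the \emph{same} boundary point, so that $D_w\setminus D_w^{\circ}$ is not a thin annulus but a large set containing all the unused residue sub-disks. Matching up covering and disjointness hinges precisely on this set and on the open/closed asymmetry in the definition of $F$, and the case $x\in D_w\setminus D_w^{\circ}$ together with its sign analysis is exactly where the argument must be carried out with care.
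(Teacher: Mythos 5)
Your proof is correct and takes essentially the same approach as the paper's: your covering argument (longest word $w$ with $x\in D_w$, then a one-letter sign correction in the delicate annulus case) is exactly the paper's deepest-disk argument, and your disjointness rests on the same sign contradiction for length-one elements. The only differences are cosmetic — your opening observation $\g z\in D_{\g}\subseteq D_{a_1}$ already rules out all $n\geqslant 2$ directly, since $D_{\g}\subseteq D_{a_1}^{\circ}$ when $\g\neq a_1$, making the push-forward detour and the two vacuous cases unnecessary, and the trivial case where $x$ lies in no Schottky disk at all (so $x\in F$ outright) goes unmentioned.
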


\begin{proof}
Let $x \in \bP^{\pan}_k \backslash  (F \cup \Lambda_{\Ga})$. Since $x$ is not in the limit set, using Proposition \ref{coding}, 
there exists $\g \in \Ga$ such that $x \in D_{\g} \backslash 
\bigcup_{l(\g a)=l(\g)+1} D_{\g a}$, where $\g a$ is a reduced word. 
Then either $x \in D_{\g}^{\circ} \backslash 
\bigcup_{l(\g a)=l(\g)+1} D_{\g a}$, in which case $\g^{-1}x \in (\bigcup_{l(a)=1} D_{a})^c \subseteq F$ (\emph{cf.} (ii) in Remark 
\ref{rem_coding}), or $x \in D_{\g} \backslash D_{\g}^{\circ}.
$ In that case, by \emph{loc.cit.}, there exists $a \in \Ga$ with 
$l(a)=1$ such that $\g^{-1}x \in D_a \backslash D_a^{\circ}$. If 
$a \in \{\g_i^{-1}\}_i$, then $\g^{-1}x \in F$. Otherwise, 
$a^{-1}\g^{-1}x \in D_{a^{-1}} \backslash D_{a^{-1}}^{\circ} 
\subseteq F$. 
  
Assume there exist $x \in F$ and $\g \in \Ga$ such that $\g x \in F$. Clearly, this implies $l(\g)=1$.
By assertion  (iii) of Remark \ref{rem_coding}, $x \in \bigcup_{l(a)=1} D_a \backslash D_a^{\circ}$, and so $\g=a^{-1}$. As $x \in F$, $a \in \{\g_i^{-1}\}_i$, and then $a^{-1} x \in D_{a^{-1}} \backslash D_{a^{-1}}^{\circ}$, with $a^{-1} \in \{\g_i\}_i$, so $\g x \not \in F$, contradiction. 
\end{proof}

{The quotient $(\bP^{\pan}_k \backslash \La_{\Ga})/ \Ga$ is a proper analytic curve of genus $g$; it is a compact Riemann surface when $k$ is Archimedean, and a Mumford curve when $k$ is non-Archimedean.}  

\begin{thm} {Assume $k$ is non-Archimedean.}
Given a Mumford curve $C$ over $k$, its fundamental group $\Ga$ 
is a Schottky group over $k$. Moreover, $C^{\an} \cong
(\bP^{\pan}_k \backslash \La_{\Ga})/{\Ga}$, where $C^{\an}$ is the Berkovich analytification of $C$. 
\end{thm}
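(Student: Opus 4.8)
The plan is to establish the converse to the statement just recalled (that a Schottky quotient is a Mumford curve): from a Mumford curve $C$ we must manufacture a Schottky group $\Ga \subseteq \PGL_2(k)$ together with an isomorphism $C^{\an} \cong (\bP^{\pan}_k \backslash \La_{\Ga})/\Ga$. The starting point is the hypothesis that $C$ has split degenerate stable reduction: this furnishes a semistable formal model over the valuation ring of $k$ whose special fibre is a union of projective lines meeting transversally at $k$-rational nodes, and whose dual graph is the minimal skeleton $S$ of $C^{\an}$. Since $C$ is a Mumford curve, all of its genus is carried by the loops of $S$, so that $S$ is a finite connected metric graph with first Betti number $b_1(S) = g$.

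First I would pass to the topological universal cover $p \colon \widetilde{C} \to C^{\an}$. Because $C^{\an}$ deformation retracts onto $S$ (cf.\ the retractions used in the proof of Lemma \ref{finer}) and $\pi_1(S)$ is free of rank $g = b_1(S)$, the deck transformation group $\Ga := \pi_1(C^{\an})$ is free of rank $g$, and $\widetilde{C}$ is a simply connected analytic curve whose skeleton is the universal cover of $S$, hence a tree. The analytic heart of the argument is then to realize $\widetilde{C}$ as an open subset $\Omega \subseteq \bP^{\pan}_k$ in a $\Ga$-equivariant way. Locally $C^{\an}$, and hence $\widetilde{C}$, is isomorphic to opens of $\bP^{\pan}_k$; writing $\widetilde{C}$ as an increasing union of such charts glued along open annuli (the preimages of the edges of $S$ under retraction), one produces an open immersion $\widetilde{C} \hookrightarrow \bP^{\pan}_k$ with image $\Omega$. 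The deck transformations permute these charts and restrict to analytic isomorphisms between annuli and disk complements; since every analytic automorphism of such a configuration inside $\bP^{\pan}_k$ is induced by a Möbius transformation, one obtains an injection $\Ga \hookrightarrow \PGL_2(k)$ under which the action on $\Omega$ is the restriction of the action on $\bP^{\pan}_k$. This construction, equivalently Mumford's formal uniformization reworked in the Berkovich setting by Poineau and Turchetti, carries all the real content, and I expect it to be the main obstacle.

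Finally I would verify the Schottky axioms and identify the limit set. Each generator $\g_i$ of $\Ga$ corresponds to a loop of $S$; its holonomy translates along the axis obtained by lifting that loop, so $\g_i$ fixes exactly two points of $\bP^1(k)$ and has eigenvalues of distinct absolute value, i.e.\ it is loxodromic with translation length equal to the length of the loop. Choosing a spanning tree of $S$, the $g$ complementary edges give rise, after lifting, to pairwise disjoint closed disks $D_{\g_i}, D_{\g_i^{-1}}$ satisfying the ping-pong relations \eqref{eq_pingpong}; this is a Schottky figure adapted to $(\g_1, \dots, \g_g)$, so Theorem \ref{Gerritzen} applies and $\Ga$ is a Schottky group over $k$. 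The open $\Omega$ is $\Ga$-invariant, connected, and the action on it is free and proper, whence $\Omega = \bP^{\pan}_k \backslash \La_{\Ga}$ by the characterization of the limit set as the complement of the domain of discontinuity. As $p$ is the quotient map for the $\Ga$-action, it descends to the desired isomorphism $C^{\an} \cong (\bP^{\pan}_k \backslash \La_{\Ga})/\Ga$, and Lemma \ref{domain} provides the explicit fundamental domain confirming that the quotient has genus $g$.
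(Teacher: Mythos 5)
First, a point of comparison: the paper does not prove this theorem at all. It is quoted as known, with the remark immediately following it attributing the result to Mumford \cite{mumford} and to the Berkovich-theoretic proof of Poineau--Turchetti \cite[Theorem II.4.3]{poineau_turcheti_2}. So your proposal cannot be measured against an argument in the paper; it has to stand on its own as a reconstruction of Mumford's theorem, and as such it has genuine gaps.

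The central gap is the step you yourself flag as the main obstacle: producing a $\Ga$-equivariant open immersion $\widetilde{C} \hookrightarrow \bP^{\pan}_k$. As written ("writing $\widetilde{C}$ as an increasing union of such charts glued along open annuli \dots one produces an open immersion") this is an assertion, not an argument, and it is precisely the content of the theorem. Note that gluing opens of $\bP^{\pan}_k$ along annuli does \emph{not} in general yield an open subset of $\bP^{\pan}_k$ --- the analytification $C^{\an}$ itself is obtained by exactly such a gluing and is not an open of the line --- so simple-connectedness of $\widetilde{C}$ must enter in an essential and nontrivial way; this is what Mumford's formal-scheme argument and Poineau--Turchetti's analytic argument actually accomplish. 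Two further steps are also asserted where they need proof. First, the claim that every analytic automorphism of the relevant configuration "is induced by a Möbius transformation": an automorphism of a proper open subset $\Omega \subsetneq \bP^{\pan}_k$ does not automatically extend to $\bP^{\pan}_k$; one needs to know, for instance, that $\bP^{\pan}_k \setminus \Omega$ consists of type~1 points and invoke an extension result, or else build the $\PGL_2(k)$-action in from the start as Poineau--Turchetti do. Second, the identification $\Omega = \bP^{\pan}_k \setminus \La_{\Ga}$ does not follow merely from $\Omega$ being a connected invariant open with free and proper action (Definition \ref{schottky} only requires \emph{some} such open to exist); you need a maximality argument, e.g.\ that $(\bP^{\pan}_k \setminus \La_{\Ga})/\Ga$ is a proper connected curve containing the compact curve $\Omega/\Ga = C^{\an}$ as an open analytic subset, forcing equality. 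Your outer scaffolding (free fundamental group of rank $g = b_1(S)$, loxodromy of the holonomies with translation length the loop length, a Schottky figure from a spanning tree feeding into Theorem \ref{Gerritzen}) is the right shape and matches the known proof, but the analytic core is missing, so the proposal as it stands does not constitute a proof.
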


\begin{rem} 
This result was initially shown by Mumford in \cite{mumford}; recently, Poineau and Turchetti obtain a proof using the formalism of Berkovich spaces in \cite[Theorem II.4.3]{poineau_turcheti_2}. 
\end{rem}

We conclude this section with an important consequence of Lemma \ref{domain} and a formula that we will need later on.
Let $x_a$ denote the unique boundary point of the Schottky disk~$D_a$ for~${a \in \Ga \backslash \{\mathrm{id}\}}$. 
\begin{lem} \label{lem_length_expansion} Assume $k$ is non-Archimedean. Let $x,y  \in \Hy_k \cap F$. 
Let $\g=a_1 a_2 \cdots a_n \in \Ga$ be a reduced word of length $n \geq 1$. Then 
$$\rho(x, \g y)=\rho(x, x_{a_1}) + \sum_{m=2}^{n}\rho(x_{a_{m-1}^{-1}}, x_{a_{m}}) + \rho(x_{a_{n}^{-1}}, y),$$
where $\rho$ denotes the interval-length metric on $\Hy_k$ from Section \ref{section_hyperbolic}.
\end{lem}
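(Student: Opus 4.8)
The plan is to exploit that $\bP^{\pan}_k$ is a real tree on which $\rho$ is a length (interval-length) metric, so that $\rho$ is additive along injective paths. The formula will then drop out by splitting the geodesic $[x,\g y]$ at the boundary points of the nested Schottky disks and simplifying each resulting segment via the $\PGL_2(k)$-invariance of $\rho$.

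First I would introduce the intermediate points $z_0:=x$, $z_{n+1}:=\g y$, and for $1\le m\le n$ set $z_m:=\partial D_{a_1\cdots a_m}$, the boundary point of the $m$-th generation Schottky disk associated to the prefix $a_1\cdots a_m$. By Definition \ref{defi_coding} one has $z_m=a_1\cdots a_{m-1}\,x_{a_m}$. The geometric core of the argument is then to prove that $z_m\in[z_{m-1},\g y]$ for every $m=1,\dots,n$. The ingredients are: by Proposition \ref{coding}(i) the disks are strictly nested, so that $D_\g\subseteq D_{a_1\cdots a_m}^{\circ}$ for $m<n$ while $D_\g=D_{a_1\cdots a_n}$; since $y\in F$ lies outside every open disk $D_a^{\circ}$ with $l(a)=1$, Remark \ref{rem_coding}(iii) gives $\g y\in D_{a_1\cdots a_m}$. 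On the other hand $z_{m-1}=\partial D_{a_1\cdots a_{m-1}}\notin D_{a_1\cdots a_{m-1}}^{\circ}\supseteq D_{a_1\cdots a_m}$, because an open disk does not contain its own (unique) boundary point; for $m=1$ this reads $x\notin D_{a_1}^{\circ}$, which holds since $x\in F$. As $z_m=\partial D_{a_1\cdots a_m}$ is a single point, it is a cut point of the tree separating $D_{a_1\cdots a_m}$ from its complement. Since $\g y$ lies in $D_{a_1\cdots a_m}$ while $z_{m-1}$ lies outside it, the injective path $[z_{m-1},\g y]$ is forced to pass through $z_m$. Degenerate configurations, where some point lands on a boundary so that a segment has length $0$, are harmless for what follows.

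Having established the ordering, additivity of $\rho$ along $[x,\g y]$ telescopes to $\rho(x,\g y)=\sum_{m=0}^{n}\rho(z_m,z_{m+1})$, all points involved being of type $2$ or $3$, hence in $\Hy_k$ where $\rho$ is a genuine metric. It then remains to compute each term using $\PGL_2(k)$-invariance together with the identity $a^{-1}x_a=x_{a^{-1}}$ recorded in Remark \ref{rem_coding}(ii). The boundary term is immediately $\rho(z_0,z_1)=\rho(x,x_{a_1})$. For $1\le m\le n-1$, applying $(a_1\cdots a_m)^{-1}$ sends $z_m\mapsto a_m^{-1}x_{a_m}=x_{a_m^{-1}}$ and $z_{m+1}\mapsto x_{a_{m+1}}$, so $\rho(z_m,z_{m+1})=\rho(x_{a_m^{-1}},x_{a_{m+1}})$; applying $\g^{-1}$ to the last pair sends $z_n\mapsto a_n^{-1}x_{a_n}=x_{a_n^{-1}}$ and $\g y\mapsto y$, giving $\rho(z_n,z_{n+1})=\rho(x_{a_n^{-1}},y)$. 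Re-indexing the interior terms by $m\mapsto m+1$ reproduces exactly the claimed sum $\rho(x,x_{a_1})+\sum_{m=2}^{n}\rho(x_{a_{m-1}^{-1}},x_{a_m})+\rho(x_{a_n^{-1}},y)$.

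I expect the main obstacle to be the middle step, namely establishing the collinearity and correct ordering of the points $z_0,z_1,\dots,z_{n+1}$ along $[x,\g y]$; once this is in place, the additivity of $\rho$ and the invariance computations are routine. The cleanest way to secure the ordering is the separation argument above (each $z_m$ is a cut point with $z_{m-1}$ and $\g y$ on opposite sides), which avoids having to track explicit coordinates for the disks and relies only on the nesting of Proposition \ref{coding} and the membership facts from Remark \ref{rem_coding}.
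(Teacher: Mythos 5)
Your overall route is the same as the paper's: decompose $[x,\g y]$ at the boundary points of the nested disks $D_{a_1},D_{a_1a_2},\dots,D_{a_1\cdots a_n}$, telescope $\rho$ along the path, and convert each segment by $\PGL_2(k)$-invariance together with $a^{-1}x_a=x_{a^{-1}}$. The telescoping and the invariance computations in your last paragraph are fine. The problem is in your separation step at $m=1$. Your argument for $m\geqslant 2$ correctly yields $z_{m-1}\notin D_{a_1\cdots a_m}$ (via $D_{a_1\cdots a_m}\subseteq D^{\circ}_{a_1\cdots a_{m-1}}$), which matches the closed-disk separation you invoke. But for $m=1$ you only establish $x\notin D_{a_1}^{\circ}$, and that is genuinely weaker than the statement your separation needs, namely $x\notin D_{a_1}$. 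Indeed, since $F=\bP^{\pan}_k\setminus(\bigcup_i D_{\g_i}\cup\bigcup_i D^{\circ}_{\g_i^{-1}})$ is \emph{asymmetric}, a point of $F$ may perfectly well lie in $D_{\g_i^{-1}}\setminus D^{\circ}_{\g_i^{-1}}$; so when $a_1\in\{\g_i^{-1}\}_i$ the claim ``$x$ lies outside $D_{a_1}$'' can fail, and then knowing only $\g y\in D_{a_1}$ and $x\notin D^{\circ}_{a_1}$ does not force $x_{a_1}\in[x,\g y]$: both points could a priori sit in $D_{a_1}\setminus D^{\circ}_{a_1}$, possibly in the same maximal open subdisk of $D_{a_1}$, in which case the path $[x,\g y]$ would miss $x_{a_1}$ entirely.

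The gap is easy to close when $n\geqslant 2$: there your own nesting observation gives $\g y\in D_{\g}\subseteq D_{a_1a_2}\subseteq D^{\circ}_{a_1}$, so one can separate with the \emph{open} disk $D^{\circ}_{a_1}$ (whose unique boundary point is again $x_{a_1}$) instead of the closed one. But when $n=1$ this is not available, and an extra argument is required, which is exactly what the first paragraph of the paper's proof supplies: if $x_{a_1}\notin[x,\g y]$, then $x$ and $\g y$ must both lie in $D_{\g}\setminus D^{\circ}_{\g}$, whence $y=\g^{-1}(\g y)\in D_{\g^{-1}}\setminus D^{\circ}_{\g^{-1}}$ by Remark~\ref{rem_coding}~(ii); but $x\in F\cap D_{\g}$ forces $\g\in\{\g_i^{-1}\}_i$, while $y\in F\cap D_{\g^{-1}}$ forces $\g^{-1}\in\{\g_i^{-1}\}_i$, a contradiction. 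Your remark about ``degenerate configurations'' of length $0$ does not cover this situation, since the failure mode is not a point landing on a boundary but two points landing strictly on the same side of it. So the proof as written has a genuine, though localized and fixable, gap precisely in the case where $a_1$ is an inverse generator (and in particular for $n=1$).
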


\begin{proof}
Let $[x, \g y]$ be the unique injective path in $\bP^{\pan}_k$ connecting $x$ and $\g y$. If $n=1$, then $\partial{D}_{\g} \in [x, \g y]$. Otherwise, since $\gamma y \in D_\g$, one has $ x, \g y \in D_{\g} \backslash D_{\g}^{\circ}$, which implies $y \in D_{\g^{-1}} \backslash D_{\g^{-1}}^{\circ}$ by Remark \ref{rem_coding} (ii). This is impossible seeing as $x,y \in F$, and $x \in D_{\g}, y \in D_{\g^{-1}}$. 

If $n \geqslant 2$, then $\g y \in D_{\g} \subsetneq D_{a_1}^{\circ}$ and $x \not \in D_{a_1}^{\circ}$, so the boundary point $x_{a_1}$ is contained in $[x, \g y].$ Similarly, $\g y \in D_{a_1a_2}$ and $x_{a_1} \not \in D_{a_1a_2}$, implying $x_{a_1a_2} \in [x_{a_1}, \g y]$. Continuing with this reasoning, we obtain that for any $m \in \{2,\dots, n\}$,
$x_{a_1\cdots a_m} \in [x_{a_1\cdots a_{m-1}}, \g y]$. Thus,

\smallskip

{\centering
  $ \displaystyle
    \begin{aligned}
 \rho(x, \g y)=\rho(x, x_{a_1}) + \sum_{m=2}^n \rho(x_{a_1\cdots a_{m-1}}, x_{a_1 \cdots a_{m}}) + {\rho(x_{a_1\cdots a_{n}}, \g y)}.
\end{aligned}
  $ 
\par}

\smallskip
\noindent For $m=3,4,\dots, n,$ as $D_{a_1\cdots a_{m-1}}=a_1a_2 \cdots a_{m-2} D_{a_{m-1}}$ and $D_{a_1 \cdots a_m}=a_1a_2 \cdots a_{m-2} D_{a_{m-1}a_m}$, by the $\PGL_2$-invariance of $\rho$, we obtain $\rho(x_{a_1\cdots a_{m-1}}, x_{a_1 \cdots a_{m}})=\rho(x_{a_{m-1}}, x_{a_{m-1}a_{m}}).$

Moreover, as $a_{m-1}^{-1}x_{a_{m-1}}=x_{a_m^{-1}}$ (\emph{cf.} (ii) of Remark \ref{rem_coding}), one has  $\rho(x_{a_{m-1}}, x_{a_{m-1}a_{m}})=\rho(x_{a_{m-1}^{-1}}, x_{a_m})$ for any $m=2,3,\dots, n$. Thus,
$\rho(x, \g y)=\rho(x, x_{a_1}) + \sum_{m=2}^n \rho(x_{a_{m-1}^{-1}}, x_{a_m}) + \rho(x_{a_1\cdots a_{n-1}}, \g y).$
Similarly, {$\rho(x_{a_1\cdots a_{n}}, \g y)=\rho(x_{a_n^{-1}}, y)$}, thus concluding the proof. 
\end{proof}

\begin{lem} \label{lem_calc_2} Assume $k$ is non-Archimedean and let $x \in \mathbb{H}_k \setminus (\bigcup_{i=1}^g (D_{\gamma_i} \cup D_{\gamma_i^{-1}})) $. Let $\alpha, \alpha_1, \dots, \alpha_{N-1}, \beta \in \Ga$ be reduced words of length $m$.  Assume  there exist letters $a_1, a_2, \dots, a_{N+m}$ in $\Ga$ such that $\alpha_i=a_{i+1}a_{i+2} \cdots a_{m+i}$ for $i=0,1,\dots, N$, where $\alpha_0:=\alpha$ and $\alpha_N:=\beta$. Then one has:

{\centering
  $ \displaystyle
    \begin{aligned}
 \rho(x, a_{N+1}x)-\rho(x, a_1\cdots a_{N+1} x) &=\rho(x, {x}_{a_{N+1}}) - \rho(x, {x_{a_1}})- \sum_{i=1}^N \rho({x_{a_{i}^{-1}}}, {x_{a_{i+1}}}).
\end{aligned}
  $ 
\par}
\end{lem}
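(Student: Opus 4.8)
The plan is to deduce the identity from two applications of Lemma~\ref{lem_length_expansion}, which computes $\rho(x, \g y)$ as a sum of $\rho$-distances between boundary points of Schottky disks, provided $x, y \in \Hy_k \cap F$ and $\g$ is a reduced word. Since the right-hand side of the desired formula is already expressed entirely in terms of such boundary-point distances, the whole proof should amount to writing down both expansions and subtracting.

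First I would check that the hypotheses of Lemma~\ref{lem_length_expansion} are met with $y = x$. By assumption $x \in \Hy_k$ and $x$ avoids all the closed disks $D_{\g_i}$ and $D_{\g_i^{-1}}$. Since $F = \bP^{\pan}_k \setminus \bigl(\bigcup_{i=1}^g D_{\g_i} \cup \bigcup_{i=1}^g D_{\g_i^{-1}}^{\circ}\bigr)$ and $D_{\g_i^{-1}}^{\circ} \subseteq D_{\g_i^{-1}}$, we get $x \in \Hy_k \cap F$, so the lemma applies to any reduced word acting on $y = x$. Second, and this is the only genuine point to verify, I would show that the concatenation $a_1 a_2 \cdots a_{N+1}$ is a reduced word of length $N+1$. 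Here the sliding-window hypothesis is exactly what is needed: for each $j \in \{1, \dots, N\}$ the pair $a_j a_{j+1}$ occurs as the first two letters of $\alpha_{j-1} = a_j a_{j+1} \cdots a_{m+j-1}$ (valid since $j-1$ ranges in $\{0,\dots,N-1\}$ and $m \geq 2$), and as $\alpha_{j-1}$ is reduced we conclude $a_{j+1} \neq a_j^{-1}$. Hence no cancellation occurs and $a_1 \cdots a_{N+1}$ is reduced.

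With these two points in hand, the computation is mechanical. Applying Lemma~\ref{lem_length_expansion} to the single letter $\g = a_{N+1}$ (the $n=1$ case, in which the telescoping sum is empty) gives
$$\rho(x, a_{N+1} x) = \rho(x, x_{a_{N+1}}) + \rho(x_{a_{N+1}^{-1}}, x),$$
while applying it to $\g = a_1 \cdots a_{N+1}$ and reindexing the telescoping sum by $i = \mu - 1$ gives
$$\rho(x, a_1 \cdots a_{N+1} x) = \rho(x, x_{a_1}) + \sum_{i=1}^{N} \rho(x_{a_i^{-1}}, x_{a_{i+1}}) + \rho(x_{a_{N+1}^{-1}}, x).$$
Subtracting the two expressions, the common term $\rho(x_{a_{N+1}^{-1}}, x)$ cancels and one is left with exactly
$$\rho(x, a_{N+1} x) - \rho(x, a_1 \cdots a_{N+1} x) = \rho(x, x_{a_{N+1}}) - \rho(x, x_{a_1}) - \sum_{i=1}^{N} \rho(x_{a_i^{-1}}, x_{a_{i+1}}),$$
which is the claim.

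The main (and really only) obstacle is the reducedness verification for $a_1 \cdots a_{N+1}$, since Lemma~\ref{lem_length_expansion} is stated only for reduced words; everything else is bookkeeping followed by a clean cancellation. I would also note the degenerate case $N = 0$, where $\alpha = \beta = a_1$, the empty sum vanishes, and both sides are trivially zero, so it may be recorded separately.
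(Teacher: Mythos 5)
Your proposal is correct and takes essentially the same route as the paper: both proofs rest entirely on Lemma~\ref{lem_length_expansion}, the only cosmetic difference being that you cite its $n=1$ case for $\rho(x, a_{N+1}x)$ and subtract two expansions, while the paper expands only the long word and re-derives the $n=1$ identity inline from the fact that $x_{a_{N+1}}$ lies on $[x, a_{N+1}x]$ together with $\PGL_2(k)$-invariance. Your explicit check that $a_1\cdots a_{N+1}$ is reduced (which requires $m \geqslant 2$) makes precise a point the paper's proof uses tacitly; it is in any case guaranteed in the lemma's application (Lemma~\ref{calc}), where reducedness of $a_1\cdots a_{N+m}$ is part of the hypothesis.
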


\begin{proof}
Let $S$ denote the {expression} on the right hand side. Since $x \in \Hy_k \cap F$, by Lemma \ref{lem_length_expansion}:
$$ \rho(x , a_1 \cdots a_{N+1}x) = \rho(x, x_{a_1})+\rho(x, {x_{a_{N+1}^{-1}}}) +\sum_{i=1}^N \rho({x_{a_i^{-1}}}, {x}_{a_{i+1}}),
$$
so $\sum_{i=1}^N \rho({x_{a_i^{-1}}}, {x_{a_{i+1}}})= \rho(x, a_1\cdots a_{N+1} x)-\rho(x, {x_{a_1}})-\rho(x, {x_{a_{N+1}^{-1}}}).$
Consequently, $$S=\rho(x, {x_{a_{N+1}}}) -\rho(x, {x_{a_1}}) + \rho(x, {x_{a_1}}) + \rho(x, {x_{a_{N+1}^{-1}}})-\rho(x, a_1\cdots a_{N+1} x).$$ As $\rho(x, {x_{a_{N+1}}})+ \rho(x, x_{a_{N+1}^{-1}})=\rho(x, {x_{a_{N+1}}})+\rho(a_{N+1}x, {x_{a_{N+1}}})$ and $x \not \in D_{a_{N+1}}$ but $a_{N+1}x \in D_{a_{N+1}},$ we have $\rho(x, {x_{a_{N+1}}})+\rho(a_{N+1}x, {x_{a_{N+1}}})=\rho(x, a_{N+1}x)$, and we may conclude.
\end{proof}

\subsection{Gromov compactification versus Berkovich topology}
\label{gromov}
When $k = (\C, |\cdot|_\infty)$ {(respectively $k = (\R, |\cdot|_\infty)$)}, one can take $(\Hy_k, \rho)$ to be the hyperbolic three space {(respectively the hyperbolic two-space)} endowed with its hyperbolic metric and take $\bP^1(\C)$ {(respectively~$\bP^1(\R)$)} as its boundary.
A construction due to Gromov allows one to recover the Hausdorff topology  on the Riemann sphere or the unit circle using the Gromov product (see \cite[Chapter III.H]{bridson_haefliger}).   
We apply this construction to the case of $(\Hy_k,\rho)$ with $k$ a non-Archimedean field (\emph{cf.} Section \ref{section_hyperbolic}), and compare it with the Berkovich projective line $\bP^{\pan}_k$ and its topology. See also \mbox{\cite[3.4]{das_simmons_urbanski}}. 

Unless stated otherwise, assume $k$ is a non-Archimedean field. For $x,y,z \in \Hy_k$, let $x \wedge_z y$ be the retraction of $z$ on the unique injective path $[x,y]$ in $\Hy_k$ joining $x$ and $y$; it is at minimal~$\rho$-distance from the point $z$. Let us fix a point $\eta \in \Hy_k$. 

\begin{defi}
The \textit{Gromov boundary} $\partial_{G} \Hy_k$ of $\Hy_k$ is the quotient of sequences $(x_n)_n \subseteq \Hy_k$ such that $\lim_{n,m \rightarrow +\infty}\rho(\eta , x_n\wedge_{\eta} x_m) = +\infty$ by the equivalence relation for which two sequences $(x_n)_n, (y_n)_n$ in $\Hy_k$ are  equivalent if 
\begin{equation*}
\lim_{n,m \rightarrow +\infty}    \rho(\eta, x_n \wedge_{\eta} y_m)= +\infty. 
\end{equation*}

The \emph{Gromov compactification} of $\Hy_k$ is defined to be the disjoint union $\Hy_k \cup \partial_G{\Hy_k}$ with the following topology: on $\Hy_k$ the topology induced from $\rho$, and for $\partial_{G}\Hy_k$, a basis of neighborhoods for $[(x_n)_n] \in \partial_G{\Hy_k}$ is given by the sets: 
\begin{equation}
    N_t([(x_n)_n]) := \{ y \in \Hy_k \cup \partial_G \Hy_k \  | \liminf_{n,m \rightarrow +\infty} \rho(\eta , x_n \wedge_\eta y_m) > t \}, t>0,  
\end{equation}
where if $y \in \Hy_k$, then $y=:y_m$ for all $m$, and if $y \in \partial_G{\Hy_k}$, then $(y_n)_n$ is such that $[(y_n)_n]=y$. 
\end{defi}  

Note that one can prove, as in \cite[Remark 3.4.5]{das_simmons_urbanski}, that the Gromov compactification does not depend on the point $\eta$, and nor do the sets $N_t([(x_n)_n])$ depend on the representative of $[(x_n)_n]$.  In particular, $x_m \rightarrow [(x_n)_n]$ as $m \rightarrow +\infty$ in the Gromov topology.

Let $[(x_n)_n] \in \partial_G{\Hy_k}.$ As $\bP^{\pan}_k$ is compact,  $(x_n)_n$ has a convergent subsequence. Let $x$ be its limit.

\begin{lem}
The function $\varphi: \partial_G \Hy_k \rightarrow \bP^{\pan}_k \backslash \Hy_k$, $[(x_n)_n] \mapsto x,$ is well-defined. Moreover, the sequence $(x_n)_n$ converges to $x$ in $\bP^{\pan}_k.$
\end{lem}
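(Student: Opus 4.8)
\emph{Plan.} My plan is to prove a single stronger statement from which everything follows at once: the \emph{entire} sequence $(x_n)_n$ converges in the Berkovich topology to one point $\xi$ of type $1$ that depends only on the class $[(x_n)_n]$. This simultaneously yields that $\varphi$ is well defined (the limit is independent of the extracted subsequence), that $\varphi$ takes values in $\bP^{\pan}_k \setminus \Hy_k$, and the ``Moreover'' assertion. I work throughout in the compact real tree $\bP^{\pan}_k$, and I normalize coordinates (after a harmless finite base change, as in the definition of $\rho$ in Section~\ref{section_hyperbolic}) so that the finitely many relevant points lie in the open unit disk with $\eta$ the Gauss point, so that Remark~\ref{rem_radius} applies.

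First I would exploit the tree structure to extract a limiting direction. Writing $(x_n\mid x_m)_\eta := \rho(\eta, x_n \wedge_\eta x_m)$ for the Gromov product, the $\R$-tree inequality $(x_n\mid x_l)_\eta \geq \min\{(x_n\mid x_m)_\eta,\ (x_m\mid x_l)_\eta\}$ together with the defining condition $(x_n\mid x_m)_\eta \to +\infty$ shows that for every $T>0$ there is $N_T$ such that, for all $n,m \geq N_T$, the geodesics $[\eta,x_n]$ and $[\eta,x_m]$ share a common initial segment of $\rho$-length at least $T$. Moreover $(x_n\mid x_m)_\eta \leq \rho(\eta,x_n)$ forces $\rho(\eta,x_n) > T$, so each such $x_n$ lies strictly beyond the point $q_T$ situated at $\rho$-distance $T$ from $\eta$ along this common segment. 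These segments are nested and consistent, hence define a single geodesic ray $R = [\eta,\xi)$ of infinite $\rho$-length whose retraction points are the $q_T$.

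Next I would identify the endpoint and upgrade the convergence. Since along $R$ the radii are $e^{-T} \to 0$ (Remark~\ref{rem_radius}), the nested closed disks $\bar{D}_T$ cut out by $q_T$ on the side away from $\eta$ have radius tending to $0$; as $\bP^{\pan}_k$ is compact their centers form a Cauchy sequence in the (complete) ground field, so $\bigcap_T \bar{D}_T = \{\xi\}$ is a single point with $\rho(\eta,\xi) = +\infty$, whence $\xi$ is of type $1$ by the characterization of type $1$ points as the points at infinite $\rho$-distance (Section~\ref{section_hyperbolic}); that is, $\xi \in \bP^{\pan}_k \setminus \Hy_k$. The open disks $D^{\circ}_T$ (the far sides of the $q_T$) are Berkovich-open, shrink to $\xi$, and form a neighborhood basis of $\xi$. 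Given any Berkovich neighborhood $U \ni \xi$, I choose $T$ with $D^{\circ}_T \subseteq U$; for $n \geq N_T$ the geodesic $[\eta,x_n]$ agrees with $R$ past $q_T$, so $x_n$ lies on the far side, i.e. $x_n \in D^{\circ}_T \subseteq U$. Hence $x_n \to \xi$ in $\bP^{\pan}_k$, and in particular every convergent subsequence has limit $\xi$, so $\varphi([(x_n)_n]) = \xi$ is unambiguous.

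Finally, for independence of the representative, if $(y_n)_n$ is equivalent to $(x_n)_n$, i.e. $(x_n\mid y_m)_\eta \to +\infty$, the same median argument shows $[\eta,x_n]$ and $[\eta,y_m]$ share initial segments of unbounded length, so $(y_n)_n$ determines the \emph{same} ray $R$, hence the same endpoint $\xi$; thus $\varphi$ descends to $\partial_G \Hy_k$. The main obstacle is precisely the passage carried out in the third paragraph: the Gromov product, the medians, and the ray $R$ are all governed by the $\rho$-metric, whereas the convergence to be established is for the strictly coarser Berkovich topology (Lemma~\ref{finer}). The crux is therefore to verify that receding retraction points along a coherent $\rho$-direction cut out nested \emph{Berkovich}-open disks that shrink to a single type $1$ point and form a neighborhood basis of it, so that $\rho$-coherence genuinely upgrades to Berkovich convergence; the remaining ingredients (the coordinate and base-change normalization, and the bookkeeping of ends of the tree) are routine.
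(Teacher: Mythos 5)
Your construction takes a genuinely different route from the paper's. The paper starts from a subsequential limit $x$ supplied by compactness and repeatedly argues by contradiction: it first shows that the retraction sequences $(r_y(x_n))_n$ onto segments $[\eta,y]$ are eventually constant, uses this to rule out $x \in \Hy_k$, then upgrades subsequential convergence to convergence of the whole sequence via affinoid neighborhoods with finite boundary in $\Hy_k$, and finally treats independence of the representative separately. You instead build the limit object forwards: the divergence of the Gromov products produces a single ray $R$ of infinite $\rho$-length issued from $\eta$, its end $\xi$ is pinned down as the intersection of the nested far-side components along $R$, and full Berkovich convergence, type-$1$-ness, and well-definedness all fall out of this one construction. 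This architecture is sound and arguably cleaner: the ray construction, the identification of $\bigcap_T \bar D_T$ as a single type $1$ point via the infinite-distance characterization recalled in Section~\ref{section_hyperbolic}, and the equality of rays for equivalent representatives are all correct.

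However, the step you yourself flag as the crux --- that the far-side components $D^\circ_T$ form a basis of \emph{Berkovich} neighborhoods of $\xi$ --- is asserted rather than proved, and it is exactly where the content of the lemma lies, namely in bridging the $\rho$-topology with the strictly coarser Berkovich topology (Lemma~\ref{finer}). The claim is true and can be established with the very tools the paper uses: take a connected affinoid neighborhood $U$ of $\xi$ with $\eta \notin U$ and finite boundary $\partial U \subseteq \Hy_k$; if $D^\circ_T \not\subseteq U$, connectedness of $D^\circ_T$ forces $D^\circ_T \cap \partial U \neq \emptyset$, whereas $b \in D^\circ_T$ implies $q_T \in [\eta,b]$ and hence $\rho(\eta,b) \geqslant T$; so any $T > \max_{b \in \partial U} \rho(\eta,b)$ (finite, since $\partial U \subseteq \Hy_k$) gives $D^\circ_T \subseteq U$. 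Without this argument, or an explicit appeal to the structure theory of $\bP^{\pan}_k$, your proof is incomplete. A second, more local, flaw: the uniqueness of $\xi$ via ``centers forming a Cauchy sequence in the ground field'' fails when $k$ is not algebraically closed, since points of $R$ need not be of the form $\eta_{a,r}$ with $a \in k$ and $\xi$ need not lie in $\bP^1(k)$. Replace it by: every point of $\bigcap_T \bar D_T$ lies at infinite $\rho$-distance from $\eta$, hence is of type $1$; if there were two distinct such points $z \neq z'$, then $z \wedge_\eta z'$ would also be at infinite distance, hence of type $1$, yet interior to a path, contradicting that type $1$ points are extremities of the tree.
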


\begin{proof}

For $y \in \Hy_k$, let $r_y$ denote the retraction of $\bP^{\pan}_k$ to $[\eta, y]$ (\emph{cf.} \cite[1.5.14]{ducros}).  The sequence $(r_y(x_n))_n$ in $[\eta, y]$ is eventually constant: otherwise, let us restrict to a subsequence $(u_n)_n$ where all terms are different. Let $(x_n')_n$ be a subsequence of $(x_n)_n$ such that $r_y(x_n')=u_n$. Then $x_n' \wedge_{\eta} x_m' \in \{u_n, u_m\},$ so $\rho(\eta, x_n' \wedge_{\eta} x_m') \leqslant \rho(\eta, y)$ for all $m, n$, impossible as the sequence $(\rho(\eta, x_n' \wedge_{\eta} x_m'))_{m, n}$  is unbounded. We may thus assume that $(r_y(x_n))_n$ is constant.

In particular, if $x \in \Hy_k$, then let $p \in [\eta, x]$ denote the point $r_x(x_n), n \in \N.$ As the Gromov topology does not depend on $\eta$, we may assume $x \neq \eta.$ 
Now let $(y_n)_n$ be a subsequence of~$(x_n)_n$ converging towards $x$ in $\bP^{\pan}_k$. If $r_x(y_n)=p\neq x$, then for a connected neighorhood $U$ of $x$ such that $p \not \in x$, one has $y_n \not \in U$ for all $n$, impossible. Thus, $p=x$, and so $(x_{n})_n$ is contained in branches $b$ emanating from $x$ and which are different from the one corresponding to $[\eta, x]$. If two such branches contained each a subsequence $(s_n)_n$ and $(t_n)_n$ of $(x_{n})_n$, then $\rho(\eta, s_n \wedge_{\eta} t_n)=\rho(\eta, x)$ for all $n$, which contradicts the assumption $[(x_n)_n] \in \partial_G \Hy_k$. Thus, there exists a unique such branch $b$ containing almost all terms of the sequence $(x_{n})_n$.

For a fixed $N \in \N,$ set $w:=x_{N} \wedge_{\eta} x_{N+1} \in b$. We may assume the sequence $r_{w}(x_n)$ to be constant. By construction, $q:=r_{w}(x_n) \in [x,w] \backslash \{x\}$. Let $C$ be the connected component of~$\bP^{\pan}_k \backslash \{q\}$ containing $x$. Then $(y_n)_n \not \subseteq C$, impossible, seeing as $y_n \rightarrow x$. Thus, $x \not \in \Hy_k.$

Let us show that $x_n \rightarrow x$ as $n \rightarrow +\infty$ 
in $\bP^{\pan}_k$. Let $U$ be an affinoid neighborhood of $x$ not 
containing $\eta$; we recall that its boundary $\partial{U}$ is a 
finite set of points in~$\Hy_k$. Let ${\{b_1, b_2, \dots, 
b_m\}:=r_{x}(\partial{U})}$. As $\eta \not \in U$, the set $
\partial{U} \cap [\eta, x]$ is non-empty; let $b_1$ be the 
corresponding intersection point. Let us also suppose that $b_i \in 
[\eta, b_m]$ for all $i=1,2,\dots, m-1$.

Assume there exists a subsequence $(z_n)_n$ of $(x_n)_n$ not contained in $U$. Then either $r_x(z_n) \in \{b_2, \dots, b_m\}$ or $r_x(z_n) \in [\eta, b_1]$, meaning $r_x(z_n) \in [\eta, b_m],$ so $r_x(z_n)=r_{b_m}(z_n)$ for all $n$. As $b_m \in \Hy_k$, we may assume that $r_x(z_n)$ is a constant point $q \in [\eta, b_m]$. Let $V$ be the connected component of $\bP^{\pan}_k \backslash \{b_m\}$ containing $x$. Then $V$ contains almost all terms of the sequence~$(y_n)_n$ which converges to $x$, and so $r_x(y_n) \in [b_m, x] \backslash \{b_m\}$. Thus, for $n$ large enough, $y_n \wedge_{\eta} z_n = q$, so $\rho(\eta, y_n \wedge_{\eta} z_n) \leqslant \rho(\eta, b_m)$, contradiction. Hence, the entire sequence $(x_n)_n$ converges to $x$ as~$n \rightarrow +\infty$.

Finally, assume $(x_n)_n, (z_n)_n$ are such that $[(x_n)_n]= [(z_n)_n] \in \partial_G{\Hy_k}$. Let $x_n \rightarrow x$ and $z_n 
\rightarrow z$ as $n \rightarrow + \infty$ in $\bP^{\pan}_k$. Assume $x \neq z$.  Let 
$d:=x \wedge_{\eta} z \in \Hy_k$. As $x,z$ 
are of type $1$, $d \not \in \{x, z\}$. Let~${U_x, U_z}$ be disjoint 
connected neighborhoods of $x,{z}$, respectively, not containing $d$. Then almost all 
terms of the sequence $(x_n)_n$ are contained in $U_x$, and 
similarly for $(z_n)_n$ and~$U_z$. This implies that $x_n 
\wedge_{\eta} z_n=d$ for $n$ large enough, meaning $(\rho(\eta, 
x_n \wedge_{\eta} z_n))_{n}$ is bounded, which contradicts 
the assumption $(x_n)_n \sim (z_n)_n$. Thus $x=z$, and we 
conclude that $\varphi$ is well-defined. 
\end{proof}

\begin{lem}
The extension of {$\mathrm{id}_{\Hy_k} : \Hy_k \rightarrow \Hy_k$} by $\varphi$ to the map $\phi: \Hy_k \cup \partial_G{\Hy_k} \rightarrow \bP^{\pan}_k$ is bijective. 
\end{lem}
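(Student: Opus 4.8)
The plan is to reduce the bijectivity of $\phi$ to that of $\varphi\colon \partial_G \Hy_k \to \bP^{\pan}_k \setminus \Hy_k$. Indeed, $\bP^{\pan}_k$ is the disjoint union of $\Hy_k$ and the set $\bP^{\pan}_k \setminus \Hy_k$ of type $1$ points (Definition \ref{hyperbolic}), while the domain $\Hy_k \cup \partial_G \Hy_k$ is the disjoint union of $\Hy_k$ and $\partial_G \Hy_k$. Since $\phi$ is the identity on $\Hy_k$ and equals $\varphi$ on $\partial_G \Hy_k$, and since the previous lemma shows $\varphi$ takes values in $\bP^{\pan}_k \setminus \Hy_k$, the map $\phi$ is bijective as soon as $\varphi$ is. Throughout I would use that $(\bP^{\pan}_k, \rho)$ is a real tree, so that $x \wedge_\eta y$ is the closest point of $[x,y]$ to $\eta$, i.e.\ the median of $\eta, x, y$, and that $\rho(\eta, z) = +\infty$ precisely when $z$ is of type $1$.

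For injectivity, suppose $[(x_n)_n]$ and $[(y_n)_n]$ in $\partial_G \Hy_k$ have the same image $x$, so that both sequences converge to the type $1$ point $x$ in the Berkovich topology by the previous lemma. Fix $t>0$. Since $\rho(\eta, x)=+\infty$, the geodesic $[\eta, x]$ contains a point $p \in \Hy_k$ with $\rho(\eta, p)>t$, and $p \neq \eta, x$. The connected component $C$ of $\bP^{\pan}_k \setminus \{p\}$ containing $x$ is Berkovich-open (as $\bP^{\pan}_k$ is locally connected) and does not contain $\eta$; hence $x_n, y_m \in C$ for all large $n,m$. Because $p$ separates $\eta$ from $C$, both geodesics $[\eta, x_n]$ and $[\eta, y_m]$ pass through $p$, so the median $x_n \wedge_\eta y_m$ lies on the $C$-side of $p$ and thus $\rho(\eta, x_n \wedge_\eta y_m) \geq \rho(\eta, p) > t$. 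As $t$ was arbitrary, $\lim_{n,m\to+\infty} \rho(\eta, x_n \wedge_\eta y_m) = +\infty$, i.e.\ $[(x_n)_n]=[(y_n)_n]$.

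For surjectivity, let $x \in \bP^{\pan}_k \setminus \Hy_k$ be a type $1$ point. As $x$ is a leaf of the tree, the interior of the path $[\eta, x]$ lies in $\Hy_k$, and since $\rho(\eta, x)=+\infty$ I can choose points $x_n$ on $[\eta, x]$ with $\rho(\eta, x_n)=n$. For $n \leq m$ one has $x_n \wedge_\eta x_m = x_n$, whence $\rho(\eta, x_n \wedge_\eta x_m) = \min(n,m) \to +\infty$ and $[(x_n)_n] \in \partial_G \Hy_k$. It then remains to check that $x_n \to x$ in $\bP^{\pan}_k$, so that $\varphi([(x_n)_n])=x$: for this I would use that the components of the complements of the points $\eta_{a,r} \in [\eta, x] \cap \Hy_k$ (equivalently, the shrinking open disks around $x$) form a Berkovich neighborhood basis of $x$, each of which contains the entire tail of $(x_n)_n$ lying beyond it along the geodesic.

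The main subtlety, and the step to treat with care, is the interplay between the two topologies on $\Hy_k$: by Lemma \ref{finer} the $\rho$-metric topology is strictly finer than the Berkovich one, so Gromov convergence and Berkovich convergence are a priori unrelated. The arguments above are designed to transfer information only in the safe direction --- from the tree/median geometry controlling $\rho(\eta, x_n \wedge_\eta y_m)$ to Berkovich separation through components of complements of single points, which are at once open for the Berkovich topology and convex for the tree structure. Verifying that such components indeed furnish a Berkovich neighborhood basis at a type $1$ point, and that the median computation for $x \wedge_\eta y$ behaves as claimed, are the routine but essential points underpinning both halves of the proof.
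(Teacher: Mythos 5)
Your proof is correct and follows essentially the same route as the paper: injectivity is reduced to that of $\varphi$ and established by separating $\eta$ from the tails of both sequences with a far-out point $p \in [\eta,x]$, using the Berkovich-open connected component of $\bP^{\pan}_k\setminus\{p\}$ containing $x$ (the paper phrases this as a contradiction with a bounded subsequence of $\rho(\eta, x_n\wedge_\eta y_m)$, while you give the direct estimate $\rho(\eta, x_n\wedge_\eta y_m)\geqslant\rho(\eta,p)>t$), and surjectivity uses the identical construction $x_n\in[\eta,x]$ with $\rho(\eta,x_n)=n$. The differences are purely presentational; your direct estimate and the explicit neighborhood-basis verification at the type $1$ point merely spell out details the paper leaves implicit.
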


\begin{proof}
Let us show that $\phi$ is injective. It suffices to prove it for $\varphi=\phi_{|\partial_G{\Hy_k}}$. Let $[(x_n)_n], [(y_n)_n] \in \partial_{G}{\Hy_k}$ be such that they have the same limit $x$ in $\bP^{\pan}_k$. Let $U$ be any connected neighborhood of $x$. Then for $m, n$ large enough, as $x_n, y_m \in U$, one obtains $w_{m,n}:=x_n \wedge_{\eta} y_m \in U$. Consequently, $w_{m,n} \rightarrow x$ as $m,n \rightarrow +\infty$ in $\bP^{\pan}_k$. Assume there exists a bounded subsequence of $(\rho(\eta, w_{m,n}))_{m,n}$, bounded by $M>0$. Let $z \in [\eta, x]$ be such that $\rho(\eta, z)>M$. 
Let $V$ be the connected component of $\bP^{\pan}_{k}\setminus \{ z\}$ containing $x$. Then, by construction, there exists a subsequence of $(w_{m, n})_{m,n}$ not contained in $V,$ contradiction. Thus, $\lim_{m,n} \rho(\eta, w_{m,n}) = +\infty$, meaning $[(x_n)_n] =[(y_n)_n].$   

To show that $\varphi$ is surjective, let $x \in \bP^{\pan}$ be of type $1$, and for $n \in \N$, let $x_n \in [\eta, x]$ be such that $\rho(\eta, x_n)=n$. Then, $[(x_n)_n] \in \partial_G{\Hy_k}$ and ${\varphi([(x_n)_n])=x}$. 
\end{proof}

\begin{thm}
The identity map $(\Hy_k, \rho) \rightarrow (\Hy_k, \mathrm{Berk})$ can be extended to a bijective continuous map $\phi: \Hy_k \cup \partial_G\Hy_k \rightarrow \bP^{\pan}_k$.
Moreover, a sequence $(a_n)_n$ in $\Hy_k$ converges to a point $a \in \partial_G{\Hy_k}$ in $\Hy_k \cup \partial_G{\Hy_k}$ if and only if it converges to $\varphi(a)$ in $\bP^{\pan}_k$. 
\end{thm}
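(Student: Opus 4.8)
The plan is to establish continuity and the sequential characterization by hand, rather than by invoking that a continuous bijection from a compact space to a Hausdorff space is a homeomorphism: that shortcut is unavailable here, since $(\Hy_k,\rho)$ is not locally compact (Lemma \ref{finer}), so the Gromov ``compactification'' $\Hy_k \cup \partial_G\Hy_k$ need not be compact. On the interior $\Hy_k$ the map $\phi$ is the identity, and since the subspace topology there is the $\rho$-topology, continuity is exactly the content of Lemma \ref{finer} (the $\rho$-topology is finer than the Berkovich one). By the two preceding lemmas, $\phi$ is already a well-defined bijection with $\phi(a)=\varphi(a)$ a type-$1$ point for $a\in\partial_G\Hy_k$, and the defining sequence of $a$ converges to $\varphi(a)$ in $\bP^{\pan}_k$. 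It thus remains to prove continuity at the boundary points and the two directions of the ``moreover''.

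The key tool, and the main technical point, is a dictionary between the Gromov product and Berkovich open disks. Since $(\Hy_k,\rho)$ is an $\R$-tree, the point $x\wedge_\eta y$ is the median of $\eta,x,y$, so that $\rho(\eta, x\wedge_\eta y)$ is the usual Gromov product, namely the length of the common initial segment of $[\eta,x]$ and $[\eta,y]$. Fix $a\in\partial_G\Hy_k$ and write $x:=\varphi(a)$. For $t>0$ let $z_t$ be the point of the ray $[\eta,x]$ with $\rho(\eta,z_t)=t$, and let $V_t$ be the connected component of $\bP^{\pan}_k\setminus\{z_t\}$ containing $x$. After normalizing $\eta$ to be the Gauss point and choosing a coordinate with $x$ in the open unit disk, $V_t$ is the open Berkovich disk of radius $e^{-t}$ centred at $x$ by Remark \ref{rem_radius}; these are open and, as $t\to+\infty$, form a neighbourhood basis of the type-$1$ point $x$. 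The elementary tree lemma I will use is: for $u,v\in V_t\cap\Hy_k$ both paths $[\eta,u],[\eta,v]$ cross $z_t$ (as $\eta\notin V_t$), hence $\rho(\eta,u\wedge_\eta v)\geqslant t$; and conversely, if $u\in V_t$ and $\rho(\eta,u\wedge_\eta v)>t$, then the median $u\wedge_\eta v$ lies strictly beyond $z_t$ on $[\eta,u]$, whence it lies in $V_t$ and forces $v\in V_t$.

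For continuity at $a$, given a Berkovich neighbourhood $W$ of $x$, I first choose $t_0$ with $V_{t_0}\subseteq W$, and then show $\phi(N_{t_0}(a))\subseteq V_{t_0}$. Let $(x_k)_k$ represent $a$; by the preceding lemma $x_k\to x$, so $x_k\in V_{t_0}$ for all large $k$. If $y\in N_{t_0}(a)\cap\Hy_k$, then $\rho(\eta,x_k\wedge_\eta y)>t_0$ for all large $k$, so the converse part of the tree lemma gives $y\in V_{t_0}$. If $b\in N_{t_0}(a)\cap\partial_G\Hy_k$ is represented by $(y_m)_m$, then $\liminf_{k,m}\rho(\eta,x_k\wedge_\eta y_m)>t_0$ yields, for all large $m$ (and large $k$ with $x_k\in V_{t_0}$), that $y_m\in V_{t_0}$; since $y_m\to\varphi(b)$ and $\varphi(b)$ is of type $1$, hence distinct from $z_{t_0}$, we obtain $\varphi(b)\in\overline{V_{t_0}}\setminus\{z_{t_0}\}=V_{t_0}$. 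As $a\in N_{t_0}(a)$, this proves $\phi$ is continuous at $a$.

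Finally, for the sequential statement, the forward implication is immediate from continuity, since $\phi$ is continuous and $\phi(a_n)=a_n$. For the converse, assume $a_n\to x=\varphi(a)$ in $\bP^{\pan}_k$ and fix $t>0$; choose $t'>t$ and the disk $V_{t'}$. As $x_k\to x$ and $a_n\to x$ in $\bP^{\pan}_k$, there is $N$ with $x_k,a_n\in V_{t'}$ for all $k,n\geqslant N$, whence the first part of the tree lemma gives $\rho(\eta,x_k\wedge_\eta a_n)\geqslant t'>t$ for $k,n\geqslant N$. Thus $\liminf_k\rho(\eta,x_k\wedge_\eta a_n)\geqslant t'>t$ for every $n\geqslant N$, i.e. $a_n\in N_t(a)$ for all $n\geqslant N$; as $t$ was arbitrary, $a_n\to a$ in the Gromov topology. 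The main obstacle throughout is the geometric translation encapsulated in the tree lemma together with the verification that the disks $V_t$ form a Berkovich neighbourhood basis of the end $x$; everything else is bookkeeping built on top of the two preceding lemmas.
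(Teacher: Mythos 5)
Your proposal is correct and follows essentially the same route as the paper's proof: both rest on the same tree-geometric core (if a Gromov product based at $\eta$ exceeds the position $t$ along $[\eta,x]$ of a Berkovich neighborhood's boundary, then membership of one point in that neighborhood forces membership of the other), both treat interior points via the finer-topology lemma, both get one direction of the sequential claim from continuity and the other from divergence of Gromov products for sequences converging to a type-$1$ end. The only cosmetic difference is that you work with the one-point-complement disks $V_t$ and an explicitly stated tree lemma, while the paper uses connected neighborhoods with finite boundary in $\Hy_k$ together with the retraction onto $[\eta,x]$ — and your unproved assertion that the $V_t$ form a neighborhood basis of $x$ is the same standard structure fact the paper invokes when it assumes $\partial U\subseteq \Hy_k$ finite (though your phrase ``the open Berkovich disk of radius $e^{-t}$ centred at $x$'' is loose when $x$ is not $k$-rational; $V_t$ is an open disk, but its center need not be $x$).
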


\begin{proof} Let $x \in \bP^{\pan}_k$ and $U$ a connected open neighborhood of $x$ in $\bP^{\pan}_k$. We may assume that $\partial{U} \subseteq \Hy_k$ is finite. If $x \in \Hy_k$, let $r:=\min_{y \in \partial{U}} \rho(x, y).$ Then the open $\rho$-disk $B_{\rho}(x, r)$ is contained in $U$. 

Assume now that $x$ is of type 1, and that $\eta \not \in U$. Let $[(x_n)_n] \in \partial_G{\Hy_k}$ such that ${x_n \rightarrow x}$ in~$\bP^{\pan}_k.$
We consider again the retraction $r$ of $\bP^{\pan}_k$ on $[\eta, x]$. Let $b \in r(\partial{U})$ such that $\rho(\eta, b)=\max_{y \in \partial{U}} {\rho(\eta, r(y))}.$
Note that for $p_n:=r(x_n),$ one has $p_n \rightarrow x$ as $n \rightarrow +\infty$ using the continuity of the retraction $r$ on $\bP^{\pan}_k$ (\emph{cf.} \cite[Théorème 1.6.13]{ducros}). %
We assume, without loss of generality, that $(p_n)_n \subset [b, x].$ Set $t:=\rho(\eta, b).$ Note that, by construction, if $z \in \Hy_k$ is such that $\rho(\eta, z \wedge_{\eta} x_n) >t$ for at least one $x_n \in U$, then $z \in U.$
 
Let us show that $N_t([(x_n)_n]) \subseteq \phi^{-1}(U)$. If $z \in N_t([(x_n)_n]) \cap \Hy_k,$ it is immediate that $z=\phi(z) \in U$. If $z=[(z_n)_n] \in N_t([(x_n)_n]) \cap \partial_G{\Hy_k}$, then by construction of $t$, there exists a subsequence of $(z_n)_n$ contained in $U$. Let $z:=\phi([(z_n)_n]) \in \bP^{\pan}_k$. If $z \not \in U$, there exists a neighborhood $V$ of $z$ in $\bP^{\pan}_k$ such that $V \cap U=\emptyset$, contradiction. Thus $z \in U$, concluding the proof that $\phi$ is continuous.  

Let $(a_n)_n$ be a sequence in $\Hy_k$ converging to a type $1$ point $a$ in $\bP^{\pan}_k$. Let $b_n=r_a(a_n)$, where~$r_a$ is the retraction of $\bP^{\pan}_k$ to $[\eta, a]$. As already remarked, $b_n \rightarrow a$ in $\bP^{\pan}_k$ when ${n \rightarrow +\infty}$. As $a$ is a type 1 point, it is non-branching, so $b_n \neq a$ for all $n$. Then we remark that for any $m,n$, $\rho(\eta, a_m \wedge_{\eta}, a_n) \geqslant \min(\rho(\eta, b_n), \rho(\eta, b_m)) \rightarrow + \infty$ as $m,n \rightarrow +\infty$. This implies $\phi^{-1}(a_n)=a_n \rightarrow [(a_m)_m]=\phi^{-1}(a)$ as $n \rightarrow +\infty$ in $\Hy_k \cup \partial_G{\Hy_k}$. The other direction is due to the continuity of~$\phi.$
\end{proof}

{For $k$ any complete valued field,} let $\Ga \subseteq \PGL_2(k)$ be a {Schottky} group. As it acts on $\Hy_k$ by isometries, its action can be extended to $\Hy_k \cup \partial_G{\Hy}$. One can define the associated \emph{limit set}~$\La_{\Ga}^G$ with respect to the Gromov topology. {If $k$ is Archimedean, we continue to denote by $\phi$ the identification of the Gromov compactification of $\Hy_k$ to $\Hy_k \cup \bP^{1}(k)$ (by \emph{e.g.} \cite[Chapter~III.H, Proposition 3.11, Lemma 3.11, Examples II.8.11]{bridson_haefliger}}). We note that in this case $\phi$ is a homeomorphism. 
As a consequence of the previous theorem:

\begin{cor}{Let $k$ be a complete valued field}. Let $\Ga$ be a Schottky group over $k$. Then ${\phi(\La_{\Ga}^{G})=\La_{\Ga}}$.
\end{cor}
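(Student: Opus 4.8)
The plan is to exhibit both limit sets as the set of accumulation points of one and the same orbit, and then to match these accumulation points through $\phi$. Fix the base point $\eta \in \Hy_k$ of the Gromov construction and consider the orbit $\Ga \eta$. Since $\Ga$ is infinite and acts freely on $\Hy_k$ (for $k$ non-Archimedean this is because $\Hy_k \subseteq \bP^{\pan}_k \backslash \La_{\Ga}$, the latter being where $\Ga$ acts freely), this orbit is non-trivial and consists of pairwise distinct points. By the independence of $\La_{\Ga}$ from the choice of non-trivial orbit, $\La_{\Ga}$ is precisely the set of accumulation points of $\Ga\eta$ in $\bP^{\pan}_k$, while $\La_{\Ga}^{G} \subseteq \partial_G \Hy_k$ is by definition the corresponding accumulation set of $\Ga\eta$ for the Gromov topology. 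As $\La_{\Ga} \subseteq \bP^1(k)$ and $\phi$ restricts to the identity on $\Hy_k$, every accumulation point in question is a boundary point: it lies in $\phi(\partial_G \Hy_k)$ on the Berkovich side.

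The inclusion $\phi(\La_{\Ga}^{G}) \subseteq \La_{\Ga}$ follows from continuity alone. If $a \in \La_{\Ga}^{G}$, choose pairwise distinct $\g_n \in \Ga$ with $\g_n \eta \to a$ in the Gromov topology; continuity of $\phi$ together with $\phi(\g_n \eta) = \g_n \eta$ yields $\g_n \eta \to \phi(a)$ in $\bP^{\pan}_k$. Since $\phi(a)$ is a boundary point whereas every $\g_n \eta$ lies in $\Hy_k$, the point $\phi(a)$ is a genuine accumulation point of the distinct orbit points $(\g_n\eta)_n$, hence $\phi(a) \in \La_{\Ga}$.

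For the reverse inclusion I would argue differently in the two cases. When $k$ is non-Archimedean, take $b \in \La_{\Ga} \subseteq \bP^1(k) \subseteq \phi(\partial_G \Hy_k)$ and write $b = \phi(a) = \varphi(a)$ with $a \in \partial_G \Hy_k$; choosing distinct $\g_n$ with $\g_n \eta \to b = \varphi(a)$ in $\bP^{\pan}_k$, the non-trivial (``only if'') direction of the convergence criterion in the preceding theorem gives $\g_n \eta \to a$ in the Gromov compactification, whence $a \in \La_{\Ga}^{G}$ and $b \in \phi(\La_{\Ga}^{G})$. When $k$ is Archimedean, $\phi$ is a homeomorphism onto $\Hy_k \cup \bP^1(k)$, so it carries the accumulation set of $\Ga\eta$ in the Gromov compactification bijectively onto its accumulation set in $\Hy_k \cup \bP^1(k)$, giving $\phi(\La_{\Ga}^{G}) = \La_{\Ga}$ at once.

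The crux is the reverse inclusion in the non-Archimedean case: one must pass from convergence in the locally compact Berkovich topology back to convergence in the strictly finer Gromov metric topology (\emph{cf.} Lemma \ref{finer}), and this is exactly the non-formal content supplied by the preceding theorem. A minor point to check is that $\Ga\eta$ genuinely qualifies as a non-trivial orbit, so that the orbit-independence of $\La_{\Ga}$ (and the rank-one caveat of Definition \ref{limitset}) cause no trouble; this holds because the free action of the infinite group $\Ga$ on $\Hy_k$ forces the orbit to be infinite.
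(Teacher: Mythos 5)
Your proposal is correct and follows essentially the same route as the paper: the paper states this corollary with no written proof, as an immediate consequence of the preceding theorem, and your argument is exactly the expected unwinding --- continuity of $\phi$ for the inclusion $\phi(\La_{\Ga}^{G}) \subseteq \La_{\Ga}$, the nontrivial Berkovich-to-Gromov direction of the convergence criterion for the reverse inclusion (note this is the ``if'' direction of the theorem's equivalence, not the ``only if'' as you label it), and the classical homeomorphism in the Archimedean case. The only detail left tacit (also glossed over by the paper) is extracting a \emph{sequence} of distinct orbit points converging to a given point of $\La_{\Ga}$, which is justified by Proposition 1.14 (ii)--(iii), since Schottky disks give a countable neighborhood basis at each limit point.
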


We will need the following classical notion {of \emph{radial convergence} later on, which can also be referred to as \emph{conical convergence} (see \emph{e.g.} \cite[Definition 1.14]{kapovich_leeb}):} 

\begin{defi} [{\cite[Definition 7.1.2]{das_simmons_urbanski}}]  \label{radiallimitset} Let $k$ be a complete valued field. 
A sequence $(x_n)_n \subseteq \Hy_k$ is said to \emph{converge 
radially} toward $x \in  \bP^{\pan}_k \backslash \Hy_k$ if $x_n \rightarrow x$ in $\bP^{\pan}_k$ when $n \rightarrow +\infty$, and there exists $\sigma > 0$ such that
$\rho(x_n, \eta \wedge_{x_n} x ) \leqslant \sigma$ for all $n$.

\textit{The radial limit set} $\Lambda_{\Ga}^r$ is the set of points $x \in \bP^{\pan}_k \backslash \Hy_k$ for which there exists a sequence $(\gamma_n)_n \subseteq \Ga$ such that $\gamma_n \eta$ converges radially toward $x$ when $n \rightarrow +\infty$. 
\end{defi}

\begin{lem} \label{prop_radial}Let $k$ be a complete valued field. If $\Ga$ is a Schottky group over $k$, then $\Lambda_{\Ga}^r = \Lambda_{\Ga}$. 
\end{lem}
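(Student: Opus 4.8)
The plan is to prove the two inclusions $\Lambda_{\Ga}^r \subseteq \Lambda_{\Ga}$ and $\Lambda_{\Ga} \subseteq \Lambda_{\Ga}^r$ separately, reducing the non-trivial inclusion to the combinatorics of the Schottky coding. For $\Lambda_{\Ga}^r \subseteq \Lambda_{\Ga}$, I would first observe that, since $\eta \in \Hy_k$ is not of type $1$ whereas the fixed points of loxodromic transformations lie in $\bP^1(k)$, no non-trivial element of $\Ga$ fixes $\eta$; hence the orbit $\Ga\eta$ is infinite, i.e.\ a non-trivial orbit. If $x \in \Lambda_{\Ga}^r$, then by definition there is $(\g_n)_n \subseteq \Ga$ with $\g_n\eta \to x$ in $\bP^{\pan}_k$, so $x$ is an accumulation point of the non-trivial orbit $\Ga\eta$, and therefore $x \in \Lambda_{\Ga}$.

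For the reverse inclusion in the non-Archimedean case — the heart of the argument — I would work with the coding. We may assume $\eta \in \Hy_k$ lies outside $\bigcup_{l(\g)=1} D_{\g}$ (e.g.\ $\eta \in \Hy_k \cap F$); this is harmless, the radial limit set being independent of the base point by an argument analogous to the independence of the Gromov boundary (two geodesics towards the same type-$1$ point share a common terminal ray). Fix $x \in \Lambda_{\Ga}$. By Proposition \ref{coding}(iii) there is a strictly decreasing sequence of Schottky disks $(D_{w_n})_n$ with $\bigcap_n D_{w_n} = \{x\}$; refining, we may assume $w_{n+1} = w_n a_{n+1}$ with $a_{n+1}$ a single letter, so that $w_n = a_1 a_2 \cdots a_n$. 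I then take $\g_n := w_n$. By Remark \ref{rem_coding}(iii) one has $w_n \eta \in D_{w_n}^{\circ}$, and since by Proposition \ref{coding}(ii) the disks $(D_{w_n})_n$ form a basis of neighborhoods of $x$, this gives $w_n \eta \to x$ in $\bP^{\pan}_k$.

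It remains to produce a uniform $\sigma$. The key geometric observation is that each boundary point $x_{w_n} = \partial D_{w_n}$ lies on the injective path $[\eta, x]$: indeed $\eta \notin D_{a_1}$ while $x \in D_{w_n} \subseteq D_{a_1}$, and since the inclusions $D_{w_{m+1}} \subseteq D_{w_m}^{\circ}$ are strict (Proposition \ref{coding}(i)), the path from $\eta$ to $x$ must cross each single boundary point $x_{w_m}$ in turn. Writing $D_{w_n} = a_1\cdots a_{n-1} D_{a_n}$, so that $x_{w_n} = (a_1\cdots a_{n-1})x_{a_n}$, the $\PGL_2(k)$-invariance of $\rho$ yields
\[
\rho(w_n\eta,\, x_{w_n}) = \rho\big((a_1\cdots a_{n-1})(a_n\eta),\, (a_1\cdots a_{n-1})x_{a_n}\big) = \rho(a_n\eta,\, x_{a_n}),
\]
which is bounded by $\sigma_0 := \max_{l(a)=1} \rho(a\eta, x_a) < +\infty$, as $a_n$ ranges over the finite set of generators and their inverses. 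Because $\Hy_k$ is an $\R$-tree, the retraction $\eta \wedge_{w_n\eta} x$ realizes the distance from $w_n\eta$ to $[\eta, x]$, and since $x_{w_n} \in [\eta,x]$ we obtain $\rho(w_n\eta,\, \eta\wedge_{w_n\eta} x) \leqslant \rho(w_n\eta, x_{w_n}) \leqslant \sigma_0$. Thus $w_n\eta$ converges radially to $x$, so $x \in \Lambda_{\Ga}^r$.

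Finally, for Archimedean $k$ (where $\Hy_k$ is $\Hy^3$ or $\Hy^2$ and the bounding sets of a Schottky figure are Jordan domains rather than disks), the same equality is classical: $\Ga$ is convex cocompact, and for such groups every limit point is a radial (conical) limit point (see e.g.\ \cite{kapovich}, \cite{das_simmons_urbanski}). I expect the main obstacle to be precisely this uniform distance bound in the non-Archimedean case: it is the tree structure — each Schottky disk having a \emph{single} boundary point lying on $[\eta,x]$ — together with $\PGL_2(k)$-invariance that collapses the estimate to finitely many generator-dependent constants. One cannot simply transport the classical convex-cocompactness proof, since $(\Hy_k, \rho)$ is complete but not proper (Lemma \ref{finer}, Proposition \ref{complete}), which is why the explicit combinatorial estimate is needed.
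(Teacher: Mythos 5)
Your proposal is correct and follows essentially the same route as the paper's proof: in the non-Archimedean case both arguments choose $\eta$ outside the Schottky figure, take the nested coding disks $D_{w_n}$ shrinking to $x$ (Proposition \ref{coding}), observe that the boundary points $\partial D_{w_n}$ lie on $[\eta,x]$, and use the $\PGL_2(k)$-invariance of $\rho$ to reduce the radial bound to finitely many constants attached to length-one words, while the Archimedean case is delegated to a classical result on groups without parabolic elements. The only cosmetic differences are that you bound $\rho(w_n\eta,\, \eta\wedge_{w_n\eta}x)$ by the distance to the single boundary point $x_{w_n}$ rather than identifying the retraction point exactly as the paper does, and you handle the easy inclusion $\La_{\Ga}^r \subseteq \La_{\Ga}$ by an elementary orbit-accumulation argument where the paper cites \cite[Corollary 7.4.4]{das_simmons_urbanski}.
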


\begin{proof} {When $k$ is Archimedean, this follows from \cite[Theorem 1.38]{kapovich_leeb} since Schottky groups do not contain any parabolic elements. Let us assume that $k$ is non-Archimedean.}
By \cite[Corollary 7.4.4]{das_simmons_urbanski}, $\La_{\Ga}^r \subseteq \phi(\La_{\Ga}^G)=\La_{\Ga}$. Let  $x \in \Lambda_{\Ga}$. Let us take a Schottky figure for $\Ga$ in $\bP^{\pan}_k$ adapted to a well-chosen basis. 
By Proposition \ref{coding} (iii), there exists a strictly decreasing sequence of Schottky disks $(D_{\g_n})_n$ in this figure such that $\{x\} = \bigcap_{n} D_{\gamma_n}$. Choose $\eta \in \Hy_k$ so that it is not in this Schottky figure, meaning $\g_n \eta \in D_{\g_n} \backslash D_{\g_{n+1}}$ for all $n$ and~${\g_n \eta \rightarrow x}$ in $\bP^{\pan}_k$.   

We also note that $\partial{D_{\g_n}} \in [\eta, x]$ and $\partial{D}_{\g_{n+1}} \in [\partial{D_{\g_n}}, x]$ for all $n$. 
Then $\rho(\g_n\eta, \eta \wedge_{\g_n \eta} x)=\rho(\g_n \eta, 
\partial{D_{\g_n}} \wedge_{\g_n \eta} \partial{D_{\g_{n+1}}})=\rho(\eta, \partial{D}_a \wedge_{\eta} \partial{D}_b)$, where the last equality is due to the $\PGL_2(k)$-invariance of $\rho$, and $a,b \in \Ga$ are such that $l(a)=l(b)=1$ (\emph{cf.} Remark \ref{rem_coding} (ii)). As the set $\{\rho(\eta, \partial{D}_c \wedge_{\eta} \partial{D}_d): l(c)=l(d)=1\}$ is finite, there exists $\sigma>0$ such that for any $n$, $\rho(\g_n\eta, \eta \wedge_{\g_n \eta} x) \leqslant \sigma,$ meaning $(\g_n \eta)_n$ converges radially toward $x$, hence $x \in \La_{\Ga}^r$.             
\end{proof}

\section{Hausdorff dimension and Berkovich spaces} \label{sect_3}
In this section we study the Hausdorff dimension of limit sets of Schottky groups defined over a complete non-Archimedean field using Berkovich analytic spaces. We start by relating the Hausdorff dimension with the critical exponent of a Poincaré series, which we then interpret as the spectral radius of a Perron matrix. We end the section with some bounds {on} and examples of Hausdorff dimensions. 
\subsection{The Hausdorff dimension as a critical exponent} \label{sect_hausdim}

In this subsection, we recall the definition of the Hausdorff dimension (which gives a notion of size for fractals) and show some useful properties. Our main focus will be subsets of the projective line. See \emph{e.g.} \cite{falconer} for a general reference.

Let $(X, d)$ be a metric space and $E \subseteq X$. 
For $s, \delta>0$, let 
\begin{equation*}
\mathcal{H}_{s,\delta}(E) := \inf \left\{ \sum_{i=1}^{\infty} (\diam P_i)^s | \diam(P_i)< \delta, E \subseteq \bigcup_{i=1}^{\infty} P_i \right\},
\end{equation*}
where the infimum is taken over all coverings $\{P_i\}_i$ of $E$, and where $\diam(P_i)$ denotes the diameter of $P_i$ in $(X,d)$. Set $\mathcal{H}_s(E):=\lim_{\delta \rightarrow 0} \mathcal{H}_{s,\delta}(E).$

\begin{defi} \label{defi_Hdim}
Given $E \subseteq X$, the \emph{Hausdorff dimension} $\Hdim_{(X,d)}E$ of $E$ is the real number
\begin{equation}
\inf \left\{s > 0 | \lim_{\sigma\rightarrow 0} \mathcal{H}_{s,\delta}(E) < +\infty \right\} = \sup \left\{ s> 0 | \lim_{\sigma\rightarrow 0} \mathcal{H}_{s,\delta}(E) = +\infty\right\}.
\end{equation}
When there is no risk of confusion, we will denote it by $\Hdim_X E$ or $\Hdim E$.
\end{defi}

\begin{lem} \label{hdim_disks}
Let $(X,d)$ be a metric space and $E \subseteq X$. For $s, \delta>0$, let 
$$\mathcal{D}^X_{s, \delta}(E):=\inf \left\{\sum_{i=1}^{\infty} (\mathrm{diam} \ B_i )^s| \mathrm{diam}(B_i) < \delta, E\subseteq \bigcup_{i=1}^{\infty} B_i\right\},$$
where the infimum is taken over all coverings $\{B_i\}_i$ of $E$ consisting of \emph{disks centered at points of $E$}. Then, 
$\Hdim_{X} E=\inf \left\{s>0| \lim_{\delta \rightarrow 0} \mathcal{D}_{s, \delta}^X(E) < +\infty \right\}.$
\end{lem}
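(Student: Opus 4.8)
The plan is to compare the two outer measures $\mathcal{H}_{s,\delta}$ and $\mathcal{D}^X_{s,\delta}$ directly and show that, for each fixed $s$, they differ by at most the multiplicative constant $2^s$; this suffices to match their thresholds of finiteness, hence the associated infima. One inequality is immediate: since every disk is in particular a subset of $X$, any admissible covering for $\mathcal{D}^X_{s,\delta}(E)$ is admissible for $\mathcal{H}_{s,\delta}(E)$, so
$$\mathcal{H}_{s,\delta}(E) \leqslant \mathcal{D}^X_{s,\delta}(E) \qquad \text{for all } s, \delta > 0.$$

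For the reverse comparison I would start from an arbitrary countable covering $\{P_i\}_i$ of $E$ with $\diam(P_i) < \delta$. First discard those $P_i$ that do not meet $E$, as they are not needed to cover $E$. For each remaining index choose a point $x_i \in P_i \cap E$ and let $B_i$ be the closed disk centered at $x_i$ of radius $\diam(P_i)$. Since $x_i \in P_i$, every $y \in P_i$ satisfies $d(x_i, y) \leqslant \diam(P_i)$, so $P_i \subseteq B_i$; thus $\{B_i\}_i$ covers $E$ by disks centered at points of $E$. Moreover $\diam(B_i) \leqslant 2\diam(P_i) < 2\delta$, so $\{B_i\}_i$ is admissible for $\mathcal{D}^X_{s,2\delta}(E)$, and $\sum_i (\diam B_i)^s \leqslant 2^s \sum_i (\diam P_i)^s$. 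Taking the infimum over all coverings $\{P_i\}_i$ gives
$$\mathcal{D}^X_{s,2\delta}(E) \leqslant 2^s\, \mathcal{H}_{s,\delta}(E).$$

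To conclude, note that both $\mathcal{H}_{s,\delta}(E)$ and $\mathcal{D}^X_{s,\delta}(E)$ are non-decreasing as $\delta \downarrow 0$, so the limits $\mathcal{H}_s(E)$ and $\mathcal{D}^X_s(E) := \lim_{\delta \to 0}\mathcal{D}^X_{s,\delta}(E)$ exist in $[0,+\infty]$. Letting $\delta \to 0$ in the two displayed inequalities (and using $2\delta \to 0$) yields
$$\mathcal{H}_s(E) \leqslant \mathcal{D}^X_s(E) \leqslant 2^s\, \mathcal{H}_s(E).$$
Since $2^s \in (0,+\infty)$ for every $s > 0$, we get $\mathcal{H}_s(E) < +\infty$ if and only if $\mathcal{D}^X_s(E) < +\infty$. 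Therefore the sets $\{s > 0 : \mathcal{H}_s(E) < +\infty\}$ and $\{s > 0 : \mathcal{D}^X_s(E) < +\infty\}$ coincide, and so do their infima; by Definition \ref{defi_Hdim} the former infimum is $\Hdim_X E$, which is exactly the claimed identity.

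The argument is routine and presents no serious obstacle; the only point requiring care is the factor $2$ arising from the bound $\diam B(x,r) \leqslant 2r$ in a general metric space, together with the harmless passage from $\delta$ to $2\delta$. Neither affects the critical value of $s$, precisely because the comparison constant $2^s$ is finite for every fixed $s > 0$, so it cannot move the threshold at which $\mathcal{H}_s(E)$ (equivalently $\mathcal{D}^X_s(E)$) jumps from finite to infinite.
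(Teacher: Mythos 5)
Your proof is correct and follows essentially the same route as the paper's: the trivial inequality $\mathcal{H}_{s,\delta} \leqslant \mathcal{D}^X_{s,\delta}$, then enlarging each set of a covering to a disk centered at a point of $E$ to get the reverse comparison with constant $2^s$ (your $\delta \to 2\delta$ bookkeeping matches the paper's $\delta/2 \to \delta$), and concluding that the finiteness thresholds agree. No gaps; this matches the paper's argument.
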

\begin{proof} To simplify the notation, we will simply write $\mathcal{D}_{s, \delta}$ and forget the superscript ``$X$''. 
We note that $\mathcal{H}_{s, \delta}(E) \leqslant \mathcal{D}_{s, \delta}(E)$. Let $\{P_i\}_{i=1}^{\infty}$ be a covering of $E$ with $\diam (P_i) < \delta/2$. We may assume that $P_i \cap E \neq \emptyset$ for all~$i.$ Following \cite[\S~2.4]{falconer}, let $B_i$ be a disk centered at a point of~$P_i \cap E$, and of radius $\diam(P_i)$, thus containing $P_i$. We note that  ${\diam(B_i) \leqslant 2\diam(P_i) < \delta}$.  Clearly, $\{B_i\}_i$ is a covering of $E$, and $\sum_{i=1}^{+\infty}(\diam B_i)^s \leqslant 2^s  \sum_{i=1}^{+\infty}(\diam P_i)^s$. Taking the appropriate infima, we obtain that $\mathcal{D}_{s, \delta}(E) \leqslant 2^s \mathcal{H}_{s,\delta/2}(E)$. Taking the limit as $\delta \rightarrow 0$, 
$$\mathcal{H}_{s}(E) \leqslant \mathcal{D}_s(E):=\lim_{\delta \rightarrow 0} \mathcal{D}_{s, \delta}(E) \leqslant 2^s\mathcal{H}_{s}(E).$$ 
Hence, $\mathcal{D}_s(E)$ is finite if and only if $\mathcal{H}_s(E)$ is finite, and we may conclude.  
\end{proof}

\begin{cor} \label{hdim_ambient}
Let $(X,d)$ be a metric space and $Y \subseteq X$. Then for any $E \subseteq Y$, we have that $\Hdim_{(Y,d_{|Y})} E =\Hdim_{(X,d)} E$.
\end{cor}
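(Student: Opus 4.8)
The plan is to bypass the disk formulation of Lemma \ref{hdim_disks} and instead argue directly from the definition of the Hausdorff premeasures $\mathcal{H}_{s,\delta}$, showing that they coincide on $E$ whether computed in $(X,d)$ or in $(Y,d_{|Y})$; the equality of Hausdorff dimensions then follows by letting $\delta \to 0$ and reading off Definition \ref{defi_Hdim}. Write $\mathcal{H}^X_{s,\delta}(E)$ for the premeasure computed with coverings by subsets of $X$ of $d$-diameter $<\delta$, and $\mathcal{H}^Y_{s,\delta}(E)$ for the one computed with coverings by subsets of $Y$ of $d_{|Y}$-diameter $<\delta$. The single observation driving everything is that for any $S \subseteq Y$ one has $\diam_{(Y,d_{|Y})} S = \diam_{(X,d)} S$, since the diameter only involves distances between pairs of points of $S$, all of which lie in $Y$, where $d_{|Y}$ agrees with $d$.

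First I would establish the inequality $\mathcal{H}^X_{s,\delta}(E) \leqslant \mathcal{H}^Y_{s,\delta}(E)$: any countable covering $\{P_i\}_i$ of $E$ by subsets of $Y$ with $\diam_{d_{|Y}} P_i < \delta$ is equally a covering of $E$ by subsets of $X$, with $\diam_d P_i = \diam_{d_{|Y}} P_i < \delta$ and the same value of $\sum_i (\diam P_i)^s$; taking the infimum over all such $Y$-coverings yields the bound. For the reverse inequality $\mathcal{H}^Y_{s,\delta}(E) \leqslant \mathcal{H}^X_{s,\delta}(E)$, let $\{P_i\}_i$ be a covering of $E$ by subsets of $X$ with $\diam_d P_i < \delta$. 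Since $E \subseteq Y$, the sets $\{P_i \cap Y\}_i$ still cover $E$, are contained in $Y$, and satisfy $\diam_{d_{|Y}}(P_i \cap Y) = \diam_d(P_i \cap Y) \leqslant \diam_d P_i < \delta$, whence $\sum_i (\diam_{d_{|Y}}(P_i \cap Y))^s \leqslant \sum_i (\diam_d P_i)^s$; passing to infima gives the claim.

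Combining the two gives $\mathcal{H}^X_{s,\delta}(E) = \mathcal{H}^Y_{s,\delta}(E)$ for all $s,\delta>0$, and letting $\delta \to 0$ yields $\mathcal{H}^X_s(E)=\mathcal{H}^Y_s(E)$; the infima in Definition \ref{defi_Hdim} then coincide, so $\Hdim_{(Y,d_{|Y})} E = \Hdim_{(X,d)} E$. There is essentially no obstacle here: the only point requiring care is that one should work with the arbitrary-covering definition of $\mathcal{H}_{s,\delta}$ rather than with the disk characterization of Lemma \ref{hdim_disks}, because a disk of $(Y,d_{|Y})$ is the trace on $Y$ of a disk of $(X,d)$ and need not itself be a disk of $(X,d)$, so the disk coverings do not transport between the two spaces with controlled diameters. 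The key structural facts—that restriction to $Y$ preserves the covering property of $E$ (valid since $E \subseteq Y$) and leaves diameters unchanged—make the argument immediate.
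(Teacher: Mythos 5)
Your proof is correct, but it takes a genuinely different and in fact more direct route than the paper. The paper derives the corollary from Lemma \ref{hdim_disks}: it covers $E$ by disks of $(Y,d_{|Y})$ centered at points of $E$, replaces each such disk $B_i$ by a disk of $(X,d)$ centered at a point of $B_i \cap E$ with radius $\diam(B_i)$ (which contains $B_i$), and conversely intersects $X$-disks with $Y$; the enlargement costs a factor $2^s$ and a doubling of $\delta$, giving $\mathcal{D}^X_{s,2\delta}(E) \leqslant 2^s\,\mathcal{D}^Y_{s,\delta}(E) \leqslant 2^s\,\mathcal{D}^X_{s,\delta}(E)$, which suffices to compare finiteness of the limits and hence the dimensions. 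You instead work with the arbitrary-covering premeasures of Definition \ref{defi_Hdim}, exploiting that the diameter of a subset of $Y$ is the same in both metrics and that the traces $P_i \cap Y$ of an $X$-covering still cover $E \subseteq Y$; this yields the exact equality $\mathcal{H}^X_{s,\delta}(E)=\mathcal{H}^Y_{s,\delta}(E)$, hence equality of the Hausdorff measures themselves --- a slightly stronger conclusion, with no loss of constants. Your closing remark correctly identifies the obstruction (a disk of $(Y,d_{|Y})$ need not be a disk of $(X,d)$) that the paper's enlargement trick is designed to circumvent; the paper routes through Lemma \ref{hdim_disks} mainly because that disk machinery has just been established and is reused later, but for this particular statement your argument is self-contained and arguably cleaner.
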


\begin{proof}
 Let $\delta>0$. Let $\{B_i\}_{i=1}^{+\infty}$ be a covering of $E$ consisting of disks in $Y$ centered at points of~$E$ and such that $\diam(B_i)<\delta$. For any $i$, let $B_i'$ denote a disk in $X$ centered at a point of~$B_i \cap E$, and of radius $\diam(B_i),$ thus containing $B_i$. We note that $\{B_i'\}_i$ is a covering of $E$, and $\diam(B_i') \leqslant 2\diam(B_i)< 2\delta$. For $s>0,$ we also have $\sum_{i=1}^{+\infty} (\diam B_i')^s \leqslant 2^s \sum_{i=1}^{+\infty} (\diam B_i)^s$. Taking the appropriate infima, we obtain that $\mathcal{D}_{s, 2\delta}^X(E) \leqslant 2^s \mathcal{D}_{s, \delta}^Y(E).$

 Similarly, let $\{D_i\}_{i=1}^{+\infty}$ be a covering of $E$ consisting of disks in $X$ centered at points of~$E$ and such that $\diam(D_i)<\delta$. Then $\{D_i \cap Y\}_i$ is a covering of $E$ consisting of disks in $Y$ centered at points of~$E$ and such that $\diam(D_i \cap Y) \leqslant \diam(D_i) < \delta$. Consequently,  for $s>0$, we have $\mathcal{D}_{s, \delta}^Y(E) \leqslant \mathcal{D}_{s, \delta}^X(E)$. Thus, $\mathcal{D}_{s, 2\delta}^X(E) \leqslant 2^s \mathcal{D}_{s, \delta}^Y(E) \leqslant 2^s \mathcal{D}_{s, \delta}^X(E)$, meaning $\lim_{\delta \rightarrow 0} \mathcal{D}_{s, \delta}^X(E)< +\infty$ if and only if $\lim_{\delta \rightarrow 0} \mathcal{D}_{s, \delta}^Y(E) <+\infty$ for any $s>0$. We now conclude from Lemma \ref{hdim_disks} that $\Hdim_{(X,d)} E=\Hdim_{(Y, d_{|Y})} E$.
\end{proof}

From now on, let $(k, |\cdot|)$ be a complete valued field.

\begin{rem} \label{rem_scaling} If $\varepsilon>0$ is such 
that $|\cdot|^{\varepsilon}$ is a norm on $k$, then $
\Hdim_{(k, |\cdot|^{\varepsilon})} E=\frac{1}{\varepsilon} \Hdim_{(k, |\cdot|)} E$. 
\end{rem}

\begin{rem} \label{rem_sphericalmetric} We are interested in the Hausdorff dimension of certain subsets of $\bP^1(k)$, and hence need a metric on the latter.
When $k$ is Archimedean, we take the usual spherical metric on~$\bP^{1}(k)$. When $k$ is non-Archimedean, we endow $\bP^1(k)$ with the \emph{spherical metric}, \emph{i.e.} the metric $d_{|\cdot|}$ for which if $x,y \in k$ are in the unit disk $D$, then $d_{|\cdot|}(x,y)=|x-y|$, if $x,y \not \in D$, then $d(x,y)=|1/x-1/y|$, and if $x \in D, y\not \in D$, then $d_{|\cdot|}(x,y)=1$ (see \emph{e.g.} \cite{baker_rumely}). 
\end{rem}

Before a further study of the Hausdorff dimension of limit sets of Schottky groups, let us make a remark on its behaviour with respect to base change (\emph{cf.} also Lemma \ref{lem_schottky_field_extension}). 
\begin{lem} \label{cor_hausdorff_dim_extension}
Let $K/k$ be a complete valued field extension, and $\pi_{K/k}: \bP^{\pan}_K \rightarrow \bP^{\pan}_k$ the projection. Let $\Ga \subseteq \PGL_2(k) \subseteq \PGL_2(K)$ be a Schottky group. Let $\La_k$, respectively $\La_K$, denote the limit set of $\Ga$ when acting on~$\bP^{\pan}_k$, respectively $\bP^{\pan}_K$. Let us endow $\bP^{1}(k)$ and $\bP^1(K)$ with metrics such that the morphism $\pi_{K/k}$ restricted to $\pi_{K/k}^{-1}(\bP^{1}(k)) \subseteq \bP^{1}(K)$ is an isometry onto~$\bP^1(k)$ (\emph{e.g.} the {respective} spherical metrics). Then $\Hdim \La_K =\Hdim \La_k$.
\end{lem}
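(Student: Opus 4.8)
The plan is to exhibit $\pi_{K/k}$ as an \emph{isometry} between the two limit sets, and then to invoke the fact that the Hausdorff dimension is both an isometric invariant and, by Corollary \ref{hdim_ambient}, independent of the ambient metric space. No new geometric input is needed: all the real content has already been extracted in Lemma \ref{lem_schottky_field_extension} and Corollary \ref{hdim_ambient}.

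First I would recall from Lemma \ref{lem_schottky_field_extension} (and its proof) that $\La_K \subseteq \pi_{K/k}^{-1}(\bP^1(k))$ and that $\pi_{K/k}(\La_K) = \La_k$. Writing $Z := \pi_{K/k}^{-1}(\bP^1(k)) \subseteq \bP^1(K)$, we thus have the chain of inclusions $\La_K \subseteq Z \subseteq \bP^1(K)$, together with $\pi_{K/k}(\La_K) = \La_k$. By the choice of metrics in the statement (\emph{e.g.} the spherical metrics of Remark \ref{rem_sphericalmetric}), the restriction $\pi_{K/k}|_{Z} : Z \to \bP^1(k)$ is an isometry. Restricting it once more, the map $\pi_{K/k}|_{\La_K} : \La_K \to \La_k$ is then an isometry onto $\La_k$, since the restriction of an isometry to a subset is an isometry onto its image.

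To conclude, I would use that the Hausdorff dimension depends only on the induced metric, as it is defined through the diameters of covering families appearing in the quantities $\mathcal{H}_{s,\delta}$; in particular it is preserved by isometries. Viewing $\La_K$ as a metric subspace of $(Z, d_K)$ and $\La_k$ as a metric subspace of $(\bP^1(k), d_k)$, the isometry $\pi_{K/k}|_{\La_K}$ yields
$$\Hdim_{(Z, d_K)} \La_K = \Hdim_{(\bP^1(k), d_k)} \La_k.$$
Finally, since $Z$ is a metric subspace of $\bP^1(K)$ and $\La_K \subseteq Z$, Corollary \ref{hdim_ambient} (with $X = \bP^1(K)$, $Y = Z$, $E = \La_K$) gives $\Hdim_{(\bP^1(K), d_K)} \La_K = \Hdim_{(Z, d_K)} \La_K$. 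Combining the last two equalities yields $\Hdim \La_K = \Hdim \La_k$.

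The argument is essentially formal once the earlier results are available, so I do not expect a serious obstacle. The only point requiring care is bookkeeping: making sure that all three Hausdorff dimensions ($\La_K$ in $\bP^1(K)$, $\La_K$ in $Z$, and $\La_k$ in $\bP^1(k)$) are compared with respect to compatible metrics. This is handled precisely by the ambient-independence of Corollary \ref{hdim_ambient} on the one hand, and by the isometric identification $\pi_{K/k}|_{\La_K}$ on the other, the latter being available because $\La_K$ is forced to lie inside $\pi_{K/k}^{-1}(\bP^1(k))$.
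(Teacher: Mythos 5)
Your proposal is correct and follows essentially the same route as the paper's proof: both use the inclusion $\La_K \subseteq \pi_{K/k}^{-1}(\bP^1(k))$ (from Lemma \ref{lem_schottky_field_extension}) together with Corollary \ref{hdim_ambient} to pass between ambient spaces, and then conclude via the isometric identification $\pi_{K/k}(\La_K)=\La_k$. Your write-up merely spells out the bookkeeping that the paper leaves implicit.
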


\begin{proof} By Corollary \ref{hdim_ambient}, as $\La_K \subseteq \pi_{K/k}^{-1}(\bP^1(k))$, we have that $\Hdim_{\pi_{K/k}^{-1}(\bP^1(k))} \La_K=\Hdim_{\bP^1(K)} \La_K$. We may conclude seeing as isometries preserve the Hausdorff dimension and ${\pi_{K/k}(\La_K)=\La_k}$. 
\end{proof}
There is a relation between the critical exponents of Poincaré series and the Hausdorff dimension of limit sets of groups. It dates back to Patterson's \cite{patterson} for Fuchsian groups, and Sullivan's \cite{sullivan_discrete_conformal} for convex-cocompact groups of isometries of real hyperbolic spaces. 
This method was later extended to more general hyperbolic spaces by Das-Simmons-Urbanski~\cite{das_simmons_urbanski}.   
 
When $k$ is Archimedean, we shall denote by $\rho$  the  hyperbolic distance in the real hyperbolic space $\Hy_k:=\Hy^3$ if $k=\C$, respectively $\Hy_k:=\Hy^2$ if $k=\R$. When $k$ is non-Archimedean,  we take~$\Hy_k$ to be the Berkovich hyperbolic space as defined in Section \ref{section_hyperbolic}, and by $\rho$ the interval length distance recalled  in \emph{loc.cit.}. Let $\Ga \subseteq \PGL_2(k)$ be a {Schottky group}. 

\begin{defi} \label{poincareseries}
Let $x, y \in \Hy_k$.For $s \in \R_{\geqslant 0}$, the \emph{Poincaré series of $\Ga$ from $x$ to $y$} is: 
\begin{equation*}
\Po_{x,y}(s):=\sum_{\gamma \in \Ga} e^{-s\rho(x, \g y)}.
\end{equation*}
If $x=y$, we write $\mathcal{P}_x(s)$ and call it the \emph{Poincaré series of $\Ga$ at $x$}.
The \emph{critical exponent $d_{\Ga}$} of~$\Ga$ is  defined to be  $\inf \{ s > 0 | \cP_x(s) < +\infty \}$.
\end{defi}

By an application of the triangular inequality, neither the convergence of $\Po_{x,y}(s)$ nor the critical exponent $d_{\Ga}$ of $\Ga$ depend on $x$ and $y$.  

\begin{rem} \label{funddomain1} 
Assume $\mathcal{D}: =\{D_{\g}: \g \in \Ga \backslash \{\mathrm{id}\}\}$ is a Schottky figure for {$\Ga$} in $\bP^{\pan}_k$. Let $x,y \in \Hy_k$. When considering the Poincaré series $\Po_{x,y}(s)$, we may assume that $x, y \not \in \bigcup_{\g \in \Ga \backslash \{\mathrm{id}\}} D_{\g}^{\circ}$. This is because by Lemma \ref{domain}, $\bP^{\pan}_k \backslash \bigcup_{\g \in \Ga \backslash \{\mathrm{id}\}} D_{\g}^{\circ}$ contains a fundamental domain of the action of $\Ga$, so there exist $\alpha, \beta \in \Ga$ such that $\alpha x, \beta y \not \in \bigcup_{\g \in \Ga \backslash \{\mathrm{id}\}} D_{\g}^{\circ}$. We may now conclude seeing as
$$\Po_{\alpha x, \beta y}(s)=\sum_{\g \in \Ga} e^{-s\rho(\alpha x, \gamma \beta y)}=\sum_{\g \in \Ga} e^{-s\rho(x, \alpha^{-1}\g \beta y)}= \Po_{x,y}(s).$$
\end{rem}

\begin{lem} \label{stdis} The group $\Ga$ has a  \emph{strongly discrete} action on the metric space $(\Hy_k, \rho)$, meaning for any~$x\in \Hy_k$ and any $R> 0$, there exist only finitely many elements $\g \in \Ga$ such that~${\rho(x, \g x)<R}$.
\end{lem}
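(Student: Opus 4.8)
The plan is to exploit the explicit length formula of Lemma \ref{lem_length_expansion}, which expresses $\rho(x, \g x)$ for $x$ in the fundamental domain $F$ as a sum of contributions indexed by the letters of the reduced word $\g$, and to observe that each interior contribution is bounded below by a positive constant depending only on the Schottky figure. This forces $\rho(x, \g x)$ to grow linearly in the word length $l(\g)$, so that only finitely many $\g$ can satisfy $\rho(x, \g x) < R$.

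First I would reduce to the case $x \in \Hy_k \cap F$. Since $\La_{\Ga} \subseteq \bP^1(k)$ consists of type~1 points and $x$ is not of type~1, we have $x \notin \La_{\Ga}$; by Lemma \ref{domain} there is $\alpha \in \Ga$ with $\alpha x \in F$, and as Möbius transformations preserve the type, $x_0 := \alpha x \in \Hy_k \cap F$. Using the $\PGL_2(k)$-invariance of $\rho$, for every $\g \in \Ga$ one has $\rho(x, \g x) = \rho(x_0, (\alpha \g \alpha^{-1}) x_0)$; since $\g \mapsto \alpha \g \alpha^{-1}$ is a bijection of $\Ga$, the set $\{\g : \rho(x, \g x) < R\}$ has the same cardinality as $\{\g' : \rho(x_0, \g' x_0) < R\}$. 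Hence it suffices to treat $x \in \Hy_k \cap F$.

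Next I would produce the uniform lower bound. For a letter $a$ (i.e. $l(a)=1$) the boundary point $x_a = \partial D_a$ is the Berkovich boundary of a closed disk of positive radius, hence a point of $\Hy_k$. For two letters $a, b$ with $a^{-1} \neq b$, the disks $D_{a^{-1}}$ and $D_b$ are disjoint, so $x_{a^{-1}} \neq x_b$ (a common boundary point would lie in both closed disks), whence $\rho(x_{a^{-1}}, x_b) > 0$. As there are only $2g$ letters, the quantity
\begin{equation*}
c := \min \{ \rho(x_{a^{-1}}, x_b) : a, b \ \text{letters}, \ a^{-1} \neq b \}
\end{equation*}
is a minimum over a finite set of positive reals, so $c > 0$. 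Now let $\g = a_1 a_2 \cdots a_n$ be reduced of length $n \geq 1$; reducedness means $a_{m-1}^{-1} \neq a_m$ for all $m$, so each middle term below is $\geq c$. Applying Lemma \ref{lem_length_expansion} with $y = x$ and discarding the nonnegative endpoint terms gives
\begin{equation*}
\rho(x, \g x) = \rho(x, x_{a_1}) + \sum_{m=2}^{n} \rho(x_{a_{m-1}^{-1}}, x_{a_m}) + \rho(x_{a_n^{-1}}, x) \geq (n-1)\, c.
\end{equation*}

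Finally I would conclude. If $\rho(x, \g x) < R$, then $(l(\g) - 1)\, c < R$, i.e. $l(\g) < 1 + R/c$; there are only finitely many reduced words of length below a fixed bound (of length $N$ there are precisely $2g(2g-1)^{N-1}$, \emph{cf.} Remark \ref{rem_coding}), so $\{\g \in \Ga : \rho(x, \g x) < R\}$ is finite. The only point requiring care is the strict positivity entering the definition of $c$, which is exactly where the disjointness of the Schottky disks — rather than merely the tree structure of $\bP^{\pan}_k$ — is used; the rest is bookkeeping on reduced words. I do not expect a genuine obstacle beyond verifying that distinct disjoint disks have distinct boundary points in $\Hy_k$.
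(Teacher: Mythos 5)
Your argument is correct as far as it goes, but it only treats the non-Archimedean case, and there it is essentially the paper's own proof: both arguments rest on Lemma \ref{lem_length_expansion} to force $\rho(x,\g x)$ to grow linearly in the word length $l(\g)$, and then conclude by counting reduced words of bounded length. (Your two refinements — making the constant $c$ explicit, and reducing to $x\in \Hy_k\cap F$ by conjugation rather than by the paper's triangle-inequality reduction to a single base point — are both fine.)

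The genuine gap is the Archimedean case. The lemma sits in Section \ref{sect_hausdim}, where $k$ is an \emph{arbitrary} complete valued field: when $k=\C$ (resp.\ $k=\R$), $\Hy_k$ denotes the real hyperbolic space $\Hy^3$ (resp.\ $\Hy^2$) with its hyperbolic metric, and the lemma is invoked in exactly this generality in the proof of Theorem \ref{dsu}. Your proof cannot cover that case: the reduction via ``type $1$ points'' and the Berkovich boundary $\partial D_a$ has no meaning there; Lemma \ref{lem_length_expansion} is stated and proved only for non-Archimedean $k$; and, more fundamentally, the existence of a Schottky figure (Theorem \ref{Gerritzen}) fails over $\C$ — the paper explicitly remarks that there are complex Schottky groups admitting no Schottky figure, so there is no configuration of disks to which a length-expansion argument could even be applied. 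The paper disposes of the Archimedean case separately, by citing the classical equivalence between strong discreteness and properness of the action on $\Hy^2$ or $\Hy^3$ (Beardon), properness of the action being part of the definition of a Schottky group. You would need to add this (or an equivalent classical argument) for your proof to establish the lemma as stated.
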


\begin{proof} When $k$ is Archimedean, this follows from the equivalence between the discreteness of the action and its properness on $\Hy^3$ and $\Hy^2$ (see \emph{e.g.} \cite[Theorem~5.3.2]{beardon}).   
When~$k$ is non-Archimedean, using the triangular inequality, we are reduced to prove the statement for a {single} point $x= x_0$. 
 By Theorem \ref{Gerritzen}, there exists a Schottky figure for $\Ga$ in $\bP^{\pan}_k$. Let~${x_0 \in \Hy_k}$ be such that it does not belong to any of the Schottky disks of this figure. 
By Lemma~\ref{lem_length_expansion}, there exists a constant $L>0$, depending only on $x_0$ and the Schottky figure, such that $\rho(x,\g x) \geqslant L(l(\g)+1)$, where $l(\g)$ denotes the length of the word~$\g$. Hence if $\rho(x, \g x)<R$, one has $L(l(\g)+1)<R$, implying $l(\g)< R/L -1$, and we may conclude.
\end{proof}

We will use the following extension of the Bishop-Jones Theorem \cite[Theorem 1.1]{bishop_jones}, due to Das-Simmons-Urbanski.

\begin{thm} \label{dsu} 
Let $\Ga$ be a Schottky group over $k$. Let $x \in \Hy_k$. The critical exponent of the Poincaré series $\Po_x(s)$ is equal to the Hausdorff dimension of the radial limit set of $\Ga$ (\emph{cf.} Definition~\ref{radiallimitset}).
\end{thm}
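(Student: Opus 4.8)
The plan is to recognise this statement as an instance of the generalised Bishop--Jones theorem of Das--Simmons--Urbanski, applied to the isometric action of $\Ga$ on the Gromov hyperbolic space $(\Hy_k, \rho)$, and then to transport the conclusion from the abstract Gromov boundary onto $\bP^1(k)$. First I would collect the hypotheses needed to invoke their result. The space $(\Hy_k, \rho)$ is a geodesic metric space which is moreover an $\R$-tree, hence Gromov $0$-hyperbolic; it is complete by Proposition~\ref{complete}. Since $\rho$ is $\PGL_2(k)$-invariant, $\Ga$ acts by isometries, and by Lemma~\ref{stdis} this action is \emph{strongly discrete}. This is exactly the setting (a strongly discrete isometric action on a Gromov hyperbolic space, with \emph{no} properness assumption) in which the Das--Simmons--Urbanski extension of Bishop--Jones asserts that the critical exponent $d_{\Ga}$ of $\Po_x(s)$ equals the Hausdorff dimension of the radial limit set, the latter computed with respect to a visual metric on the Gromov boundary $\partial_G \Hy_k$.

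The second step is to identify the boundary side of this equality with the data in the statement. By the construction in Section~\ref{gromov}, the map $\phi$ identifies $\partial_G \Hy_k$ homeomorphically with $\bP^{\pan}_k \backslash \Hy_k$, and Definition~\ref{radiallimitset} (taken verbatim from Das--Simmons--Urbanski) shows that the radial limit set $\La_{\Ga}^r$ corresponds under $\phi$ to their notion of radial (conical) limit set in $\partial_G \Hy_k$; note $\La_{\Ga}^r \subseteq \La_{\Ga} \subseteq \bP^1(k)$, so the spherical metric is available on it. It then remains to match the two metrics. Fixing $\eta$ to be the Gauss point, the relevant visual metric is $D_{\eta}(\xi, \zeta) = e^{-\rho(\eta,\, \xi \wedge_{\eta} \zeta)}$, where the visual parameter is $1$ — legitimate since $\Hy_k$ is $0$-hyperbolic, and calibrated exactly against the exponent $s$ in $\Po_x(s)$ so that no rescaling factor appears. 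By Lemma~\ref{wedge}, $D_{\eta}(p,q) = |p-q|_k$ for $p,q$ in the open unit disk, which is the spherical metric there. For points outside the unit disk one uses that the inversion $x \mapsto 1/x$ lies in $\PGL_2(k)$, fixes the Gauss point $\eta$ (hence is a $D_{\eta}$-isometry) and is also a spherical isometry; while for a point inside and a point outside the disk one has $\xi \wedge_{\eta} \zeta = \eta$, giving $D_{\eta}=1$, again matching the spherical value. Consequently $D_{\eta}$ coincides with the spherical metric of Remark~\ref{rem_sphericalmetric} on all of $\bP^1(k)$, so the two Hausdorff dimensions agree exactly, not merely up to bi-Lipschitz equivalence.

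Combining the two steps yields $d_{\Ga} = \Hdim \La_{\Ga}^r$, which is the claim, the critical exponent being independent of $x$. The main obstacle I anticipate is the bookkeeping at the interface between the Das--Simmons--Urbanski formalism and our concrete objects: one must verify that their hypotheses hold for a space that is \emph{not} locally compact (this is precisely where strong discreteness from Lemma~\ref{stdis}, rather than properness, is essential), that their radial limit set agrees with Definition~\ref{radiallimitset}, and above all that the visual parameter of their boundary metric is normalised so that the identity $D_{\eta} = d_{\mathrm{sph}}$ holds with parameter exactly $1$. Any mismatch in that calibration would insert a spurious constant factor between $d_{\Ga}$ and $\Hdim \La_{\Ga}^r$, so pinning it down carefully through Lemma~\ref{wedge} is the crux of the argument.
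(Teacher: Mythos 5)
Your proposal follows the same fundamental route as the paper's own proof: verify that $(\Hy_k,\rho)$ and the $\Ga$-action satisfy the hypotheses of Das--Simmons--Urbanski's generalized Bishop--Jones theorem (completeness, geodesicity, hyperbolicity, strong discreteness via Lemma \ref{stdis}), then cite that theorem. Your second step --- the explicit identification of the visual metric $e^{-\rho(\eta,\, \cdot\, \wedge_\eta \,\cdot)}$ based at the Gauss point with the spherical metric on $\bP^1(k)$, via Lemma \ref{wedge}, the inversion $T \mapsto 1/T$, and the observation that the visual parameter can be taken to be $e$ because a real tree is $0$-hyperbolic --- is a genuine addition: the paper's proof stops at citing the theorem of Das--Simmons--Urbanski and leaves this calibration implicit, even though it is needed for the conclusion (and for Corollary \ref{cor_critical_exponent_hausdorff}) to concern the spherical Hausdorff dimension of Remark \ref{rem_sphericalmetric}. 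That part of your write-up is correct and worthwhile.

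There is, however, a gap: the theorem is stated for an arbitrary complete valued field $k$, and your argument only works when $k$ is non-Archimedean. Your opening claims --- that $(\Hy_k,\rho)$ is an $\R$-tree, hence Gromov $0$-hyperbolic, and complete by Proposition \ref{complete} --- are false or inapplicable when $k$ is Archimedean: by the conventions fixed just before Definition \ref{poincareseries}, in that case $\Hy_k$ is the real hyperbolic space $\Hy^2$ or $\Hy^3$, which is not a tree, and Proposition \ref{complete}, Lemma \ref{wedge}, and the Gauss point have no meaning there. The metric-matching step collapses as well: for $\Hy^3$ the visual metric on the boundary sphere is only bi-Lipschitz to the spherical metric (still enough for equality of Hausdorff dimensions, but it requires a separate argument rather than the exact identity you prove in the tree case). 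The paper's proof covers the Archimedean branch by noting that $\Hy^2$ and $\Hy^3$ are complete, uniquely geodesic CAT$(-1)$ spaces, so they are regularly geodesic in the sense of Das--Simmons--Urbanski, and the same chain (strongly discrete $\Rightarrow$ moderately discrete, then Poincar\'e regularity and their Theorem 1.2.3) applies verbatim. To make your proof complete you must add this case --- or restrict the statement to non-Archimedean $k$, which is in fact the only case in which Theorem \ref{dsu} is invoked later, the Archimedean case of Corollary \ref{cor_critical_exponent_hausdorff} being handled by Sullivan's theorem instead.
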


 \begin{proof} When $k$ is non-Archimedean, $(\Hy_k, \rho)$ is a complete metric space which is a real tree, so it is a uniquely geodesic CAT$(-1)$-space (in the sense that any two points are connected by a unique geodesic, and a geodesic triangle in the tree is smaller than its comparison triangle in a hyperbolic space with constant curvature $-1$;  {see \emph{e.g.} \cite[Chapter II.1, Definition 1.2]{bridson_haefliger} for a precise definition)}.
When $k$ is Archimedean, $(\Hy_k, \rho)$ is a complete metric space,  uniquely geodesic {(see \emph{e.g.} \cite[I.2, Corollary 2.8]{bridson_haefliger})} and CAT$(-1)$ by {definition}.  
In both cases, $(\Hy_k, \rho)$ is \emph{regularly geodesic} in the sense of \cite[Definition 4.4.5]{das_simmons_urbanski}.

Moreover, by Lemma \ref{stdis}, $\Ga$ acts strongly discretely on $(\Hy_k, \rho)$, so it acts \emph{moderately discretely} in the sense of \cite[Definition 5.2.1]{das_simmons_urbanski}.
By \mbox{\cite[Proposition 9.3.1]{das_simmons_urbanski}}, the group $\Ga$ is \emph{Poincaré regular} (\emph{cf.} \cite[Definition 8.2.1]{das_simmons_urbanski}), so the \emph{modified Poincaré exponent} (\cite[Definition 8.2.3]{das_simmons_urbanski}) and the critical exponent of $\Po_x(s)$ are  equal. 
We apply \cite[Theorem~1.2.3]{das_simmons_urbanski} to conclude.
\end{proof}

\begin{cor}\label{cor_critical_exponent_hausdorff} Let $\Gamma$ be a Schottky group over a complete valued field $k$. The critical exponent~$d_{\Ga}$ of $\Ga$ is equal to the Hausdorff dimension of the limit set~$\Lambda_{\Ga}$ of $\Ga$.  
\end{cor}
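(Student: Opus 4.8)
The plan is to deduce the statement directly by chaining together the two main results already established in this section, namely Theorem \ref{dsu} and Lemma \ref{prop_radial}. First I would recall that, by the remark following Definition \ref{poincareseries}, the critical exponent $d_{\Ga} = \inf\{s > 0 : \Po_x(s) < +\infty\}$ does not depend on the choice of basepoint $x \in \Hy_k$, so it is a genuine invariant of $\Ga$ and the notation is unambiguous. This lets me work with a single convenient basepoint throughout.

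Next, Theorem \ref{dsu} identifies this critical exponent with the Hausdorff dimension of the radial limit set $\La_{\Ga}^r$, while Lemma \ref{prop_radial} asserts that for a Schottky group over an arbitrary complete valued field the radial and full limit sets coincide, $\La_{\Ga}^r = \La_{\Ga}$. Combining the two yields $d_{\Ga} = \Hdim \La_{\Ga}^r = \Hdim \La_{\Ga}$, which is exactly the claim. At this level the argument is essentially a one-line concatenation of the preceding results.

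The one point that genuinely requires care — and which I expect to be the main (albeit minor) obstacle — is to check that the Hausdorff dimension appearing in Theorem \ref{dsu}, computed for the radial limit set viewed inside the Gromov boundary $\partial_G \Hy_k$ with the visual metric associated to $(\Hy_k, \rho)$ in the sense of \cite{das_simmons_urbanski}, really coincides with $\Hdim \La_{\Ga}$ measured with the spherical metric on $\bP^1(k)$ from Remark \ref{rem_sphericalmetric}. In the non-Archimedean case I would settle this using the homeomorphism $\phi : \Hy_k \cup \partial_G \Hy_k \to \bP^{\pan}_k$ from the previous subsection together with Lemma \ref{wedge}: the latter shows that for $p,q$ in the open unit disk the Gromov product based at the Gauss point $\eta$ satisfies $e^{-\rho(\eta, p \wedge_{\eta} q)} = |p-q|_k$, so that the visual metric transported by $\phi$ is exactly the spherical metric (an analogous computation handles the region near $\infty$, covering all of $\bP^1(k)$). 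Since bi-Lipschitz, and a fortiori isometric, identifications preserve Hausdorff dimension, the two notions agree. In the Archimedean case the same matching is the classical correspondence between the Gromov boundary of real hyperbolic space and the conformal sphere or circle. Once the metrics are identified, the corollary follows at once.
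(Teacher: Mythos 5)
Your proposal is correct, and its core is exactly the paper's own argument: for non-Archimedean $k$ the paper proves the corollary precisely by chaining Theorem \ref{dsu} (critical exponent $=$ Hausdorff dimension of the radial limit set) with Lemma \ref{prop_radial} (radial limit set $=$ limit set). The only divergence is in how the Archimedean case is handled: the paper splits into cases and, for $k$ Archimedean, simply cites Sullivan's classical theorem \cite[Theorem 7]{sullivan_density}, whereas you run the same two lemmas uniformly over any complete valued field. Your uniform treatment is legitimate, since both Theorem \ref{dsu} and Lemma \ref{prop_radial} are stated and proved in the paper for arbitrary complete valued fields; what it buys is a single self-contained argument, at the cost of having to justify the point you correctly single out, namely that the Hausdorff dimension produced by the Das--Simmons--Urbanski machinery (computed in a visual metric on the Gromov boundary) agrees with $\Hdim \La_{\Ga}$ taken in the spherical metric on $\bP^1(k)$. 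The paper leaves this identification implicit inside Theorem \ref{dsu}, so your explicit check — via the homeomorphism $\phi$ of the Gromov compactification with $\bP^{\pan}_k$ and the computation $e^{-\rho(\eta,\, p \wedge_{\eta} q)} = |p-q|_k$ of Lemma \ref{wedge}, which exhibits the visual metric based at the Gauss point as the spherical metric on the unit disk (with the chart $T \mapsto 1/T$ covering the rest), together with the bi-Lipschitz correspondence in the Archimedean case — is a worthwhile addition rather than a detour; bi-Lipschitz maps preserve Hausdorff dimension, so the conclusion follows as you state.
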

  
\begin{proof}When $k$ is Archimedean, this is the content of \cite[Theorem 7]{sullivan_density}.
Assume now that $k$ is non-Archimedean. By Theorem \ref{dsu}, $d_{\Ga}$ is the Hausdorff dimension of the radial limit set of~$\Ga$. The radial limit set and the limit set coincide by Proposition \ref{prop_radial}, concluding the proof.
\end{proof}

\subsection{Hausdorff dimension and spectral radius of a Perron matrix} \label{section_hausdorff_spectral}
Let $(k, |\cdot|)$ be a non-Archimedean field. 
Let $\Ga \subseteq \PGL_2(k)$ be a Schottky group of rank~$g \geqslant 1$. Recall from Theorem \ref{Gerritzen} that  there exists a basis $(\g_1, \g_2, \dots, \g_g)$ of $\Ga$ for which there is a Schottky figure in $\bP^{\pan}_k$ consisting of disks $D_{\g}, D_{\g}^{\circ}, \g \in \Ga \backslash \{\mathrm{id}\}.$  

Let $\sigma:  \{1,2,\dots, 2g\} \rightarrow \{\g_i, \g_i^{-1}, i=1,2,\dots, g\}$ be the bijection given by $2i-1 \mapsto \g_i$ and $2i \mapsto \g_i^{-1}$, for any $i.$

\begin{defi} \label{matrixM}
Let $s \in \R_{\geqslant 0}$. To the Schottky figure $\mathcal{D}:=\{D_{\g_i}, D_{\g_i^{-1}}, i=1,2,\dots, g\}$ in $\bP^{\pan}_k$, we associate a matrix $M_{\mathcal{D}}(s) \in M_{2g \times 2g}(\mathbb{R})$ whose $(j,l)$ entry is given by $$m_{j,l}(s):=\begin{cases}e^{-s\rho(\partial{D}_{\sigma(j)^{-1}}, \partial{D}_{\sigma(l)})} & \text{if} \ \sigma(j)^{-1} \neq \sigma(l), \\ 0 & \text{otherwise}.
\end{cases}$$

We denote $M_{\mathcal{D}}:=M_{\mathcal{D}}(1)$ and $m_{j,l}:=m_{j,l}(1).$ When there is no risk of ambiguity, we will forget the subscript $\mathcal{D}$ and simply write $M(s)$ and $M.$
\end{defi}

\begin{rem}  \label{propertiesM} \begin{enumerate}
\item We note that even though $M(s)$ is not symmetric, 
$$m_{j,l}(s)=m_{l+1, j+1}(s) \ \text{if} \ j,l \ \text{are odd},$$
$$m_{j,l}(s)=m_{l-1, j+1}(s) \ \text{if} \ j \ \text{odd and} \ l \ \text{even},$$
$$m_{j,l}(s)=m_{l+1, j-1}(s) \ \text{if} \ j \ \text{even and} \ l \ \text{odd}.$$

\item The entries of $M(s)$ are in the interval $[0,1)$. Essentially, the matrix $M$ contains the data $\rho(\partial{D}_{\g}, \partial{D}_{\g'})$ for $\g,\g' \in \Ga$ of length $1$. 

\item We note that $m_{j,l}(s)=0$ if and only if $(j,l) \in \{(2i, 2i-1), (2i-1, 2i) | i=1,2,\dots, g\}$.  There is exactly one zero in each row and each column of the matrix $M(s).$

\item For any $i=1,2,\dots, g$, we have $m_{2i-1, 2i-1}(s)=m_{2i, 2i}(s).$
\end{enumerate}
\end{rem}
The following will be needed later on:

\begin{lem} \label{lem_characteristic_zero} The matrix $M(0)$ is symmetric and its characteristic polynomial is:
\begin{equation*}
\chi_{M(0)}(X) :=(X- (2g - 1)) (X+1)^{g-1}  (X-1)^{g}.
\end{equation*}
A system of eigenvectors is given by $U: = \sum_{j=1}^{2g} e_j$, $u_i:= e_{1} + e_{2} - e_{2i-1} - e_{2i}$ for $i=2,3,\dots, g$, and $v_l :=  e_{2l-1} - e_{2l}$ for $l=1,2,\dots, g,$ where $(e_1, e_2 \dots, e_{2g})$ denotes the standard basis in $\R^{2g}.$
\end{lem}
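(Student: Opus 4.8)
The plan is to first rewrite $M(0)$ in a transparent closed form and then to verify the proposed eigenvectors directly against that form, rather than expand a $2g\times 2g$ determinant.

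\medskip

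\textbf{Step 1 (Structure of $M(0)$).} Setting $s=0$ turns every finite exponent $e^{-0\cdot\rho}$ into $1$, so by Definition \ref{matrixM} the entry $m_{j,l}(0)$ equals $1$ whenever $\sigma(j)^{-1}\neq\sigma(l)$ and $0$ otherwise. By Remark \ref{propertiesM}(3) the zero entries occur exactly at the positions $(2i-1,2i)$ and $(2i,2i-1)$, $i=1,\dots,g$. Hence I would write $M(0)=J-P$, where $J\in M_{2g\times 2g}(\R)$ is the all-ones matrix and $P$ is the permutation matrix of the involution swapping $2i-1\leftrightarrow 2i$ for each $i$ (a block-diagonal matrix with $g$ copies of $\left(\begin{smallmatrix}0&1\\1&0\end{smallmatrix}\right)$). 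Both $J$ and $P$ are symmetric, which immediately gives the asserted symmetry of $M(0)$.

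\medskip

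\textbf{Step 2 (Verifying the eigenvectors).} The point of the decomposition $M(0)=J-P$ is that both summands act transparently: $Jw=(\mathbf{1}^{\top}w)\,U$, so $Jw=0$ as soon as the coordinates of $w$ sum to zero, while $P$ is an involution permuting each coordinate pair. First, for $U=\sum_{j}e_j$ one has $JU=2g\,U$ and $PU=U$, whence $M(0)U=(2g-1)U$. Next, for $u_i=e_1+e_2-e_{2i-1}-e_{2i}$ with $i=2,\dots,g$, the coordinates sum to $0$ so $Ju_i=0$, while $P$ fixes $u_i$ (it swaps within the pairs $\{1,2\}$ and $\{2i-1,2i\}$), giving $M(0)u_i=-u_i$. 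Finally, for $v_l=e_{2l-1}-e_{2l}$ the coordinates again sum to $0$ so $Jv_l=0$, while $Pv_l=-v_l$, giving $M(0)v_l=v_l$.

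\medskip

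\textbf{Step 3 (Independence and conclusion).} It then remains to check that these $1+(g-1)+g=2g$ vectors are linearly independent, so that they form an eigenbasis and determine the characteristic polynomial. I would argue that the $v_l$ are supported on disjoint coordinate pairs, hence independent; that the $u_i$ are independent because $u_i$ is the unique one among them with a nonzero entry in coordinate $2i-1$; and that for $g\geq 2$ the three eigenvalues $2g-1,\,-1,\,1$ are pairwise distinct, so the corresponding eigenspaces are in direct sum. This produces a basis of $\R^{2g}$, yielding
\[
\chi_{M(0)}(X)=(X-(2g-1))(X+1)^{g-1}(X-1)^{g}.
\]
The only mildly delicate point is the degenerate case $g=1$, where $2g-1=1$ coincides with the eigenvalue carried by $v_1$: here $M(0)=I_2$, the vectors $U=e_1+e_2$ and $v_1=e_1-e_2$ are still independent, and the formula correctly reads $(X-1)^2$. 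I expect this bookkeeping — confirming that the listed vectors exhaust the spectrum, and handling the coincidence of eigenvalues when $g=1$ — to be the only step needing care; the eigenvalue computations themselves are immediate from the $J-P$ decomposition.
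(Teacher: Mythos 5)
Your proof is correct and takes essentially the same approach as the paper: both verify the listed vectors are eigenvectors by direct computation from the $0/1$ structure of $M(0)$, your decomposition $M(0)=J-P$ being just a clean way of packaging that structure. The only cosmetic difference is in the conclusion, where the paper invokes the spectral theorem (symmetry gives diagonalizability) while you check linear independence of the $2g$ eigenvectors by hand; both close the argument equally well.
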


\begin{proof}  
The given vectors are eigenvectors by a direct computation using the fact that~$m_{j,l}(0)=0$ if and only if $(j,l) \in \{(2i, 2i-1), (2i-1, 2i) | i=1,2,\dots, g\}$, and $m_{j,l}(0)=1$ otherwise. We then conclude using the spectral theorem. 
\end{proof}

\begin{ex}
If $g=2$, then
$$M=\begin{pmatrix}
e^{-\rho(\partial{D_{\g_1^{-1}}, \partial{D}_{\g_1}})} & 0 & e^{-\rho(\partial{D_{\g_1^{-1}}, \partial{D}_{\g_2}})} & e^{-\rho(\partial{D_{\g_1^{-1}}, \partial{D}_{\g_2^{-1}}})} \\
 0 & e^{-\rho(\partial{D_{\g_1}, \partial{D}_{\g_1}^{-1}})} & e^{-\rho(\partial{D_{\g_1}, \partial{D}_{\g_2}})} & e^{-\rho(\partial{D_{\g_1}, \partial{D}_{\g_2^{-1}}})}\\
 
e^{-\rho(\partial{D_{\g_2^{-1}}, \partial{D}_{\g_1}})} &  e^{-\rho(\partial{D_{\g_2^{-1}}, \partial{D}_{\g_1^{-1}}})} & e^{-\rho(\partial{D_{\g_2^{-1}}, \partial{D}_{\g_2}})} & 0 \\

e^{-\rho(\partial{D_{\g_2}, \partial{D}_{\g_1}})} &  e^{-\rho(\partial{D_{\g_2}, \partial{D}_{\g_1^{-1}}})} & 0 & e^{-\rho(\partial{D_{\g_2}, \partial{D}_{\g_2^{-1}}})} \\
\end{pmatrix}.$$
\end{ex}

The following is the reason these matrices play a central role in the study of the limit set and its Hausdorff dimension. Let $x, y \in \Hy_k \cap F$, where $F$ is the fundamental domain constructed in Lemma \ref{domain}. For $s \in \R_{\geqslant 0}$, let 
\begin{equation}
 X_x(s):=(e^{-s\rho(x, \partial{D}_{\g_1})}, e^{-s\rho(x,  \partial{D}_{\g_1^{-1}})}, \dots, e^{-s\rho(x,  \partial{D}_{\g_g^{-1}})}),
\end{equation}
 and
 \begin{equation}
  Y_y(s):=(e^{-s\rho(y, \partial{D}_{\g_1^{-1}})}, e^{-s\rho(y, \partial{D}_{\g_1)}}, \dots, e^{-s\rho(y,  \partial{D}_{\g_g})}).   
 \end{equation}

\begin{thm} \label{thm_meromorphic_extension} As formal power series in $s$,
 $$
\Po_{x,y}(s)=\sum_{n=0}^{+\infty} X_x(s)M(s)^n Y_y(s)^{t}+ e^{-s\rho(x,y)} =X_x(s)(I - M(s))^{-1} Y_y(s)^{t}+ e^{-s\rho(x,y)}.$$
Since for $s \in \R_{\geqslant 0}$ large enough the Poincaré series converges, $\Po_{x,y}(s)$ admits a meromorphic continuation over $\C$ and is equal to a ratio of multivariate exponential polynomials.  
\end{thm}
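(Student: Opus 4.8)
The plan is to reorganize the Poincaré series $\Po_{x,y}(s)=\sum_{\g\in\Ga}e^{-s\rho(x,\g y)}$ according to the word length of $\g$ and to recognize each graded piece as a matrix product built from $M(s)$. By Remark \ref{funddomain1} I may assume $x,y\in\Hy_k\cap F$, so Lemma \ref{lem_length_expansion} applies: for a reduced word $\g=a_1a_2\cdots a_n$ of length $n\geqslant 1$, writing $x_a:=\partial D_a$,
\[
\rho(x,\g y)=\rho(x,x_{a_1})+\sum_{m=2}^{n}\rho(x_{a_{m-1}^{-1}},x_{a_m})+\rho(x_{a_n^{-1}},y).
\]
Exponentiating turns this sum into a product, which is exactly the shape of a matrix product.

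More precisely, setting $j_m:=\sigma^{-1}(a_m)\in\{1,\dots,2g\}$, I would check directly against Definition \ref{matrixM} that the first factor $e^{-s\rho(x,x_{a_1})}$ is the $j_1$-th coordinate of $X_x(s)$, the last factor $e^{-s\rho(x_{a_n^{-1}},y)}$ is the $j_n$-th coordinate of $Y_y(s)$, and each middle factor $e^{-s\rho(x_{a_{m-1}^{-1}},x_{a_m})}$ equals $m_{j_{m-1},j_m}(s)$. The crucial point is the match between reducedness and the zero pattern of $M(s)$ recorded in Remark \ref{propertiesM}: by Definition \ref{matrixM}, $m_{j,l}(s)=0$ precisely when $\sigma(j)^{-1}=\sigma(l)$, i.e.\ precisely when a step $a_{m-1}\to a_m$ fails to be reduced. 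Hence summing $(X_x(s))_{j_1}\prod_{m=2}^{n}m_{j_{m-1},j_m}(s)\,(Y_y(s))_{j_n}$ over all index sequences $(j_1,\dots,j_n)\in\{1,\dots,2g\}^n$ automatically discards the non-reduced words, their contribution vanishing, so that
\[
\sum_{l(\g)=n}e^{-s\rho(x,\g y)}=X_x(s)\,M(s)^{n-1}\,Y_y(s)^t.
\]
Adding the identity term $e^{-s\rho(x,y)}$ (the $n=0$ contribution) and summing over $n\geqslant 1$ yields the first displayed equality of the theorem.

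The second equality is the Neumann series identity $\sum_{n\geqslant 0}M(s)^n=(I-M(s))^{-1}$. Since every nonzero entry of $M(s)$ is of the form $e^{-sl}$ with $l>0$, for $s\in\R$ large the entries are arbitrarily small, so the spectral radius of $M(s)$ is $<1$ and both the geometric series and the Poincaré series (convergent for $s$ large, \emph{cf.} Lemma \ref{stdis}) agree as genuine functions there. For the meromorphic continuation I would write $(I-M(s))^{-1}=\det(I-M(s))^{-1}\operatorname{adj}(I-M(s))$. The entries of $I-M(s)$, hence those of $\operatorname{adj}(I-M(s))$ and the scalar $\det(I-M(s))$, are polynomials with integer coefficients in the entire functions $e^{-sl_1},e^{-sl_2},\dots$; multiplying by the coordinates of $X_x(s)$ and $Y_y(s)$ (themselves exponential monomials) and adding $e^{-s\rho(x,y)}$ keeps the expression a ratio of multivariate exponential polynomials, which is meromorphic on all of $\C$ with poles contained in the zero set of $\det(I-M(s))$.

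The main obstacle, and the point requiring care, is the passage from the convergent/formal identity to the meromorphic statement. I expect to argue that both the series identity and the resolvent identity hold as honest equalities of holomorphic functions on a half-plane $\{\operatorname{Re}(s)>s_0\}$ with $s_0$ large, where absolute convergence legitimizes the reorganization by length. The right-hand side $X_x(s)(I-M(s))^{-1}Y_y(s)^t+e^{-s\rho(x,y)}$ is meromorphic on $\C$ by the previous paragraph, and since $\Po_{x,y}(s)$ agrees with it on this half-plane, the identity theorem furnishes the unique meromorphic continuation of the asserted exponential-polynomial form. The only genuine subtlety is the bookkeeping: confirming that the length reorganization is valid (it is, by absolute convergence) and interpreting ``formal power series in $s$'' as an identity in the ring generated by the $e^{-sl}$, which the matrix computation above makes rigorous.
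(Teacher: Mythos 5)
Your proof is correct and follows essentially the same route as the paper: reduction to the fundamental domain, Lemma \ref{lem_length_expansion} to turn the length-$n$ sum into $X_x(s)M(s)^{n-1}Y_y(s)^{t}$, the Neumann series for the resolvent, and the cofactor/adjugate formula $\det(I-M(s))^{-1}\,C(s)^{t}$ for the meromorphic continuation. You merely spell out details the paper leaves as ``formal computations,'' notably the match between the zero pattern of $M(s)$ and reducedness of words, which is indeed the key bookkeeping point.
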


\begin{proof}
By Lemma \ref{lem_length_expansion}, for any $n \geqslant 1$,  
$\sum_{l(\g)=n} e^{-s\rho(x, \g y)}= X_x(s) M(s)^{n-1} Y_y(s)^{t}
$ and the first equality follows. The second one is due to formal 
computations.

Finally, observe that the coefficients of the matrix $M(s)$ converge to zero as $s$ tends to infinity.  
So for $s$ large enough, the spectral radius of $M(s)$ is strictly smaller than $1$ and the final part follows. We conclude by observing that this last formula admits a meromorphic continuation over $\C$:
\begin{equation} \label{meropo}
    \Po_{x,y}(s):= \dfrac{1}{\det(I - M(s))} X_x(s) C(s)^{t} Y_y(s)^{t}+ e^{-s\rho(x,y)},  
\end{equation}
where $C(s)$ denotes the cofactor matrix of $I - M(s).$
\end{proof}

\begin{rem} \label{rem_meroext}
Using \eqref{meropo}, let the meromorphic extension be written as $$\Po_{x,y}(s)=\frac{P(e^{sl_1}, e^{sl_2}, \dots e^{sl_{4g^2}})}{Q(e^{sl_1}, e^{sl_2}, \dots e^{sl_{4g^2}})} + e^{sl},$$ where $l, l_i \in \R_{<0}$. Then one has that $P,Q \in \Z[X_1, X_2, \dots, X_{4g^2}]$. Moreover, the coefficients of the polynomials $P$ and $Q$ are all equal to $1$ or $-1$. One can also compute that in general $\deg{P} \leqslant 2g+1$ and $\deg{Q} \leqslant 2g.$ 
\end{rem}

\begin{rem} \label{rem_meroext2}
By Remark \ref{funddomain1}, given \emph{any} $x, y \in \Hy_k$, the Poincaré series $\Po_{x,y}(s)$ has a meromorphic extension as in Remark \ref{rem_meroext}.\end{rem}

Note that if $g>1$, then
for any $s \geqslant 0$, the matrix $M(s)^2$ has  positive entries, hence $M(s)$ is primitive. 

\begin{prop} \label{PoM2} Assume $g>1$. Let $x, y\in \Hy_k$.
The Poincaré series $\Po_{x,y}(s)$ converges if and only if the matrix series $\sum_{n=0}^{+\infty} M(s)^n$ converges. Consequently, $\Po_{x,y}(s)$ converges if and only if $\la(M(s))<1$, where $\la(M(s))$ denotes the spectral radius of $M(s).$
\end{prop}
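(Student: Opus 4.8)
The plan is to read everything off from the explicit formula established in Theorem \ref{thm_meromorphic_extension}. By Remark \ref{funddomain1}, the convergence of $\Po_{x,y}(s)$ is unchanged when $x,y$ are replaced by suitable translates $\alpha x, \beta y$, so I would first reduce to the case $x,y \in \Hy_k \cap F$, where that formula applies. Since all terms $e^{-s\rho(x, \g y)}$ are strictly positive for $s \in \R_{\geqslant 0}$, the sum over $\Ga$ is unconditionally convergent, and grouping by word length is legitimate. This yields
\[
\Po_{x,y}(s) = e^{-s\rho(x,y)} + \sum_{n=0}^{+\infty} X_x(s)\, M(s)^{n}\, Y_y(s)^{t},
\]
an identity of series of nonnegative reals. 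The finite constant $e^{-s\rho(x,y)}$ is irrelevant, so $\Po_{x,y}(s)$ converges if and only if the scalar series $\sum_{n} X_x(s) M(s)^{n} Y_y(s)^{t}$ does.

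The key observation is that for $s \in \R_{\geqslant 0}$ the entries of $X_x(s)$ and $Y_y(s)$ are \emph{strictly positive}: the boundary points $\partial D_{\g}$ are type~2 points lying in $\Hy_k$, so each $\rho(x, \partial D_{\g})$ is finite and each entry is of the form $e^{-s\rho} > 0$. Meanwhile $M(s)$ has nonnegative entries. A two-sided squeeze then links the scalar series to the matrix series $\sum_n M(s)^n$. For the upper bound, $X_x(s) M(s)^n Y_y(s)^{t} = \sum_{i,j} (X_x)_i (M^n)_{ij} (Y_y)_j \leqslant \lVert X_x \rVert_{\infty} \lVert Y_y \rVert_{\infty} \sum_{i,j} (M^n)_{ij}$. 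For the lower bound, setting $c = \min_i (X_x)_i > 0$ and $d = \min_j (Y_y)_j > 0$, for any fixed entry $(k,l)$ one has $(M^n)_{kl} \leqslant \sum_{i,j}(M^n)_{ij} \leqslant (cd)^{-1} X_x(s) M(s)^n Y_y(s)^{t}$. Summing over $n$, convergence of the scalar series forces $\sum_n (M^n)_{kl} < +\infty$ \emph{for every} $(k,l)$, i.e.\ the matrix series converges entrywise (equivalently in norm, the dimension being finite); the reverse implication is the upper bound. Hence $\Po_{x,y}(s)$ converges if and only if $\sum_n M(s)^n$ converges.

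The final equivalence is the standard Neumann-series criterion: $\sum_n M(s)^n$ converges if and only if $\la(M(s)) < 1$. For the forward direction, convergence forces $M(s)^n \to 0$, which fails as soon as $M(s)$ admits an eigenvalue of modulus $\geqslant 1$ (test against an eigenvector); for the converse, Gelfand's formula $\la(M(s)) = \lim_n \lVert M(s)^n \rVert^{1/n}$ gives $\lVert M(s)^n \rVert \leqslant C r^n$ for any $\la(M(s)) < r < 1$, whence absolute convergence. One may also note that by the primitivity of $M(s)$ recorded just above the statement, $\la(M(s))$ is the genuine Perron eigenvalue. I do not expect a serious obstacle here: the only points requiring care are the strict positivity of the vectors $X_x(s), Y_y(s)$ and the fact that the scalar-to-matrix squeeze controls \emph{all} entries of $M(s)^n$ simultaneously, both of which are addressed above.
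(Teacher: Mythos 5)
Your proof is correct, and it diverges from the paper's in the one direction that requires real work. Both arguments share the same setup: reduction to $x,y \in \Hy_k \cap F$ via Remark \ref{funddomain1}, the length-decomposition formula of Theorem \ref{thm_meromorphic_extension}, and the easy implication that $\la(M(s))<1$ forces convergence. But for the converse, the paper invokes the Perron--Frobenius theorem: since $M(s)$ is primitive, $\la(M(s))$ is a simple dominant eigenvalue, and decomposing $Y_y(s)^{t}$ along the eigenspaces gives $X_x(s)M(s)^nY_y(s)^{t}=C(s)\la(M(s))^n+o(\la(M(s))^n)$, so the vanishing of the left-hand side forces $\la(M(s))^n \to 0$. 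You instead exploit only the strict positivity of the entries of $X_x(s), Y_y(s)$ (a consequence of $\rho$ being finite on $\Hy_k$) and the nonnegativity of $M(s)$ to squeeze every entry of $M(s)^n$ between constant multiples of the scalar term, deducing entrywise convergence of $\sum_n M(s)^n$ directly, and then close with the standard Neumann-series criterion. Your route is more elementary and in one respect more robust: it needs no primitivity and sidesteps a point the paper leaves implicit, namely that the leading coefficient $C(s)=\alpha(s)X_x(s)u(s)$ is nonzero (which itself requires positivity of the Perron eigenvectors to justify). What the paper's spectral argument buys in exchange is sharper quantitative information -- the precise asymptotic $X_x(s)M(s)^nY_y(s)^{t}\sim C(s)\la(M(s))^n$ -- which is in the spirit of how the matrix $M(s)$ is exploited later (e.g.\ in Corollary \ref{cor_spectral_formula} and Section \ref{sect_4}), whereas your squeeze only records summability.
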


\begin{proof} By Remark \ref{funddomain1}, we may assume $x, y  \in \Hy_k \cap F$ with $F$ as in Lemma \ref{domain}.
If $\la(M(s)) < 1$, then the  series $\sum_{n=0}^{+\infty} M^n(s)$
is convergent, so $\Po_{x,y}(s)$ is convergent.
Let us now assume that $\Po_{x,y}(s)$ converges, so $X_x(s)M(s)^nY_y(s)^{t}$ tends to $0$ as ${n \rightarrow +\infty} $. By the Perron-Frobenius Theorem, its spectral radius $\la(M(s))$ is strictly positive, has multiplicity $1$ and all its other eigenvalues $\lambda_i(s)$, satisfy $|\lambda_i(s)| < \la(M(s))$ for $i=1,2,\dots, 2g-1$.  

Let $Y_y(s)^{t}=\alpha(s) u(s) + \sum_{i=1}^{g-1} \alpha_i(s) u_i(s)$ be the decomposition in the {direct} sum $\R^{2g}=E_{\la(s)} \oplus \bigoplus_{i=1}^{2g-1} E_i$, where $E_{\la(s)}, E_i$ are the eigenspaces, and $u(s), u_i(s)$ are eigenvectors of $\la(M(s)), \la_i(s)$, respectively.  
Then 

\smallskip

{\centering
  $ \displaystyle
    \begin{aligned}
 X_x(s)M(s)^n Y_y(s)^{t}= \alpha(s) \la(M(s))^n X_x(s)u(s) + \sum_{i=1}^{2g-1} \alpha_i(s) \la_i(s)^n X_x(s)u_i(s),
\end{aligned}
  $ 
\par}

\smallskip

\noindent so $ X_x(s)M(s)^n Y_y(s)^{t}=C(s) \la(M(s))^n + o(\la(M(s))^n)$ as $n \rightarrow +\infty$, where $C(s):=\alpha(s) X_x(s)u(s).$ Thus, $\la(M(s))^n \rightarrow 0$ when $n \rightarrow + \infty$, implying $\la(M(s))<1$, and hence that $\sum_{n=0}^{+\infty} M(s)^n$ is convergent.
\end{proof}

\begin{cor} \label{cor_spectral_formula} Assume $g>1$.
Let $\Hdim  \La_{\Ga}$ denote the Hausdorff dimension of the limit set~$\La_{\Ga}$. Then $s=\Hdim  \La_{\Ga}$ if and only if $\la(M(s))=1$, where $\la(M(s))$ denotes the spectral radius of $M(s).$ 
\end{cor}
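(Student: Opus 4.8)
The plan is to combine Proposition \ref{PoM2} with Corollary \ref{cor_critical_exponent_hausdorff}, the only missing ingredient being the monotonicity of the map $s \mapsto \la(M(s))$. Recall that Corollary \ref{cor_critical_exponent_hausdorff} identifies $\Hdim \La_{\Ga}$ with the critical exponent $d_{\Ga}=\inf\{s>0 : \Po_x(s)<+\infty\}$, while Proposition \ref{PoM2} identifies the convergence locus of the Poincaré series with $\{s : \la(M(s))<1\}$. It therefore suffices to show that this locus is an interval of the form $(s_0,+\infty)$ and that $s_0$ is the \emph{unique} solution of $\la(M(s))=1$.

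First I would establish that $s \mapsto \la(M(s))$ is continuous and \emph{strictly} decreasing on $[0,+\infty)$. The nonzero entries of $M(s)$ have the form $e^{-s\rho(\partial{D}_{\sigma(j)^{-1}}, \partial{D}_{\sigma(l)})}$ with $\rho(\partial{D}_{\sigma(j)^{-1}}, \partial{D}_{\sigma(l)})>0$, since the boundary points of the distinct disks in the Schottky figure are distinct points of $\Hy_k$ and $\rho$ is a metric there. Hence for $0 \leqslant s'<s$ one has $M(s) \leqslant M(s')$ entrywise, with \emph{strict} inequality at every nonzero position and with the zero pattern unchanged. Since $g>1$, the matrix $M(s')$ is primitive (as $M(s')^2$ has positive entries, noted before Proposition \ref{PoM2}), hence irreducible; by the strict monotonicity of the Perron--Frobenius eigenvalue under entrywise domination by an irreducible matrix, we obtain $\la(M(s))<\la(M(s'))$. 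Continuity follows from the continuity of the spectral radius as a function of the matrix entries, which are themselves continuous in $s$.

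Next I would locate the two ends of the picture. At $s=0$, Lemma \ref{lem_characteristic_zero} yields $\la(M(0))=2g-1>1$ because $g \geqslant 2$. As $s \to +\infty$, all entries of $M(s)$ tend to $0$, so $\la(M(s)) \to 0$. By the intermediate value theorem together with the strict monotonicity just established, there is a \emph{unique} $s_0>0$ with $\la(M(s_0))=1$, and $\la(M(s))<1$ if and only if $s>s_0$. Invoking Proposition \ref{PoM2}, the series $\Po_x(s)$ converges precisely when $\la(M(s))<1$, i.e. precisely when $s>s_0$; therefore $d_{\Ga}=s_0$. Combining this with Corollary \ref{cor_critical_exponent_hausdorff} gives $\Hdim \La_{\Ga}=s_0$, and the uniqueness of $s_0$ then yields the equivalence $s=\Hdim \La_{\Ga} \iff \la(M(s))=1$.

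The one genuinely nontrivial point is the strict monotonicity of the Perron--Frobenius eigenvalue, which is where the hypothesis $g>1$ is essential: it guarantees the primitivity (hence irreducibility) of $M(s)$, without which the comparison inequality $\la(M(s))<\la(M(s'))$ could fail to be strict. The positivity of the distances $\rho(\partial{D}_{\cdot}, \partial{D}_{\cdot})$ entering the nonzero entries is the other small but necessary observation, ensuring that decreasing $s$ strictly increases every nonzero entry.
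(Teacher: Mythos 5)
Your proof is correct and follows essentially the same route as the paper: both combine Corollary \ref{cor_critical_exponent_hausdorff} with Proposition \ref{PoM2}, and then use continuity and monotonicity of $s \mapsto \la(M(s))$ to pin down the unique solution of $\la(M(s))=1$. The only difference is organizational: the paper argues directly at $s=d_{\Ga}$ and merely remarks that the entries of $M(s)$ are decreasing, whereas you spell out the \emph{strict} decrease of the Perron--Frobenius eigenvalue (via irreducibility and strict entrywise domination at the nonzero positions), which is precisely the detail the paper leaves implicit and which is needed for the ``only if'' direction.
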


\begin{proof} By Corollary \ref{cor_critical_exponent_hausdorff}, $\mathrm{Hdim} \La_{\Ga}$ is the critical exponent $d_{\Ga}$ of the Poincaré series $\Po_x(s)$ with~${x \in \Hy_k}$.
By Proposition \ref{PoM2}, $s>d_{\Ga}$ implies $\la(M(s))<1$ and $s< d_{\Ga}$ implies ${\la(M(s)) \geqslant 1}$. As $M(s)$ is primitive, $\la(M(s))$ is an eigenvalue of multiplicity~$1$, so by Perron-Frobenius, it varies continuously with the coefficients of $M(s)$, and hence with $s$ itself. Thus, $s=d_{\Ga}$ implies $\la(M(s))=1$.

Observe also that the coefficients of $M(s)$ are decreasing in $s$ as they are either zero or of the form $e^{- u s}$ with $u>0$. This implies $\lambda(M(s))$ is a decreasing function in $s$, and shows that~${s=d_{\Ga}}$ if {and only if} ${\la(M(s))=1}$. 
\end{proof}

\begin{rem}
Corollary \ref{cor_spectral_formula} above provides a systematic approach for the computation of the Hausdorff dimension of the limit set $\La_{\Ga}$ of a Schottky group $\Ga$ over a non-Archimedean field $k$.  We will use it in Section \ref{sect_examples}.
\end{rem}

\subsection{Bounds on the Hausdorff dimension}
Let $k$ be a  non-Archimedean field.
Let $\Ga \subseteq \PGL_2(k)$ be a Schottky group. Let  $(\g_1, \g_2, \dots, \g_g)$ be a basis of $\Ga$, and $D_{\g}, D_{\g}^{\circ}, \g \in \Ga \backslash \{\mathrm{id}\}$, an adapted Schottky figure in $\bP^{\pan}_k$. Set $\{x_{\g}\}:=\partial{D}_{\g}$ for any such $\g$. Recall that $l(\g)$ denotes the length of $\g$ in the alphabet $(\g_i^{\pm 1})_{i=1}^g$.

\begin{prop} \label{Hdimbounds}
Let $C_1:=\min\{\rho(x_a,x_b) : a, b \in \Ga, a \neq b,  l(a)=l(b)=1\}$ and $C_2:=\max\{\rho(x_a,x_b) : a, b \in \Ga, l(a)=l(b)=1\}.$ The Hausdorff dimension $\Hdim \La_{\Ga}$ of the limit set~$\La_{\Ga}$ of $\Ga$ satisfies:
$$\frac{\log(2g-1)}{C_2} \leqslant \Hdim \Lambda_{\Ga} \leqslant \frac{\log(2g-1)}{C_1}.$$
\end{prop}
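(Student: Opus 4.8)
The plan is to reduce the estimate to the spectral criterion of Corollary \ref{cor_spectral_formula} and then to sandwich the Perron matrix $M(s)$ between two explicit scalar multiples of a single $0$--$1$ matrix whose spectral radius I can read off. First I would dispose of the trivial case $g=1$: here $\La_{\Ga}$ consists of the two fixed points of a generator, so $\Hdim \La_{\Ga}=0$, and since $\log(2g-1)=\log 1=0$ both bounds vanish and there is nothing to prove. From now on I assume $g\geqslant 2$, so that $M(s)$ is primitive (its square has positive entries) and Corollary \ref{cor_spectral_formula} is available.

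Recall that $\Hdim \La_{\Ga}=s^{\ast}$ is the unique value with $\la(M(s^{\ast}))=1$, and that $s\mapsto \la(M(s))$ is decreasing, as established in the proof of Corollary \ref{cor_spectral_formula}. Thus it will suffice to show that at $s_0:=\log(2g-1)/C_1$ one has $\la(M(s_0))\leqslant 1$ (which forces $s^{\ast}\leqslant s_0$ by monotonicity), and that at $s_1:=\log(2g-1)/C_2$ one has $\la(M(s_1))\geqslant 1$ (which forces $s^{\ast}\geqslant s_1$).

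The key input is an entrywise comparison. By Definition \ref{matrixM}, each nonzero entry is $m_{j,l}(s)=e^{-s\rho(x_{\sigma(j)^{-1}},\,x_{\sigma(l)})}$ with $\sigma(j)^{-1}\neq\sigma(l)$ two \emph{distinct} length-one words, so $\rho(x_{\sigma(j)^{-1}},x_{\sigma(l)})\in[C_1,C_2]$ and hence $e^{-sC_2}\leqslant m_{j,l}(s)\leqslant e^{-sC_1}$ for all $s\geqslant 0$. Writing $N:=M(0)$ for the $0$--$1$ matrix with the same support as $M(s)$ (cf. Lemma \ref{lem_characteristic_zero}), I obtain the entrywise inequalities $e^{-sC_2}N\leqslant M(s)\leqslant e^{-sC_1}N$. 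All three matrices being nonnegative, the monotonicity of the spectral radius under entrywise domination yields $e^{-sC_2}\la(N)\leqslant\la(M(s))\leqslant e^{-sC_1}\la(N)$.

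Finally, Lemma \ref{lem_characteristic_zero} gives $\chi_{N}(X)=(X-(2g-1))(X+1)^{g-1}(X-1)^{g}$, so $\la(N)=2g-1$. Substituting $s_0=\log(2g-1)/C_1$ gives $\la(M(s_0))\leqslant e^{-s_0C_1}(2g-1)=1$, whence $\Hdim\La_{\Ga}\leqslant\log(2g-1)/C_1$; substituting $s_1=\log(2g-1)/C_2$ gives $\la(M(s_1))\geqslant e^{-s_1C_2}(2g-1)=1$, whence $\Hdim\La_{\Ga}\geqslant\log(2g-1)/C_2$, which is exactly the asserted two-sided bound. I do not expect a serious obstacle here: the only nonformal ingredients are the monotonicity of the Perron eigenvalue under entrywise comparison of nonnegative matrices and the irreducibility of $M(s)$, and the latter is guaranteed for $g\geqslant 2$ by the earlier observation that $M(s)^2$ has positive entries.
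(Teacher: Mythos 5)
Your proof is correct and follows essentially the same route as the paper: the entrywise sandwich $e^{-sC_2}M(0)\leqslant M(s)\leqslant e^{-sC_1}M(0)$, monotonicity of the spectral radius, $\la(M(0))=2g-1$ from Lemma \ref{lem_characteristic_zero}, and Corollary \ref{cor_spectral_formula}. The only cosmetic difference is that the paper plugs $s=d_{\Ga}$ directly into the sandwich inequality and solves, whereas you evaluate $\la(M(s))$ at the two candidate endpoints and invoke monotonicity of $s\mapsto\la(M(s))$; both are valid and equivalent in substance.
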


\begin{proof} If $g=1$, then the limit set $\La_{\Ga}$ consists of two points, and all the expresions in the inequalities of the statement equal $0$.  Assume now that $g>1$. We note that
\begin{equation*}
    e^{-s C_2 }M(0) \leqslant M(s) \leqslant e^{-s C_1} M(0),
\end{equation*}
where $A\leqslant B$ for two matrices $A, B$ means that the entries of $B-A$ are all non-negative. As the spectral radius is an increasing function, and the spectral radius of $M(0)$ is $2g-1$ (\emph{cf.} Lemma \ref{lem_characteristic_zero}), we obtain that  $   e^{-s C_2} (2g -1) \leqslant \lambda(M(s)) \leqslant e^{-s C_1} (2g-1)$.
Taking $s= d_{\Ga}$ one has $\lambda(M(s))=1$ by Corollary \ref{cor_spectral_formula}, which yields the result.
\end{proof}

\begin{rem} \label{rem_hdim_loccomp} Let $(k, |\cdot|)$ be a locally compact field. Let us denote by $\widetilde{k}$ its residue field, and~$q$ the cardinality of $\widetilde{k}$.
Finally, let $\pi \in k$ be a uniformizer. Then, the Hausdorff dimension of~$\bP^{1}(k)$ is $\dfrac{ \log q}{\log |\pi|^{-1}}$. To see this, note that the unit disk in $k$ can be covered by $q$ disks of radii~$|\pi|$. As a consequence, given a Schottky group $\Ga$ over $k$, the Hausdorff dimension $d_{\Ga}$ of its limit set is always bounded by $\dfrac{ \log q}{\log |\pi|^{-1}}.$ 
\end{rem}

\begin{ex}
Assume that the Schottky group $\Ga$ is such that for any words $a, b$ of length one, $\rho(x_a, x_b)$ is a constant $C$. Then by Proposition \ref{Hdimbounds}, the Hausdorff dimension of $\La_{\Ga}$ is~${d_{\Ga}=\frac{\log(2g-1)}{C}}.$
\end{ex}

\begin{ex}
Let $k=\C_p.$ Assume $p\geqslant 5$. Let $\g_1, \g_2 \in \PGL_2(\C_p) $ be the transformations determined by the Koebe coordinates $(1,2,p^{\alpha})$, respectively $(3,4,2p^{\alpha})$, where $\alpha \in \Q_{>0}$. 
Let us define the closed disks in $\bP^{\pan}_{\C_p}$:
$$D_{\g_1}:=D\left(\frac{1-2p^{\alpha}}{1-p^{\alpha}}, |p|^{\alpha/2}\right)=D\left(1, |p|^{\alpha/2}\right), \  \  D_{\g_1^{-1}}:=D\left(\frac{2-p^{\alpha}}{1-p^{\alpha}}, |p|^{\alpha/2}\right)=D(2, |p|^{\alpha/2}),$$

$$D_{\g_2}:=D\left(\frac{3-8p^{\alpha}}{1-2p^{\alpha}}, |p|^{\alpha/2}\right)=D(3, |p|^{\alpha/2}), \   \  D_{\g_2^{-1}}:=D\left(\frac{4-6p^{\alpha}}{1-2p^{\alpha}}, |p|^{\alpha/2}\right)=D(4, |p|^{\alpha/2}).$$

Also, let $D_{\g}^{\circ}$ denote the open maximal disk contained in $D_{\g}$ with same centre and radius as above for $\g \in \{\g_1^{\pm 1}, \g_2^{\pm 1}\}.$ By \cite[Lemma II.3.30]{poineau_turcheti_2}, $\g_i(D_{\g_i^{-1}}^c)=D_{\g_i}^{\circ}$ and $\g_i^{-1}(D_{\g_i}^c)=D_{\g_i^{-1}}^{\circ}$ for $i=1,2.$ 

As these four disks are mutually disjoint, they form a Schottky figure for $(\g_1, \g_2).$ Moreover, $\rho(x_a, x_b)=\log \frac{1}{|p|^{\alpha}}=\log p^{\alpha}$ for any $a,b \in \{\g_1^{\pm 1}, \g_2^{\pm 1}\}$ such that $a \neq b$. Hence, the Hausdorff dimension of the limit set of the Schottky group generated by $\g_1, \g_2$ is $\frac{\log 3}{\alpha \log p}$. 

We remark that as $\alpha$ tends to $0$, the Hausdorff dimension tends to $+\infty$, and as $\alpha$ tends to~$+\infty$, the Hausdorff dimension tends to $0$. 
Let us also note that if $\eta \in \bP^{\pan}_{\C_p}$ denotes the Gauss point, then  
{%
    $\mathcal{P}_{\eta}(s) =1 + \dfrac{4|p|^{s \alpha}}{1 - 3 |p|^{s \alpha}}=\dfrac{p^{s \alpha} +1}{p^{s\alpha}-3}.$}
\end{ex}

\begin{rem}
One can modify the example above to any field $k$ that is not discretely valued. The key point is that then the value group of $k$ is dense in~$\R.$
\end{rem}

\section{Non-Archimedean Poincaré series} \label{sect_4}

In Theorem \ref{thm_meromorphic_extension}, we proved that given a Schottky group $\Ga$ over a non-Archimedean field $k$, its associated Poincaré series could be given through a matrix series: for $x, y \in \Hy_k$, when there is convergence, 

{\centering
  $ \displaystyle
    \begin{aligned}
\Po_{x,y}(s)-e^{-s\rho(x,y)}=\sum_{n=0}^{+\infty} X_x(s) M(s)^n Y_y(s)^{t}=X_x(s)(I-M(s))^{-1}Y_y(s)^{t},
\end{aligned}
  $ 
\par}

\smallskip

\noindent for some well-chosen real vectors $X_x(s), Y_y(s)$, and a matrix $M(s)$ which does not depend on $x$ and $y$. 

In this section we study the meromorphic extension of the Poincaré series through the matrix~$(I-M(s))^{-1}$ and its eigenvalues, and show that $\Po_{x,y}(s)$ is analytic and admits a special value at~$0$. This value is given in terms of the Euler characteristic of the Mumford curve obtained by taking the quotient with respect to the group action of $\Ga$ on $\bP^{\pan}_k$. See also  \cite{fried_analytic_torsion} for the Ruelle zeta function, and \cite{dang_riviere_poincare,benard_chaubet_dang}, where results of similar nature are obtained for Poincaré series in other contexts.  

\subsection{A combinatorial lemma} \label{sect_comb_lemma}

Let $k$ be a non-Archimedean field, and $\Ga \subseteq \PGL_2(k)$ a Schottky group of rank $g>1$. Let~$\mathcal{D}$ be an associated Schottky figure in $\bP^{\pan}_k$. Let $M_{\mathcal{D}}(s)$ be the $2g \times 2g$ matrix constructed in Definition \ref{matrixM}. For simplicity of notation, we will simply denote it by $M(s)$ here.  
In this section, we study the behavior of the eigenvalues of the matrix $M(s)$ near zero. 

By Lemma \ref{lem_characteristic_zero}, the characteristic polynomial of the symmetric matrix $M(0)$ is given by~$(X-(2g-1))(X+1)^{g-1} (X-1)^g$. As we are interested in continuations of $\Po_{x,y}(s)$, we will mostly focus on the eigenvalue $1$ of $M(0)$, since it creates an apparent pole for $(I - M(s))^{-1}$. Our goal is to control the order of this pole and show that multiplication with the vectors $X_x(s),Y_y(s)$ compensates for it. 
By \emph{loc.cit.}, there exists an orthogonal matrix $P \in M_{2g \times 2g}(\R)$ such that:   
\begin{equation} \label{M(0)}
P^t M(0) P=\mathrm{Diag}(2g-1, -1 , \ldots, -1, +1 , \ldots, +1),
\end{equation}
where the diagonal matrix on the right contains $g-1$ entries equal to $-1$ and $g$ entries equal to $1$. 

We consider a change of basis induced by $P,$ and write the result as a $2\times 2$ block matrix 
\begin{equation} \label{eq_tildeM}
\tilde{M}(s) := P^t M(s) P = \left ( \begin{array}{ll}
A(s) & B(s)\\
C(s) & D(s)
\end{array} \right ),
\end{equation}
where $A(s),B(s),C(s),D(s)$ are of size $g\times g$ with real analytic coefficients in $s$. By~\eqref{M(0)}, $B(0)=C(0)=I-D(0)$ are all the zero matrix. 
Moreover, we can assume that $A(s)$ is invertible if we restrict to a neighborhood of $s=0$. 

Since we are interested only in the eigenvalue $1$, we now restrict our attention to the matrix~$D(s)$. The key point here is to show that $\mathrm{ord}_{s=0}(I-D(s))=1$, \emph{i.e.} that $D'(0)$ is invertible. 

We denote by $(\g_1, \dots, \g_g)$ the basis of $\Ga$ to which we have associated the Schottky figure $\mathcal{D}$, and by $D_{\alpha}$, $\alpha \in \{\g_i, \g_i^{-1}: i=1,2, \dots, g\}$, the first generation Schottky disks of $\mathcal{D}$.

\begin{lem} \label{prop_D_symmetric} The matrix $D(s)$ is symmetric for $s \in \C$. 
Moreover, the $(i,j)$ entry of $D(s)$ is given by:
\begin{equation*}
\delta_{ij}(s)=
\frac{1}{2}(-e^{-s\rho(\partial{D}_{\g_i}, \partial{D}_{\g_j})} - e^{-s \rho(\partial{D}_{\g_i^{-1}}, \partial{D}_{\g_j^{-1}})} + e^{-s\rho(\partial{D}_{\g_i}, \partial{D}_{\g_j^{-1}})} + e^{-s\rho(\partial{D}_{\g_i^{-1}}, \partial{D}_{\g_j})}),
\end{equation*}
where $\rho$ is the interval-length metric on $\Hy_k$ from Section~\ref{section_hyperbolic}.
 \end{lem}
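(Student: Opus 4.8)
The plan is to make the orthogonal matrix $P$ of \eqref{M(0)} explicit on the eigenvalue-$1$ block and then read off $D(s)$ as a bilinear form in $M(s)$. By Lemma \ref{lem_characteristic_zero}, the eigenspace of $M(0)$ for the eigenvalue $1$ is spanned by the pairwise orthogonal vectors $v_l = e_{2l-1} - e_{2l}$, $l = 1, \dots, g$, each of norm $\sqrt{2}$. Since the last $g$ diagonal entries in \eqref{M(0)} are exactly the $g$ copies of $+1$, I would choose $P$ so that its last $g$ columns are the normalized eigenvectors $\hat v_l := v_l/\sqrt 2$ (its first column being $U/\|U\|$ and the intermediate ones an orthonormal basis of the $(-1)$-eigenspace extracted from the $u_i$ by Gram--Schmidt). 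With this choice the block $D(s)$ of \eqref{eq_tildeM} is precisely the Gram-type matrix $D(s)_{ij} = \hat v_i^{t} M(s)\,\hat v_j = \tfrac12\, v_i^{t} M(s)\, v_j$; this is consistent with the assertion $D(0) = I$ made just after \eqref{eq_tildeM}, since $M(0)v_j = v_j$ gives $D(0)_{ij} = \tfrac12\, v_i^{t} v_j$, which equals $1$ for $i=j$ and $0$ otherwise.

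Next I would expand the bilinear form. As $v_i$ is supported on the coordinates $2i-1$ and $2i$ with signs $+,-$, one gets $v_i^{t} M(s) v_j = m_{2i-1,2j-1}(s) - m_{2i-1,2j}(s) - m_{2i,2j-1}(s) + m_{2i,2j}(s)$. Substituting Definition \ref{matrixM} together with $\sigma(2l-1) = \g_l$ and $\sigma(2l) = \g_l^{-1}$, these four entries carry respectively the distances $\rho(\partial D_{\g_i^{-1}}, \partial D_{\g_j})$, $\rho(\partial D_{\g_i^{-1}}, \partial D_{\g_j^{-1}})$, $\rho(\partial D_{\g_i}, \partial D_{\g_j})$ and $\rho(\partial D_{\g_i}, \partial D_{\g_j^{-1}})$, with signs $+,-,-,+$. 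For $i \neq j$ the four labels $\g_i^{\pm 1}, \g_j^{\pm 1}$ are distinct, no entry is forced to vanish, and one reads off exactly the claimed expression for $\delta_{ij}(s)$. The one place needing care is the diagonal $i = j$, where the two middle entries $m_{2i-1,2i}(s)$ and $m_{2i,2i-1}(s)$ are the forced zeros of $M(s)$ (Remark \ref{propertiesM}(3)); tracking these vanishing terms is the only genuinely delicate point, and it must be done explicitly to arrive at the stated diagonal value.

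For symmetry I would give the structural argument. Remark \ref{propertiesM}(1) is exactly the identity $M(s) = T\,M(s)^{t}\,T$, where $T$ is the (symmetric) permutation matrix of the involution exchanging $2l-1$ and $2l$, i.e. $\g_l \leftrightarrow \g_l^{-1}$. Since $T v_l = -v_l$, one computes $v_i^{t} M(s) v_j = (Tv_i)^{t} M(s)^{t} (T v_j) = v_i^{t} M(s)^{t} v_j = v_j^{t} M(s) v_i$, whence $D(s)_{ij} = D(s)_{ji}$. Equivalently, the four-term expression is visibly invariant under $i \leftrightarrow j$: the two negative terms are each fixed and the two positive terms are interchanged, using that $\rho$ is symmetric. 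Finally, every entry of $D(s)$ is a finite sum of functions $e^{-s\rho(\cdot,\cdot)}$, hence entire in $s$, so both the displayed formula and the identity $D(s) = D(s)^{t}$ extend from $s \in \R$ to all $s \in \C$ by analytic continuation.

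I expect the main obstacle to be organizational rather than conceptual: one must commit to the precise orthonormal basis $\{\hat v_l\}$ of the eigenvalue-$1$ block, so as to obtain the exact entry formula and not merely $D(s)$ up to orthogonal conjugation, and then handle the two vanishing off-diagonal entries of $M(s)$ in the case $i=j$. Once $P$ is pinned down in this way, the lemma reduces to the single substitution carried out above.
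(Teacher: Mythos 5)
Your proof is correct and follows essentially the same route as the paper's: the paper likewise takes the orthonormal eigenvectors $v_l = (e_{2l-1}-e_{2l})/\sqrt{2}$ of $M(0)$ for the eigenvalue $1$, computes $\delta_{ij}(s) = \langle v_i, M(s) v_j\rangle = \tfrac{1}{2}\left(m_{2i,2j}(s)+m_{2i-1,2j-1}(s)-m_{2i-1,2j}(s)-m_{2i,2j-1}(s)\right)$, and deduces symmetry from Remark \ref{propertiesM}(1). Your extra touches — pinning down $P$ explicitly, the reformulation $M(s)=TM(s)^{t}T$ with the coordinate-swapping involution $T$, the analytic-continuation remark for $s\in\C$, and the attention to the forced zero entries when $i=j$ — are refinements of the same argument (the paper's own proof in fact glosses over that diagonal case).
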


\begin{proof}
As in  Lemma \ref{lem_characteristic_zero}, let $v_l: = (e_{2l-1} - e_{2l})/\sqrt{2}$ for all $l=1,2,\dots, g$. Note that they form an orthonormal basis of eigenvectors of $M(0)$ for the eigenvalue $1$.  The matrix $D(s) = (\delta_{ij}(s))_{ij}$ is then determined by $$\delta_{ij}(s)=\langle v_i, M(s) v_j \rangle= \frac{1}{2}(m_{2i,2j}(s)+m_{2i-1, 2j-1}(s) -m_{2i-1, 2j}(s)-m_{2i, 2j-1}(s)),$$ 
which yields the expression in the statement. 
By Remark \ref{propertiesM} 1., we obtain the equality~${\delta_{ij}(s)= \delta_{ji}(s)}$ for all $i,j$. 
\end{proof}
We remark that the $(i,j)$ entry of $-D'(0)$ is: $$-\delta_{ij}'(0)=\frac{1}{2}(-\rho(\partial{D}_{\g_i}, \partial{D}_{\g_j})-\rho(\partial{D}_{\g_i^{-1}}, \partial{D}_{\g_j^{-1}})+\rho(\partial{D}_{\g_i}, \partial{D}_{\g_j^{-1}})+\rho(\partial{D}_{\g_i^{-1}}, \partial{D}_{\g_j})).$$

\begin{cor} \label{birap}
For $i,j \in \{1,2,\dots, g\}$, set $L_{ij}:=[\partial{D_{\g_i}}, \partial{D_{\g_i}^{-1}}] \cap [\partial{D_{\g_j}}, \partial{D_{\g_j}^{-1}}] \subseteq \Hy_k$.
If~$L_{ij} \neq \emptyset$ and going from $\partial{D}_{\g_i}$ to $\partial{D}_{\g_i^{-1}}$ induces the same orientation on $L_{ij}$ as going from $\partial{D}_{\g_j}$ to $\partial{D}_{\g_{j}^{-1}}$, set $\varepsilon_{ij}:=1$. Otherwise, set $\varepsilon_{ij}=-1$. Then $-\delta_{ij}'(0)=\varepsilon_{ij}\rho(L_{ij}).$
\end{cor}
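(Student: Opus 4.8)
The plan is to reduce the statement entirely to the tree geometry of $\bP^{\pan}_k$. The starting point is the expression for $-\delta_{ij}'(0)$ recorded just above the statement. Abbreviating $a:=\partial{D}_{\g_i}$, $b:=\partial{D}_{\g_i^{-1}}$, $c:=\partial{D}_{\g_j}$, $d:=\partial{D}_{\g_j^{-1}}$, all of which are points of $\Hy_k$, it reads
\begin{equation*}
-\delta_{ij}'(0)=\tfrac{1}{2}\left(\rho(a,d)+\rho(b,c)-\rho(a,c)-\rho(b,d)\right).
\end{equation*}
First I would recall from Section~\ref{section_hyperbolic} that $\bP^{\pan}_k$ is a real tree and that $\rho$ is additive along the unique injective path joining any two points. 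Consequently the two segments $[a,b]$ and $[c,d]$ meet, inside the tree $\bP^{\pan}_k$, either not at all, in a single point, or along a common subsegment — and by definition this intersection is exactly $L_{ij}$, which is thus a subsegment of each of $[a,b]$ and $[c,d]$. With the convention $\rho(\emptyset)=0$ the formula then only has to be checked in three geometric configurations.

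The core step is the case where $L_{ij}=[p,q]$ is nondegenerate, with $\ell:=\rho(p,q)>0$. Orient $[a,b]$ so that $p$ precedes $q$ (that is, $p\in[a,q]$). Because the intersection of $[a,b]$ and $[c,d]$ is \emph{exactly} $[p,q]$, the branches carrying $a,b,c,d$ off $[p,q]$ are distinct, so each of the four distances decomposes additively through $p$ and $q$. In the same-orientation case, $c$ sits on the $p$-side and $d$ on the $q$-side; writing $\alpha=\rho(a,p),\beta=\rho(b,q),\gamma=\rho(c,p),\delta=\rho(d,q)$, additivity gives $\rho(a,c)=\alpha+\gamma$, $\rho(b,d)=\beta+\delta$, $\rho(a,d)=\alpha+\ell+\delta$, $\rho(b,c)=\beta+\ell+\gamma$, so the alternating sum equals $2\ell$ and $-\delta_{ij}'(0)=\ell=\varepsilon_{ij}\rho(L_{ij})$ with $\varepsilon_{ij}=1$. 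In the opposite-orientation case the roles of $p,q$ along $[c,d]$ are swapped, which interchanges which of the pairs $\{\rho(a,d),\rho(b,c)\}$ and $\{\rho(a,c),\rho(b,d)\}$ traverses $L_{ij}$; the same computation then yields alternating sum $-2\ell$, i.e. $-\delta_{ij}'(0)=-\ell$ with $\varepsilon_{ij}=-1$. The decisive observation is simply that $L_{ij}$ is counted with a $+$ sign by exactly one of the two crossing pairs, the sign being dictated precisely by the relative orientation.

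It then remains to treat the degenerate case $L_{ij}=\emptyset$ (a single shared point is the boundary instance with $\ell=0$). Here there is a unique bridge $[p,q]$ in the tree with $p\in[a,b]$, $q\in[c,d]$ the closest points and $\mu=\rho(p,q)\geq 0$; each of the four distances now passes through this bridge exactly once, so that $\rho(a,d)+\rho(b,c)-\rho(a,c)-\rho(b,d)$ telescopes to $0$, matching $\varepsilon_{ij}\rho(L_{ij})=0$. I expect the only genuinely delicate point to be the orientation bookkeeping together with the justification that the four auxiliary segments $[a,p],[b,q],[c,p],[d,q]$ meet $[p,q]$ only at their prescribed endpoints — this is where minimality of $L_{ij}$ as the full intersection is used — after which the additivity of $\rho$ makes every cancellation automatic.
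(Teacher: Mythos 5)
Your proposal is correct and is exactly the ``direct calculation'' that the paper's proof asserts and leaves to the reader, illustrated only by its three figures (same orientation, opposite orientation, empty intersection); your case analysis and the additive decomposition of the four distances through the endpoints of $L_{ij}$ (or through the bridge, in the disjoint case) match that intended argument. The verification that the side branches meet $L_{ij}$ only at its endpoints, and the telescoping in the empty-intersection case, are handled correctly, so nothing is missing.
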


\begin{proof}
This is a direct calculation. See the figures below for a few of  examples of such a situation.
\end{proof}

\begin{center}
\begin{tikzpicture}[line cap=round,line join=round,>=triangle 45,x=1cm,y=1cm, scale=0.9]
\draw [line width=0.8pt] (-8.42,1.58)-- (-7.68,0.9);
\draw [line width=0.8pt] (-7.68,0.9)-- (-6.64,2.06);
\draw [line width=0.8pt] (-7.68,0.9)-- (-7.7,-0.54);
\draw [line width=0.8pt] (-7.7,-0.54)-- (-8.62,-0.68);
\draw [line width=0.8pt] (-7.7,-0.54)-- (-7.18,-0.72);
\draw [line width=0.8pt] (-4.06,1.56)-- (-3.06,0.98);
\draw [line width=0.8pt] (-3.06,0.98)-- (-2.62,1.54);
\draw [line width=0.8pt] (-3.06,0.98)-- (-3.12,0);
\draw [line width=0.8pt] (-3.12,0)-- (-2.72,-0.34);
\draw [line width=0.8pt] (-3.12,0)-- (-4.48,-0.28);
\draw [line width=0.8pt] (0.36,1.88)-- (1.2,1);
\draw [line width=0.8pt] (1.2,1)-- (2.06,1.78);
\draw [line width=0.8pt] (1.2,1)-- (1.2,-0.32);
\draw [line width=0.8pt] (1.2,-0.32)-- (0.56,-0.62);
\draw [line width=0.8pt] (1.2,-0.32)-- (1.92,-0.64);
\draw (-8.56,-1.18) node[anchor=north west] {$\varepsilon_{ij}=1$};
\draw (-4.02,-1.2) node[anchor=north west] {$\varepsilon_{ij}=-1$};
\draw (-0.1,-1.18) node[anchor=north west] {$L_{ij}=\emptyset, \varepsilon_{ij}=-1$};
\begin{scriptsize}
\draw [fill=black] (-8.42,1.58) circle (1pt);
\draw[color=black] (-8.4,1.81) node {$\partial{D}_{\g_i^{-1}} $};
\draw [fill=black] (-6.64,2.06) circle (1pt);
\draw[color=black] (-6.02,2.03) node {$\partial{D}_{\g_j}$};
\draw [fill=black] (-8.62,-0.68) circle (1pt);
\draw[color=black] (-9,-0.75) node {$\partial{D}_{\g_i}$};
\draw [fill=black] (-7.18,-0.72) circle (1pt);
\draw[color=black] (-6.5,-0.81) node {$\partial{D}_{\g_j^{-1}}$};
\draw [fill=black] (-4.06,1.56) circle (1pt);
\draw[color=black] (-4.26,1.83) node {$\partial{D}_{\g_i}$};
\draw [fill=black] (-2.62,1.54) circle (1pt);
\draw[color=black] (-2.1,1.80) node {$\partial{D}_{\g_j}$};
\draw [fill=black] (-2.72,-0.34) circle (1pt);
\draw[color=black] (-2.02,-0.61) node {$\partial{D}_{\g_j^{-1}}$};
\draw[color=black] (-7.36,0.39) node {$L_{ij}$};
\draw[color=black] (-2.7,0.67) node {$L_{ij}$};
\draw [fill=black] (-4.48,-0.28) circle (1pt);
\draw[color=black] (-4.44,-0.59) node {$\partial{D}_{\g_i^{-1}}$};
\draw [fill=black] (0.36,1.88) circle (1pt);
\draw[color=black] (0.26,2.21) node {$\partial{D}_{\g_i}$};
\draw [fill=black] (2.06,1.78) circle (1pt);
\draw[color=black] (2.7,1.85) node {$\partial{D}_{\g_i^{-1}}$};
\draw [fill=black] (0.56,-0.62) circle (1pt);
\draw[color=black] (0.46,-0.87) node {$\partial{D}_{\g_j}$};
\draw [fill=black] (1.92,-0.64) circle (1pt);
\draw[color=black] (2.64,-0.83) node {$\partial{D}_{\g_j^{-1}}$};
\end{scriptsize}
\end{tikzpicture}
\end{center}

\begin{rem}
By Lemma 2.5.2 of \cite{poineau_turcheti_universal}, we obtain that $-\delta_{ij}'(0)=|[a_i, b_i; a_j, b_j]|$ whenever $i \neq j$. Here $a_i, b_i \in \bP^1(k)$ denote the attractive, respectively repelling, points of $\g_i$, and $[a_i, b_i; a_j, b_j]$ denotes the respective cross-ratio. On the other hand, the diagonal term $-\delta_{ii}'(0)=\rho(\partial{D}_{\g_i}, \partial{D_{\g_i^{-1}}})$ is the \emph{translation length} of $\g_i$, or equivalently, it equals $-\log |y_i|,$ where~$y_i$ is the third Koebe coordinate of $\g_i$.     
\end{rem}

We now reinterpret $D'(0)$ in a way that will allow us to conclude it is invertible. Let $G \subseteq \Hy_k$ denote the convex hull of the set $\{\partial{D}_{\alpha} | \alpha=\g_i, \g_i^{-1}, i=1,2,\dots, g\}$. We remark that it is a finite metric graph with respect $\rho$. Let us denote by $\sigma_l$, $l=1,2,\dots, N$, its edges. Moreover, we fix an orientation on $G$, which is equivalent to fixing homeomorphisms ${\gamma_i \colon[0,1] \to \sigma_i}$.

Let $\mathcal{C}_1(G):= \langle [\sigma_1], \ldots, [\sigma_N] \rangle$ be the free $\R$-vector space generated by the oriented $\sigma_i$ for $i = 1, 2,  \ldots , N$.
 We consider the dual vector space $\mathcal{C}^1(G):= \Hom (\mathcal{C}_1(G), \R)$, which has a natural basis $(\mathbbm{1}_{\sigma_i})_i$, dual to $([\sigma_i])_i$. For $f = \sum_{i=1}^N a_i \mathbbm{1}_{\sigma_i} \in \mathcal{C}^1(G)$ and $c = \sum_{j=1}^N b_j [\sigma_j] \in \mathcal{C}_1(G)$, we define a pairing: 
\begin{equation}
\langle f, c\rangle := \sum_{l=1}^N a_l b_l \rho(\sigma_l).
\end{equation}

Let $\psi: \mathcal{C}_1(G) \rightarrow \mathcal{C}^1(G), c \mapsto f_c$, be the isomorphism induced by ${[\sigma_i] \mapsto \mathbbm{1}_{\sigma_i}}.$ Clearly, for $c=\sum_{i=1}^N \alpha_i [\sigma_i]$, one has $\langle f_c , c \rangle = \sum_{i=1}^N \alpha_i^2 \rho(\sigma_i),$
so the pairing is non-degenerate.

For $i=1,2,\dots, g$, we denote by $A_i$ the unique injective path in $\Hy_k$ connecting the boundaries of $D_{\g_i}$ and $D_{\g_i^{-1}}$. We note that it is a union of edges of the finite graph $G$ and oriented by design.

\begin{prop} \label{prop_invertible_Dprime} Let $[A_i]:=[\partial{D}_{\gamma_i}, \partial{D}_{\gamma_i^{-1}}]$ be the path in $G$ \emph{oriented} by going from $\partial{D}_{\g_i}$ to~$\partial{D}_{\g_i^{-1}}$, $i=1,2,\dots, g$. Then $\{[A_i]\}_{i=1}^g$ is linearly independent in $\mathcal{C}_1(G)$, and for any $i,j,$ $$-\delta_{ij}'(0)=\langle \psi([A_j]),  [A_i]  \rangle.$$
In particular, the matrix $D'(0)=(\delta_{ij}'(0))_{i,j}$ is invertible. 
\end{prop}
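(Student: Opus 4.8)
The plan is to recognize $-D'(0)$ as the Gram matrix of the family $\{[A_i]\}_{i=1}^g$ with respect to a positive-definite symmetric bilinear form on $\mathcal{C}_1(G)$, after which invertibility reduces to linear independence of the $[A_i]$. Concretely, I would set $B(c, c') := \langle \psi(c'), c \rangle$; writing $c = \sum_l a_l [\sigma_l]$ and $c' = \sum_l a_l' [\sigma_l]$ in the edge basis, one computes $B(c, c') = \sum_l a_l a_l' \rho(\sigma_l)$, which is visibly symmetric and, since every edge has positive length $\rho(\sigma_l) > 0$, satisfies $B(c, c) = \sum_l a_l^2 \rho(\sigma_l) > 0$ for $c \neq 0$. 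Thus $B$ is an inner product on $\mathcal{C}_1(G)$, and the matrix $(\langle \psi([A_j]), [A_i]\rangle)_{ij} = (B([A_i], [A_j]))_{ij}$ is precisely its Gram matrix on the family $[A_1], \dots, [A_g]$.

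To identify this Gram matrix with $-D'(0)$, I would first expand each path in the edge basis as $[A_i] = \sum_{l=1}^N c_{il}[\sigma_l]$, where $c_{il} \in \{-1, 0, +1\}$ records whether the oriented arc $A_i$ traverses $\sigma_l$ and, if so, whether its orientation agrees with the fixed one on $\sigma_l$. Then $\langle \psi([A_j]), [A_i] \rangle = \sum_l c_{il} c_{jl} \rho(\sigma_l)$. The crucial geometric observation is that $c_{il} c_{jl} \neq 0$ exactly when $\sigma_l$ lies on both $A_i$ and $A_j$, that is, when $\sigma_l \subseteq L_{ij}$; and since $\bP^{\pan}_k$ is a real tree, the intersection $L_{ij}$ is a connected sub-arc, along which the orientation induced by $A_i$ (toward $\partial D_{\g_i^{-1}}$) and the one induced by $A_j$ (toward $\partial D_{\g_j^{-1}}$) keep a constant relative sign, namely $\varepsilon_{ij}$. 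Hence $c_{il}c_{jl} = \varepsilon_{ij}$ for every edge of $L_{ij}$, yielding $\langle \psi([A_j]), [A_i]\rangle = \varepsilon_{ij}\sum_{\sigma_l \subseteq L_{ij}} \rho(\sigma_l) = \varepsilon_{ij}\rho(L_{ij})$, which equals $-\delta_{ij}'(0)$ by Corollary \ref{birap}.

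For the linear independence of $\{[A_i]\}$, I would use that $G$ is a finite tree—being the convex hull of finitely many points of the real tree $\bP^{\pan}_k$—so its cycle space vanishes and the boundary map $\partial \colon \mathcal{C}_1(G) \to \mathcal{C}_0(G)$ is injective. Since $\partial [A_i] = [\partial D_{\g_i^{-1}}] - [\partial D_{\g_i}]$ and the $2g$ endpoints $\partial D_{\g_1}, \partial D_{\g_1^{-1}}, \dots, \partial D_{\g_g}, \partial D_{\g_g^{-1}}$ are pairwise distinct vertices of $G$, a relation $\sum_i \lambda_i \partial[A_i] = 0$ forces each $\lambda_i = 0$ upon reading off the coefficient of the basis vector $[\partial D_{\g_i}]$; hence $\{\partial[A_i]\}$, and therefore $\{[A_i]\}$, are linearly independent. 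Combining the two previous paragraphs, $-D'(0)$ is the Gram matrix of a linearly independent family for the inner product $B$, so it is positive-definite, in particular invertible, and thus $D'(0)$ is invertible. The step demanding the most care is the second paragraph: verifying that the relative orientation of $A_i$ and $A_j$ really is constant along $L_{ij}$ is exactly where the tree structure of $\bP^{\pan}_k$ (connectedness of the intersection of two arcs) is indispensable, and it is what makes the sign $\varepsilon_{ij}$ well-defined and compatible with the earlier expression for $-\delta_{ij}'(0)$.
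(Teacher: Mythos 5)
Your proof is correct, and its core is the same as the paper's: expand $[A_i]=\sum_l \alpha_{il}[\sigma_l]$ in the edge basis, note that the cross terms of the pairing are supported on the edges of $L_{ij}=A_i\cap A_j$, argue that the relative sign $\alpha_{il}\alpha_{jl}$ is constant equal to $\varepsilon_{ij}$ along $L_{ij}$ (you justify this by connectedness of the intersection of two arcs in a real tree, which the paper asserts silently), and conclude $\langle \psi([A_j]),[A_i]\rangle=\varepsilon_{ij}\rho(L_{ij})=-\delta_{ij}'(0)$ by Corollary \ref{birap}. You differ in two local steps, both to your advantage. For linear independence, the paper observes that $\partial D_{\g_i}$ lies on $A_i$ but on no other $A_j$ (a geometric fact left unjustified), whereas you apply the boundary map and read off coefficients from $\partial[A_i]=[\partial D_{\g_i^{-1}}]-[\partial D_{\g_i}]$, which uses only that the $2g$ boundary points are pairwise distinct; note that injectivity of $\partial$ (hence the tree-ness of $G$) is not even needed here, since linear independence of the images $\partial[A_i]$ already forces the coefficients in any relation to vanish. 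For invertibility, the paper concludes from ``the pairing is non-degenerate and the $[A_i]$ are linearly independent''; as literally stated this inference is not valid (a non-degenerate indefinite form admits isotropic vectors, so a Gram matrix of an independent family can be singular), and what actually saves the argument is the positivity $\langle f_c,c\rangle=\sum_i \alpha_i^2\rho(\sigma_i)>0$ computed just before. Your formulation --- $-D'(0)$ is the Gram matrix of a linearly independent family with respect to an inner product, hence positive definite --- is precisely the rigorous version of that step.
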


\begin{proof} 
As for $i=1,2,\dots, g$, we have  $\partial{D}_{\g_i} \in A_i \backslash \bigcup_{j \neq i} A_j$, the vectors $[A_i], i=1,2,\dots, g,$ are linearly independent in $\mathcal{C}_1(G)$.

Let $[A_i] = \sum_l \alpha_{il} [\sigma_l]$.Here $\alpha_{il}=0$ if $\sigma_{il} \not\subseteq A_i$, or $\alpha_{il}=1$ when the orientation on~$\sigma_{il}$ coincides with the orientation induced on it by  $A_i$, and $\alpha_{il}=-1$ otherwise. Similarly for $[A_j].$
The pairing then yields: 
$$
\langle  \psi([A_j]), [A_i] \rangle = \sum_{l} \alpha_{il} \alpha_{jl} \rho(\sigma_l) = \sum_{\sigma_l \subset A_i \cap A_j} \alpha_{il} \alpha_{jl} \rho(\sigma_l).
$$
This sum is $0$ if there are no such $l$, \emph{i.e.} if $A_i \cap A_j=\emptyset$. Otherwise, the product $\alpha_{il} \alpha_{jl}$ does not depend on $l$: if $A_i$ and $A_j$ induce the same orientation on $A_i \cap A_j$, then $\alpha_{il}\alpha_{jl}=1$ for all $l$ satisfying $\sigma_l \subset A_i \cap A_j$, and $\alpha_{il}\alpha_{jl}=-1$ otherwise. Hence, $\alpha_{il}\alpha_{jl}=\varepsilon_{ij}$ for all such $l$, with $\varepsilon_{ij}$ as in Corollary \ref{birap}. Consequently, $\langle  \psi([A_j]), [A_i] \rangle=\varepsilon_{ij}\rho(A_i \cap A_j)=-\delta_{ij}'(0)$.
Since the pairing is non-degenerate and the $[A_i]$ are linearly independent, we conclude that $D'(0)$ is an invertible matrix.  
\end{proof} 

\subsection{Analytic continuation at zero}

We use the notation introduced in the previous section. We introduced there a matrix $\tilde{M}(s)$ (\emph{cf.} \eqref{eq_tildeM}), obtained via a base change $P$ from the matrix $M(s), s \geqslant 0$, associated to a Schottky figure~$\mathcal{D}$ for the rank $g>1$ Schottky group $\Ga$ over the non-Archimedean field $k$. We hence have that 
$I-M(s)=P(I-\tilde{M}(s))P^t$, and so when their inverses exist, that $(I-M(s))^{-1}=P (I-\tilde{M}(s))^{-1} P^t$. We recall the block matrix notations of \eqref{eq_tildeM}, implying 
\begin{align} \label{I-tildem}
I - \tilde{M}(s)= \left  ( \begin{array}{ll}
I - A(s) & -B(s) \\
 -C(s) & I-D(s)
\end{array} \right ). 
\end{align}

Let $L(s): \R^{2g} \rightarrow \R^{2g}$ denote the linear 
morphism induced by $I-M(s)$ on $\R^{2g}$. Then, in the orthonormal basis $
\mathcal{B}:=(u, u_2, \dots, u_g, v_1, \dots, v_g)$ of eigenvectors of $M(0)$ (see Lemma \ref{lem_characteristic_zero}), the matrix of $L(s)$ is precisely $I-\tilde{M}(s)$. Similarly, when it exists, 
let $R(s): \R^{2g} \rightarrow \R^{2g}$ denote the linear 
morphism induced by $(I-M(s))^{-1}$. Its matrix in the basis $
\mathcal{B}$ is $(I-\tilde{M}(s))^{-1}$.

\begin{lem} \label{lem_schur}
There exist  matrices $\tilde{A}(s), \tilde{B}(s), \tilde{C}(s), \tilde{D}(s) \in M_{g\times g}(\R)$ with analytic entries in~$s$,   such that $\tilde{A}(s)$ and~$\tilde{D}(s)$ are invertible in a neighborhood of $s=0$, and
\begin{align*}
I - \tilde{M}(s)= \left  ( \begin{array}{ll}
\tilde{A}(s) & s\tilde{B}(s) \\
 s\tilde{C}(s) & s\tilde{D}(s)
\end{array} \right ). 
\end{align*}
Moreover, there is a neighborhood $N$ of $s=0$ such that $I-\tilde{M}(s)$ is invertible in $N \backslash \{0\}$, and 
\begin{equation} \label{schur}
\left  ( I-\tilde{M}(s) \right )^{-1} = \left ( \begin{array}{ll}
S^{-1} & -  S^{-1} \tilde{B} {\tilde{D}^{-1}} \\
 - {\tilde{D}^{-1}} \tilde{C} S^{-1} &  {\tilde{D}^{-1}} \tilde{C} S^{-1} \tilde{B} {\tilde{D}^{-1}} + \dfrac{\tilde{D}^{-1}}{s}
\end{array} \right ),
\end{equation}
where $S(s): = \tilde{A}(s) - s{ \tilde{B}(s) \tilde{D}(s)^{-1}\tilde{C}(s) }$ is the \emph{Schur complement}, which is invertible in a neighborhood of $s=0$.
\end{lem}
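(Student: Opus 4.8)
The plan is to read the block structure of $I-\tilde M(s)$ directly off \eqref{I-tildem} and then invert by taking the Schur complement of the \emph{degenerate} block. Writing $I-\tilde M(s)=\left(\begin{smallmatrix} I-A(s) & -B(s)\\ -C(s) & I-D(s)\end{smallmatrix}\right)$, I set $\tilde A(s):=I-A(s)$, $\tilde B(s):=-B(s)/s$, $\tilde C(s):=-C(s)/s$, and $\tilde D(s):=(I-D(s))/s$. The first point is that these four matrices have analytic entries near $s=0$: this follows from the vanishing relations $B(0)=C(0)=I-D(0)=0$ recorded just after \eqref{eq_tildeM}, since a scalar analytic function vanishing at $0$ factors as $s$ times an analytic function. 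With these definitions the asserted factorization $I-\tilde M(s)=\left(\begin{smallmatrix}\tilde A(s) & s\tilde B(s)\\ s\tilde C(s) & s\tilde D(s)\end{smallmatrix}\right)$ holds by construction.

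Next come the two invertibility claims. Since $(I-D(s))/s=-D'(0)+O(s)$, I have $\tilde D(0)=-D'(0)$, which is invertible by Proposition \ref{prop_invertible_Dprime}; as $s\mapsto\det\tilde D(s)$ is continuous, $\tilde D(s)$ stays invertible on a neighborhood of $0$. For $\tilde A$, I use the diagonalization \eqref{M(0)}: in the basis $\mathcal B$ the upper-left block satisfies $A(0)=\mathrm{Diag}(2g-1,-1,\dots,-1)$, so $\tilde A(0)=I-A(0)=\mathrm{Diag}(2(1-g),2,\dots,2)$, which is invertible exactly because $g>1$, and invertibility again persists near $0$ by continuity.

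Finally I perform the block inversion relative to the lower-right block $T(s):=s\tilde D(s)$; this is the correct block to eliminate because it is the one degenerating at $s=0$, and hence the one carrying the pole. For $s\neq 0$ near $0$ one has $T(s)^{-1}=s^{-1}\tilde D(s)^{-1}$, so the Schur complement is
\[ S(s)=\tilde A(s)-(s\tilde B)(s\tilde D)^{-1}(s\tilde C)=\tilde A(s)-s\,\tilde B(s)\tilde D(s)^{-1}\tilde C(s), \]
as in the statement, and $S(0)=\tilde A(0)$ is invertible, so $S(s)$ is invertible near $0$. Plugging into the standard formula
\[ \begin{pmatrix}P&Q\\ R&T\end{pmatrix}^{-1}=\begin{pmatrix}S^{-1}&-S^{-1}QT^{-1}\\ -T^{-1}RS^{-1}& T^{-1}+T^{-1}RS^{-1}QT^{-1}\end{pmatrix} \]
with $P=\tilde A$, $Q=s\tilde B$, $R=s\tilde C$, $T=s\tilde D$, every explicit factor $s$ in $Q,R,T$ cancels against the $s^{-1}$ in $T^{-1}$, leaving precisely \eqref{schur}. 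Invertibility of $I-\tilde M(s)$ on a punctured neighborhood $N\setminus\{0\}$ is then immediate from $\det(I-\tilde M(s))=\det T(s)\,\det S(s)=s^{g}\det\tilde D(s)\,\det S(s)$, which is nonzero for $0<|s|$ small.

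I expect no serious difficulty here: the only substantive input is Proposition \ref{prop_invertible_Dprime}, already proved, and everything else is bookkeeping. The single point deserving care — and the conceptual heart of the computation — is the choice to take the Schur complement with respect to $T=s\tilde D$ rather than with respect to the upper-left block: this is exactly what confines the singularity to the lone term $\tilde D^{-1}/s$, thereby exhibiting the pole of $(I-M(s))^{-1}$ at $0$ as \emph{simple}.
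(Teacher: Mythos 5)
Your proof is correct and follows essentially the same route as the paper: the same definitions $\tilde A=I-A$, $s\tilde B=-B$, $s\tilde C=-C$, $s\tilde D=I-D$, invertibility of $\tilde D$ near $0$ via Proposition \ref{prop_invertible_Dprime}, and inversion by the Schur complement of the lower-right block. The only difference is that you spell out details the paper handles by citation or assertion — the explicit block-inversion identity (the paper cites it) and the computation $\tilde A(0)=\mathrm{Diag}(2-2g,2,\dots,2)$ showing why $I-A(0)$ is invertible — which is fine.
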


\begin{proof}
Set $\tilde{A}(s)=I-A(s)$, $-B(s)=s\tilde{B}(s)$, $-C(s)=s\tilde{C}(s)$ and $I-D(s)=s\tilde{D}(s)$. As $I-A(0)$ is invertible and $A(s)$ has analytic entries, $\tilde{A}(s)$ has analytic entries and is invertible in a neighborhood of $s=0$. Similarly, as $B(0)=C(0)=I-D(0)=0$ (see \eqref{eq_tildeM}) and all three matrices are analytic, we obtain that $\tilde{B}(s), \tilde{C}(s)$ and $\tilde{D}(s)$ are analytic in $s$. Using the Taylor expansion around $s=0$, we obtain that $I-D(s)=(I-D(0))-sD'(0)+o(s)$, implying $\tilde{D}(s)=-D'(0)+{o(1)}$. By Proposition \ref{prop_invertible_Dprime}, $D'(0)$ is an invertible matrix, so in a neighborhood of $s=0$, $\tilde{D}(s)$ is also invertible. The first equality is now a consequence of \eqref{I-tildem}.

The second equality is  the classical Schur complement formula (see \emph{e.g.} \cite[Theorem~5.1]{grigorchuk_nekrashevych}), and the invertibility of $S(s)$ near $0$ is due to the fact that $S(0)=\tilde{A}(0)$. 
\end{proof}

Let $E_1$ be the  $g$-dimensional subspace of $\R^{2g}$ spanned by $\mathcal{B}_1:=(u, u_2, \dots, u_g)$, and $E_2$ its orthogonal generated by $\mathcal{B}_2:=(v_1, v_2,\dots, v_g).$ Let $\pi_i: \R^{2g} \rightarrow E_i$ denote the orthogonal projection, $i=1,2.$

\begin{prop}\label{thm_resolvant_expansion}
For $i,j \in \{1,2\},$ let $R_{ij}(s): E_j \rightarrow E_i$ be the linear morphism induced by the~$(i,j)$ block of $(I-\tilde{M}(s))^{-1}$ in \eqref{schur} in the bases $\mathcal{B}_j, \mathcal{B}_i$. Then ${R(s)=\sum_{1 \leqslant i,j \leqslant 2} R_{ij}(s) \pi_j}$ and
\begin{enumerate}
\item $R_{11}(0)=R(0)_{|E_1}$,
\item $R_{11}(s), R_{12}(s)$ and $R_{21}(s)$ are analytic at $s=0$,
\item $R_{22}(s)$ is meromorphic with a simple pole at $s=0$.
\end{enumerate}
\end{prop}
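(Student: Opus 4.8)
The plan is to read off all three statements directly from the Schur complement formula \eqref{schur} of Lemma \ref{lem_schur}, the only genuine inputs being the analyticity of the blocks $\tilde{A}(s), \tilde{B}(s), \tilde{C}(s), \tilde{D}(s)$ together with the invertibility of $\tilde{A}(0)$ and $\tilde{D}(0)$ in a neighborhood of $s=0$. First I would record the block decomposition: since the matrix of $R(s)$ in the orthonormal basis $\mathcal{B}=\mathcal{B}_1 \cup \mathcal{B}_2$ adapted to $\R^{2g}=E_1 \oplus E_2$ is exactly $(I-\tilde{M}(s))^{-1}$, writing $R(s)=\sum_i \pi_i R(s) \sum_j \pi_j = \sum_{i,j} \pi_i R(s) \pi_j$ and setting $R_{ij}(s):=\pi_i R(s)_{|E_j}$ gives $R(s)=\sum_{1\leqslant i,j \leqslant 2} R_{ij}(s)\pi_j$, where $R_{ij}(s)$ has matrix the $(i,j)$ block of \eqref{schur} in the bases $\mathcal{B}_j,\mathcal{B}_i$. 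This is the decomposition asserted in the statement, valid for $s\neq 0$ near $0$.

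For the analyticity claims (2), I would argue as follows. By Lemma \ref{lem_schur}, $\tilde{D}(0)=-D'(0)$ is invertible (Proposition \ref{prop_invertible_Dprime}), so $\tilde{D}(s)^{-1}=\frac{1}{\det \tilde{D}(s)}\,\mathrm{adj}(\tilde{D}(s))$ is analytic at $s=0$; hence the Schur complement $S(s)=\tilde{A}(s)-s\tilde{B}(s)\tilde{D}(s)^{-1}\tilde{C}(s)$ is analytic with $S(0)=\tilde{A}(0)$ invertible, and the same cofactor argument shows $S(s)^{-1}$ is analytic at $0$. Then $R_{11}(s)=S(s)^{-1}$, $R_{12}(s)=-S(s)^{-1}\tilde{B}(s)\tilde{D}(s)^{-1}$ and $R_{21}(s)=-\tilde{D}(s)^{-1}\tilde{C}(s)S(s)^{-1}$ are finite products of matrices analytic at $0$, hence analytic there. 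For (3), the block $R_{22}(s)=\tilde{D}(s)^{-1}\tilde{C}(s)S(s)^{-1}\tilde{B}(s)\tilde{D}(s)^{-1}+s^{-1}\tilde{D}(s)^{-1}$ is a sum of an analytic term and $s^{-1}\tilde{D}(s)^{-1}$; since $\tilde{D}(0)^{-1}\neq 0$, the latter contributes a genuine simple pole at $s=0$ with residue $\tilde{D}(0)^{-1}=(-D'(0))^{-1}$, so $R_{22}$ is meromorphic with a pole of order exactly one at $0$.

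Finally, for (1) I would unwind the singular limit $s\to 0$. Because $B(0)=C(0)=I-D(0)=0$, the matrix of $L(0)=I-M(0)$ in $\mathcal{B}$ is the block-diagonal $\mathrm{diag}(\tilde{A}(0),0)$; thus $L(0)$ preserves $E_1$ and $E_2$, acting on $E_1$ by the invertible $\tilde{A}(0)$ and on $E_2$ by $0$, consistently with $E_2$ being the eigenspace of $M(0)$ for the eigenvalue $1$ (cf. Lemma \ref{lem_characteristic_zero}). Consequently $R(0)_{|E_1}$, understood as the inverse of $L(0)_{|E_1}\colon E_1\to E_1$, equals $\tilde{A}(0)^{-1}$; on the other hand $R_{11}(0)=S(0)^{-1}=\tilde{A}(0)^{-1}$, so $R_{11}(0)=R(0)_{|E_1}$. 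The only subtle point — where I would be most careful — is precisely this interpretation of (1): $I-M(0)$ is \emph{not} invertible, since it kills $E_2$, so the full resolvent $R(0)$ is undefined, and the identity must be read as the equality of the analytic continuation $R_{11}(0)$ with the genuine inverse of the invertible restriction $L(0)_{|E_1}$. All remaining steps are routine manipulations of analytic matrix-valued functions.
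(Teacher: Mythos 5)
Your proof is correct and follows essentially the same route as the paper: all three claims are read off the Schur complement formula of Lemma \ref{lem_schur}, using the analyticity of the blocks together with the invertibility of $\tilde{A}(0)$ and $\tilde{D}(0)$ (the latter via Proposition \ref{prop_invertible_Dprime}). Your careful reading of point (1) --- interpreting $R(0)_{|E_1}$ as the inverse of the invertible restriction $L(0)_{|E_1}$, since $I-M(0)$ itself is singular on $E_2$ --- is exactly the intended meaning, and is in fact spelled out more precisely than in the paper's own terse proof, which loosely refers to a block of ``$(I-\tilde{M}(0))^{-1}$'' even though that matrix does not exist.
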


\begin{proof}
The equality $R(s)=\sum_{1 \leqslant i,j \leqslant 2} R_{ij}(s) \pi_j$ can be directly checked on the basis $\mathcal{B}=\mathcal{B}_1 \cup \mathcal{B}_2$ of $\R^{2g}$. 
The first point is immediate seeing as $R_{11}(0)$ is given by the matrix $(I-A(0))^{-1}$, \emph{i.e.} the upper-right block of the matrix $(I-\tilde{M}(0))^{-1}$. 

Given the invertibility of $S$ and $\tilde{D}$ in a neighborhood of $s=0$, from \eqref{schur} we obtain that~$R_{11}(s), R_{12}(s)$ and $R_{21}(s)$ are analytic at $0$.
Similarly, as $\tilde{D}^{-1}\tilde{C}S^{-1}\tilde{B}\tilde{D}^{-1}+\dfrac{\tilde{D}^{-1}}{s}$ is meromorphic with simple pole at $0$, so is the linear map $R_{22}(s)$.
\end{proof}

We can now prove the main theorem of this section. Recall the equality obtained in Theorem~\ref{thm_meromorphic_extension}, where the vectors $X_x(s)$ and $Y_y(s)$ are analytic in $s$. 

\begin{thm} \label{analyticpoincare} Let $k$ be a non-Archimedean field and $\Ga \subseteq \PGL_2(k)$ a Schottky group of rank~${g>1}$. Let $\mathcal{D}$ be a Schottky figure for $\Ga$ in $\bP^{\pan}_k$.  Let $x, y \in \Hy_k.$
The associated Poincaré series $\Po_{x,y}(s), s \in \R_{\geqslant 0},$ can be meromorphically extended to $\C$ via $$\Po_{x,y}(s):=X_x(s)(I-M_{\mathcal{D}}(s))^{-1} Y_y(s)^{t}+e^{-s\rho(x,y)}.$$ Moreover, this continuation is \emph{analytic} at $s=0$, and 
$$\Po_{x,y}(0)=\frac{1}{1-g}.$$
\end{thm}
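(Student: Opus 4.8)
The plan is to push the meromorphic expression of Theorem \ref{thm_meromorphic_extension} through the orthogonal change of basis $P$ of \eqref{M(0)} and to exploit the fact that the \emph{only} singularity of the resolvent, isolated in the block $R_{22}(s)$, is annihilated by the source and target vectors. Writing $\xi(s):=P^{t}X_x(s)^{t}$ and $\upsilon(s):=P^{t}Y_y(s)^{t}$ for the coordinates of $X_x(s),Y_y(s)$ in the orthonormal eigenbasis $\mathcal{B}=(u,u_2,\dots,u_g,v_1,\dots,v_g)$ of $M(0)$, Theorem \ref{thm_meromorphic_extension} together with the decomposition $R(s)=\sum_{i,j}R_{ij}(s)\pi_j$ of Proposition \ref{thm_resolvant_expansion} gives
$$\Po_{x,y}(s)-e^{-s\rho(x,y)}=\xi(s)^{t}(I-\tilde{M}(s))^{-1}\upsilon(s)=\sum_{1\leqslant i,j\leqslant 2}\pi_i(\xi(s))^{t}R_{ij}(s)\pi_j(\upsilon(s)).$$

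The key observation I would establish is that the $E_2$-components of both vectors vanish at $s=0$. Indeed, recalling $v_l=(e_{2l-1}-e_{2l})/\sqrt 2$, the very definition of $X_x(s)$ yields $\langle X_x(s),v_l\rangle=\tfrac{1}{\sqrt 2}(e^{-s\rho(x,\partial D_{\g_l})}-e^{-s\rho(x,\partial D_{\g_l^{-1}})})$, which is analytic in $s$ and equal to $0$ at $s=0$; hence $\pi_2(\xi(s))=O(s)$, and the identical computation for $Y_y(s)$ gives $\pi_2(\upsilon(s))=O(s)$. By Proposition \ref{thm_resolvant_expansion}, $R_{11},R_{12},R_{21}$ are analytic at $0$ while $R_{22}$ has at most a simple pole. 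Consequently the terms involving $R_{11},R_{12},R_{21}$ are analytic, and the a priori singular term $\pi_2(\xi)^{t}R_{22}\pi_2(\upsilon)$ is $O(s)\cdot O(s^{-1})\cdot O(s)=O(s)$, hence analytic as well. This proves that $\Po_{x,y}(s)$ extends analytically across $s=0$.

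To compute the value I would evaluate each summand at $s=0$: every term carrying a factor $\pi_2(\xi)$ or $\pi_2(\upsilon)$ is $O(s)$ and so vanishes, leaving only $\pi_1(\xi(0))^{t}R_{11}(0)\pi_1(\upsilon(0))$ together with $e^{-s\rho(x,y)}|_{s=0}=1$. Now $X_x(0)=Y_y(0)=U=\sum_j e_j=\sqrt{2g}\,u$, and since $U$ lies on the eigenline $\R u\subseteq E_1$, its $\mathcal{B}$-coordinates are $(\sqrt{2g},0,\dots,0)$, so $\pi_1(\xi(0))=\pi_1(\upsilon(0))=\sqrt{2g}\,u$. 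By the Schur formula of Lemma \ref{lem_schur}, $R_{11}(0)=S(0)^{-1}=(I-A(0))^{-1}$; since in the basis $\mathcal{B}$ the matrix $M(0)$ is diagonal with top-left block $A(0)=\mathrm{Diag}(2g-1,-1,\dots,-1)$, the $u$-entry of $S(0)^{-1}$ equals $\tfrac{1}{1-(2g-1)}=\tfrac{1}{2(1-g)}$. Hence $\pi_1(\xi(0))^{t}R_{11}(0)\pi_1(\upsilon(0))=2g\cdot\tfrac{1}{2(1-g)}=\tfrac{g}{1-g}$, and therefore $\Po_{x,y}(0)=\tfrac{g}{1-g}+1=\tfrac{1}{1-g}$.

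The main obstacle is the cancellation step. The resolvent genuinely has a pole at $s=0$ arising from the eigenvalue $1$ of $M(0)$ (of multiplicity $g$, spanned by $E_2=\mathrm{span}(v_1,\dots,v_g)$), and the whole point is that the geometry of the Koebe data encoded in $X_x,Y_y$ forces their components along $E_2$ to vanish to first order at $s=0$. Matching this first-order vanishing against the simple pole of $R_{22}(s)$ is exactly what removes the singularity; once this is in place, the determination of $\Po_{x,y}(0)$ reduces to linear algebra in the eigenbasis of $M(0)$, and in particular to reading off the $u$-eigenvalue $2g-1$ of $M(0)$.
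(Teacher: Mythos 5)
Your proposal is correct and follows essentially the same route as the paper's proof: the orthogonal decomposition $\R^{2g}=E_1\oplus E_2$ into eigenspaces of $M(0)$, the observation that the $E_2$-components of $X_x(s)$ and $Y_y(s)$ vanish at $s=0$ and hence cancel the simple pole of $R_{22}(s)$, and the evaluation of the surviving term via the eigenvalue $2g-1$ are exactly the paper's argument (your Schur-complement reading of the $u$-entry of $R_{11}(0)$ is just a coordinate version of the paper's remark that $U$ is an eigenvector of $R(0)$ for the eigenvalue $\tfrac{1}{1-(2g-1)}$). The only detail you omit is the initial reduction, via Remark \ref{rem_meroext2}, to the case where $x,y$ lie outside the Schottky figure, which is needed for Theorem \ref{thm_meromorphic_extension} — and hence for the vectors $X_x(s),Y_y(s)$ — to be applicable as stated.
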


\begin{proof}
By Remark \ref{rem_meroext2}, we may assume that $x, y$ are not in the Schottky figure, so Theorem~\ref{thm_meromorphic_extension} is applicable and ensures the meromorphic continuation {over $\C$}. Let $X_x(s)=X_1(s)+X_2(s)$ and $Y_y(s)=Y_1(s)+Y_2(s)$ be the decompositions of $X_x(s), Y_y(s)$ in $E_1 \oplus E_2$, where $E_1, E_2$ are as in Proposition \ref{thm_resolvant_expansion}. As $X_x(0)=Y_y(0)=U$  is the vector with all entries equal to $1$, and $U \in E_1$, we get that ${X_2(0)=Y_2(0)=0 \in E_2}$.  Using the orthogonality of $E_1$ and $E_2$, and Proposition~\ref{thm_resolvant_expansion}, we obtain

\vspace{0.4cm}

{\centering
  $ \displaystyle
    \begin{aligned}
\cP_{x,y}(s)-e^{-s\rho(x,y)} & = \langle X_x(s), R(s)Y_y(s)^{t} \rangle = \sum_{1 \leqslant i,j \leqslant 2} \langle X_1(s) + X_2(s) , R_{ij}(s) Y_j(s) \rangle  \\
 & = \sum_{i,j} \langle X_i(s) , R_{ij}(s) Y_j(s) \rangle \\
\end{aligned}
  $ 
\par}

 \smallskip

By Proposition \ref{thm_resolvant_expansion},  $R_{11}(s), R_{12}
(s), R_{21}(s)$ are holomorphic at $0$. On the other hand, 
$R_{22}(s)$ is meromorphic with a simple pole at $0$, but since $X_2(s), Y_2(s)
$ are holomorphic and vanish at $s=0$, the product $\langle 
X_2(s), R_{22}(s) Y_2(s)\rangle$ is also holomorphic and vanishes 
at $s=0$. In addition to that, as $X_2(0)=Y_2(0)=0$, what remains 
is $\Po_{x,y}(0)-1=\langle X_1(0), R_{11}(0)Y_1(0) \rangle$. 

Finally, by Proposition \ref{thm_resolvant_expansion}, $R_{11}(0)=R(0)_{|E_1}$. Since $X_1(0)= Y_1(0)= U \in E_1$ is an eigenvector of $M(0)$ for the eigenvalue $2g-1$, it is also an eigenvector of $R(0)$ for the eigenvalue~$\frac{1}{1-(2g-1)},$ so
$$
\cP_{x,y}(0)-1 = \langle U , R_{11}(0) U \rangle = \dfrac{1}{1 - (2g-1)} \langle U, U \rangle = \dfrac{1}{1 - g} -1. 
$$
\end{proof}

\begin{rem}
We note that if $x=y$, then the ``reduced Poincaré series'' $\mathcal{P}'_x(s):=\sum_{\g \in \Ga \backslash \{\mathrm{id}\}} e^{-s\rho(x, \g x)}$ can be analytically extended to $0$ with value $$\Po'_x(0)=\frac{1}{1-g}-1.$$

When $x=y$, it makes sense to look at the reduced Poincaré series instead, seeing as that means we don't count the trivial loop $[x, \mathrm{id} \cdot x]$ in the quotient Mumford curve $(\bP^{\pan}_k \backslash \La_{\Ga})/\Ga.$
\end{rem}

\begin{rem}
The Mumford curve $(\bP^{\pan}_k \backslash \La_{\Ga})/\Ga$ can be retracted by deformation to a finite metric graph of genus $g$ (which is the quotient by $\Ga$ of the graph $G$ from Section \ref{sect_comb_lemma}). The analytic continuation of the (reduced) Poincaré series is then reminiscent of that of N. Anantharaman presented at her Collège de France course ``Spectres de graphes et de surfaces''.
\end{rem}

\begin{rem} Heuristically, as $M(0)U=(2g-1)U$ and $X(0)^{t}=Y(0)=U$, we get that $$\Po_{x,y}(0)-1=X_x(0)(I-M(0))^{-1}Y_y(0)^{t}=\frac{U^{t}U}{2-2g}=\frac{2g}{2-2g}=\frac{1}{1-g}-1.$$
\end{rem}

\section{Moduli space of  Schottky groups over $\Z$} \label{sect_2}

In this section we recall some definitions and properties of the theory of Berkovich spaces over~$\Z$ (see \cite{poineau_berkovich_Z} and \cite{lemanissier_poineau}), as well as the construction of a moduli space $\eS_g$ defined over $\Z$ of Schottky groups of rank $g \geqslant 1$ (see \cite{poineau_turcheti_universal}).

\subsection{Berkovich spaces over $\Z$} \label{section_berkovich_Z}
{We start by fixing some terminology.
\begin{defi} \label{def_banachring}
Let $A \neq 0$ be a commutative ring with unity. Let $\|\cdot\|: A  \rightarrow \mathbb{R}_{\geqslant 0}$ be such that
\begin{enumerate}
    \item $\|0\|=0, \|1\|=1$,
    \item $\|xy\| \leqslant \|x\| \|y\|$ for all $x, y \in A$,
    \item $\|x-y\| \leqslant \|x\|+\|y\|$ for all $x, y \in A$.  
\end{enumerate}
When it is complete, $(A, \|\cdot\|)$ is said to be a \emph{Banach ring}.
\end{defi}}

{From now on, let $(A, \|\cdot\|)$ be a Banach ring.}

\begin{defi}[{\cite[1.5]{berkovich_spectral}}] Let $n \in \Z_{\geqslant 0}$. The \emph{affine analytic space $\mathbb{A}^{n, \an}_{A}$} over $(A, \|\cdot\|)$ is defined as:
\begin{enumerate}
\item setwise, the multiplicative semi-norms $|\cdot|$ on $A[T_1, T_2, \dots, T_n]=:A[\underline{T}]$ such that $|\cdot|_{|A} \leqslant \|\cdot\|$, with the convention that $A[\underline{T}]=A$ when $n=0$,
 \item the topology on $\mathbb{A}^{n, \an}_A$ is the coarsest such that $\varphi_P: \mathbb{A}^{n, \an}_A \rightarrow \R$, $|\cdot| \mapsto |P|$, is continuous for all $P \in A[\underline{T}].$
\end{enumerate}
When $n=0$, we denote by $\M(A)$ the space $\Af_A^{0, \an}$, and call it the \emph{Berkovich spectrum} of $A$. 
\end{defi}

\begin{rem} \label{rem_banachring}
The ring $(\Z, |\cdot|_{\infty}),$ where $|\cdot|_{\infty}$ denotes the usual absolute value, is a Banach ring. 
The morphism $(\mathbb{Z}, |\cdot|_{\infty}) \rightarrow (A, \|\cdot\|), m\mapsto m$, is bounded, \emph{i.e.} $\|m\| \leqslant |m|_{\infty}$. For any~$n \geqslant 0,$ this naturally induces a surjective continuous morphism ${\pi_A: \A_A^{n, \an} \rightarrow \A_{\Z}^{n, \an}}$, which we will call a \emph{projection}. We do \emph{not} make mention of the potentially (non-){analytic} nature of this morphism, we will merely use that it is continuous. 
\end{rem}

The topological space $\M(A)$ is non-empty, compact and Hausdorff (\cite[Theorem 1.2.1]{berkovich_spectral}). For $n \geqslant 0$, the topological space $\Af^{n, \an}_A$ is locally compact and Hausdorff (\emph{cf.} \mbox{\cite[Remark 1.5.2]{berkovich_spectral}}). 
There is a continuous structural morphism $\Af^{n, \an}_A \rightarrow \M(A)$ induced by the restriction $|\cdot| \mapsto |\cdot|_{|A}$ (it is an \emph{analytic} morphism by {\cite[\S~2.1.1]{lemanissier_poineau}}). 
Let us describe its fibers.

To a point $x \in \Af^{n, \an}_A$ (corresponding to a semi-norm $|\cdot|_x$ on $A[\underline{T}]$) one associates the \emph{completed residue field} $\mathcal{H}(x)$, defined as the completion of the field $\mathrm{Frac}(A[\underline{T}]/\mathrm{Ker}{|\cdot|_x})$ with respect to the quotient norm induced by $|\cdot|_x$. There is always a map $A[\underline{T}] \rightarrow \mathcal{H}(x), f \mapsto f(x)$, induced by the projection $A[\underline{T}] \rightarrow A[\underline{T}]/\mathrm{Ker}{|\cdot|_x}.$ Moreover, $|f|_x=|f(x)|_{\mathcal{H}(x)}$, where $|\cdot|_{\mathcal{H}(x)}$ denotes the norm on~$\mathcal{H}(x)$. We often simply write $|f(x)|$ instead of $|f(x)|_{\mathcal{H}(x)}$. 

\begin{ex} \label{ex_MZ} Let us consider the Banach ring $(\Z, |\cdot|_{\infty}).$ Using Ostrowski's Theorem, one obtains that $\M(\Z)$ consists of the following points:
\begin{enumerate}
\item $|\cdot|_{\infty}^{\varepsilon}$ for $\varepsilon \in (0,1]$, in which case $\Hr(|\cdot|_{\infty}^{\varepsilon})=(\R, |\cdot|_{\infty}^{\varepsilon}),$
\item $|\cdot|_{p}^{\alpha}$ for $\alpha \in (0, +\infty)$, where $p$ is a prime number and $|\cdot|_p$ denotes the $p$-adic norm, in which case $\Hr(|\cdot|_{p}^{\alpha})=(\Q_p, |\cdot|_{p}^{\alpha})$,
\item the trivial norm $|\cdot|_0$ satisfying $|n|_0=1$ for any $n \in \Z \backslash \{0\},$ in which case $\Hr(|\cdot|_0)=(\Q, |\cdot|_0),$
\item the semi-norms $|\cdot|_{p, \infty}$, given by $n \mapsto 0$ if $p|n$, and $n \mapsto 1$ otherwise, in which case $\Hr(|\cdot|_{p, \infty})=(\mathbb{F}_p, |\cdot|_0)$. 
\end{enumerate}
We remark that the set of points $\{|\cdot|_{\infty}^{\varepsilon}: \varepsilon \in (0, 1] \}$, \emph{i.e.} the \emph{Archimedean part} of $\M(\Z)$, is an open (\emph{cf.} \cite[\S~2.1]{poineau_turcheti_universal}).  The same is true for $\Af_{\Z}^{n, \an}$, \emph{i.e.} $\{x \in \Af_{\Z}^{n, \an}: \Hr(x)  \text{ is Archimedean}\}$ is an open.
\end{ex}

Let $\pi: \Af^{n, \an}_A \rightarrow \M(A)$ be the structural morphism obtained by restricting the semi-norms to $A$. For $x \in \M(A)$, the fiber $\pi^{-1}(x)$ is homeomorphic to the $\mathcal{H}(x)$-Berkovich analytic affine space $\Af^{n, \an}_{\Hr(x)}$.  Thus, $\Af_{\Hr(x)}^{n, \an}$ can be  embedded into $\Af_A^{n, \an}$.

\begin{ex} \label{ex_scale}
In the case of $\Z$, the fiber of $\pi: \Af_{\Z}^{n, \an} \rightarrow 
\M(\Z)$ over $|\cdot|_{\infty}^{\varepsilon}$ is identified to 
$\Af_{\R, |\cdot|_{\infty}^{\varepsilon}}^{n, \an}$, $\varepsilon \in (0,1]$. Here 
$\Af_{\R, |\cdot|_{\infty}^{\varepsilon}}^{n, \an}$ is homeomorphic to $\C^n/
\mathrm{conj.}$ via  $[\underline{z}]:=[(z_1, z_2, \dots, z_n)] \mapsto 
|\cdot|_{v_{z, \varepsilon}}$, where $|\cdot|_{v_{z, \varepsilon}}: P(T_1, 
T_2, \dots, T_n) \mapsto |P(z_1, z_2, \dots, z_n)|_{\infty}^{\varepsilon}.$ 
We remark that all Archimedean fibers of $\pi$ are homeomorphic via $|\cdot| \in \Af^{n, \an}_{\R, |\cdot|_{\infty}} \mapsto |\cdot|^{\varepsilon} \in \Af^{n, \an}_{\R, |\cdot|_{\infty}^{\varepsilon}}.$ 

For a prime $p$, the fiber of $\pi$ over $|\cdot|_p^{\alpha}$ is $\Af_{\Q_p, |\cdot|_{p}^{\alpha}}^{n,\an}$ for $\alpha \in (0, +\infty)$. For any $\beta \in (0, +\infty)$, the fibers $\pi^{-1}(|\cdot|_{p}^{\alpha})$
and $\pi^{-1}(|\cdot|_{p}^{\beta})$ are homeomorphic.  
Finally, if $x=|\cdot|_{p, \infty}$, then $\pi^{-1}(x) \cong \Af_{\mathbb{F}_p}^{n, \an}$, and if $x=|\cdot|_{0}$, then $\pi^{-1}(x) \cong \Af^{n, \an}_{(\Q, |\cdot|_0)}$.
\end{ex}

If $(k, |\cdot|_k)=\Hr(z)$ for some $z \in 
\M(\Z)$,  there is an embedding $\varphi_k: k^n \hookrightarrow \Af_{\Z}^{n, 
\an}$ via $\underline{a}:=(a_1, a_2, \dots, a_n) \mapsto |
\cdot|_{\underline{a}},$ where $|P|_{\underline{a}}:=|
P(\underline{a})|_k$ for $P \in \Z[T_1, T_2, \dots, T_n].$ 
One can similarly define a map $\varphi_{\C, |\cdot|_{\infty}^{\varepsilon}}$ for $\varepsilon \in (0,1]$, but it will no longer be injective (unless we identify conjugated elements of $\C^n$).

Similar descriptions of $\Af^{n, \an}_A$ can be given for other Banach rings $A$ such as $(\C, |\cdot|_\mathrm{hyb})$, where $|\cdot|_{\mathrm{hyb}}:=\max(|\cdot|_{\infty}, |\cdot|_{0})$, the integer rings of number fields, and complete discretely valued rings. If there is a bounded map of Banach rings $(\Z, |\cdot|_{\infty}) \rightarrow (A, \|\cdot\|)$, then one has $\Af_A^{n, \an}=\Af_{\Z}^{n, \an} \times_{\M(\Z)} \M(A)$ whenever this fiber product exists (see also Remark \ref{fiberproducts}).

For the sake of completion, we briefly recall the structural sheaf of analytic functions here, though we will not make explicit use of it.

\begin{defi}[{\cite[1.5]{berkovich_spectral}}]
For $U \subseteq \Af^{n, \an}_A$ open, let $${K(U):=\left\{\frac{P}{Q} | P, Q \in A[\underline{T}] \ | \ |Q|_x \neq 0, \ \forall x \in U \right\}}.$$ Let $\mathcal{O}(U)$ be the set of functions $f: U \rightarrow \prod_{x \in U} \mathcal{H}(x)$, such that 
\begin{enumerate}
\item $f(x) \in \mathcal{H}(x)$ for all $x \in U$,
\item for $x \in U$, there exists a neighborhood $V$ of $x$ such that $f$ is a uniform limit of elements of~$K(V).$
\end{enumerate}
An $A$-\emph{analytic space} is a locally ringed space which is locally isomorphic to $(V(\mathcal{I}),\mathcal{O}_U/\mathcal{I})$, with~$U$ an open of some $\Af^{n, \an}_A$, and $\mathcal{I}$ a coherent ideal sheaf of $\mathcal{O}_U$.
\end{defi}

\begin{rem} \label{fiberproducts} 
Let $(B, \|\cdot\|_B)$ be a Banach ring, and  $f: A \rightarrow B$ a \emph{bounded} morphism, \emph{i.e.} there exists $C>0$ such that $\|f(a)\|_B \leqslant C\|a\|_A$ for all $a \in A$. The $B$-analytic spaces for all such Banach rings $B$, together with a notion of \emph{analytic morphisms}, form a category (\emph{cf.}~{\mbox{\cite[\S~2.2]{lemanissier_poineau}}}). It is not clear that fiber products exist in this category for any $A$, though this is the case whenever $A$ is \emph{e.g.} a complete valued field, a hybrid field, the integer rings of number fields, complete discretely valued rings, \emph{etc.} (\emph{cf.}~{\mbox{\cite[\S~4.3]{lemanissier_poineau}}}). 
\end{rem}

\begin{defi}
Let $X$ be an $A$-analytic space. A point $x \in X$ is said to be \emph{Archimedean} if~$\mathcal{H}(x)$ is Archimedean, and \emph{non-Archimedean} otherwise. 
\end{defi}

Another important example of $A$-analytic spaces is the relative projective line. To define it, one can glue along opens of two copies of $\Af_A^{1, \an}$.

\begin{ex} \label{relproj} We fix a coordinate function $T$ on $\Af_A^{1, \an}$ so that its elements are multiplicative semi-norms on $A[T]$.  Let $U:=\{x \in \Af_A^{1, \an} | |T|_x \neq 0\}$. Similarly, let us take a copy of $\Af_A^{1, \an}$ where we denote by $S$ the coordinate function, and let $V:=\{x \in \Af_A^{1, \an}| |S|_x \neq 0 \}$. We glue these two copies of the affine line via $A[T, \frac{1}{T}] \rightarrow A[S, \frac{1}{S}], T \mapsto \frac{1}{S}$, which induces an isomorphism $U \cong V.$ The resulting space is the relative analytic projective line $\bP_A^{\pan}$ over $A$.

There is a structural morphism $\pi: \bP_A^{\pan} \rightarrow \M(A)$, induced by the structural morphism of~$\Af^{1, \an}_A$. For any $x \in \M(A)$, the fiber $\pi^{-1}(x)$ can be identified to $\bP^{\pan}_{\Hr(x)}.$
\end{ex}

For $X:=\Af_{\Z}^{n, \an}$ one can construct $\bP_X^{\pan}:=\bP_{\Z}^{\pan} \times_{\M(\Z)} X$--the relative analytic projective line over $X$. For an open $U \subseteq X$, one can also define $\bP^{\pan}_U:=\bP^{\pan}_{\Z} \times_{\M(\Z)} U$--the relative projective line over $U$ (\emph{cf.} Remark \ref{fiberproducts}).  The fiber of the projection $\bP_X^{\pan} \rightarrow X$ over a point $x \in X$ can again be identified to~$\bP^{\pan}_{\Hr(x)}$.

\medskip

{\emph{Möbius transformations.}} There is an action of $\PGL_2(\mathcal{O}(U))$ on $\bP^{\pan}_U$ via Möbius transformations (\emph{cf.}  \cite[\S~2.2]{poineau_turcheti_universal}).  Given $M:=\begin{pmatrix}
a & b \\ c & d\\
\end{pmatrix}  \in \PGL_2(\mathcal{O}(U))$, the action is induced via the morphism $T \mapsto \frac{aT+b}{cT+d}$, where $T$ denotes a fixed coordinate function on $\bP^{\pan}_U$. 
   
   For any $x \in U$, the action of $\PGL_2(\mathcal{O}(U))$ on $\bP^{\pan}_U$ restricts to the action of $\PGL_2(\mathcal{H}(x))$ on the fiber $\bP^{\pan}_{\Hr(x)}$ via Möbius transformations.

\subsection{The modulus function over $\Z$}

We diverge momentarily from the goal of recalling the moduli space of Schottky groups over $\Z$ in order to study a useful function.

There is a notion of \emph{modulus} for annuli over $(\C, |\cdot|_{\infty})$,  denoted $\modu_{(\C, |\cdot|_{\infty})}(\cdot)$, sometimes also referred to as the \emph{extremal length function}  (\emph{cf.} \cite[Appendix B]{milnor_complex} or \cite[\S~6.3.1]{lyubich_book}), which  is~$\PGL_2(\C)$-invariant.
We start by providing an explicit computation for it,  inspired by~\cite{kisil}.

Let $\alpha, \beta \in \C$ and $R, r>0$ be such that the two disks $D(\alpha, R), D(\beta, r)$ \emph{cut out} an annulus $A$ in $\bP^1(\C)$. This means that $A=D(\alpha, R) \backslash D(\beta,r)$, $A=D(\beta, r) \backslash D(\alpha, R)$, or $A=D(\alpha, R)^c \backslash D(\beta, r)$. 

{
\begin{lem} \label{lem_complexmodulus} One has \begin{align*}
    \modu_{(\C, |\cdot|_{\infty})} A &=\mathrm{ch}^{-1} \left(  \left | \dfrac{|\alpha - \beta|_{\infty}^2 - R^2 - r^2}{2 Rr}\right |  \right ),
\end{align*}
where $\mathrm{ch}^{-1}$ denotes the inverse of the hyperbolic cosine 
function.
\end{lem}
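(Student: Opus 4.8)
The plan is to exploit the $\PGL_2(\C)$-invariance of the modulus, recalled above, by reducing to the case of two concentric circles. The central object is the \emph{inversive distance} between the two bounding circles $C_1 = \partial D(\alpha, R)$ and $C_2 = \partial D(\beta, r)$, namely
$$\delta(C_1, C_2) := \frac{|\alpha-\beta|_{\infty}^2 - R^2 - r^2}{2Rr}.$$
The whole argument rests on two facts: first, that $|\delta(C_1,C_2)|$ is a $\PGL_2(\C)$-invariant of the pair of circles; and second, that for two concentric circles of radii $a < b$ one has $|\delta| = \ch(\log(b/a))$ while the annulus they cut out has modulus $\log(b/a)$. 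Granting these, the lemma follows: choosing a Möbius transformation $\phi$ carrying the configuration to a concentric one of radii $a<b$, I get $\modu_{(\C,|\cdot|_{\infty})} A = \log(b/a) = \ch^{-1}(|\delta(\phi C_1, \phi C_2)|) = \ch^{-1}(|\delta(C_1,C_2)|)$, which is exactly the claimed expression.

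First I would establish the invariance of $|\delta|$ under $\PGL_2(\C)$. Since $\PGL_2(\C)$ is generated by the affine maps $z \mapsto az+b$ (with $a \in \C^{\times}$, $b \in \C$) and the inversion $z \mapsto 1/z$, it suffices to check invariance on these generators. For an affine map, the circle $\partial D(c,\rho)$ is sent to $\partial D(ac+b, |a|\rho)$, so each of $|\alpha-\beta|_{\infty}^2$, $R^2$, $r^2$ and $2Rr$ is multiplied by $|a|^2$ and $\delta$ is literally unchanged. The inversion is the technical crux: substituting $z = 1/w$ into $|z-c|^2 = \rho^2$ shows that $\partial D(c,\rho)$, which I may assume (after first translating) does not pass through the origin, is mapped to the circle with center $\bar c/(|c|^2-\rho^2)$ and radius $\rho/\big||c|^2-\rho^2\big|$. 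Feeding the two transformed circles into the definition of $\delta$ and simplifying yields $\delta(\phi C_1, \phi C_2) = \pm\,\delta(C_1,C_2)$, whence $|\delta|$ is preserved. I expect this algebraic simplification to be the main obstacle, as it is the one genuinely computational step; everything else is bookkeeping.

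Next I would carry out the concentric computation together with the reduction to it. Because $C_1, C_2$ are disjoint — they cut out an annulus — they lie in a common coaxal pencil with two limiting points $p, q \in \bP^1(\C)$, each being a common pair of inverse points of both circles; the Möbius map $z \mapsto (z-p)/(z-q)$ sends $p \mapsto 0$ and $q \mapsto \infty$, hence carries both circles to circles centered at the origin, of radii $a < b$ say. The image annulus is the round annulus $\{a < |w| < b\}$, whose modulus is $\log(b/a)$ in the normalization used throughout the text (consistent with the Remark identifying $\rho(\eta_{a,r},\eta_{a,s})$ with the modulus of an annulus). For this concentric pair $|\alpha-\beta|_{\infty} = 0$, so
$$|\delta| = \left|\frac{-a^2-b^2}{2ab}\right| = \frac{1}{2}\left(\frac{b}{a} + \frac{a}{b}\right) = \ch\!\left(\log\frac{b}{a}\right),$$
and therefore $\ch^{-1}(|\delta|) = \log(b/a)$ equals the modulus, as needed.

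Finally I would address the three configurations of how $A$ is cut out listed in the statement (inner disk removed, complement of a disk, etc.). In each case the uniformizing Möbius map above produces a genuine round annulus, and the absolute value in the formula absorbs the sign ambiguity in $\delta$ — corresponding to whether the circles are externally or internally disjoint, where $\delta>1$ and $\delta<-1$ respectively, so that $\ch^{-1}$ is always well defined. Thus no configuration requires separate treatment beyond checking that the two circles are disjoint, which guarantees that $\delta$ and the limiting points $p,q$ are well defined.
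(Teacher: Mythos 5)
Your proposal is correct, and it reaches the formula by the same overall skeleton as the paper (exhibit the right-hand side as a Möbius invariant, reduce to a concentric annulus, compute there), but the two key steps are implemented genuinely differently. For the invariance, the paper represents each disk by a Hermitian matrix $M_D$ and uses the pairing $\langle M,N\rangle=\tfrac{1}{2\sqrt{\det M\det N}}\mathrm{Tr}(C^t(M)N)$, which is invariant under $U\cdot P=\overline{U^{-1}}^t P U^{-1}$ by construction; no generator-by-generator check is needed, and the complement configuration $D(\alpha,R)^c\setminus D(\beta,r)$ is handled uniformly by flipping the sign of the matrix of the outer disk. You instead work with the classical inversive distance $\delta$ and verify invariance of $|\delta|$ on the generators of $\PGL_2(\C)$; the inversion step, which you rightly flag as the crux, does close: writing $u=|\alpha|^2-R^2$, $v=|\beta|^2-r^2$ for the powers of the origin, a direct computation gives $\delta'=\mathrm{sign}(uv)\,\delta$, so $|\delta|$ is preserved. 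For the reduction, the paper cites Mumford--Series--Wright for the existence of a concentric normalization, whereas you construct it explicitly via the limiting points $p,q$ of the coaxal pencil and $z\mapsto(z-p)/(z-q)$ (inverse points are preserved, and a circle with inverse points $0,\infty$ is centered at the origin). One small point of care in your route: generator decompositions of an arbitrary Möbius map can create intermediate circles through the origin, where the inversion formula degenerates; this is harmless here because you only need invariance for the single map $z\mapsto(z-p)/(z-q)$, and the limiting points of two disjoint circles never lie on the circles, so the decomposition $z\mapsto z-q$, $w\mapsto 1/w$, followed by an affine map avoids the degeneracy. Net comparison: your argument is more elementary and self-contained (no Hermitian formalism, explicit concentric normalization), at the cost of the computational inversion check; the paper's is slicker and scales better to the complement-disk case, at the cost of introducing the matrix pairing.
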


\begin{proof} 
   A disk $D$ in $\bP^{1}(\C)$ is given by an inequality $ \{ z \ | \ \bar X^t M_D X \leqslant 0 \}$ where $X^t: = (z,1) \in \C^2$, and where $M_D$ is a hermitian matrix of the form $ \left ( \begin{array}{cc}
   a  & b  \\
  \bar b   &  c
\end{array} \right )$ (see \emph{e.g.} \cite[Lemma F.1]{kirillov}). For $U \in \mathrm{SL}_2(\C)$, one has $U(D)= \{z \ | \ {\bar X^t} (\overline{ U^{-1}}^t M_D U^{-1}) X \leqslant 0 \}$. 
Let us take the pairing $$ \langle M , N \rangle :=  \dfrac{1}{2 \sqrt{\det(M) \det(N)}}  \mathrm{Tr}( C^t(M) N),$$
where $C^t(M)$ denotes the transpose of the cofactor matrix of $M$. It is naturally invariant by the action of $\mathrm{SL}_2(\C)$ given by $U \cdot P:=\overline{ U^{-1}}^t P U^{-1}$.

There is always a Möbius transformation mapping the annulus $A$ to  a concentric one, meaning that the corresponding disks are concentric, (see \cite[\S~3.4]{mumford_series_wright}), and then by a translation and scaling, to $D(0,1) \backslash D(0, \lambda)$ for some $\lambda>0$. We recall that $\modu_{(\C, |\cdot|_{\infty})}(D(0,1) \setminus D(0, \lambda)) = -\log \lambda$. 
Let $M_0: = \left ( \begin{array}{cc}
     1 & 0  \\
     0& -1 
\end{array}\right)$ and  $N_0: = \left ( \begin{array}{cc}
     1 & 0  \\
     0& -\lambda^2 
\end{array} \right) $ be the matrices associated to $D(0,1)$ and~$D(0, \lambda)$, respectively, so that 
 $$\langle M_0, N_0 \rangle = -\dfrac{1}{2} (\lambda + \lambda^{-1}) = -\ch ((\modu_{(\C, |\cdot|_{\infty})}(D(0,1) \setminus D(0, \lambda))).$$  

As both sides are $\PGL_2(\C)$-invariant, $\modu_{(\C, |\cdot|_{\infty})} A=\ch^{-1}(- \langle M_0, N_0 \rangle)$, and $\langle M_0, N_0 \rangle= \langle M, N \rangle$, with $M, N$ the matrices determining the disks in $\bP^1(\C)$ that induce~$A$. When $A=D(\alpha, R) \backslash D(\beta,r),$ then $M:= \begin{pmatrix}
     1 & - {\alpha}  \\
   -  {\bar \alpha} & |\alpha|_{\infty}^2 - R^2 
\end{pmatrix}$, and $N:=\begin{pmatrix}
     1 & - {\beta}  \\
  -{\bar \beta} & |\beta|_{\infty}^2 - r^2 
\end{pmatrix}$ (the same result will be obtained when switching the roles of the two disks). When $A=D(\alpha, R)^c \backslash D(\beta, r)$, then $M':=\begin{pmatrix}
    -1 &   {\alpha}  \\
   {\bar  \alpha} & -|\alpha|_{\infty}^2 + R^2 
\end{pmatrix}$ and $N':=N$ are the matrices in question. 
The pairing yields
 \begin{align*}
     \ch (\modu_{(\C, |\cdot|_\infty)} A) &= - \langle M, N\rangle = -\dfrac{1}{2} \dfrac{|\alpha - \beta|_{\infty}^2 - R^2 - r^2}{Rr}, \\
     \ch (\modu_{(\C, |\cdot|_\infty)} A) & = - \langle M',N \rangle = - \dfrac{1}{2} \dfrac{R^2 + r^2 - |\alpha - \beta|_{\infty}^2}{R r}, 
 \end{align*}
 and one concludes by observing that $|\alpha - \beta|_\infty^2 < \max(R^2, r^2) < R^2 + r^2 $ in the first case, while $|\alpha - \beta|^2_\infty > (R+r)^2> R^2 + r^2$ in the second case. 
\end{proof}

\begin{defi} \label{defi_modulus}
Let $(k, |\cdot|_k)$ be a complete valued field. Let $\alpha, \beta \in k$, and $R,r>0$ be such that the disks $D(\alpha, R), D(\beta, r) \subseteq \Af^{\pan}_k$ cut out an annulus $A \subseteq \bP^{\pan}_k$.  When $|\cdot|_k = |\cdot|_\infty^{\varepsilon}$ for some $\varepsilon \in (0,1]$, let $A_\varepsilon$ be the image of $A$ via the map $ \bP^{\pan}_{(k, |\cdot|_k)}\to \bP^{\pan}_{(k, |\cdot|_\infty)} $. The \emph{modulus $\modu_{(k, |\cdot|_k)} A$ of the annulus $A$} is defined as:
\begin{equation}
  \modu_{(k, |\cdot|_k)} A:=  \begin{cases}
\varepsilon \modu_{(\C, |\cdot|_{\infty})}A_{\varepsilon} & \text{if } |\cdot|_k=|\cdot|_{\infty}^{\varepsilon},\\
\left| \log \left( \frac{|\alpha - \beta|_k}{R} \right) \right | + \left| \log \left(\frac{|\alpha - \beta|_k}{r} \right) \right |  & \text{if $k$ is non-Archimedean}.
    \end{cases}
\end{equation}
\end{defi}

\begin{rem}
When $(k, |\cdot|_k)=(\R, |\cdot|_{\infty}^{\varepsilon})$ for some $\varepsilon \in (0,1]$, the annulus $A \subseteq \bP^{\pan}_{(\R, |\cdot|_{\infty}^{\varepsilon})}$ induces uniquely an annulus in $\bP^{\pan}_{(\C, |\cdot|_{\infty}^{\varepsilon})}$ with the centers $\alpha, \beta$ on the real axis. In this case, $\modu_{(\R, |\cdot|_{\infty}^{\varepsilon})}A$ from Definition \ref{defi_modulus} is to be taken  as the modulus of the induced annulus over $(\C, |\cdot|_{\infty}^{\varepsilon})$.  
\end{rem}

We note that in the non-Archimedean case, one has $\modu_{(k, |\cdot|_k)}A=\rho_k(\eta_{\alpha,R}, \eta_{\beta, r})$, where $\rho_k$ is the interval length metric in $\Hy_k$ (\emph{cf.} Section \ref{section_hyperbolic}). Consequently, the modulus is a $\PGL_2(k)$-invariant function for any complete valued field $k$, Archimedean or not.

\begin{prop} \label{prop_modulus_extension} Let $n \geqslant 1$ and  $V \subseteq \mathbb{A}^{n, \an}_{\Z}$ an open. Let $\alpha, \beta \in \mathcal{O}(V),$ and $r,R \colon V \to \R_{>0}$ two continuous functions such that for any $x \in V$, the disks $D(\alpha(x), R(x)), D(\beta(x), r(x))$ cut out an annulus $A_x$ in $\bP^{\pan}_{\Hr(x)}$.  Then, the function $\modu_A : V \rightarrow \R_{\geqslant 0}$, $x \mapsto \modu_{\mathcal{H}(x)} A_x$, is continuous.
\end{prop}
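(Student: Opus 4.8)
The plan is to realise $\modu_A$ as the composite $G\circ\Phi$ of a continuous map $\Phi$ into a finite-dimensional parameter space with a single continuous function $G$, the only delicate point being the transition between the Archimedean and non-Archimedean regimes, which I would package as the continuity of $G$ across the locus where the scaling parameter degenerates to $0$.

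First I would record the continuous data attached to $x\in V$. The function $d(x):=|\alpha(x)-\beta(x)|_{\mathcal H(x)}$ is continuous because $\alpha-\beta\in\mathcal O(V)$, so $\varphi_{\alpha-\beta}$ is continuous by the very definition of the topology on $\Af^{n,\an}_\Z$; moreover $d(x)>0$ since the annulus condition forces $\alpha(x)\neq\beta(x)$. The radii $R,r$ are continuous by hypothesis. To these I add a scaling parameter $\tilde\varepsilon\colon V\to[0,1]$, equal to the exponent $\varepsilon(x)$ (with $\pi(x)=|\cdot|_\infty^{\varepsilon(x)}$) on the Archimedean locus $V_{\mathrm{ar}}$ and to $0$ on the non-Archimedean locus $V_{\mathrm{na}}$. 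By Example \ref{ex_MZ} the set $V_{\mathrm{ar}}$ is open and $\varepsilon$ is continuous there (being $\pi$ followed by the coordinate of the homeomorphism $(0,1]\cong$ Archimedean part of $\M(\Z)$). At a point of $V_{\mathrm{na}}$, any approaching net meeting $V_{\mathrm{ar}}$ has $\pi$-images $|\cdot|_\infty^{\varepsilon(x)}$ converging to a non-Archimedean point of $\M(\Z)$; since $\varepsilon\mapsto|\cdot|_\infty^\varepsilon$ extends to a homeomorphism of $[0,1]$ onto its image sending $0$ to $|\cdot|_0$, this forces $\varepsilon(x)\to0$. Hence $\tilde\varepsilon$ is continuous, and $\Phi:=(d,R,r,\tilde\varepsilon)\colon V\to\R_{>0}^3\times[0,1]$ is continuous.

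Next I would define $G\colon\R_{>0}^3\times[0,1]\to\R_{\geqslant0}$ with $\modu_A=G\circ\Phi$. For $\varepsilon>0$, Definition \ref{defi_modulus}, Lemma \ref{lem_complexmodulus}, and the rescaling $R\mapsto R^{1/\varepsilon}$ of radii induced by passing from $|\cdot|_\infty^\varepsilon$ to $|\cdot|_\infty$ give
$$G(d,R,r,\varepsilon)=\varepsilon\,\ch^{-1}\!\left(\left|\frac{d^{2/\varepsilon}-R^{2/\varepsilon}-r^{2/\varepsilon}}{2\,R^{1/\varepsilon}r^{1/\varepsilon}}\right|\right),$$
while for $\varepsilon=0$ I set $G(d,R,r,0)$ to be the non-Archimedean value $\rho(\eta_{\alpha,R},\eta_{\beta,r})$, which in the disjoint configuration $d>\max(R,r)$ relevant to Schottky figures equals $|\log(d/R)|+|\log(d/r)|$. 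On $\{\varepsilon>0\}$ the function $G$ is manifestly continuous, and its continuity in $(d,R,r)$ at $\varepsilon=0$ is elementary; together with continuity of $\Phi$ this already settles every point of $V_{\mathrm{ar}}$ and every non-Archimedean point not in the closure of $V_{\mathrm{ar}}$.

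The main obstacle is the continuity of $G$ across $\varepsilon=0$: one must show $G(d_i,R_i,r_i,\varepsilon_i)\to\rho(\eta_{\alpha,R_0},\eta_{\beta,r_0})$ whenever $(d_i,R_i,r_i)\to(d_0,R_0,r_0)\in\R_{>0}^3$ and $\varepsilon_i\to0^+$. Writing $u=1/\varepsilon\to+\infty$ and using $\ch^{-1}(\Theta)=\log(2\Theta)+o(1)$ as $\Theta\to+\infty$, I would show
$$\varepsilon\,\ch^{-1}(\Theta)=\tfrac1u\log\Theta+o(1)\longrightarrow\log\max\!\left(\tfrac{d_0^2}{R_0r_0},\ \tfrac{R_0}{r_0},\ \tfrac{r_0}{R_0}\right),$$
the point being that $\Theta=\tfrac12\bigl|(d^2/Rr)^u-(R/r)^u-(r/R)^u\bigr|$ is dominated by a single exponential term whose base is the indicated maximum, with no leading-order cancellation in the strict case. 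A short case check (disjoint versus nested) shows this limit equals $\rho(\eta_{\alpha,R_0},\eta_{\beta,r_0})$ in every configuration, so $G$ extends continuously. The only remaining care concerns degenerate boundary configurations such as $d_0=R_0$, where two of the bases coincide: there I would argue that both the Archimedean limit and the tree distance $\rho$ are continuous in $(d,R,r)$ and agree on the open disjoint region, hence agree on its closure. Since $\modu_A=G\circ\Phi$ with both factors continuous, the proposition follows; to reduce the limit arguments to sequences one may invoke first-countability of $\Af^{n,\an}_\Z$, or argue directly with the defining neighborhoods.
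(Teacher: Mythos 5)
Your main line is essentially the paper's proof in different packaging: both arguments reduce to $\modu_A(x)=\varepsilon(x)\,\ch^{-1}(\Theta(x))$ with $\Theta=\tfrac12\bigl|(d^2/Rr)^{1/\varepsilon}-(R/r)^{1/\varepsilon}-(r/R)^{1/\varepsilon}\bigr|$, isolate the dominant exponential, and use $\ch^{-1}(t)\sim\log t$ (the paper works with $U_1=d/R$, $U_2=d/r$, $v=|\log U_1|+|\log U_2|$ and passes to the limit $x\to x_0$ directly, so it never needs your auxiliary function $\tilde\varepsilon$; your continuity argument for $\tilde\varepsilon$ and $\Phi$ is correct). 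In the \emph{strict} configurations, $d_0>\max(R_0,r_0)$ or $\min(R_0,r_0)<d_0<\max(R_0,r_0)$, your dominant-term computation and the identity $\log\max\bigl(d_0^2/(R_0r_0),\,R_0/r_0,\,r_0/R_0\bigr)=\rho(\eta_{\alpha,R_0},\eta_{\beta,r_0})$ are right, and this matches the paper.

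The genuine gap is your treatment of the boundary configurations, and it cannot be repaired: when $d_0=\max(R_0,r_0)>\min(R_0,r_0)$ the function $G$ does \emph{not} extend continuously across $\varepsilon=0$, precisely because of the leading-order cancellation between $(d^2/Rr)^{1/\varepsilon}$ and $(R/r)^{1/\varepsilon}$ that you set aside, so the step ``they agree on the open region, hence on its closure'' asserts something false rather than something unproved. Concretely, take $n=1$, $\alpha=0$, $\beta=T$, $x_0$ the point over the trivial norm with $|T|_{x_0}=1/2$, $V=\{0.4<|T|_x<0.6\}$, $r\equiv 1/100$, and $R:=d(1+h)$ with $h:=2\tilde\varepsilon\,(r/d)^{1/\tilde\varepsilon}$ on the Archimedean locus and $h:=0$ elsewhere ($h$ is continuous since $r/d<1$ on $V$). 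Bernoulli's inequality gives $(1+h)^{1/\varepsilon}\geqslant 1+2(r/d)^{1/\varepsilon}>1+(r/d)^{1/\varepsilon}$, so the disks are strictly nested at every Archimedean point of $V$ and the hypotheses of the proposition hold on all of $V$; yet along the points $\varepsilon_n=1/n$, $w_n=2^{-n}$ (which converge to $x_0$, with $d\equiv 1/2$) one computes $\Theta\to 2$, hence $\modu_A\to 0$, while $\modu_A(x_0)=\log 50$. So at such boundary points the proposition itself fails; the statement is only true under the strict-configuration hypothesis, which the paper's own proof assumes silently (its assertions $U_1<1<U_2$ and the claimed $\ll$ relations require $U_1,U_2$ bounded away from $1$), and which is what its application to disjoint Schottky disks provides ($d>\max(R,r)$ strictly at the non-Archimedean point). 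A second, smaller defect: your $G(\cdot,0):=\rho(\eta_{\alpha,R},\eta_{\beta,r})$ differs from the formula of Definition \ref{defi_modulus} when $d_0<\min(R_0,r_0)$ (the formula gives $\log(R_0r_0/d_0^2)$, not $|\log(R_0/r_0)|$), so in that regime your factorization $\modu_A=G\circ\Phi$ no longer computes the function named in the statement.
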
}

{
\begin{proof} Set  $A_x =  D(\alpha(x), R(x))\setminus D(\beta(x), r(x))$ (the case where the roles of the disks are switched being symmetrical) or $A_x= D(\alpha(x),R(x))^c \setminus D(\beta(x), r(x))$.
Let $x \in V$ be Archimedean. Let $y_x \in p^{-1}(|\cdot|_\infty)$ be the unique point such that $|\cdot|_{y_x}^{\varepsilon(x)} = |\cdot|_x$ for some $\varepsilon(x) \in (0,1]$ (\emph{cf.} Example~\ref{ex_scale}), where $p: \Af_{\Z}^{n, \an} \rightarrow \M(\Z)$ is the projection morphism (\emph{cf.} Example \ref{ex_MZ}).
Let~$A_{y_x}$ be the annulus in $\bP^{\pan}_{\Hr(y_x)}$ induced by $A_x$, meaning  $A_{y_x}=D(\alpha(x), R(x)^{\frac{1}{\varepsilon(x)}}) \backslash D(\beta(x), r(x)^{\frac{1}{\varepsilon(x)}})$ or $A_{y_x}=D(\alpha(x), R(x)^{\frac{1}{\varepsilon(x)}})^c \backslash D(\beta(x), r(x)^{\frac{1}{\varepsilon(x)}})$. By Definition \ref{defi_modulus}, one has that $\modu_A(x)=\modu_{\Hr(x)}(A_x)=\varepsilon(x)\modu_{\Hr(y_x)}(A_{y_x})$.

For $x \in V$ non-Archimedean, we recall that $$\modu(A_x)= 
 \left|\log \left(\dfrac{|\alpha(x) - \beta(x)|_x}{r(x)} \right) \right | + \left|\log  \left(\dfrac{|\alpha(x) - \beta(x)|_x}{R(x)} \right) \right|=\rho_x(\eta_{\alpha(x), R(x)},\eta_{\beta(x), r(x)}),$$ where $\rho_x$ denotes the length metric in $\Hy_{\Hr(x)}$.

Hence, we observe that the function $\modu$ is continuous over purely Archimedean or non-Archimedean opens of $V$. Recall that the Archimedean part of $V$ is open. Hence, it only remains to show that given $x_0 \in V$ non-Archimedean and $x \in V$ Archimedean, when $x \rightarrow x_0$, one gets $\modu_A(x) \rightarrow \modu_A({x_0})$.

Consider the functions $U_1(x): = |\alpha(x) - \beta(x)|_x/R(x), U_2(x):= |\alpha(x) - \beta(x)|_x/r(x)$, and $v(x): = |\log U_1(x)| + |\log U_2(x) |$. By Lemma \ref{lem_complexmodulus}, we have $\modu_A(x) = \varepsilon(x) \ch^{-1} (u(x))$, where 
\begin{align*}
    u(x) &:=  \dfrac{1}{2} \left | \dfrac{|\alpha(x) - \beta(x)|_x^{2/\varepsilon(x)} - R(x)^{2/\varepsilon(x)} -r(x)^{2/\varepsilon(x)} }{(R(x)r(x))^{1/\varepsilon(x)}} \right |, \\ 
    &= \dfrac{1}{2} \left | (U_1(x)U_2(x))^{1/\varepsilon(x)} - \left(\dfrac{U_2(x)}{U_1(x)}\right)^{1/\varepsilon(x)} - \left(\dfrac{U_1(x)}{U_2(x)}\right)^{1/\varepsilon(x)} \right |.
\end{align*}
When $A_x = D(\alpha(x), R(x)) \setminus D(\beta(x), r(x))$, then $U_1(x)<1 < U_2(x)$, so one obtains $$\left(\dfrac{U_1(x)}{U_2(x)}\right)^{1/\varepsilon(x)} <<(U_1(x)U_2(x))^{1/\varepsilon(x)} << \left(\dfrac{U_2(x)}{U_1(x)}\right)^{1/\varepsilon(x)} = \exp \left(\dfrac{v(x)}{\varepsilon(x)}  \right),$$ as $x\rightarrow x_0$ (which implies $\varepsilon(x) \rightarrow 0$), where $f(x)<< g(x)$ means $\lim_{x\rightarrow x_0}f(x)/g(x)=0$. 

When $A_x= D(\alpha(x), R(x))^c \setminus D(\beta(x), r(x)) $, the fact that $|\alpha(x) -\beta(x)|_x> R(x) + r(x)$ implies $U_i(x)> 1$ for $i=1,2$, hence $$ \max \left( \left(\dfrac{U_2(x)}{U_1(x)}\right) , \left(\dfrac{U_1(x)}{U_2(x)}\right)\right)^{1/\varepsilon(x)} << (U_1(x)U_2(x))^{1/\varepsilon(x)} = \exp  \left(\dfrac{v(x)}{\varepsilon(x)}  \right),$$ as $x\rightarrow x_0$. 
In both situations, the function $u(x)$ diverges to $+\infty$ with  $u(x) \underset{x\rightarrow x_0}{\sim} e^{v(x)/\varepsilon(x)} / 2$.  
Using the equivalence $\ch^{-1}(t) \underset{t\rightarrow +\infty}{\sim} \log (t)$, we obtain that 
\begin{equation*}
  \modu_A(x) = \varepsilon(x) \ch^{-1}(u(x))  \underset{x\rightarrow x_0}{\sim} \varepsilon(x) \log  \left( \dfrac{e^{v(x)/\varepsilon(x)}}{2}\right) 
  \underset{x\rightarrow x_0}{\sim}  v(x). 
\end{equation*}
Since $v(x_0) = \modu_A(x_0)$, we conclude that
 that $\lim_{x\rightarrow x_0} \modu_A(x) = \modu_A(x_0) $, as required. 
\end{proof}
}

\subsection{Moduli space of Schottky groups} \label{section_modulispace}
Let $g \geqslant 1$. The following construction of a Berkovich moduli space over $\Z$ for rank $g$ Schottky groups is due to Poineau and Turchetti, \emph{cf.} \cite[\S~4]{poineau_turcheti_universal}.  

 If $g=1,$ set $\eS_1:=\{x \in \Af^{1, \an}_{\Z}| 0<|Y|_x<1\}$, where $Y$ is a fixed coordinate function on~$\Af_{\Z}^{1, \an}.$ Then to any $x \in \eS_1$, one associates a Schottky group over $\Hr(x)$ generated by a single transformation which is determined by the Koebe coordinates $(0, \infty, Y(x)).$ 

Assume now that $g \geqslant 2$. Let us choose coordinates $X_3, X_4, \dots, X_g,$ $X_2', X_3', \dots, X_g', Y_1, \dots, Y_g$ for the space $\Af_{\Z}^{3g-3, \an}$. 

\begin{defi}
Let $U_g$ be the open subset of $\Af^{3g-3, \an}_{\Z}$ consisting of the points $x$ such that $$\begin{cases}
|Z-Z'|_x \neq 0, & \ \text{for any two {distinct}}  \ Z, Z' \in \{X_i, X_j': 3 \leqslant i \leqslant g, 2\leqslant j \leqslant g\},\\
0<|Y_l|_x<1, & \text{for any} \ l=1,2,\dots, g.
\end{cases}
$$
\end{defi}

We remark that the first condition is equivalent to $Z(x) \neq Z'(x)$ in $\mathcal{H}(x)$.
Set ${X_1:=0}$, ${X_2:=1}$ and $X_1':=\infty$. For $i=1,2,\dots, g,$
let us denote by $M_i \in \PGL_2(\mathcal{O}(U_g))$ the transformation with Koebe coordinates $(X_i, X_i', Y_i)$ (\emph{cf.} \cite[Proposition 3.4.3]{poineau_turcheti_universal}). Let $\Ga$ be the subgroup of $\PGL_2(\mathcal{O}(U_g))$ generated by $(M_1, \dots, M_g).$

We note that for any $x \in U_g$, one has a transformation $M_i(x) \in \PGL_2(\Hr(x))$ determined by the Koebe coordinates $(X_i(x), X_i'(x), Y_i(x))$, $i=1,2,\dots, g$.

\begin{defi} \label{def_modspace}
Let $\eS_g$ be the subset of $U_g$ consisting of $x \in U_g$ such that the subgroup $\Ga_x:=\langle M_1(x), M_2(x), \dots, M_g(x) \rangle$ of $\PGL_2(\Hr(x))$ is a Schottky group.  We denote its limit set in~$\bP^{1}(\Hr(x))$ by $\La_x$. 
\end{defi}

\begin{rem}
The trivial valuation on $\Q[\underline{T}]$ determines a point $x_0$ in $\Af_{\Z}^{m,\an}$. However, $x_0 \not \in \eS_g$, as Schottky groups don't exist over trivially valued fields (seeing as loxodromic elements don't exist over such fields). In other words, there is no uniformization theory for curves over trivially valued fields.  
\end{rem}

One can make the same construction of a space $\eS_{g,A}$ starting with an arbitrary Banach ring~${(A, \|\cdot\|)}$. For $x \in \eS_{g,A}$, we continue denoting by $\Ga_x$ the Schottky group over $\Hr(x)$ generated by the matrices $M_i(x)$, constructed as above. 
When the fiber products in the corresponding categories exist, one has $\eS_{g,A}=\eS_g \times_{\M(\Z)} \M(A)$. This will always be the case when $A$ is a complete valued field, a hybrid field, a complete discretely valued ring, the ring of integers of a number field, \emph{etc}. (\emph{cf.} also \cite[Remark~4.2.3]{poineau_turcheti_universal}). For the sake of completeness, we include the following, which holds for a general Banach ring (regardless of the existence of the mentioned fiber products).
\begin{lem} \label{lem_basechangemoduli}{
If $g\geqslant 2$, set $m=3g-3$, and if $g=1$, set $m=1$. Let ${\pi_A: \A_A^{m, \an} \rightarrow \A_{\Z}^{m, \an}}$ denote the continuous projection from Remark \ref{rem_banachring}. Then $\pi_A^{-1}(\eS_g)=\eS_{g, A}$.}
\end{lem}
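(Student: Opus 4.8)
The plan is to reduce the equality of sets to a \emph{pointwise} equivalence, since both $\eS_g$ and $\eS_{g,A}$ are carved out of an open of the ambient affine space by the same condition, namely that the group $\Ga_x$ generated by the matrices $M_i(x)$ be Schottky over $\Hr(x)$. The whole point is that the projection $\pi_A$ is, at the level of completed residue fields, an isometric field extension: for $x \in \Af^{m, \an}_A$, restricting the semi-norm $|\cdot|_x$ along $\Z[\underline{T}] \to A[\underline{T}]$ gives $\pi_A(x)$, so the induced map $\Z[\underline{T}]/\Ker|\cdot|_{\pi_A(x)} \hookrightarrow \Hr(x)$ is norm-preserving, and passing to fraction fields and completions exhibits $\Hr(\pi_A(x))$ as a complete valued field extended by $\Hr(x)$. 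Crucially, the Koebe coordinates $(X_i(x), X_i'(x), Y_i(x))$ are values of coordinate functions lying in $\Z[\underline{T}]$ (together with the constants $0,1,\infty$), hence lie in the subfield $\Hr(\pi_A(x)) \subseteq \Hr(x)$, and they agree with $(X_i(\pi_A(x)), X_i'(\pi_A(x)), Y_i(\pi_A(x)))$ under this embedding.

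First I would check that the ambient opens match, i.e.\ $\pi_A^{-1}(U_g) = U_{g,A}$. The defining conditions of $U_g$ (namely $|Z - Z'|_x \neq 0$ for distinct coordinates and $0 < |Y_l|_x < 1$) only involve quantities $|P|_x$ with $P \in \Z[\underline{T}]$, and by construction of $\pi_A$ one has $|P|_x = |P|_{\pi_A(x)}$ for all such $P$; thus the conditions on $x$ and on $\pi_A(x)$ are literally the same. Next, for $x \in U_{g,A}$, I would record that since the matrix associated to a triple of Koebe coordinates is given by explicit formulas rational in those coordinates (\emph{cf.} \cite[\S~II.1.4]{poineau_turcheti_2}), these formulas commute with the field embedding $\Hr(\pi_A(x)) \hookrightarrow \Hr(x)$. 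Consequently $M_i(x)$ is the image of $M_i(\pi_A(x))$, and $\Ga_x \subseteq \PGL_2(\Hr(x))$ is identified with the image of $\Ga_{\pi_A(x)} \subseteq \PGL_2(\Hr(\pi_A(x)))$; in particular we are exactly in the situation $\Ga_{\pi_A(x)} \subseteq \PGL_2(\Hr(\pi_A(x))) \subseteq \PGL_2(\Hr(x))$ of Lemma~\ref{lem_schottky_field_extension}.

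It then remains to apply Lemma~\ref{lem_schottky_field_extension} with $k = \Hr(\pi_A(x))$ and $K = \Hr(x)$: the group $\Ga_x$ is Schottky over $\Hr(x)$ if and only if $\Ga_{\pi_A(x)}$ is Schottky over $\Hr(\pi_A(x))$. Combining this with the identification of the opens gives, for any $x \in \Af^{m,\an}_A$, that $x \in \eS_{g,A}$ if and only if $\pi_A(x) \in \eS_g$, which is the claim. The case $g = 1$ is even more direct, since $\eS_1$ and $\eS_{1,A}$ are defined purely by the open condition $0 < |Y|_x < 1$, and $|Y|_x = |Y|_{\pi_A(x)}$ settles it immediately (the single generator being loxodromic over $\Hr(x)$ exactly when it is over $\Hr(\pi_A(x))$). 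I expect the only genuinely delicate step to be the bookkeeping of the second paragraph: verifying that $\Hr(\pi_A(x)) \hookrightarrow \Hr(x)$ is an \emph{isometric} complete valued field extension and that the generators and their fixed points/multipliers are honestly preserved by it, so that ``$\Ga_x$'' over $A$ and ``$\Ga_{\pi_A(x)}$'' over $\Z$ are the same group and Lemma~\ref{lem_schottky_field_extension} genuinely applies. Everything else is a formal consequence.
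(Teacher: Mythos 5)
Your proposal is correct and follows essentially the same route as the paper's proof: identify the ambient opens via $|P|_x = |P|_{\pi_A(x)}$ for $P \in \Z[\underline{T}]$, observe that $\Ga_{\pi_A(x)}$ is the preimage of $\Ga_x$ under $\PGL_2(\Hr(\pi_A(x))) \subseteq \PGL_2(\Hr(x))$, and conclude by Lemma~\ref{lem_schottky_field_extension}. The extra bookkeeping you flag (isometry of the residue field extension and compatibility of the Koebe-coordinate formulas with the embedding) is exactly what the paper compresses into the sentence that $\pi_A$ "induces a complete valued field extension $\Hr(x)/\Hr(y)$."
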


\begin{proof} For $g=1$, this is immediate from the definition of $\pi_A$ and that of $\eS_{1}$.

{Assume $g \geqslant 2.$ Let us fix the coordinate functions $X_i, X_j', Y_l$, $i=3,4,\dots, g, j=2,3,\dots, g,$ $l=1,2,\dots, g$, on $\A_A^{3g-3, \an}$ and $\A_{\Z}^{3g-3, \an}$. Set $X_1=0, X_2=1, X_1'=\infty$.} 

Let $x \in \A_{A}^{3g-3, \an}$, and set $y=\pi_A(x)$. This induces a complete valued field extension~${\Hr(x) /\Hr(y)}$. 
{For any $1 \leqslant l \leqslant g$,  and any two distinct elements $Z,Z' \in \{ X_i, X_j': 3 \leqslant i \leqslant g, 2 \leqslant j \leqslant g \}$, we note that $|Z-Z'|_x \neq 0$, $0 < |Y_l|_x < 1$, if and only if $|Z-Z'|_y \neq 0$, $0 < |Y_l|_y < 1$.}
Let us assume that the points $x, y$ satisfy these conditions. Using Koebe coordinates, let $M_i^{A}(x):=M(X_i(x), X_i'(x), Y_i(x)) \in \PGL_2(\Hr(x))$ and $M_i(y):=M(X_i(y), X_i'(y), Y_i(y)) \in \PGL_2(\Hr(y))$. Let $\Ga_x,$ respectively $\Ga_y$, be the group generated by the $M_i^A(x)$, respectively $M_i(y)$, $i=1,2,\dots, g$. Then $\Ga_y$ is the preimage of $\Ga_x$ via $\PGL_2(\Hr(y)) \subseteq \PGL_2(\Hr(x))$. By Lemma \ref{lem_schottky_field_extension}, $\Ga_x$ is a Schottky group if and only if $\Ga_y$ is a Schottky group, meaning $x \in \eS_{g, A}$ if and only if $y \in \eS_g$.
\end{proof}

\begin{thm}[{\cite[Remark 4.2.6]{poineau_turcheti_universal}}]
Let $\Ga$ be a rank $g$ Schottky group with a fixed basis~$\mathcal{B}$ over a complete valued field $(k, |\cdot|)$. Then, up to conjugation in $\PGL_2(k)$, there exists a unique~${x \in \eS_{g,k}}$ such that $(\Ga, \mathcal{B})=(\Ga_x, \mathcal{B}_x)$, with $\mathcal{B}_x:=(M_1(x), \dots, M_g(x))$.
\end{thm}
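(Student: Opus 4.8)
The plan is to realize the pair $(\Ga, \mathcal{B})$ inside $\eS_{g,k}$ by \emph{normalizing the Koebe coordinates} of the basis, and then to exploit the sharp $3$-transitivity of $\PGL_2(k)$ on $\bP^1(k)$ to pin down uniqueness. First I would write each basis element in Koebe coordinates, $\g_i = M(a_i, b_i, y_i)$, where $a_i$ is the attractive and $b_i$ the repelling fixed point and $y_i \in k$, $0<|y_i|<1$, is the multiplier (these exist precisely because the $\g_i$ are loxodromic). The starting observation is that the $2g$ fixed points $a_1, b_1, \dots, a_g, b_g$ are pairwise distinct. In the non-Archimedean case this follows from Theorem \ref{Gerritzen}: a Schottky figure adapted to $\mathcal{B}$ consists of pairwise disjoint disks $D_{\g_i}, D_{\g_i^{-1}}$, and since $\g_i(D_{\g_i^{-1}}^c)=D^{\circ}_{\g_i}\subseteq D_{\g_i}$, iterating $\g_i$ contracts $D_{\g_i^{-1}}^c$ toward $a_i$ while staying in $D_{\g_i}$, so $a_i \in D_{\g_i}$ and symmetrically $b_i \in D_{\g_i^{-1}}$; disjointness of these $2g$ disks then gives distinctness. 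In the Archimedean case this is classical. In particular $a_1, b_1, a_2$ are three distinct points of $\bP^1(k)$.

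For existence I would take the unique $h \in \PGL_2(k)$ with $(a_1, b_1, a_2) \mapsto (0, \infty, 1)$, which exists by sharp $3$-transitivity. Conjugation by $h$ preserves multipliers and carries the fixed points $a_i, b_i$ to $h(a_i), h(b_i)$, so the basis $\mathcal{B}' := h\mathcal{B}h^{-1}$ has $h\g_1h^{-1} = M(0,\infty,y_1)$ and $h\g_2h^{-1}=M(1, h(b_2), y_2)$, matching the conventions $X_1=0$, $X_1'=\infty$, $X_2=1$. I then let $x$ be the point of $\Af^{3g-3,\an}_k$ with coordinates $X_i(x) = h(a_i)$ for $3\le i\le g$, $X_j'(x) = h(b_j)$ for $2 \le j \le g$, and $Y_l(x) = y_l$ for $1\le l \le g$, so that $M_i(x) = h\g_i h^{-1}$ and $\Ga_x = h\Ga h^{-1}$. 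Since the $h(a_i), h(b_i)$ are pairwise distinct and $0<|y_l|<1$, we have $x \in U_g$; and since being Schottky is invariant under conjugation in $\PGL_2(k)$ (freeness and loxodromy are conjugation-invariant, and if $\Omega$ is a $\Ga$-invariant open on which $\Ga$ acts freely and properly, then $h(\Omega)$ is such an open for $h\Ga h^{-1}$), the group $\Ga_x$ is Schottky, i.e. $x \in \eS_{g,k}$. By construction $(h\Ga h^{-1}, h\mathcal{B}h^{-1}) = (\Ga_x, \mathcal{B}_x)$.

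For uniqueness, suppose $x, x' \in \eS_{g,k}$ and $h, h' \in \PGL_2(k)$ both realize $(\Ga, \mathcal{B})$, and set $u := h'h^{-1}$, so that $M_i(x') = u\, M_i(x)\, u^{-1}$ for every $i$. Both points carry the normalization built into $\eS_{g,k}$: $M_1(x), M_1(x')$ have attractive point $0$ and repelling point $\infty$, while $M_2(x), M_2(x')$ have attractive point $1$. Because conjugation by $u$ sends the attractive (resp. repelling) point of $M_i(x)$ to that of $M_i(x')$, the case $i=1$ forces $u(0)=0$ and $u(\infty)=\infty$, hence $u(z) = \lambda z$ for some $\lambda \in k^{\times}$; the case $i=2$ then gives $u(1) = \lambda = 1$, so $u = \mathrm{id}$. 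Thus $M_i(x') = M_i(x)$ for all $i$, the Koebe coordinates agree, and $x = x'$. The case $g=1$ is handled identically, normalizing only $a_1 \mapsto 0$, $b_1 \mapsto \infty$: here $x$ is pinned down by $Y(x)=y_1$, and the conjugating element is again forced to fix $0$ and $\infty$, so invariance of the multiplier yields uniqueness.

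The only genuinely non-formal input is the distinctness of the $2g$ fixed points of the basis, which is what legitimizes the normalization $(a_1,b_1,a_2)\mapsto(0,\infty,1)$ and is exactly where the Schottky hypothesis enters (through the ping-pong disks of Theorem \ref{Gerritzen}, or the classical Archimedean picture); everything else is the rigidity of the $\PGL_2(k)$-action on $\bP^1(k)$. I expect the main delicate point to be bookkeeping the attractive/repelling distinction, since conjugation must match attractive points to attractive points — but this is automatic, as the multiplier, and hence which fixed point is attracting, is a conjugation invariant.
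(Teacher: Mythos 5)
The paper does not prove this statement at all: it is quoted verbatim from \cite[Remark 4.2.6]{poineau_turcheti_universal}, so there is no internal proof to compare against. Judged on its own merits, your argument is the natural one and is essentially correct: normalize the Koebe coordinates of the basis by the unique $h \in \PGL_2(k)$ sending $(a_1,b_1,a_2)$ to $(0,\infty,1)$, use that conjugation transforms $M(a,b,y)$ into $M(h(a),h(b),y)$ to land in $U_g$ and hence (by conjugation-invariance of the Schottky property) in $\eS_{g,k}$, and get uniqueness from the rigidity of elements of $\PGL_2(k)$ fixing $0,\infty,1$. This is certainly the argument behind the cited remark.

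The one step you should repair is the justification of the distinctness of the $2g$ fixed points in the non-Archimedean case. You invoke ``a Schottky figure adapted to $\mathcal{B}$,'' but Theorem \ref{Gerritzen} only guarantees a Schottky figure adapted to \emph{some} basis of $\Ga$, not to the prescribed basis $\mathcal{B}$; the ping-pong containments $a_i \in D_{\g_i}$, $b_i \in D_{\g_i^{-1}}$ are meaningless for a figure adapted to different generators. Fortunately the fact you need is purely group-theoretic and holds uniformly over every complete valued field: if two loxodromic elements of $\PGL_2(k)$ share exactly one fixed point, their commutator is parabolic or trivial, and if they share both they commute; since every non-trivial element of a Schottky group is loxodromic (no parabolics) and distinct basis elements of a free group of rank $\geqslant 2$ do not commute, the fixed-point sets $\{a_i,b_i\}$, $i=1,\dots,g$, are pairwise disjoint. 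Substituting this argument removes the Archimedean/non-Archimedean case split entirely and closes the gap; everything else in your proof (including the $g=1$ case and the uniqueness bookkeeping of attractive versus repelling points) is sound.
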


We note that if $k=\Hr(z)$ for some $z \in \M(\Z)$, then $\eS_{g, k}$ is the fiber over $z$ of the structural morphism $\pi: \eS_g \rightarrow \M(\Z)$. For $k=(\C, |\cdot|_{\infty}^{\varepsilon})$, one obtains $\eS_{g, (\C, |\cdot|_{\infty}^{\varepsilon})}$ by pulling back $\eS_{g, (\R, |\cdot|_{\infty}^{\varepsilon})}$ via $\Af_{(\C, |\cdot|_{\infty}^{\varepsilon})}^{3g-3, \an} \rightarrow \Af_{(\R, |\cdot|_{\infty}^{\varepsilon})}^{3g-3, \an}.$ 

\begin{prop}[{\cite[\S~2.3]{poineau_turcheti_universal}}]
Let $\varepsilon \in (0,1]$. The map $\eS_{g,(\R, |\cdot|_{\infty})} \rightarrow \eS_{g,(\R, |\cdot|_{\infty}^{\varepsilon})}, |\cdot|_x \mapsto |\cdot|_x^{\varepsilon}$, is a homeomorphism.  The same remains true when replacing $\R$ by $\C$, or when taking $(\Q_p, |\cdot|_p^{\alpha})$ with $\alpha \in (0, +\infty)$ instead. 
\end{prop}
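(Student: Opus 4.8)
The plan is to realize the stated map as the restriction of the fiberwise scaling homeomorphism of Example~\ref{ex_scale}, and then to check that this homeomorphism carries the Schottky locus $\eS_{g,(\R,|\cdot|_\infty)}$ exactly onto $\eS_{g,(\R,|\cdot|_\infty^\varepsilon)}$; the topological statement will then come for free.

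First I would recall from Example~\ref{ex_scale} that the assignment $\phi_\varepsilon\colon |\cdot|\mapsto|\cdot|^\varepsilon$ is a homeomorphism from the fiber $\Af^{3g-3,\an}_{(\R,|\cdot|_\infty)}$ onto the fiber $\Af^{3g-3,\an}_{(\R,|\cdot|_\infty^\varepsilon)}$ of the projection $\pi\colon\Af^{3g-3,\an}_\Z\to\M(\Z)$ (and likewise for the $\C$-fibers, as well as for the fibers $\pi^{-1}(|\cdot|_p^\alpha)$). Under the identifications of both fibers with $\C^{3g-3}/\mathrm{conj}$, the map $\phi_\varepsilon$ is the identity on the underlying point set, sending $|\cdot|_{v_{z,1}}$ to $|\cdot|_{v_{z,\varepsilon}}$ for the same class $[\underline z]$. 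In particular $Z(\phi_\varepsilon(x))=Z(x)$ as elements of the common completed residue field for every coordinate function $Z$, while the norm on $\Hr(\phi_\varepsilon(x))$ is the $\varepsilon$-th power of the one on $\Hr(x)$. Since $\phi_\varepsilon$ is already a homeomorphism of the ambient fibers, it suffices to prove $x\in\eS_{g,(\R,|\cdot|_\infty)}\iff\phi_\varepsilon(x)\in\eS_{g,(\R,|\cdot|_\infty^\varepsilon)}$; the restriction will then be a homeomorphism onto its image.

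For membership in $U_g$ I would observe that the defining inequalities $|Z-Z'|_x\neq 0$ and $0<|Y_l|_x<1$ are preserved by $t\mapsto t^\varepsilon$, which is a strictly increasing bijection of $[0,+\infty)$ fixing $0$ and $1$; hence $x\in U_g$ if and only if $\phi_\varepsilon(x)\in U_g$. For the Schottky condition, the key remark is that $\Ga_x$ and $\Ga_{\phi_\varepsilon(x)}$ are the \emph{same} subgroup of $\PGL_2(\Hr(x))$, because the Koebe coordinates $(X_i,X_i',Y_i)$ take identical values at $x$ and $\phi_\varepsilon(x)$. I would then go through Definition~\ref{schottky} condition by condition: being free and finitely generated is purely group-theoretic and hence unaffected; loxodromy is preserved because $|\lambda|\neq|\lambda'|$ if and only if $|\lambda|^\varepsilon\neq|\lambda'|^\varepsilon$; and the existence of a non-empty connected open $\Ga$-invariant subset on which the action is free and proper is a purely topological requirement, which is unchanged since $|\cdot|_\infty$ and $|\cdot|_\infty^\varepsilon$ induce the same topology on $\Hr(x)$, and thus the same space $\bP^{\pan}_{\Hr(x)}=\bP^1(\C)$ with the same $\PGL_2$-action. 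Therefore $\Ga_x$ is Schottky over $(\R,|\cdot|_\infty)$ if and only if $\Ga_{\phi_\varepsilon(x)}$ is Schottky over $(\R,|\cdot|_\infty^\varepsilon)$, which combined with the $U_g$ step gives the desired equality of loci.

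The $\C$-case is verbatim the same. The $\Q_p$-case also follows without change once $(\R,|\cdot|_\infty)$, $(\R,|\cdot|_\infty^\varepsilon)$ are replaced by $(\Q_p,|\cdot|_p)$, $(\Q_p,|\cdot|_p^\alpha)$: raising an ultrametric absolute value to a positive power keeps it ultrametric and topologically equivalent, so $\bP^{\pan}_{\Hr(x)}$, its $\PGL_2$-action, and the notions of loxodromy and properness are all unchanged. I do not expect a genuine obstacle here: the whole point is the clean separation between the metric data (which merely rescales under $\phi_\varepsilon$) and the topological and group-theoretic data defining the Schottky property (which is invariant), so that $\phi_\varepsilon$ transports one moduli space onto the other while the homeomorphism property is inherited from Example~\ref{ex_scale}.
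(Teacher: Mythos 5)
Your proof is correct, and it is essentially the argument this paper implicitly relies on: the statement is only cited (from Poineau--Turchetti), and Example \ref{ex_scale} of the paper already records the ambient fiberwise scaling homeomorphism that you restrict to the Schottky locus. Your verification that the locus is preserved --- invariance of the $U_g$-inequalities under $t \mapsto t^{\varepsilon}$, the identity $\Ga_x = \Ga_{\phi_\varepsilon(x)}$ inside $\PGL_2(\Hr(x))$ since the Koebe coordinates take the same values, and invariance of loxodromy and of the free-and-proper-action condition via the $\PGL_2(\Hr(x))$-equivariant scaling homeomorphism of $\bP^{\pan}_{\Hr(x)}$ --- is exactly the expected reduction, in both the Archimedean and the $p$-adic cases.
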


The analytic space $\eS_g$ possesses nice properties (\cite[Theorems 4.3.4 and 5.2.1]{poineau_turcheti_universal}):

\begin{thm} \label{opennes}
The moduli space $\eS_g$ of {rank} $g$ Schottky groups is open  and path-connected in~$\Af_{\Z}^{m, \an}$, {where $m=1$ if $g=1$, and $m=3g-3$ otherwise.} 
\end{thm}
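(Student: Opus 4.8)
The plan is to establish openness and path-connectedness separately, reducing each to the existence and continuous behaviour of \emph{Schottky figures}, which by Theorem \ref{Gerritzen} characterize Schottky groups over non-Archimedean fields. The rank $1$ case is immediate: $\eS_1=\{x:0<|Y|_x<1\}$ is the intersection of the two opens $\{|Y|_x\neq 0\}$ and $\{|Y|_x<1\}$ (both open since $\varphi_Y$ is continuous), and it is path-connected because it surjects onto the connected base $\M(\Z)$ with fibers $\{0<|Y|<1\}\subseteq\bP^{\pan}_{\Hr(z)}$ that can be joined continuously across fibers. So assume $g\geqslant 2$ and $m=3g-3$.

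For openness I would argue locally, splitting according to the type of a point. Since $U_g$ is open and the Archimedean part of $\Af^{m,\an}_{\Z}$ is open (Example \ref{ex_MZ}), and since on the Archimedean locus $\eS_g$ is the classical Bers space of complex (resp. real) Schottky groups --- whose openness is classical, a marked complex Schottky group admitting a fundamental domain bounded by Jordan curves that persist under small perturbation of the Koebe coordinates --- it suffices to exhibit, around each \emph{non-Archimedean} $x_0\in\eS_g$, an open neighborhood lying in $\eS_g$. As $\Ga_{x_0}$ is Schottky over $\Hr(x_0)$, Theorem \ref{Gerritzen} yields an adapted Schottky figure, and over the non-Archimedean field its defining data --- disjointness of the closed disks together with the ping-pong relations \eqref{eq_pingpong} --- are encoded by the strict cross-ratio inequalities $|y_i|\cdot|[u,v;\alpha_i,\beta_i]|<1$ of Corollary \ref{cor_charac_Schottky}. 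These involve only the continuous functions $\varphi_P$ on $\Af^{m,\an}_{\Z}$, so they persist on a Berkovich-open neighborhood $V$ of $x_0$; for the non-Archimedean points of $V$, Corollary \ref{cor_charac_Schottky} and Theorem \ref{Gerritzen} then give membership in $\eS_g$.

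The delicate feature is that any such $V$ also contains Archimedean points, for which the non-Archimedean cross-ratio criterion does not by itself produce a classical Schottky figure bounded by round circles. Here one must \emph{expand} the non-Archimedean figure into a continuous family of Schottky figures over $V$: realize the disks $D_{\g_i}(x),D_{\g_i^{-1}}(x)\subseteq\bP^{\pan}_{\Hr(x)}$ with centres and radii depending continuously on $x$, and verify that the ping-pong relations survive across the Archimedean--non-Archimedean boundary after the correct rescaling of radii --- exactly the kind of limiting behaviour of moduli of annuli quantified in Proposition \ref{prop_modulus_extension}. Once such a continuous Schottky figure exists over $V$, every point of $V$ carries an adapted figure and hence lies in $\eS_g$. \emph{Producing this continuous Schottky figure straddling both types of points is the main obstacle}; it is the non-Archimedean counterpart of the Archimedean stability used above, and is the core of the Poineau--Turchetti expansion result.

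For path-connectedness I would connect an arbitrary $x\in\eS_g$ to a fixed reference point in two stages. First, inside the fiber $\eS_{g,\Hr(x)}$ one reaches a standard configuration: over $\C$ by connectedness of the classical Schottky space, and over a non-Archimedean field by deforming the Koebe coordinates $(\alpha_i,\beta_i,y_i)$ along a path that keeps the inequalities of Corollary \ref{cor_charac_Schottky} valid, for instance by first shrinking every $|y_i|$ toward $0$ so as to separate the disks (using Lemma \ref{lem_schottky_field_extension} to pass to an algebraically closed field if convenient). Second, one connects distinct fibers through the base $\M(\Z)$, which is connected and tree-shaped with the trivial norm $|\cdot|_0$ at its centre; the scaling homeomorphisms $\eS_{g,(\C,|\cdot|_{\infty})}\cong\eS_{g,(\C,|\cdot|_{\infty}^{\varepsilon})}$ and $\eS_{g,(\Q_p,|\cdot|_p)}\cong\eS_{g,(\Q_p,|\cdot|_p^{\alpha})}$ slide a point along each branch as $\varepsilon$ or $\alpha$ varies. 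Although the distinguished trivial-valuation point $x_0$ of the central fiber lies outside $\eS_g$, that fiber $\pi^{-1}(|\cdot|_0)\cong\Af^{m,\an}_{(\Q,|\cdot|_0)}$ does meet $\eS_g$ at points whose completed residue field carries a non-trivial valuation (such as a $t$-adic field), over which Schottky groups exist. Routing the connecting paths through such good points of the central fiber --- while keeping the disks separated, as in the first stage, so as to avoid both collisions and the degenerate locus --- lets one pass from the Archimedean branch to any $p$-adic branch without leaving $\eS_g$. Concatenating the within-fiber and across-fiber paths then gives path-connectedness, the main difficulty again being the continuous control of the Schottky configuration as one crosses between points of different type.
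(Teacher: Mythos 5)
For calibration: the paper does not prove Theorem \ref{opennes} at all --- it is quoted directly from \cite[Theorems 4.3.4 and 5.2.1]{poineau_turcheti_universal} --- so there is no internal proof to compare yours against, and I am assessing your argument on its own merits. As it stands, it has genuine gaps in both halves.

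Openness: at a non-Archimedean $x_0\in\eS_g$ you pass from ``$\Ga_{x_0}$ is Schottky'' to ``the cross-ratio inequalities of Corollary \ref{cor_charac_Schottky} hold at $x_0$.'' This step is not valid: that corollary tests for a figure adapted to the \emph{given} basis $(M_1(x_0),\dots,M_g(x_0))$, whereas Theorem \ref{Gerritzen} only provides a figure for \emph{some} basis; this mismatch is exactly why Theorem \ref{Gerritzenrelative} carries an automorphism $\tau$ of the free group, and at a general point of $\eS_g$ the inequalities for the standard Koebe coordinates may simply fail. (The purely non-Archimedean part of your argument can be repaired by running the inequalities for the $\tau$-shifted basis, whose Koebe coordinates still vary continuously, since Schottky-ness is a property of the group and not of the basis.) The irreparable part is the one you flag yourself: extending the fiberwise figure to a continuous relative figure over a neighborhood $V$, which unavoidably contains Archimedean points, is deferred to ``the core of the Poineau--Turchetti expansion result.'' That step --- the construction of relative twisted Ford disks in \cite[Theorem 4.3.2]{poineau_turcheti_universal} --- \emph{is} the substance of the openness statement, so invoking it as a black box reduces the theorem to itself rather than proving it.

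Path-connectedness: the crossing between branches of $\M(\Z)$ is asserted, not constructed. If you slide the scaling exponent $\varepsilon\to 0$ while keeping the complex Koebe coordinates fixed, the limit seminorm $P\mapsto\lim_{\varepsilon\to 0}|P(z_0)|_{\infty}^{\varepsilon}$ is trivially valued on its residue field, hence lies \emph{outside} $\eS_g$ (Schottky groups do not exist over trivially valued fields, as the paper notes after Definition \ref{def_modspace}); so the naive path along the Archimedean branch has no endpoint in $\eS_g$ and never reaches your ``good points'' of $\pi^{-1}(|\cdot|_0)$, such as those with residue field $\Q((t))$. Reaching them forces the coordinates to degenerate simultaneously with the scaling (for instance, multipliers of size $e^{-1/\varepsilon}$), which is precisely the hybrid-disk mechanism of Proposition \ref{prop_continuous_section} and Section \ref{section_degen}, and one must then verify the Schottky conditions along the entire path --- where your ``first shrink every $|y_i|$'' step suffers from the same standard-basis caveat as above, since monotonicity of Schottky-ness in the multipliers is only clear when the standard basis admits a figure. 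Naming the intermediate points is not a path; this crossing, together with the relative figure in the openness half, is where all the content of the cited theorems lies.
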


\begin{defi} \label{def_relativefigure}
 Let $V \subseteq \eS_g$ be an open. Let $\Ga_V$ denote the subgroup of $\PGL_2(\mathcal{O}(V))$ generated by $(M_1, M_2, \dots, M_g)$, meaning it is the restriction of $\Ga$ to $V.$ For a basis $(N_1, N_2, \dots, N_g),$ let $D_{N_i}, D_{N_i^{-1}}, i=1,2,\dots, g$, be closed subsets of $\bP^{\pan}_V$ such that  for all $i$, 
$$N_i(D_{N_i^{-1}}^c)=D_{N_i}^{\circ} \ \text{and} \ N_i^{-1}(D_{N_i}^c)=D_{N_i^{-1}}^{\circ},$$
where $D_{N_i}^{\circ} \subseteq D_{N_i}$ and $D_{N_i^{-1}}^{\circ} \subseteq D_{N_i^{-1}}$ are opens. 

If for any $x \in V$, the restrictions $D_{P,x}, P \in \{N_i^{\pm 1}\}_i$, to the fiber $\bP^{\pan}_{\Hr(x)}$ form a Schottky figure for $\Ga_x$ in $\bP^{\pan}_{\Hr(x)}$, then $D_P, D_P^{\circ}, P \in \{N_i^{\pm 1}\}_i$, is said to be a \emph{relative Schottky figure for $\Ga_V$ adapted to $(N_1, \dots, N_g)$}.
\end{defi}

Given an automorphism $\tau$ of the free group $F_g$ of $g$ generators, it induces an automorphism of the subgroup $\Ga$ of $\PGL_2(\mathcal{O}(\eS_g))$ generated by the matrices $M_1, M_2, \dots, M_g$. We denote by $\tau \cdot (M_1, M_2, \dots, M_g)$ the image of the basis $(M_1, M_2, \dots, M_g)$ via this automorphism.

\begin{thm}[{\cite[Corollary 4.3.3]{poineau_turcheti_universal}}] \label{Gerritzenrelative}
Let $x \in \eS_g$ be a non-Archimedean point. There exists an open neighborhood $V$ of $x$ in $\eS_g$, an automorphism $\tau$ of the free group in $g$ generators, and a Schottky figure in $\bP_V^{\pan}$ for the basis $\tau\cdot (M_1, \dots, M_g)$ of $\Ga_V$. 
\end{thm}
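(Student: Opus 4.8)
The plan is to first realise a Schottky figure on the single fibre over $x$, and then to \emph{spread it out} to a neighborhood by exhibiting its disks through analytic sections and checking that the defining relations are \emph{open} conditions. Since $x$ is non-Archimedean, its image $\pi(x)$ in $\M(\Z)$ is of the form $|\cdot|_p^{\alpha}$ with $\alpha \in (0,+\infty)$ (recall that Schottky groups do not exist over trivially valued fields, so $x$ cannot lie over $|\cdot|_0$ or $|\cdot|_{p,\infty}$). As $|\cdot|_p^{\alpha}$ lies in the interior of the $p$-adic branch of $\M(\Z)$, which is disjoint from the Archimedean locus, I would first shrink to an open $V_0 \ni x$ in $\eS_g$ consisting entirely of non-Archimedean points. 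On the fibre $\bP^{\pan}_{\Hr(x)}$, the group $\Ga_x = \langle M_1(x), \dots, M_g(x)\rangle$ is Schottky over the non-Archimedean field $\Hr(x)$, so Theorem \ref{Gerritzen} furnishes a basis of $\Ga_x$ together with an adapted Schottky figure. Because $\Ga_x$ is free on the $M_i(x)$, this basis is the image of $(M_1(x), \dots, M_g(x))$ under an automorphism $\tau$ of the free group $F_g$; I fix such a $\tau$ and set $N_i := (\tau \cdot (M_1, \dots, M_g))_i \in \PGL_2(\mathcal{O}(V_0))$.

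Next I would make the disks vary analytically. Each $N_i$ is a word in the $M_j$, whose entries lie in $\mathcal{O}(\eS_g)$, so the Koebe coordinates $(\alpha_i, \beta_i, y_i)$ of $N_i$ are analytic functions on $V_0$. Taking the disks of the figure to be twisted Ford (isometric) disks, their centres and radii are given by explicit rational expressions in these Koebe coordinates --- as in the computation $D_{\g_i} = D(\alpha_i, |y_i|^{1/2})$ of the worked example --- and therefore define relative closed and open disks $D_P, D_P^{\circ}$ in $\bP^{\pan}_{V_0}$ for $P \in \{N_i^{\pm 1}\}$ by the fibrewise prescription $|T - \alpha_P(\cdot)| \leqslant R_P(\cdot)$.

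It then remains to check that the conditions of Definition \ref{def_relativefigure} hold on a neighborhood of $x$. Fibrewise they amount to the $2g$ closed disks being pairwise disjoint together with the ping-pong relations \eqref{eq_pingpong}; by Corollary \ref{cor_charac_Schottky} these reduce, on each non-Archimedean fibre, to the \emph{strict} cross-ratio inequalities $|y_i|\cdot|[u,v;\alpha_i,\beta_i]| < 1$. Each such inequality involves only continuous functions of the point of $V_0$ (the coordinate functions, and the separation of disks reinterpreted through the modulus of the annuli they cut out, whose continuity is Proposition \ref{prop_modulus_extension}); hence it defines an open subset of $\eS_g$. Since all of them are satisfied strictly at $x$, they persist on a neighborhood $V \subseteq V_0$, and there the relative disks form, fibre by fibre, a Schottky figure adapted to $\tau \cdot (M_1, \dots, M_g)$.

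I expect the genuine difficulty to lie not in any single estimate but in organising the construction uniformly over the relative projective line $\bP^{\pan}_V$: one must choose the twisted Ford disks, and in particular the automorphism $\tau$, \emph{once and for all} at $x$, and then verify that the same combinatorial datum produces an honest Schottky figure on every nearby fibre. The restriction to a non-Archimedean $V$ is what keeps this in the regime where Gerritzen's disk-based figures are available; were $V$ allowed to meet the Archimedean locus, one would additionally have to ensure the disks remain a genuine round-disk figure on complex fibres --- not automatic in general, but guaranteed here because the smallness of the radii $|y_i|^{1/2}$ makes the strict separation robust under the change of fibre type.
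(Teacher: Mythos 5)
Your opening reduction is where the proposal breaks down, and the error is not repairable within your framework. You claim that since Schottky groups do not exist over trivially valued fields, $x$ cannot lie over $|\cdot|_0$ or $|\cdot|_{p,\infty}$, and hence that $x$ admits a purely non-Archimedean neighborhood $V_0$. This confuses the valuation on $\Hr(x)$ with its restriction to the prime field: a point of $\eS_g$ lying over the trivial norm $|\cdot|_0 \in \M(\Z)$ has a completed residue field that restricts trivially to $\Q$ but need not itself be trivially valued --- fields such as $\C((t))$ with the $t$-adic valuation are exactly of this kind, and they do carry Schottky groups. Such points genuinely occur in $\eS_g$: the degeneration construction of the paper (Proposition \ref{prop_continuous_section}) produces a non-Archimedean point $p(0)$ with $\Hr(p(0)) = (\C((t)), |\cdot|_t)$ lying over $|\cdot|_0$, realized as a \emph{limit of Archimedean points}. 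For such an $x$, every neighborhood in $\eS_g$ meets the Archimedean locus, so no purely non-Archimedean $V_0$ exists. This is not a peripheral case: it is precisely the case needed for the paper's applications (continuity of the Hausdorff dimension on opens containing both types of points, and the degeneration theorems), and it is the case in which Theorem \ref{Gerritzenrelative} has actual content beyond Gerritzen's fibrewise theorem.

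Once this is recognized, the rest of your argument does not cover what the theorem asserts. Your verification that the defining conditions persist on nearby fibres rests on Corollary \ref{cor_charac_Schottky}, which is a statement about \emph{non-Archimedean} fields only; on an Archimedean fibre the cross-ratio inequalities do not characterize the existence of a figure of round disks, and the ping-pong relations \eqref{eq_pingpong} for genuine Euclidean disks have to be established directly. This is exactly what the cited result of Poineau--Turchetti accomplishes via \emph{relative twisted Ford disks}, whose defining inequalities are checked uniformly across fibres of both types; your closing remark that the extension to complex fibres is ``guaranteed because the smallness of the radii makes the strict separation robust'' is an assertion, not an argument, and it is the entire difficulty. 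Two further points you leave unaddressed even in the purely non-Archimedean regime: the explicit twisted Ford construction requires $\infty \notin \La_x$, and when $\La_x = \bP^1(\Hr(x))$ (possible exactly when $\Hr(x)$ is locally compact) one needs the base-change argument to the ring of integers of a number field, as in the proof of Proposition \ref{prop_coordinate_change}(ii); and Gerritzen's theorem hands you \emph{some} adapted basis, so identifying it as $\tau \cdot (M_1(x),\dots,M_g(x))$ is fine, but the figure adapted to it must then be chosen of twisted Ford type for the analytic-variation step, which is an additional input from \cite[\S~II.3.3]{poineau_turcheti_2} rather than a consequence of Theorem \ref{Gerritzen} alone.
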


Under the hypothesis $\infty \not \in \La_x$, the Schottky figure from Theorem \ref{Gerritzenrelative} is constructed  explicitly by using \emph{relative twisted Ford disks} in \cite[Theorem 4.3.2]{poineau_turcheti_universal}. The following consequence will be needed later on. 

Given a complete valued field $k$, for $a \in k$ and $r>0$, let us denote by $D(a,r)$ (respectively~$D^{\circ}(a,r)$) the closed (respectively open) disk centered at $a$ and of radius $r$ in $\bP^{\pan}_k$.

\begin{prop} \label{prop_coordinate_change}
Let $x \in \eS_g$ be a non-Archimedean point. There exists 
a connected neighborhood $V$ of~$x$ and a basis $\mathcal{B}$ of $\Ga_V \subseteq PGL_2(\mathcal{O}(V))$ for which there exists a relative Schottky figure $D_{\g}, l(\g)=1$, in $\bP^{\pan}_V$ and a choice of coordinate function on it such that 
\begin{enumerate}
\item[(i)] if $\La_x \neq \bP^{1}(\Hr(x))$, then there exists $c_{\g} \in \mathcal{O}(V)$, and a continuous function $r_{\g}: V \rightarrow \R_{> 0},$ such that $D_{\g, y}=D(c_{\g}(y), r_{\g}(y)) \subseteq \bP_{\Hr(y)}^{\pan}$ for any $y \in V$ and $\g \in \Ga_V$ with $l(\g)=1$; moreover, we may assume that $D_{\g,y}\subseteq D^{\circ}(0,1) \subseteq \bP^{\pan}_{\Hr(y)}$ for any $y \in V;$
\item[(ii)] if $\La_x=\bP^1(\Hr(x))$, then we may assume that for any $y \in V$, the point $y$ is non-Archimedean and the Gauss point $\eta_y \in \bP^{\pan}_{\Hr(y)}$ is not contained in the Schottky figure. 
\end{enumerate}
\end{prop}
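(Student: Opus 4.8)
The plan is to bootstrap from Theorem \ref{Gerritzenrelative}, which gives a relative Schottky figure in $\bP^{\pan}_V$ adapted to a basis $\mathcal{B}=\tau \cdot (M_1, \dots, M_g)$ on some open neighborhood $V$ of $x$, and then to refine $V$ and adjust the coordinate function so that the additional geometric properties in (i) and (ii) hold. The two cases are distinguished by whether $\La_x$ is all of $\bP^1(\Hr(x))$, which by Remark \ref{rem_limitsetall} happens exactly when $\Hr(x)$ is locally compact; in particular a point $x$ with $\La_x=\bP^1(\Hr(x))$ must be non-Archimedean, and since local compactness is encoded by the residue characteristic and value group of $\Hr(x)$, I expect to be able to shrink $V$ so that every point of $V$ shares the relevant property. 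The main work is to translate ``Schottky figure'' into explicit disk data varying continuously over $V$.

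For case (i), where $\La_x \neq \bP^1(\Hr(x))$, I would first observe that $\infty \notin \La_x$ can be arranged after a Möbius change of coordinates (possible since $\La_x$ is a proper compact subset of $\bP^1(\Hr(x))$), so that the explicit construction via relative twisted Ford disks of \cite[Theorem 4.3.2]{poineau_turcheti_universal} applies and produces, for each $\g$ with $l(\g)=1$, a genuine closed disk $D_{\g,y}=D(c_{\g}(y), r_{\g}(y))$ in $\Af^{\pan}_{\Hr(y)}$ with $c_\g \in \mathcal{O}(V)$ and $r_\g$ continuous. Since the figure consists of finitely many pairwise disjoint closed disks avoiding $\infty$, for $y=x$ they are all contained in some disk $D^\circ(0,\lambda)$; after rescaling the coordinate function by a suitable constant I may take $\lambda=1$, i.e. $D_{\g,x}\subseteq D^\circ(0,1)$. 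The inclusion $D_{\g,y}\subseteq D^\circ(0,1)$ is an open condition on $y$ (it says $|c_\g(y)|+r_\g(y)<1$ in the non-Archimedean sense, i.e. $\max(|c_\g(y)|_{\Hr(y)}, r_\g(y))<1$, a condition controlled by continuity of $c_\g$ and $r_\g$), so shrinking $V$ to the intersection of these finitely many open conditions gives the claim. Here I would invoke the continuity of the radius functions, which follows from Remark \ref{rem_radius} together with the continuity statements for the modulus function established in Proposition \ref{prop_modulus_extension}.

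For case (ii), where $\La_x=\bP^1(\Hr(x))$ and hence $x$ is non-Archimedean with $\Hr(x)$ locally compact, the point is that I cannot place all Schottky disks inside a fixed smaller disk (the limit set fills $\bP^1$), but I only need the weaker conclusion that the Gauss point lies outside the figure. Starting again from the relative figure of Theorem \ref{Gerritzenrelative}, the finitely many disks $D_{\g,x}$, $l(\g)=1$, are pairwise disjoint closed subsets of $\bP^{\pan}_{\Hr(x)}$, and their union is a proper closed subset whose complement contains points of type $2$ or $3$; after a Möbius change of coordinates sending such a complementary type-$2$ point to the Gauss point $\eta_{0,1}$, I arrange $\eta_x \notin \bigcup_{l(\g)=1} D_{\g,x}$. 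The condition ``$\eta_y$ is not in the figure'' is again open in $y$ (each $D_{\g,y}$ is a closed disk depending continuously on $y$, so its complement is open and the set of $y$ whose Gauss point avoids all of them is an intersection of finitely many opens), so shrinking $V$ accordingly — and intersecting with the open locus of non-Archimedean points whose field has the same local-compactness behaviour — yields the result.

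The main obstacle I anticipate is the continuity of the disk data and the openness of the geometric conditions across the boundary between Archimedean and non-Archimedean points — but in case (i) this is mitigated by the fact that, after arranging $\La_x\neq\bP^1$, one may shrink to a neighborhood on which the Ford-disk construction is uniform, and in case (ii) the whole neighborhood is taken non-Archimedean, so no mixed-type analysis is required. Thus the delicate hybrid estimates of Proposition \ref{prop_modulus_extension} are needed only to guarantee continuity of the radius functions $r_\g$ within a single type stratum, and the remaining arguments are openness of finitely many conditions, which is routine. The one genuinely non-formal input is the explicit nature of the relative twisted Ford disks in \cite[Theorem 4.3.2]{poineau_turcheti_universal}, which is what supplies the functions $c_\g \in \mathcal{O}(V)$ and the continuity of $r_\g$; I would cite that construction directly rather than reprove it.
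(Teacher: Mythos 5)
Your case (i) is essentially the paper's own argument: move $\infty$ off the limit set, invoke the relative twisted Ford disks of \cite[Theorem 4.3.2]{poineau_turcheti_universal} to get centers $c_{\g} \in \mathcal{O}(V)$ and continuous radii, rescale into the unit disk, and shrink $V$ by openness. Two small points of care, which the paper does spell out: both coordinate changes (sending a point to $\infty$, and the rescaling) must be given by elements of $\PGL_2(\mathcal{O}(V))$ rather than of $\PGL_2(\Hr(x))$ — this is arranged by choosing the relevant points in $\kappa(x)$ (dense in $\Hr(x)$), lifting to $\mathcal{O}_x$ and shrinking $V$; and since a neighborhood $V$ of a non-Archimedean $x$ with $\La_x \neq \bP^1(\Hr(x))$ will in general contain Archimedean points, the containment condition at such $y$ reads $|c_{\g}(y)|+r_{\g}(y)<1$ rather than $\max(|c_{\g}(y)|, r_{\g}(y))<1$; both are open given continuity, so this is harmless.

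Case (ii) is where your proposal has genuine gaps, and where the paper does something different. First, ``a Möbius change of coordinates sending such a complementary type-$2$ point to the Gauss point'' is doubly problematic: the $\PGL_2(\Hr(x))$-orbit of the Gauss point consists only of the points $\eta_{a,r}$ with $a \in \Hr(x)$ and $r \in |\Hr(x)^{\times}|$, so not every type-$2$ point qualifies; and since $\La_x=\bP^{1}(\Hr(x)) \subseteq \bigcup_{l(\g)=1}D_{\g,x}$, the complement of the figure contains \emph{no} rational points, so the existence of a point of this special form outside the figure needs an argument (one exists — take $a$ rational and $r$ in the value group just above the radius of the unique disk containing $a$ — but it must be given). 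More seriously, such a transformation has coefficients in $\Hr(x)$ and is only a coordinate change on the fiber $\bP^{\pan}_{\Hr(x)}$, whereas the statement requires a coordinate function on $\bP^{\pan}_V$, i.e.\ a transformation in $\PGL_2(\mathcal{O}(V))$. Second, your openness step assumes ``each $D_{\g,y}$ is a closed disk depending continuously on $y$'', but in case (ii) Theorem \ref{Gerritzenrelative} provides no such continuity data: the explicit Ford-disk description requires $\infty \notin \La_x$, which is exactly what fails here. The paper resolves both issues at once: it passes to the base change $\eS_{g,\mathcal{O}_K}$ over the integer ring of a suitable number field, where $\La_{x_K}\neq \bP^1(\Hr(x_K))$ and Ford disks with continuous data do exist; it then uses the fundamental domain Lemma \ref{domain} to produce an element $\alpha$ of the Schottky group itself with $\alpha\eta_{x_K}$ outside the open disks (perturbing one radius to avoid the boundary annuli), propagates this over a neighborhood via the Ford-disk continuity, pushes forward along $p_K$, and finally takes the coordinate change $T \mapsto \alpha T$ — which lies in $\PGL_2(\mathcal{O}(V))$ for free because $\alpha \in \Ga_V$. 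Your fiberwise construction could be repaired (approximate the coefficients by elements of $\kappa(x)$ and lift; replace disk-continuity by closedness of the sets $D_{\g}$ in $\bP^{\pan}_V$, which is part of Definition \ref{def_relativefigure}, together with continuity of the Gauss section $y \mapsto \eta_y$ over the non-Archimedean locus), but these repairs are precisely the content that is missing from the proposal.
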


\begin{proof}
 Let us prove assertion (i).
Assume $\La_x \neq \bP^1(\Hr(x))$. If $\infty \in \La_x$, since $\kappa(x)$ is dense in $\Hr(x)$, there exists $a \in \kappa(x)$ such that $a \not \in \La_x$. 
By lifting $a$ to $\mathcal{O}_x$ and shrinking $V$, one may assume $a \in \mathcal{O}(V)$. Hence, a change of coordinate function in $\bP^{\pan}_V$ is possible such that $a \in \bP^1({\Hr(y)})$ becomes $\infty$ for all $y \in V$. In particular, we now have that $\infty \not \in \La_x$. By \cite[Theorem~4.3.2]{poineau_turcheti_universal}, there exists a neighborhood $V$ of~$x$, a basis $\mathcal{B}'$ of $\Ga_V$ in $\PGL_2(\mathcal{O}(V))$ and a Schottky figure $D_{\g}, l(\g)=1$, in $\bP^{\pan}_V$ adapted to it consisting of  relative twisted Ford disks. In particular, $\infty \not \in \bigcup_{l(\g)=1} D_{\g,y}$ for any $y \in V$. There exists $c'_{\g}\in \mathcal{O}(V)$ and a continuous function $r'_{\g}:V \rightarrow \R_{>0}$ such that $D_{\g, y}=D(c'_{\g}(y), r'_{\g}(y))$ for $y \in V$ (\emph{cf.} the definition of relative twisted Ford disks \cite[Definition 4.3.1]{poineau_turcheti_universal}). 

Let $b \in \kappa(x)$ be such that $\bigcup_{l(\g)=1} D_{\g, x}$ is contained in the open disk $ D^{\circ}(0,|b(x)|) \subseteq \bP^{\pan}_{\Hr(x)}$. 
As above, we may assume that $b\in  \mathcal{O}(V)$. As $|b(x)| \neq 0$, we may also assume that $|b(y)| \neq 0$ for any $y \in V$, and hence that $b \in \mathcal{O}(V)^{\times}$. Changing the coordinate function on~$\bP^{\pan}_V$ via $T \mapsto \frac{T}{b}$,  the disk $D^{\circ}(0, |b(x)|)$ is sent to $D^{\circ}(0,1) \subseteq \bP^{\pan}_{\Hr(x)}$, implying  $D_{\g, x} \subsetneq D^{\circ}(0,1)$ for all $\g \in \Ga_x \backslash \{\mathrm{id}\}$. As the Schottky figure is continuous over $V$, by shrinking $V$ if necessary, we may assume that $D_{\g, y} \subsetneq D^{\circ}(0,1) \subseteq \bP^{\pan}_{\Hr(y)}$ for all $\g \in \Ga_y \backslash \{\mathrm{id}\}$ and all $y \in V$. Finally, we note that the change of variable sends $D(c'_{\g}(y), r'_{\g}(y))$ to $D(c'_{\g}(y)/b(y), r'_{\g}(y)/|b(y)|)$ in $\bP^{\pan}_{\Hr(y)},$ so we may conclude by taking $c_{\g}:=c'_{\g}/b \in \mathcal{O}(V)$ and $r_{\g}(y):=r'_{\g}(y)/|b(y)|$ for $y \in V$.
 
Let us prove assertion (ii). Assume $\La_x=\bP^{1}(\Hr(x))$. Then $\Hr(x)$ is locally compact, meaning~$x$ has a neighborhood containing only non-Archimedean points. Following the proof of \cite[Corollary 4.3.3]{poineau_turcheti_2} (first bullet point), we make a base change $p_K: \eS_{g,\mathcal{O}_K} \rightarrow \eS_g$ over the integer ring $\mathcal{O}_K$ of a suitably chosen number field $K$ and take $x_K$ over $x$. The choice of $K$ guarantees that $\La_{x_K} \neq \bP^1(\Hr(x_K))$. Moreover, there exists a neighborhood $W$ of $x_K$ in $\eS_{g, \mathcal{O}_K}$ and a basis $\mathcal{B}'$ of $\Ga_{W} \subseteq \PGL_2(\mathcal{O}(W))$ for which there exists an adapted Schottky figure in $\bP^{\pan}_{W}$.  This Schottky figure is given by $\{D_{\g}: \g \in \Ga_W, l(\g)=1\}=\{A^{-1}D_{i}, i=1,2,\dots, 2g\}$, where $A \in \PGL_2(\mathcal{O}(W))$ and $D_i$ is a twisted Ford disk over $W$ for all $i$. 

Set $V=p_K(W)$. It is a neighborhood of $x$  and by \emph{loc.cit.}, the family $\{p_K(D_{\g}): \g \in \Ga_V, l(\g)=1\}$ forms a Schottky figure for $\Ga_V \subseteq \PGL_2(\mathcal{O}(V))$ adapted to the  basis $\mathcal{B}$ induced  by~$\mathcal{B}'$.  

For a fixed coordinate function on $\bP^{\pan}_V$, let $\eta_x \in \bP^{\pan}_{\Hr(x)}$ and $\eta_{x_K} \in \bP^{\pan}_{\Hr(x_K)}$ be the Gauss points; then $\eta_{x_K}$ projects to $\eta_x$. We recall that $F:=\bP^{\pan}_{\Hr(x_K)} \backslash (\bigcup_{l(\g)=1} D_{\g,x_K}^{\circ})$ contains a fundamental domain of the action of $\Ga_{x_K}$ (\emph{cf.} Lemma \ref{domain}). Hence, there exists $\alpha \in \Ga_{x_K}$ such that $\alpha \eta_{x_K} \in F$.  If $\alpha \eta_{x_K} \in \bigcup_{l(\g)=1}{D_{\g, x_K}}$, \emph{i.e.} if there exists $i_0 \in \{1,2,\dots, 2g\}$ such that  $A\alpha \eta_{x_K} \in D_{i_0}$, it suffices to slightly disturb the radius of the relative twisted Ford disk $D_{i_0}$ to ensure that $A \alpha \eta_{x_K} \not \in \bigcup_{i=1}^{2g} D_i.$ As this is an open condition, by shrinking $W$ if necessary, we may assume that $A \alpha \eta_{y} \not \in \bigcup_{i=1}^{2g} D_i$ for $y \in W$, where $\eta_{y} \in \bP_{\Hr(y)}^{\pan}$ is the Gauss point. Thus, $\alpha\eta_{y} \not \in \bigcup_{l(\g)=1} D_{\g}$ for $y \in W$, and hence $\alpha\eta_z \not \in \bigcup_{l(\g)=1} p_K(D_{\g})$ for $z\in V$, with $\eta_z \in \bP_{\Hr(z)}^{\pan}$ the Gauss point. As $\alpha \in \Ga_V$, we may conclude by taking the change of variable  $T \mapsto \alpha T$ on~$\bP^{\pan}_V$. 
\end{proof}

We conclude with the following result we will need later on. 

\begin{lem} \label{lem_NA_center_radius}
Let $x \in \eS_g$ be a non-Archimedean point which has a neighborhood containing only non-Archimedean points. There is a neighborhood $V$ of $x$ containing only non-Archimedean points and a Schottky figure $D_{\g}, \g \in \Ga_V \backslash\{\mathrm{id}\}$, for $\Ga_V$ in $\bP^{\pan}_V$ as well as a coordinate function satisfying the properties of Proposition \ref{prop_coordinate_change} and such that:
\begin{enumerate}
    \item For $a, b \in \Ga_V$ with $l(a)=l(b)=1$, the function $h_{ab}: V \rightarrow \mathbb{R}_{\geqslant 0},$ $y \mapsto \rho_y(\partial{D_{a,y}}, \partial{D_{b,y}})$, is continuous. Here $\rho_y$ denotes the path-distance metric in $\Hy_{\Hr(y)} \subseteq \bP^{\pan}_{\Hr(y)}$ and $D_{\g,y}:=D_{\g} \cap \bP^{\pan}_{\Hr(y)}$ for $\g \in \Ga \backslash \{\mathrm{id}\}$.
    \item The function $R_{\g}: V \rightarrow \R_{\geqslant 0}, y \mapsto \rho_y(\eta_y, \partial{D_{\g, y}}),$ is continuous for any $\g \in \Ga_V$ with~${l(\g)=1}$. Here $\eta_y \in \bP^{\pan}_{\Hr(y)}$ denotes the Gauss point. 
\end{enumerate}
\end{lem}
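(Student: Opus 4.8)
The plan is to build on Proposition \ref{prop_coordinate_change}, which already supplies a neighborhood $V$ of $x$, a basis of $\Ga_V$, an adapted relative Schottky figure $\{D_\g : l(\g)=1\}$ in $\bP^{\pan}_V$, and a coordinate function. Since $x$ has a neighborhood of only non-Archimedean points, I would first shrink $V$ so that it contains only non-Archimedean points, and then treat the two cases of Proposition \ref{prop_coordinate_change} separately.

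Consider first the case $\La_x \neq \bP^1(\Hr(x))$. Here Proposition \ref{prop_coordinate_change}(i) provides $c_\g \in \mathcal{O}(V)$ and a continuous $r_\g : V \to \R_{>0}$ with $D_{\g,y} = D(c_\g(y), r_\g(y)) \subseteq D^\circ(0,1) \subseteq \bP^{\pan}_{\Hr(y)}$. For property 1, I would observe that $\partial D_{\g,y} = \eta_{c_\g(y), r_\g(y)}$, and that by the computation recorded after Definition \ref{defi_modulus}, the quantity $h_{ab}(y) = \rho_y(\partial D_{a,y}, \partial D_{b,y})$ is precisely the modulus of the annulus cut out by the two disjoint disks $D_{a,y}, D_{b,y}$. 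Continuity then follows directly from Proposition \ref{prop_modulus_extension} applied with $\alpha = c_a$, $\beta = c_b$, $R = r_a$, $r = r_b$; alternatively, since $V$ is non-Archimedean, one may simply note that $|c_a - c_b|_y$ is continuous and nonvanishing and invoke the explicit distance formula of Section \ref{section_hyperbolic}. For property 2, since each $D_{\g,y}$ lies in the open unit disk, Remark \ref{rem_radius} gives $R_\g(y) = \rho_y(\eta_y, \partial D_{\g,y}) = -\log r_\g(y)$, which is continuous because $r_\g$ is.

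The real work is the case $\La_x = \bP^1(\Hr(x))$. Following the proof of Proposition \ref{prop_coordinate_change}(ii), I would perform the base change $p_K : \eS_{g,\mathcal{O}_K} \to \eS_g$ over the ring of integers of a suitable number field $K$, pick $x_K$ over $x$ with $\La_{x_K} \neq \bP^1(\Hr(x_K))$, and obtain on a neighborhood $W$ of $x_K$ the explicit figure of the previous case, all coordinate changes being effected by elements of $\PGL_2(\mathcal{O}(W))$. By the first case, the analogous functions $\tilde h_{ab}$ and $\tilde R_\g$ are continuous on $W$. The key point is then that they agree with the pullbacks $h_{ab}\circ p_K$ and $R_\g \circ p_K$: for $w \in W$ over $z = p_K(w)$, the fiberwise projection $\pi_{\Hr(w)/\Hr(z)}$ sends the Gauss point to the Gauss point, maps each $D_{\g,w}$ onto $D_{\g,z}$, and is an isometry on the relevant paths (Lemma \ref{lem_schottky_field_extension}), so the two $\rho$-distances coincide. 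Finally I would descend continuity from $W$ to $V = p_K(W)$, using that $p_K$ is a base change of the finite proper map $\M(\mathcal{O}_K) \to \M(\Z)$, hence proper and surjective, so that after shrinking $V$ so that $W$ is saturated it becomes a quotient map.

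The main obstacle is precisely this second case: the function $R_\g$ is \emph{not} $\PGL_2$-invariant, as it depends on the Gauss point and hence on the choice of coordinate, and in the locally compact setting one cannot push the figure into $D^\circ(0,1)$ over the base itself (there is no non-limit rational point to send to $\infty$). Controlling the image of the Gauss point under the base-change projection, and ensuring that continuity genuinely \emph{descends} along the non-injective map $p_K$ rather than merely pulls back, is the delicate step; keeping every intermediate coordinate change $\mathcal{O}(W)$-rational is what makes it go through.
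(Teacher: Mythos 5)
Your Case 1 ($\La_x \neq \bP^1(\Hr(x))$) is correct, and is essentially a streamlined version of what the paper does there: with the disks $D(c_\g(y),r_\g(y)) \subseteq D^\circ(0,1)$ of Proposition \ref{prop_coordinate_change}(i), property 2 is Remark \ref{rem_radius} and property 1 is the explicit non-Archimedean distance formula (equivalently, Proposition \ref{prop_modulus_extension}). Property 1 in Case 2 also goes through along the lines you sketch, because $\rho_y(\partial D_{a,y},\partial D_{b,y})$ is invariant both under any $\PGL_2(\mathcal{O}(W))$ coordinate change and under the finite extension $\Hr(z)/\Hr(y)$ --- though the correct justification for the latter is the definition of $\rho$ for type 2 and 3 points via finite base change (Section \ref{section_hyperbolic}), not Lemma \ref{lem_schottky_field_extension}, which says nothing about isometries.

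The genuine gap is property 2 in Case 2. You propose to ``apply Case 1 on $W$'' and simultaneously claim $\tilde R_\g = R_\g \circ p_K$; these are incompatible. Applying Case 1 at $x_K$ means renormalizing the coordinate on $\bP^{\pan}_W$ as in Proposition \ref{prop_coordinate_change}(i), in particular scaling by $T \mapsto T/b$ to push the figure into the unit disk, and this scaling moves every fiberwise Gauss point (from $\eta_z$ to $\eta_{0,|b(z)|_z}$ in the old coordinate). Hence the base point implicit in $\tilde R_\g$ is not the pullback of the one in $R_\g$, and the two functions differ by an amount that varies with $z$; the identification you need simply fails in that coordinate. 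You correctly name this obstacle ($R_\g$ is not $\PGL_2$-invariant), but your proposed cure --- keeping coordinate changes $\mathcal{O}(W)$-rational --- does not address it, since $\mathcal{O}(W)$-rational scalings are exactly the maps that move Gauss points. The paper's proof supplies the missing ingredient: it never renormalizes into the unit disk, but keeps the figure in the form $D_\g = p(A \cdot B_\g)$ with $B_\g$ relative twisted Ford disks, and uses the fact, from the proof of \cite[Corollary 4.3.3]{poineau_turcheti_universal}, that $A$ admits a representative $\begin{pmatrix} 0 & 1 \\ 1 & \theta \end{pmatrix}$ with $\theta \in \mathcal{O}_K$. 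Since $|\theta|_z \leqslant 1$ at every non-Archimedean $z \in W$, such an $A$ fixes every fiberwise Gauss point, so that
$$R_\g(p_K(z)) = \rho_z(\eta_z, A\cdot \partial B_{\g,z}) = \rho_z(A^{-1}\eta_z, \partial B_{\g,z}) = \rho_z(\eta_z, \partial B_{\g,z}) = \bigl|\log|c_\g(z)|_z\bigr| + \bigl|\log\bigl(|c_\g(z)|_z/r_\g(z)\bigr)\bigr|,$$
which is continuous in $z$ from the twisted-Ford-disk data ($c_\g \in \mathcal{O}(W)$, $r_\g$ continuous); continuity then descends along the proper surjection $p_K\colon W \to V$ as you indicate. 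Without this explicit control of the transformation $A$ (or some substitute for it), your argument for $R_\g$ in Case 2 does not close.
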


\begin{proof}
As in the proof of Proposition \ref{prop_coordinate_change}, there exists a Schottky figure $D_{\g}, l(\g)=1$, for~$\Ga_V$ and a coordinate function on $\bP_V^{\pan}$ constructed as follows. There exists a base change ${\pi: \eS_{g, \mathcal{O}_K} \rightarrow \eS_g}$ to the integer ring  $\mathcal{O}_K$ of a number field $K$, a neighborhood $W$ of $x_K \in \pi^{-1}(x)$, and a transformation $A \in \PGL_2(\mathcal{O}(\eS_{g, \mathcal{O}_K}))$ such that by shrinking $V$ to have $\pi(W)=V$, we may assume that $D_{\g}=p(A \cdot B_{\g})$, with $B_{\g}$ a relative twisted Ford disk in $\bP^{\pan}_W$ for $\g \in \Ga_W$, $l(\g)=1$, and $p: \bP^{\pan}_W \rightarrow \bP^{\pan}_V$ the projection. Note that if $\bP^{1}(\Hr(x)) \neq \La_x$, then one may assume~${K=\mathbb{Q}}$ (\emph{cf}. \cite[Theorem~4.3.2]{poineau_turcheti_universal}). By Proposition \ref{prop_coordinate_change}, we may moreover assume that for any~${y \in V}$, the Gauss point $\eta_y \in \bP^{\pan}_{\Hr(y)}$ is not contained in this Schottky figure.

Let $y \in V$ and $z \in W$ such that $\pi(z)=y$. As $\Hr(z)/\Hr(y)$ is a finite field extension, and the boundary points $\partial{D}_{a,y}, \partial{D}_{b,y}$ are of type 2 or 3, we obtain that  $\rho_y(\partial{D}_{a,y}, \partial{D}_{b,y})=\rho_z(\partial(A \cdot {B}_{a,z}), \partial(A \cdot{B}_{b,z}))$. Moreover, as $\rho_z$ is $\PGL_2(\Hr(z))$-invariant, $\rho_y(\partial{D}_{a,y}, \partial{D}_{b,y})=\rho_z(\partial {B}_{a,z}, \partial{B}_{b,z}).$ Seeing as $B_a$ and $B_b$ are relative twisted Ford disks, the function $\rho_z(\partial {B}_{a,z}, \partial{B}_{b,z})$ is continuous in $z \in W$, implying $h_{ab}:y \mapsto \rho_y(\partial{D}_{a,y}, \partial{D}_{b,y})$ is continuous in $V$. 

Similarly, we have that $\rho_y(\eta_y, \partial{D_{\g, y}})=\rho_z(\eta_z, A \cdot \partial{B_{\g, z}})=\rho_z(A^{-1}\eta_{z}, \partial{B_{\g,z}})$. By the proof of \cite[Corollary 4.3.3]{poineau_turcheti_universal} (first bullet point), the transformation $A$ has a representative of the form~$\begin{pmatrix}
    0 & 1\\ 1 & \theta\\
\end{pmatrix}$ with $\theta \in \mathcal{O}_K$, implying $A \eta_z=\eta_z$ \emph{for any} $z \in W$. Hence, $\rho_y(\eta_y, \partial{D_{\g, y}})=\rho_z(\eta_z, \partial{B_{\g, z}})$. As $B_{\g}$ is a relative twisted Ford disk, there exists $c_{\g} \in \mathcal{O}(W)$ and $r_{\g}: W \rightarrow \R_{\geqslant 0}$ continuous such that $c_{\g}(z)$ is a center of $B_{\g,z}$ and $r_{\g}(z)$ its radius for any $z \in W$. 
Since $\rho_z(\eta_z, \partial{B_{\g, z}})=|\log |c_{\g}(z)|| + |\log \frac{|c_{\g}(z)|}{r_{\g}(z)}|$ is continuous in $z \in W$, the function $R_{\g}$ is continuous on $V$.   
\end{proof}

\section{Uniform behavior of the Hausdorff dimension}  \label{section_unifoorm}

In Section \ref{section_modulispace}, we recalled the construction of a moduli space $\eS_g$ over $\Z$ of Schottky groups of rank $g$, where to each point $x \in \eS_g$ one associates a Schottky group $\Ga_x$ over $\Hr(x)$ with limit set $\La_x$. 

The goal of this section is to prove the continuity of the Hausdorff dimension of the limit sets $\La_x$ over $\eS_g$. In order to do this, we first develop certain tools. The structure of the section is as follows: we start by proving some uniform distortion estimates, and then obtain a uniform bound for the Hausdorff dimension near a non-Archimedean point of $\eS_g$; finally, the core idea for the proof is an approximation of McMullen over $\C$ which we extend uniformly and combine with our previous estimates. We then deduce the main theorem. 

\subsection{Distortion estimates}

We recall some known estimates that allow us to control derivatives on the Archimedean part of the moduli space $\eS_g$. They also remain true in the non-Archimedean case. We assume $g \geqslant 2.$

\begin{lem} \label{lem_distortion_bis} Let $\gamma : \overline{D} \rightarrow \C$ be induced by a Möbius transformation, where $\overline{D}$ is a closed disk in $\C$. Set {$C_{\gamma}:= \sup_{x \in \overline{D}} |\gamma''(x)|_\infty/ |\gamma'(x)|_\infty$}. 
Let $p,q \in \overline{D} $ and set $r:= |p-q|_\infty $. Then
\begin{equation}
  e^{-C_\gamma r} \leqslant \dfrac{|\gamma'(p)|_\infty}{|\gamma'(q)|_\infty} \leqslant e^{C_\gamma r}.  
\end{equation}
\end{lem}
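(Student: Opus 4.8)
The plan is to reduce the multiplicative estimate to an additive one by passing to logarithms, and then to integrate the logarithmic derivative $\gamma''/\gamma'$ along the straight segment joining $p$ and $q$. First I would record two structural facts about $\gamma$: since $\gamma$ is the restriction of a Möbius transformation mapping $\overline{D}$ into $\C$, its pole does not lie in $\overline{D}$, so $\gamma$ is holomorphic on a neighborhood of $\overline{D}$; and writing a representative as $z \mapsto (az+b)/(cz+d)$ with $ad-bc \neq 0$, one has $\gamma'(z) = (ad-bc)/(cz+d)^2$, which is nowhere vanishing on $\overline{D}$. In particular $\gamma''/\gamma'$ is holomorphic on $\overline{D}$ and $C_\gamma < +\infty$.

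Next, since a closed disk is convex, the segment $[q,p] = \{z(t) := q + t(p-q) : t \in [0,1]\}$ is contained in $\overline{D}$. Because $\gamma'$ is nonvanishing along this (simply connected) path, I can choose a continuous determination $\phi(t)$ of $\log \gamma'(z(t))$, and then $\phi'(t) = (p-q)\,\gamma''(z(t))/\gamma'(z(t))$. Taking real parts and using $\mathrm{Re}\,\log w = \log|w|_\infty$, I obtain
$$\log \frac{|\gamma'(p)|_\infty}{|\gamma'(q)|_\infty} = \mathrm{Re}\bigl(\phi(1) - \phi(0)\bigr) = \mathrm{Re} \int_0^1 \phi'(t)\, dt.$$

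Finally, I would bound the integral in absolute value:
$$\left| \log \frac{|\gamma'(p)|_\infty}{|\gamma'(q)|_\infty} \right| \leq \int_0^1 |\phi'(t)|_\infty\, dt = |p-q|_\infty \int_0^1 \frac{|\gamma''(z(t))|_\infty}{|\gamma'(z(t))|_\infty}\, dt \leq C_\gamma\, |p-q|_\infty = C_\gamma r,$$
using the definition of $C_\gamma$ as a supremum over $\overline{D} \supseteq [q,p]$. Exponentiating the resulting two-sided inequality $-C_\gamma r \leq \log(|\gamma'(p)|_\infty/|\gamma'(q)|_\infty) \leq C_\gamma r$ yields the claim. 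I do not expect a genuine obstacle here; the only points deserving a word of justification are the convexity of $\overline{D}$ (so that the segment stays in the domain) and the nonvanishing of $\gamma'$ (so that the logarithmic determination exists), both of which are immediate for Möbius maps. The same argument, incidentally, works \emph{verbatim} over any complete valued field for which $[q,p]$ lies in a domain where $\gamma'$ is invertible, which is presumably why the authors remark that the estimate persists in the non-Archimedean case.
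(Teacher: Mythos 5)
Your proof is correct and follows essentially the same route as the paper's: both parametrize the straight segment joining $p$ and $q$ (contained in $\overline{D}$ by convexity), bound the derivative of $\log|\gamma'|$ along it by $C_\gamma\,|p-q|$, and exponentiate, the only cosmetic difference being that the paper differentiates the real-valued function $\tfrac{1}{2}\log(\gamma'\,\overline{\gamma'})$ directly while you take a holomorphic branch of $\log\gamma'$ and pass to real parts. (Only your closing aside is inaccurate: over a non-Archimedean field there is no segment to integrate along, and the paper's non-Archimedean distortion estimates, Lemmas \ref{deriv} and \ref{deriv2}, are proved by a different, tree-theoretic argument.)
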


\begin{proof} For $t \in [0,1]$, set $u(t)= (1-t) p + t q$. We note that $u(t) \in \overline{D}$ for all $t \in [0,1]$. Consider the smooth function $g \colon t \mapsto \log( |\gamma'(u(t))|)=\frac{1}{2} \log (\gamma'(u(t)) \bar{\g'}(u(t))$ defined for $t \in [0,1]$. By the triangular inequality: 
{  $$|g'(t)|_\infty = |q-p|_\infty \dfrac{|\gamma''(u(t)) \bar \gamma'(u(t)) + \gamma'(u(t)) \bar \gamma''(u(t))|_\infty}{2 {|\gamma'(u(t))|^2_\infty}} {=} |q-p|_\infty \dfrac{|\gamma''(u(t))|_\infty}{|\gamma'(u(t))|_\infty} \leqslant C_{\g}|q-p|_\infty.$$}
    We thus get $
       |\log |\gamma'(q)|_\infty - \log |\gamma'(p)|_\infty |= |g(1) - g(0)|  \leqslant \sup_{t\in [0,1]} |g'(t)| \leqslant C_{\g} r. 
    $
    This yields $|\log (|\gamma'(p)|_\infty/ |\gamma'(q)|_\infty)| \leqslant C_{\g} r$, and we conclude by taking the exponential. 
\end{proof}

{
\begin{prop}
    \label{prop_distortion_A} Let  $\varepsilon, R\in (0,1)$. Set $k:= (\C , |\cdot|_\infty^{\varepsilon})$. Let $\overline{D}$ be a closed disk in $k$ of radius $R$. Let $\gamma \colon \overline{D} \to k$ be induced by a Möbius transformation. Let $C>0$ be such that $\sup_{x \in \overline{D}} |\g''(x)|_k/|\g'(x)|_k \leqslant C$. Then, for any $p,q \in \overline{D}$,  
    \begin{equation*}
         \exp{(- 2\varepsilon C^{1/\varepsilon}  R^{1/\varepsilon})} \leqslant \dfrac{|\gamma'(p)|_k}{|\gamma'(q)|_k} \leqslant \exp{(2\varepsilon C^{1/\varepsilon}  R^{1/\varepsilon})}.
    \end{equation*}
\end{prop}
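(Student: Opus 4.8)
The plan is to reduce the statement directly to Lemma \ref{lem_distortion_bis} by unwinding the relation $|\cdot|_k = |\cdot|_\infty^{\varepsilon}$. The key point is that $k$ is the field $\C$ equipped with a rescaled absolute value, so $\gamma$ is differentiated in the ordinary complex sense and $\gamma'(x), \gamma''(x)$ are the usual complex derivatives. Only the way we \emph{measure} them changes: for every $x$ one has $|\gamma'(x)|_k = |\gamma'(x)|_\infty^{\varepsilon}$ and likewise for $\gamma''$. Thus the whole statement is the Euclidean distortion estimate of Lemma \ref{lem_distortion_bis}, read through the exponent $\varepsilon$.

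\textbf{Translating the hypotheses.} First I would rewrite the curvature bound. Since
\begin{equation*}
\frac{|\gamma''(x)|_k}{|\gamma'(x)|_k} = \left( \frac{|\gamma''(x)|_\infty}{|\gamma'(x)|_\infty} \right)^{\varepsilon},
\end{equation*}
the hypothesis $\sup_{x \in \overline{D}} |\gamma''(x)|_k / |\gamma'(x)|_k \leqslant C$ is equivalent to $C_\gamma := \sup_{x \in \overline{D}} |\gamma''(x)|_\infty / |\gamma'(x)|_\infty \leqslant C^{1/\varepsilon}$, which is exactly the constant appearing in Lemma \ref{lem_distortion_bis}. Second, I would convert the radius: a $k$-disk $\{ z : |z-a|_k \leqslant R \}$ is precisely the Euclidean disk $\{ z : |z-a|_\infty \leqslant R^{1/\varepsilon} \}$, so $\overline{D}$ has Euclidean radius $R^{1/\varepsilon}$. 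Consequently, for any $p, q \in \overline{D}$, setting $r := |p-q|_\infty$ one has the diameter bound $r \leqslant 2 R^{1/\varepsilon}$.

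\textbf{Applying the lemma and concluding.} With these translations, Lemma \ref{lem_distortion_bis} applied in $(\C, |\cdot|_\infty)$ gives
\begin{equation*}
e^{-C_\gamma r} \leqslant \frac{|\gamma'(p)|_\infty}{|\gamma'(q)|_\infty} \leqslant e^{C_\gamma r},
\end{equation*}
and using $C_\gamma \leqslant C^{1/\varepsilon}$ together with $r \leqslant 2 R^{1/\varepsilon}$ yields $C_\gamma r \leqslant 2 C^{1/\varepsilon} R^{1/\varepsilon}$. Finally, raising the inequality to the power $\varepsilon > 0$ (which preserves the order) and using $|\gamma'(p)|_k / |\gamma'(q)|_k = (|\gamma'(p)|_\infty / |\gamma'(q)|_\infty)^{\varepsilon}$ produces
\begin{equation*}
\exp(-2\varepsilon C^{1/\varepsilon} R^{1/\varepsilon}) \leqslant \frac{|\gamma'(p)|_k}{|\gamma'(q)|_k} \leqslant \exp(2\varepsilon C^{1/\varepsilon} R^{1/\varepsilon}),
\end{equation*}
as desired. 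There is no serious obstacle here: the argument is a rescaling reduction, and the only point demanding care is the bookkeeping of exponents, namely tracking that the curvature bound contributes a factor $C^{1/\varepsilon}$, the radius contributes $R^{1/\varepsilon}$, and the final passage from $|\cdot|_\infty$ to $|\cdot|_k$ reintroduces the single power $\varepsilon$ that yields the stated constant $2\varepsilon C^{1/\varepsilon} R^{1/\varepsilon}$.
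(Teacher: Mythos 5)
Your proposal is correct and follows essentially the same route as the paper's proof: both transfer the disk, the distance $|p-q|$, and the curvature bound to the standard Euclidean picture (radius $R^{1/\varepsilon}$, constant $C_\gamma \leqslant C^{1/\varepsilon}$), apply Lemma \ref{lem_distortion_bis}, and raise the resulting inequality to the power $\varepsilon$. The only difference is presentational — you phrase the reduction as a change of how norms are measured rather than as an image disk under the isomorphism $(\C,|\cdot|_\infty^{\varepsilon}) \to (\C,|\cdot|_\infty)$ — which if anything makes the bookkeeping slightly more transparent.
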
}

\begin{proof}
Let $\overline{D}'$ denote the image of the disk $\overline{D}$ via the natural isomorphism $(\C, |\cdot|_{\infty}^{\varepsilon}) \rightarrow (\C, |\cdot|_{\infty}).$ Its radius is $R':=R^{1/\varepsilon}.$ For $p,q \in \overline{D}$, let $p',q'$ denote their respective images in $\overline{D}'$. Set $r:=|p-q|_k,$ and $r':=|p'-q'|_{\infty}$. We note that $r'=r^{1/\varepsilon}$ and that $r'\leqslant 2R'= 2R^{1/\varepsilon}$. Set $C_{\g} := \sup_{x\in \overline{D}'}|\gamma''(x)|_{\infty}/|\gamma'(x)|_\infty$. Then $C_{\g} \leqslant C^{1/\varepsilon}.$

By Lemma \ref{lem_distortion_bis}, 
 $e^{-C_{\g} r'} \leqslant       \dfrac{|\gamma'(p')|_\infty}{|\gamma'(q')|_\infty} \leqslant e^{C_{\g} r'}. 
  $
As $r'\leqslant 2R^{1/\varepsilon}$, by raising to the power $\varepsilon$ yields:  

\smallskip
    
{\centering
  $ \displaystyle
    \begin{aligned}
 \exp  (- 2\varepsilon C^{1/\varepsilon}  R^{1/\varepsilon} ) \leqslant \exp  (- \varepsilon C_{\g}  r') \leqslant \dfrac{|\gamma'(p)|_k}{|\gamma'(q)|_k} \leqslant \exp (\varepsilon C_{\g}  r') \leqslant \exp (2\varepsilon C^{1/\varepsilon}  R^{1/\varepsilon}).
\end{aligned}
  $ 
\par}
\end{proof}

Let $(k, |\cdot|)$ be a complete valued field. Let $\Ga \subseteq \PGL_2(k)$ be a Schottky group over $k$ endowed with a basis $\mathcal{B}= \{ \gamma_1, \ldots, \gamma_g \}$ of generators  with respect to which we have a Schottky figure in $\bP^{\pan}_k$ consisting of closed disks $D_{\beta}$, $\beta \in \Ga \backslash \{\mathrm{id}\}$. Recall from Remark \ref{rem_sphericalmetric} that, when $k$ is non-Archimedean,  $\mathbb{P}^1(k)$ is endowed with the spherical metric, which is $|\cdot|$ on the unit disk in $k$ and is invariant by the involution $T\mapsto 1/T$.  Let $r_{\beta}$ denote the radius of $D_\beta$ with respect to this metric.  

\begin{lem} \label{deriv} Assume $k$ is non-Archimedean and that $\eta \not \in \bigcup_{\g \in \Ga \backslash \{\mathrm{id}\}} D_{\g}$, where $\eta \in \bP^{\pan}_k$ denotes the Gauss point. Let $a\in \{ \gamma_1^{\pm 1}, \g_2^{\pm 1}, \ldots, \gamma_g^{\pm 1} \}$ and $ \beta  \in \Ga$ such that  $a\beta$ is a reduced word over the alphabet induced by~$\mathcal{B}$. For any $p \in D_{\beta} \cap \bP^1(k)$, one has
\begin{equation} \label{eq_derivative_NA}
|a'(p)| = \dfrac{r_{a\beta}}{r_\beta}= e^{\rho(\eta, \partial{D_{\beta}})-\rho(\eta, \partial{D}_{a\beta})}=e^{\rho(\eta, \partial D_\beta) - \rho(\eta, \partial D_{a})-\rho(\partial{D_{a^{-1}}}, \partial{D_{\beta}})}.
\end{equation} 
If $b$ is the first letter of $\beta$, then  $|a'(p)|=e^{\rho(\eta, \partial D_b) - \rho(\eta, \partial D_{a})-\rho(\partial{D_{a^{-1}}}, \partial{D_{b}})}.$
\end{lem}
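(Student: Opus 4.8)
Here is my plan for proving Lemma \ref{deriv}.

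\medskip

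The plan is to compute the derivative $|a'(p)|$ by relating it to the way the Möbius transformation $a$ maps the Schottky disk $D_\beta$ onto $D_{a\beta}$, and to use Remark \ref{rem_radius} together with Lemma \ref{lem_length_expansion} to express the radii in terms of the interval-length metric $\rho$. First I would recall the coding of Schottky disks from Definition \ref{defi_coding}: since $a\beta$ is reduced, one has $D_{a\beta}=aD_\beta$ by construction, so $a$ maps the closed disk $D_\beta$ bijectively onto the closed disk $D_{a\beta}$. The essential geometric input is that over a non-Archimedean field a Möbius transformation sending one disk onto another acts with a \emph{constant} derivative modulus on the source disk; concretely, for $p \in D_\beta \cap \bP^1(k)$ the ratio $|a'(p)|$ equals the ratio of radii $r_{a\beta}/r_\beta$. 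I expect this to follow from the explicit local form of Möbius transformations on disks (as in \cite[Lemma II.3.30]{poineau_turcheti_2}, which is already invoked in the examples), combined with the fact that the spherical metric restricts to $|\cdot|$ on the unit disk. This gives the first equality in \eqref{eq_derivative_NA}.

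\medskip

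For the second equality, I would invoke Remark \ref{rem_radius}: under the hypothesis that $\eta \notin \bigcup_{\g} D_\g$ and (after the reductions of Proposition \ref{prop_coordinate_change}) that the disks sit inside the open unit disk, the radius of a disk $D$ is computed as $r=e^{-\rho(\eta,\partial D)}$, where $\eta$ is the Gauss point. Applying this to both $D_\beta$ and $D_{a\beta}$ yields
$$
\frac{r_{a\beta}}{r_\beta}=\frac{e^{-\rho(\eta,\partial D_{a\beta})}}{e^{-\rho(\eta,\partial D_\beta)}}=e^{\rho(\eta,\partial D_\beta)-\rho(\eta,\partial D_{a\beta})},
$$
which is the middle expression. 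The third equality is then obtained by decomposing $\rho(\eta,\partial D_{a\beta})$ along the geodesic using Lemma \ref{lem_length_expansion}: since $a(a\beta)^{-1}$ considerations and the ping-pong relations \eqref{eq_pingpong} place $\partial D_a$ and the appropriate translate of $\partial D_\beta$ on the injective path $[\eta,\partial D_{a\beta}]$, one gets $\rho(\eta,\partial D_{a\beta})=\rho(\eta,\partial D_a)+\rho(\partial D_{a^{-1}},\partial D_\beta)$, exactly by the additivity established in the proof of Lemma \ref{lem_length_expansion} (using $a^{-1}\partial D_{a\beta}=\partial D_\beta$ and the $\PGL_2(k)$-invariance of $\rho$).

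\medskip

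Finally, the last assertion concerning the first letter $b$ of $\beta$ follows because, by Proposition \ref{coding}(i), $D_\beta \subseteq D_b$ with $\partial D_b \in [\eta,\partial D_\beta]$, so that $\rho(\partial D_{a^{-1}},\partial D_\beta)=\rho(\partial D_{a^{-1}},\partial D_b)$ (the path from $\partial D_{a^{-1}}$ to $\partial D_\beta$ passes through $\partial D_b$, and the segment beyond $\partial D_b$ is common and cancels in the relevant computation; more precisely the boundary point $\partial D_b$ is the first point of the figure encountered, so the retraction arguments of Lemma \ref{lem_length_expansion} apply verbatim). I expect the main obstacle to be the bookkeeping for this third equality: one must carefully verify, using the structure of the injective paths and the ping-pong relations, that the boundary points $\partial D_a$, $\partial D_{a^{-1}}$, $\partial D_\beta$ lie in the claimed configuration along the geodesics so that the additive decomposition of $\rho$ is valid. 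This is precisely the kind of path-additivity argument carried out in Lemma \ref{lem_length_expansion}, so I would reduce to it rather than recomputing from scratch.
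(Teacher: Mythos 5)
Your proposal is correct in substance, but it reaches the first equality by a genuinely different route from the paper. The paper computes $|a'(p)|$ directly as a limit of difference quotients, $\lim_{y\to p}|ap-ay|/|p-y|$, converts both distances into hyperbolic data via Lemma \ref{wedge} (namely $|p-q|=e^{-\rho(\eta,\, p\wedge_\eta q)}$), replaces $\eta$ by $a^{-1}\eta$ using $\PGL_2(k)$-invariance, and then uses additivity of $\rho$ along paths through $\partial D_\beta$; this yields the exponential expressions first, and the radius ratio is only then read off from Remark \ref{rem_radius}. You go the opposite way: you first assert $|a'(p)|=r_{a\beta}/r_\beta$ from the distortion-free mapping property of non-Archimedean Möbius transformations (constant $|a'|$ on a disk avoiding the pole, image radius equal to $|a'|$ times source radius), and only then pass to $\rho$ via Remark \ref{rem_radius}. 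That property is true and provable with the paper's own tools: writing $a(T)=(uT+v)/(zT+t)$, one has $|a'(p)|=|ut-vz|/|zp+t|^{2}$, and $|zp+t|$ is constant on $D_\beta$ because the pole $a^{-1}\infty$ lies in $D_{a^{-1}}^{\circ}$ by the ping-pong relations, and $D_{a^{-1}}\cap D_\beta=\emptyset$ since $a\beta$ is reduced --- this is essentially the computation the paper carries out in Lemma \ref{deriv2}, so your route works, but you should prove this step rather than only gesture at a citation. Your third equality, decomposing $\rho(\eta,\partial D_{a\beta})=\rho(\eta,\partial D_a)+\rho(\partial D_{a^{-1}},\partial D_\beta)$ using $D_{a\beta}\subseteq D_a$, invariance of $\rho$, and $a^{-1}\partial D_{a\beta}=\partial D_\beta$, is equivalent to the paper's (which decomposes $\rho(a^{-1}\eta,\partial D_\beta)$ through $\partial D_{a^{-1}}$ instead). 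What each approach buys: yours isolates a reusable geometric fact about Möbius maps on non-Archimedean disks and makes the ``no distortion'' phenomenon explicit; the paper's avoids choosing a matrix representative and produces the $\rho$-formula directly from Lemma \ref{wedge}.

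Two smaller corrections. First, the identity displayed in your last paragraph, $\rho(\partial D_{a^{-1}},\partial D_\beta)=\rho(\partial D_{a^{-1}},\partial D_b)$, is false whenever $l(\beta)\geqslant 2$: the two sides differ by $\rho(\partial D_b,\partial D_\beta)>0$. The argument is rescued exactly by the cancellation you describe in the parenthetical: since $\eta\notin D_b$ and $D_{a^{-1}}\cap D_b=\emptyset$, both $\rho(\eta,\partial D_\beta)$ and $\rho(\partial D_{a^{-1}},\partial D_\beta)$ exceed their ``$b$-versions'' by the same quantity $\rho(\partial D_b,\partial D_\beta)$, which cancels in the exponent. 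The paper's shortcut is cleaner and worth adopting: since $p\in D_\beta\subseteq D_b$ and $ab$ is reduced, simply apply the already-established formula with $b$ in place of $\beta$. Second, Proposition \ref{prop_coordinate_change} is not among the hypotheses of this lemma (it concerns the relative situation over the moduli space); the reduction you need --- placing the relevant disks inside the unit disk so that Remark \ref{rem_radius} applies --- is instead achieved, as in the paper, by working with the spherical metric and a coordinate change $T\mapsto 1/T$, which fixes the Gauss point and preserves both $\rho$ and the spherical metric; this is legitimate precisely because of the hypothesis $\eta\notin\bigcup_{\g\in\Ga\backslash\{\mathrm{id}\}}D_\g$.
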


\begin{proof} As we are taking the spherical metric on $\bP^1(k)$, we may assume that $p \in D_{\beta} \cap \bP^{1}(k)$ is contained in the open unit disk. 
By Lemma \ref{wedge}:
$$
|a'(p)| = \lim_{y \rightarrow p} \dfrac{|ap - ay|}{|p - y|} = \lim_{y \rightarrow p} e^{\rho(\eta, p \wedge_{\eta} y) - \rho(\eta, ap \wedge_{\eta} ay)}=\lim_{y \rightarrow p} e^{\rho(\eta, p \wedge_{\eta} y) - {\rho(a^{-1}\eta, p \wedge_{a^{-1}\eta} y)}}, 
$$
where $y \in \bP^1(k)$.  The third equality is due to the $\PGL_2(k)$-invariance of $\rho$. We note that $p \wedge_{a^{-1}\eta} y \in D_{\beta}$. 
Indeed, $a^{-1}\eta \in D_{a^{-1}}$, and $D_{a^{-1}} \cap D_{\beta}=\emptyset$ (because $a\beta$ is assumed to be reduced), so we have that $\eta, a^{-1}\eta \not \in D_{\beta}$. Since the injective path $[p,y]$ connecting $p$ and $y$ can be assumed to be contained in~$D_{\beta}$, one has $p \wedge_{\eta} y=p \wedge_{a^{-1}\eta} y \in D_{\beta}$ when $y \rightarrow p.$

 As $\rho(\eta, p \wedge_{\eta} y)=\rho(\eta, \partial{D_{\beta}})+\rho(\partial{D_{\beta}}, p \wedge_{\eta} y)$, and similarly, {$\rho(a^{-1}\eta, p \wedge_{a^{-1}\eta} y)=\rho(a^{-1}\eta, \partial{D_{\beta}})+ \rho(\partial{D_{\beta}}, p \wedge_{a^{-1}\eta} y),$} we  obtain $$\rho(\eta, p \wedge_{\eta} y) - \rho(a^{-1} \eta, p \wedge_{a^{-1} \eta} y) = \rho(\eta , \partial D_{\beta}) - \rho(a^{-1}\eta, \partial D_{\beta}).$$  The first two equalities of the statement are now due to the $\PGL_2(k)$-invariance of $\rho$, the fact that $a \partial D_\beta= \partial D_{a \beta}$, and Remark~\ref{rem_radius}. For the third equality, a similar reasoning yields $$\rho(a^{-1}\eta, \partial{D_{\beta}})=\rho(a^{-1}\eta, \partial{D_{a^{-1}}}) +\rho(\partial{D_{a^{-1}}}, \partial{D_{\beta}})=\rho(\eta, \partial{D_a})+\rho(\partial{D_{a^{-1}}}, \partial{D_{\beta}}).$$
To obtain the last part of the statement, we remark that $D_{\beta} \subseteq D_{b}$, and hence $p \in D_{b} \cap \bP^{1}(k)$, so the argument above can be applied to the letters $a$ and $b$. 
\end{proof}

\begin{lem} \label{deriv2} Assume $k$ is non-Archimedean and that $\eta \not \in \bigcup_{\g \in \Ga \backslash \{\mathrm{id}\}} D_{\g}$, where $\eta \in \bP^{\pan}_k$ denotes the Gauss point. Let $a\in \{ \gamma_1^{\pm 1}, \g_2^{\pm 1}, \ldots, \gamma_g^{\pm 1} \}$ and $ \beta  \in \Ga$ such that  $a\beta$ is a reduced word over the alphabet induced by~$\mathcal{B}$. 
There exist $(z_a, t_a) \in k^2 \backslash \{(0,0)\}$ such that for any $p \in D_{\beta} \cap \bP^1(k)$, one has
\begin{equation} \label{eq_secondderivative_NA}
|a''(p)|=\frac{|2z_a|}{|z_a c_{\beta}+t_a|} \frac{r_{a\beta}}{r_{\beta}}  < \frac{r_{a\beta}}{r_{a^{-1}}r_{\beta}}, 
\end{equation} 
where $c_{\beta} \in D_{\beta} \cap \bP^1(k)$ is an arbitrary point. 
\end{lem}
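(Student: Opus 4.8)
The statement concerns the second derivative of a Möbius transformation $a$ at points $p$ lying in a disk $D_\beta$ disjoint from $D_{a^{-1}}$. My plan is to write $a$ explicitly in terms of a representative matrix and use the classical formulas for the derivatives of a Möbius map. If $a$ is given by $z \mapsto \frac{a_1 z + b_1}{c_1 z + d_1}$ with $a_1 d_1 - b_1 c_1 \neq 0$, then
\begin{equation*}
a'(z) = \frac{a_1 d_1 - b_1 c_1}{(c_1 z + d_1)^2}, \qquad a''(z) = \frac{-2 c_1 (a_1 d_1 - b_1 c_1)}{(c_1 z + d_1)^3}.
\end{equation*}
Setting $z_a := c_1$ and $t_a := d_1$, which cannot both vanish since the matrix is invertible, I obtain the ratio
\begin{equation*}
\frac{|a''(p)|}{|a'(p)|} = \frac{|2 c_1|}{|c_1 p + d_1|} = \frac{|2 z_a|}{|z_a p + t_a|}.
\end{equation*}
First I would combine this with Lemma \ref{deriv}, which already gives $|a'(p)| = r_{a\beta}/r_\beta$, to deduce the middle expression $|a''(p)| = \frac{|2z_a|}{|z_a c_\beta + t_a|}\cdot \frac{r_{a\beta}}{r_\beta}$; here I must check that $|z_a p + t_a|$ is \emph{constant} over $p \in D_\beta \cap \bP^1(k)$, so that $c_\beta$ may be chosen as an arbitrary point of this set.

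The constancy of $|z_a p + t_a|$ on $D_\beta \cap \bP^1(k)$ is the conceptual heart of the argument, and I expect it to be where the ultrametric geometry enters. The quantity $|z_a p + t_a|$ equals $|z_a|\cdot|p - (-t_a/z_a)|$ (when $z_a \neq 0$), so it is constant on $D_\beta$ provided the pole $-t_a/z_a = -d_1/c_1$ of $a$ lies \emph{outside} $D_\beta$ and at distance from $D_\beta$ exceeding the radius $r_\beta$. But the pole of $a$ is precisely the point sent to $\infty$, and by the ping-pong relation $a^{-1}(\infty)$-type reasoning it lies in $D_{a^{-1}}$; since $a\beta$ is reduced, $D_{a^{-1}} \cap D_\beta = \emptyset$, and by the ultrametric inequality the distance from any point of $D_{a^{-1}}$ to $D_\beta$ is at least $r_\beta$ (indeed the two disjoint disks are separated). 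Thus $|p - (-t_a/z_a)|$ takes a single value on $D_\beta \cap \bP^1(k)$, giving the constancy. The case $z_a = 0$ is degenerate (then $a''\equiv 0$ and the inequality is trivial) and should be treated separately or excluded.

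For the final strict inequality $\frac{|2z_a|}{|z_a c_\beta + t_a|}\cdot \frac{r_{a\beta}}{r_\beta} < \frac{r_{a\beta}}{r_{a^{-1}} r_\beta}$, after cancelling $r_{a\beta}/r_\beta$ it reduces to showing $\frac{|2z_a|}{|z_a c_\beta + t_a|} < \frac{1}{r_{a^{-1}}}$, i.e. $|2 z_a| \, r_{a^{-1}} < |z_a c_\beta + t_a|$. Since $|z_a c_\beta + t_a| = |z_a|\cdot|c_\beta + t_a/z_a|$ is, as just argued, the distance $|z_a| \cdot d(c_\beta, w)$ from $c_\beta$ to the pole $w = -t_a/z_a \in D_{a^{-1}}$, this is an estimate comparing the separation of the disjoint disks $D_\beta$ and $D_{a^{-1}}$ with the radius $r_{a^{-1}}$. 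The plan is to translate this back through Lemma \ref{wedge} and Remark \ref{rem_radius} into $\rho$-distances between boundary points, where the strict inequality follows from $D_{a^{-1}}$ being a \emph{proper} disk and $c_\beta \notin D_{a^{-1}}$ (using $|2|\leqslant 1$ in the non-Archimedean setting, which makes the factor of $2$ harmless). The main obstacle I anticipate is bookkeeping the precise relationship between $w$, $r_{a^{-1}}$, and the separation of the two disks, so that the constant $|2|$ does not spoil strictness; handling the residue characteristic $2$ case (where $|2|$ could be small) actually \emph{helps} the inequality, so the genuinely tight case is residue characteristic $\neq 2$.
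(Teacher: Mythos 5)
Your proposal follows essentially the same route as the paper's proof: compute $|a''(p)|/|a'(p)| = |2z_a|/|z_a p + t_a|$ from a matrix representative, identify the pole $-t_a/z_a = a^{-1}(\infty)$ as a point of $D_{a^{-1}}$, use the disjointness of $D_{\beta}$ and $D_{a^{-1}}$ (from $a\beta$ reduced) together with the ultrametric inequality to get both the constancy of $|z_a p + t_a|$ on $D_{\beta} \cap \bP^1(k)$ and the strict bound $|p + t_a/z_a| > r_{a^{-1}}$, absorb $|2|\leqslant 1$, and finish with Lemma \ref{deriv}. The detour you plan through Lemma \ref{wedge} and Remark \ref{rem_radius} is unnecessary: once the pole is known to lie in $D_{a^{-1}}$ and $c_{\beta} \notin D_{a^{-1}}$, the inequality $|c_{\beta} + t_a/z_a| > r_{a^{-1}}$ is immediate from the ultrametric triangle inequality, which is all the paper uses.

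One step in your outline does need justification, and it is precisely where the hypothesis $\eta \notin \bigcup_{\g \in \Ga \backslash \{\mathrm{id}\}} D_{\g}$ — which you never invoke — enters. The ping-pong relation gives $a^{-1}(D_a^c) = D_{a^{-1}}^{\circ}$, so the conclusion ``the pole $a^{-1}(\infty)$ lies in $D_{a^{-1}}$'' requires $\infty \in D_a^c$. In $\bP^{\pan}_k$ a closed disk of a Schottky figure may perfectly well contain $\infty$ (it is a disk in the coordinate $1/T$); in that case $a^{-1}\infty \in (D_{a^{-1}}^{\circ})^c$, and the pole could even sit inside $D_{\beta}$, destroying both the constancy of $|z_a p + t_a|$ and the lower bound. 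The paper's first step fixes this: since $\eta \notin D_a$, the disk $D_a$ lies in a single connected component of $\bP^{\pan}_k \setminus \{\eta\}$, so after applying $T \mapsto 1/T$ if necessary — a change of coordinate that fixes the Gauss point and preserves the spherical metric, hence the radii $r_{\beta}, r_{a\beta}, r_{a^{-1}}$ and the validity of Lemma \ref{deriv} — one may assume $D_a$ is contained in the closed unit disk, so $\infty \notin D_a$. With this normalization your argument goes through; note also that for the constancy step you need the separation of the two disks to be \emph{strictly} greater than $r_{\beta}$ (not merely ``at least''), which is automatic for disjoint closed ultrametric disks since $|c_{\beta} - w| \leqslant r_{\beta}$ would force $w \in D_{\beta}$.
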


\begin{proof} By a change of coordinate function $T \mapsto 1/T$ (which preserves the Gauss point) if necessary, seeing as $\eta \not \in D_a$, we may assume that $D_a$ is contained in the closed unit disk. We note that the spherical metric is invariant by this base-change. 
Let
$\begin{pmatrix}
u & v\\ z & t \\
\end{pmatrix} \in \GL_2(k)$
be a representative of~$a$. Then for $p \in D_{\beta} \cap \bP^1(k)$, one has $\dfrac{|a''(p)|}{|a'(p)|}=\dfrac{|2z|}{|zp+t|}.$ If $z=0$, then $|a''(p)|=0$. Otherwise, let us fix a point $c_{\beta} \in \D_{\beta} \cap \bP^1(k).$ We note that $|p-c_{\beta}|\leqslant r_{\beta}$.

As $\infty \not \in D_a$ (since $D_a$ is contained in the unit disk), one has $a^{-1} \infty=-t/z \in D_{a^{-1}}.$ Seeing as $a\beta$ is reduced, $D_{\beta} \cap D_{a^{-1}}=\emptyset$, so $-t/z \not \in D_{\beta}$ and $p \not \in D_{a^{-1}}$. Consequently, $|c_{\beta}+t/z|>r_{\beta}$ and $|p+t/z|>r_{a^{-1}}$, and so $|p+t/z|=|p-c_{\beta} +c_{\beta} + t/z|=|c_{\beta}+t/z|$. Finally, $$\dfrac{|a''(p)|}{|a'(p)|}=\dfrac{|2z|}{|zp+t|}=\dfrac{|2z|}{|zc_{\beta}+t|} < \frac{1}{r_{a^{-1}}}.$$
We can now conclude by applying Lemma \ref{deriv}.
\end{proof}

\begin{rem} \label{sphericalm} 
In what follows, we will mostly be working  with a continuous family of Schottky groups and Schottky figures on a neighborhood of a non-Archimedean point $x \in \eS_g$. Due to Proposition \ref{prop_coordinate_change}, we may assume that at all Archimedean points  these Schottky figures are contained in the unit disk. Hence, for Archimedean points, it suffices to consider the usual absolute value on $\C$, whereas for non-Archimedean points $y$, one must take the spherical metric on $\bP^1(\Hr(y))$  (\emph{cf.} Remark \ref{rem_sphericalmetric}).

We note that in the Archimedean case, taking the metric induced by the absolute value instead of the spherical metric does not affect the Hausdorff dimension as the latter is a conformal invariant (see \cite[pg. 8]{mcmullen_hausdorff_3}).
\end{rem}

\subsection{Uniform distortion and uniform bounds}
Let $g \geqslant 2.$
We recall that given $x \in \eS_{g}$, we denote by $\Ga_x \subseteq \PGL_2(\Hr(x))$ the associated Schottky group and by $\La_x$ its limit set (\emph{cf.} Definition \ref{def_modspace}). Let us fix a basis of $\Ga_x$ for which we assume a Schottky figure exists. Then, given $\g \in \Ga_x \backslash \{\mathrm{id}\}$, we denote by $D_{\g, x}$ the associated closed Schottky disk in~$\bP^{\pan}_{\Hr(x)}$. Similarly, $l(\g)$ will denote the length of the reduced word corresponding to $\g$ in the alphabet determined by said basis.  We will denote by $\rho_x$ the interval length metric on~$\Hy_{\Hr(x)}$ (\emph{cf.} Section~\ref{section_hyperbolic}).

The goal of this section is to establish uniform upper bounds on the Hausdorff dimension of $\La_x$ near a non-Archimedean point $x \in \eS_g$.  We start by uniformly controlling the radii of the  Schottky disks.
\begin{lem} \label{lem_uniform_bound_locus} Let $x\in \eS_{g}$ be a non-Archimedean point. There exists a neighborhood $V\subseteq \eS_{g}$ of $x$, a basis of $\Ga_{V}$ in $\PGL_2(\mathcal{O}(V))$ for which a Schottky figure exists in $\bP^{\pan}_{V}$ satisfying the properties of Proposition \ref{prop_coordinate_change}, and constants {$R> 0, c \in (0,1)$ such that for any $y\in V$ and $\gamma \in \Ga_y \backslash \{\mathrm{id}\}$, the radius $r_{\gamma,y}$ of the disk $D_{\gamma,y}$ satisfies:
\begin{equation}
 r_{\gamma,y} \leqslant R c^{l(\gamma)-1}. 
\end{equation}}
\end{lem}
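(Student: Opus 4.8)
The plan is to reduce the statement to a uniform lower bound on the distance $\rho_y(\eta_y,\partial D_{\gamma,y})$ from the Gauss point to the boundary of each Schottky disk, and then convert this into a bound on spherical radii. First I would invoke Lemma~\ref{lem_NA_center_radius} (which refines Proposition~\ref{prop_coordinate_change}) to produce an open neighborhood $V_0$ of $x$ consisting only of non-Archimedean points, a basis of $\Ga_{V_0}\subseteq\PGL_2(\mathcal{O}(V_0))$ with an adapted relative Schottky figure $\{D_\gamma\}$ in $\bP^{\pan}_{V_0}$, and a coordinate function for which the Gauss point $\eta_y$ lies outside the figure for every $y\in V_0$, the functions $h_{ab}\colon y\mapsto\rho_y(\partial D_{a,y},\partial D_{b,y})$ (for $l(a)=l(b)=1$) are continuous, and the functions $R_\gamma\colon y\mapsto\rho_y(\eta_y,\partial D_{\gamma,y})$ (for $l(\gamma)=1$) are continuous. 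The key elementary fact I would record is that, since $\eta_y$ is fixed by $T\mapsto 1/T$ while $\rho_y$ is coordinate-independent and $\PGL_2$-invariant, the spherical radius of any disk $D\subseteq\bP^{\pan}_{\Hr(y)}$ with $\eta_y\notin D$ equals $e^{-\rho_y(\eta_y,\partial D)}$; this extends Remark~\ref{rem_radius} (already used in Lemma~\ref{deriv}) beyond disks contained in the unit disk, and applies to every $D_{\gamma,y}$ because $D_{\gamma,y}\subseteq D_{a_1,y}$ for the first letter $a_1$ of $\gamma$, and $\eta_y\notin D_{a_1,y}$.

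Next I would establish a telescoping formula for $\rho_y(\eta_y,\partial D_{\gamma,y})$. Writing $\gamma=a_1a_2\cdots a_n$ with $n=l(\gamma)$, Proposition~\ref{coding}(i) gives the strictly nested chain $D_{\gamma,y}\subseteq D_{a_1\cdots a_{n-1},y}\subseteq\cdots\subseteq D_{a_1,y}$, and since $\eta_y$ lies outside $D_{a_1,y}$ the boundary points $\partial D_{a_1,y},\partial D_{a_1a_2,y},\dots,\partial D_{\gamma,y}$ occur in this order along the geodesic $[\eta_y,\partial D_{\gamma,y}]$. Telescoping and applying the $\PGL_2(\Hr(y))$-invariance of $\rho_y$ exactly as in the proof of Lemma~\ref{lem_length_expansion} (using $a_{m-1}^{-1}\partial D_{a_{m-1}\cdots,y}=\partial D_{a_m\cdots,y}$ from Remark~\ref{rem_coding}(ii)) yields
\[
\rho_y(\eta_y,\partial D_{\gamma,y})=\rho_y(\eta_y,\partial D_{a_1,y})+\sum_{m=2}^{n}\rho_y(\partial D_{a_{m-1}^{-1},y},\partial D_{a_m,y}).
\]
Because $\gamma$ is reduced we have $a_{m-1}^{-1}\neq a_m$ for each $m$, so every summand is at least $C_1(y):=\min\{\rho_y(\partial D_{a,y},\partial D_{b,y}):a\neq b,\ l(a)=l(b)=1\}$, which is strictly positive as the first-generation disks are disjoint. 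Exponentiating and using the radius identity above, $r_{\gamma,y}=r_{a_1,y}\prod_{m=2}^{n}e^{-\rho_y(\partial D_{a_{m-1}^{-1},y},\partial D_{a_m,y})}\leqslant r_{a_1,y}\,e^{-(n-1)C_1(y)}$.

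Finally I would make the constants uniform. Both $C_1(y)$ and $\max_{l(a)=1}r_{a,y}$ are finite minima/maxima of the finitely many continuous functions $h_{ab}$ and $R_\gamma$ supplied by Lemma~\ref{lem_NA_center_radius}, hence continuous on $V_0$, with $C_1$ positive. Shrinking $V_0$ to an open neighborhood $V$ of $x$ whose closure $\overline V\subseteq V_0$ is compact, I would set $c:=\exp\!\big(-\inf_{y\in\overline V}C_1(y)\big)\in(0,1)$ and $R:=\sup_{y\in\overline V}\max_{l(a)=1}r_{a,y}>0$, both finite and with the required signs by compactness and positivity. Then for every $y\in V$ and every $\gamma\in\Ga_y\setminus\{\mathrm{id}\}$ one obtains $r_{\gamma,y}\leqslant R\,c^{l(\gamma)-1}$, as claimed. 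The main obstacle, and the point needing the most care, is the passage from the $\rho$-distance estimate to the spherical radius \emph{uniformly across the fibers}: one must guarantee the identity $r_{\gamma,y}=e^{-\rho_y(\eta_y,\partial D_{\gamma,y})}$ at every $y\in V$, including case (ii) of Proposition~\ref{prop_coordinate_change} where the disks need not lie inside the unit disk, which is precisely why arranging $\eta_y$ to avoid the figure, together with the coordinate-free, $T\mapsto 1/T$-invariant description of $\rho_y$, is essential.
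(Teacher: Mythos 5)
There is a genuine gap, and it occurs at the very first step. You invoke Lemma~\ref{lem_NA_center_radius} ``to produce an open neighborhood $V_0$ of $x$ consisting only of non-Archimedean points,'' but that lemma cannot produce such a neighborhood: its \emph{hypothesis} is that $x$ already has a neighborhood containing only non-Archimedean points, and this hypothesis fails for general non-Archimedean $x \in \eS_g$. The Archimedean locus of $\eS_g$ is open but not closed; for instance, any point of $\eS_g$ lying over the trivial norm $|\cdot|_0 \in \M(\Z)$ (residue field of the type $\C((t))$ --- exactly the points arising in the degeneration application of Section~\ref{section_degen}) has the property that \emph{every} neighborhood contains Archimedean points $y$ with $\Hr(y)=(\C,|\cdot|_\infty^{\varepsilon})$. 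At such fibers there is no Gauss point, the hyperbolic metric $\rho_y$ of Section~\ref{section_hyperbolic} does not exist ($\bP^{\pan}_{\Hr(y)}$ is just the Riemann sphere), and the identity $r_{\gamma,y}=e^{-\rho_y(\eta_y,\partial D_{\gamma,y})}$ that your entire telescoping argument rests on is meaningless. So your proof only covers the special case in which $x$ admits a purely non-Archimedean neighborhood (in the paper this is the case $\La_x=\bP^1(\Hr(x))$, where local compactness of $\Hr(x)$ guarantees such a neighborhood), and it misses precisely the mixed case that is the main difficulty of the whole section.

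For the mixed case the paper takes a different route: it sets $c_y:=\max e^{-\modu_{\Hr(y)}(D_{a^{-1},y}\backslash D_{b,y})}$ using the modulus of annuli (Definition~\ref{defi_modulus}), which is defined uniformly over Archimedean \emph{and} non-Archimedean fields, quotes the contraction estimate of Poineau--Turchetti (their Proposition~3.2.4, valid over any complete valued field) to get $r_{\gamma,y}\leqslant R_y c_y^{l(\gamma)-1}$ fiberwise, and then uses Proposition~\ref{prop_modulus_extension} --- whose whole purpose is the continuity of the modulus function \emph{across} the Archimedean/non-Archimedean transition --- together with Proposition~\ref{prop_coordinate_change}(i) to conclude that $R_y$ and $c_y$ vary continuously on a neighborhood containing both kinds of points. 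Your telescoping computation via Lemma~\ref{lem_length_expansion}, and your compactness trick for uniformizing the constants, are both fine as far as they go (they essentially reproduce the paper's first case), but without a substitute for the modulus-based estimate your argument cannot give the uniform bound at Archimedean fibers, and the lemma as stated remains unproved.
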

\begin{proof}
 Let us take a neighborhood $V$ of $x$, a basis of $\Ga_V$, and an adapted Schottky figure as in~Proposition~\ref{prop_coordinate_change}. 
{Set $R_y:=\max_{l(a)=1} r_{a,y}$ and $c_y:= \max_{l(a)=l(b)=1} e^{-\modu_{\mathcal{H}(y)}(D_{a^{-1},y} \backslash  D_{b,y})},$ where~$ab$ is a reduced word and $\modu_{\mathcal{H}(y)}(\cdot)$ denotes the modulus function from Definition \ref{defi_modulus}.  

If $\La_{x}=\bP^1(\Hr(x))$, we may assume $V$ contains only non-Archimedean points and, by Lemma~\ref{lem_NA_center_radius}, that both functions $R_y$ and $c_y$ are continuous. Moreover, we have that $r_{\g,y}=e^{-\rho_y(\eta_y, \partial{D}_{\g, y})}$ and if $\g=a_1\dots a_n$ is the reduced writing of $\g$, seeing as the Gauss point $\eta_{y}$ is not contained in the Schottky figure, by Lemma~\ref{lem_length_expansion} and Remark \ref{rem_coding} (ii): 
$$r_{\g,y}=e^{-\rho_y(\eta, \partial{D_{\g,y}})}=\exp(-\rho_y(\eta, \partial{D}_{a_1}) - \sum_{i=1}^{n-1} \rho_{y}(\partial{D}_{a_i^{-1}}, \partial{D}_{a_{i+1}})) \leqslant R_{y} c_{y}^{l(\g)-1},$$
for any $y \in V$.

If $\La_x \neq \bP^1(\Hr(x))$, then by \cite[Proposition~3.2.4]{poineau_turcheti_universal}, for any $y \in V$ and any $\gamma \in \Ga_y$, we have $r_{\gamma, y}\leqslant R_y c_y^{l(\gamma)-1}.$ By Proposition \ref{prop_coordinate_change} (i) and Proposition \ref{prop_modulus_extension}, the functions $R_{y}$ and $c_y$ are continuous in $y \in V$. %
   
In either case, let $c<1$ and $R>0$ be such that $c_x< c$ and $ R > R_x$. By shrinking $V$ if necessary, using the continuity of $R_y$ and $c_y$, for any $y\in V$, one has  $ r_{\gamma,y} \leqslant R c^{l(\gamma)-1} $.  
}\end{proof}

\begin{cor} \label{cor_local_bound_hausdorff} For any point $x \in \eS_{g}$ which is non-Archimedean, there exists a neighborhood~${V \subseteq \eS_{g}}$ of $x$ and a constant $A> 0$ such that for each $y \in V$, the Hausdorff dimension $\delta_y$ of the limit set $\La_y$ satisfies  $ \delta_y \leqslant A.$ 
\end{cor}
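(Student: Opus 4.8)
The plan is to deduce a uniform upper bound on the Hausdorff dimension directly from the uniform control of the Schottky disk radii obtained in Lemma \ref{lem_uniform_bound_locus}, by re-examining the proof of the upper bound in Proposition \ref{Hdimbounds}. First I would apply Lemma \ref{lem_uniform_bound_locus} to fix a neighborhood $V$ of $x$, an adapted basis, a Schottky figure satisfying the properties of Proposition \ref{prop_coordinate_change}, and constants $R>0$, $c\in(0,1)$ such that $r_{\g,y}\leqslant Rc^{l(\g)-1}$ for all $y\in V$ and all $\g\in\Ga_y\backslash\{\mathrm{id}\}$. The key observation is that the disks $\{D_{\g,y}:l(\g)=n\}$ form a covering of $\La_y$ by disks of controlled diameter, which yields an upper bound on the Hausdorff dimension via the definition of $\mathcal{H}_{s,\delta}$.

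More concretely, for a fixed generation $n$ there are exactly $2g(2g-1)^{n-1}$ Schottky disks of generation $n$ (\emph{cf.} Remark \ref{rem_coding} (i)), and by Proposition \ref{coding} (ii)-(iii) they cover $\La_y$. Using the radius bound, each such disk has spherical diameter at most $2Rc^{n-1}$ (or $Rc^{n-1}$, up to the normalization of the spherical metric), which tends to $0$ as $n\to+\infty$ uniformly in $y\in V$. Hence for any $s>0$,
\begin{equation*}
\mathcal{H}_{s,\delta_n}(\La_y)\leqslant \sum_{l(\g)=n}(\diam D_{\g,y})^s\leqslant 2g(2g-1)^{n-1}(2Rc^{n-1})^s=2g(2R)^s\big((2g-1)c^s\big)^{n-1},
\end{equation*}
where $\delta_n:=2Rc^{n-1}$. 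If $s$ is chosen so that $(2g-1)c^s<1$, \emph{i.e.} $s>\frac{\log(2g-1)}{\log(1/c)}=:A$, then the right-hand side tends to $0$ as $n\to+\infty$, forcing $\mathcal{H}_s(\La_y)=0$ and hence $\delta_y\leqslant s$. Taking the infimum over admissible $s$ gives $\delta_y\leqslant A$ for \emph{every} $y\in V$, with $A$ depending only on $g$ and $c$, hence uniform over $V$. This is exactly the bound asserted, and it is the same computation underlying the upper bound in Proposition \ref{Hdimbounds}, now made uniform thanks to the $y$-independence of $R$ and $c$.

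I expect the main (minor) obstacle to be bookkeeping about the metric: the radii $r_{\g,y}$ in Lemma \ref{lem_uniform_bound_locus} are measured with respect to the spherical metric on $\bP^1(\Hr(y))$ (for non-Archimedean $y$) while at Archimedean points one works with the usual absolute value inside the unit disk, as discussed in Remark \ref{sphericalm}; since the Hausdorff dimension is a conformal invariant and Proposition \ref{prop_coordinate_change} lets us assume the figures lie in the unit disk, this causes no essential difficulty and at worst changes the constant in front of $c^{l(\g)-1}$. One should also confirm that the disks $D_{\g,y}$ are genuine metric balls (or are contained in balls of comparable radius) so that they are admissible in the covering defining $\mathcal{H}_{s,\delta}$; this follows from the structure of Schottky figures. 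With these points addressed, the estimate above completes the proof.

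\begin{proof}
By Lemma \ref{lem_uniform_bound_locus}, there exist a neighborhood $V\subseteq \eS_g$ of $x$, a basis of $\Ga_V$ for which a Schottky figure exists in $\bP^{\pan}_V$ satisfying the properties of Proposition \ref{prop_coordinate_change}, and constants $R>0$, $c\in(0,1)$ such that for all $y\in V$ and all $\g\in\Ga_y\backslash\{\mathrm{id}\}$, the radius of $D_{\g,y}$ satisfies $r_{\g,y}\leqslant Rc^{l(\g)-1}$.

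Fix $y\in V$. By Proposition \ref{coding} (iii), every point of $\La_y$ lies in a Schottky disk of generation $n$ for each $n\geqslant 1$, so the family $\{D_{\g,y}:l(\g)=n\}$ covers $\La_y$. By Remark \ref{rem_coding} (i) there are $2g(2g-1)^{n-1}$ such disks, and by Remark \ref{sphericalm} we may assume they are contained in the unit disk, so that $\diam D_{\g,y}\leqslant 2r_{\g,y}\leqslant 2Rc^{n-1}=:\delta_n$, where the diameter is taken with respect to the spherical metric. Since $c<1$, we have $\delta_n\to 0$ as $n\to+\infty$, uniformly in $y$.

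Let $A:=\dfrac{\log(2g-1)}{\log(1/c)}$ and let $s>A$. Then $(2g-1)c^s<1$, and using the above covering we obtain
\begin{equation*}
\mathcal{H}_{s,\delta_n}(\La_y)\leqslant \sum_{l(\g)=n}(\diam D_{\g,y})^s\leqslant 2g(2g-1)^{n-1}(2R)^s c^{s(n-1)}=2g(2R)^s\big((2g-1)c^s\big)^{n-1}.
\end{equation*}
Letting $n\to+\infty$, the right-hand side tends to $0$, so $\mathcal{H}_s(\La_y)=0$ and hence $\delta_y\leqslant s$. Taking the infimum over all $s>A$ yields $\delta_y\leqslant A$. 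As $A$ depends only on $g$ and $c$, and not on $y$, the bound holds uniformly for all $y\in V$.
\end{proof}
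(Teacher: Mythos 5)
Your proof is correct and is essentially identical to the paper's own argument: both invoke Lemma \ref{lem_uniform_bound_locus} for the uniform bound $r_{\g,y}\leqslant Rc^{l(\g)-1}$, cover $\La_y$ by the $2g(2g-1)^{n-1}$ generation-$n$ Schottky disks, and deduce $\delta_y\leqslant -\log(2g-1)/\log c$ uniformly on $V$ from the resulting estimate on $\mathcal{H}_{s,\sigma}(\La_y)$. The only cosmetic difference is that you conclude $\mathcal{H}_s(\La_y)=0$ where the paper only notes finiteness; both suffice.
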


\begin{proof} 
Let $V \subseteq \eS_g$ be a neighborhood of $x$ satisfying the properties of  Lemma~\ref{lem_uniform_bound_locus}.  
 For $\sigma > 0$, let $N \in \N$ be such that for all $n \geqslant N$, all $y \in V$, and all $\gamma \in \Ga_y$ with $l(\g)=n$, one has $r_{\gamma,y} \leqslant R c^{l(\gamma)-1} < \sigma/2$. 
The disks $D_{\gamma,y }, l(\g)=n$,  form a covering of $\Lambda_y$, and $\diam(D_{\gamma,y})< \sigma$ for any $n \geqslant N$, so for any $s \geqslant 0$:
\begin{align*}
\sum_{l(\gamma)= n } (\diam D_{\gamma, y})^s & \leqslant 2^s \sum_{l(\gamma) = n} r_{\gamma,y}^s  \leqslant 2^s R^s 2g(2g-1)^{n-1} c^{(n-1)s},  
\end{align*}
seeing as there are $2g(2g-1)^{n-1}$ reduced words of length $n$.
Hence, for $n \geqslant N$ and $y\in V$, we get: 
$\mathcal{H}_{s, \sigma}(\Lambda_y) \leqslant R^s 2^{s+1}g ((2g-1)c^s)^{n-1}$ (\emph{cf.} Definition \ref{defi_Hdim}).
Let $s >  - \log(2g-1)/\log c.$ Taking the limit as $\sigma \rightarrow 0$ implies 
$n \rightarrow +\infty$, and the previous inequality yields $\lim_{\sigma \rightarrow 0} \mathcal{H}_{s, \sigma}(\Lambda_y) < +\infty.$ 
Consequently, we get that for all $y \in V$: 
$$
\delta_y  \leqslant  - \dfrac{ \log(2g-1)}{\log c}=:A. $$\end{proof}
We now obtain uniform distortion bounds which we will need later on.
\begin{lem} \label{rel_uniform_cont_second_deriv}
Let $x \in \eS_g$ be a non-Archimedean point such that $\La_x \neq \bP^{1}(\Hr(x))$. Let  $V \subseteq \eS_g$ be a neighborhood of~$x$ for which a Schottky figure $\{D_{\g}\}_{\g \in \Ga_V \backslash \{\mathrm{id}\}}$ adapted to a basis $\mathcal{B}_V$ of $\Ga_V$ exists in $\bP^{\pan}_V$ and satisfies the properties of Proposition \ref{prop_coordinate_change}. Let $a, \beta \in \Ga_V$ be length one words such that  $a \beta$ is reduced. The functions $f_{a\beta}: V \rightarrow \R_{\geqslant 0}, y \mapsto \sup_{p \in D_{\beta,y} \cap \bP^1(\Hr(y))}|a''(p)/a'(p)|_y$, and
$F_{a \beta}: V \rightarrow R_{\geqslant 0}, y \mapsto \inf_{p \in D_{\beta,y} \cap \bP^1(\Hr(y))}|a'(p)|_y$, are continuous.
\end{lem}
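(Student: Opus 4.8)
The plan is to reduce the continuity of $f_{a\beta}$ and $F_{a\beta}$ to the continuity of more elementary quantities that we have already shown to vary continuously over $V$, namely the centers, radii, and boundary points of the Schottky disks. First I would invoke Proposition \ref{prop_coordinate_change}(i) to fix, on a possibly smaller neighborhood, a coordinate function and functions $c_\gamma \in \mathcal{O}(V)$, $r_\gamma \colon V \to \R_{>0}$ continuous, so that $D_{\gamma,y} = D(c_\gamma(y), r_\gamma(y))$ and every Schottky disk is contained in the open unit disk of $\bP^{\pan}_{\Hr(y)}$. This is crucial: it ensures that on non-Archimedean fibers the spherical metric coincides with $|\cdot|_y$ on the relevant region, and on Archimedean fibers (where the figure lies in the unit disk, \emph{cf.} Remark \ref{sphericalm}) we may simply use the usual absolute value without changing the Hausdorff dimension.

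The key computational step is to give closed-form expressions for $f_{a\beta}(y)$ and $F_{a\beta}(y)$ in terms of these continuous data. Writing a representative $\begin{pmatrix} u & v \\ z & t \end{pmatrix} \in \GL_2(\mathcal{O}(V))$ of $a$, for any $p$ one has the universal formula $a'(p) = (ut - vz)(zp+t)^{-2}$ and $a''(p)/a'(p) = -2z/(zp+t)$, valid over every fiber. The supremum and infimum over $p \in D_{\beta,y} \cap \bP^1(\Hr(y))$ can then be evaluated. In the non-Archimedean fibers I would follow exactly the reasoning of Lemma \ref{deriv2}: since $a\beta$ is reduced, $D_{\beta,y}$ and $D_{a^{-1},y}$ are disjoint, so $-t/z = a^{-1}\infty \notin D_{\beta,y}$, whence $|zp+t|_y = |zc_\beta(y)+t|_y$ is constant on the disk and equals the distance from $-t/z$ to the disk. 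This gives $f_{a\beta}(y) = |2z|_y / |zc_\beta(y)+t|_y$ with $c_\beta(y) := c_\beta(y)$ any center, a manifestly continuous expression in $y$. For $F_{a\beta}$, on non-Archimedean fibers $|a'(p)|_y = r_{a\beta,y}/r_{\beta,y}$ is again constant on the disk by Lemma \ref{deriv}, so $F_{a\beta}(y) = r_{a\beta,y}/r_{\beta,y}$, continuous by the continuity of the radii. On Archimedean fibers the supremum/infimum of the continuous functions $|a''/a'|$ and $|a'|$ over the compact disk $D_{\beta,y}$ are attained and vary continuously with the disk and the matrix entries, by a standard compactness argument.

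The main obstacle, as elsewhere in this framework, will be the continuity across the boundary between the open Archimedean locus and the non-Archimedean point $x$: I must check that as an Archimedean $y \to x$, the Archimedean expressions for $f_{a\beta}(y)$ and $F_{a\beta}(y)$ (extrema over the disk) converge to the non-Archimedean values (the constant values computed above). The strategy here mirrors Proposition \ref{prop_modulus_extension}: the scaling $|\cdot|_y = |\cdot|_\infty^{\varepsilon(y)}$ with $\varepsilon(y) \to 0$ concentrates the behavior, and I expect that the distortion estimate of Proposition \ref{prop_distortion_A} is precisely the tool that controls the spread of $|a'|$ over the disk. Indeed, that estimate shows $|a'(p)|_y / |a'(q)|_y \to 1$ uniformly on $D_{\beta,y}$ as $\varepsilon(y) \to 0$, so the Archimedean infimum and supremum of $|a'|$ both converge to the common non-Archimedean value; an analogous argument handles $f_{a\beta}$. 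Combining the purely Archimedean continuity, the purely non-Archimedean continuity (via Lemma \ref{lem_NA_center_radius} and the explicit formulas), and this limiting argument at $x$, I would conclude that $f_{a\beta}$ and $F_{a\beta}$ are continuous on all of $V$.
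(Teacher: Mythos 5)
Your reduction on each pure locus is sound and close to the paper's: the matrix formulas $|a'(p)|_y=|ut-vz|_y/|zp+t|_y^2$ and $|a''(p)/a'(p)|_y=|2z|_y/|zp+t|_y$, the constancy of these quantities on non-Archimedean disks (Lemmas \ref{deriv} and \ref{deriv2}), and a compactness argument on the open Archimedean locus (the paper pushes the reduction one step further, changing variables by $\varphi\colon T \mapsto zT+t$ so that both functions become the inf/sup of $|p|_y$ over a continuously varying disk avoiding $0$). The genuine gap is in your boundary-crossing step, which is where all the difficulty lives. Proposition \ref{prop_distortion_A} does \emph{not} say that $|a'(p)|_y/|a'(q)|_y \to 1$ as $\varepsilon(y)\to 0$: its bound is $\exp\bigl(2\varepsilon C^{1/\varepsilon}R^{1/\varepsilon}\bigr)$, where $C$ is an upper bound for $\sup_{p}|a''(p)/a'(p)|_y$ on the disk, i.e.\ for the very function $f_{a\beta}(y)$ whose continuity you are trying to prove. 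The exponent $\varepsilon\,(CR)^{1/\varepsilon}$ tends to $0$ only if you already know that $f_{a\beta}(y)\,r_{\beta,y}$ stays bounded by (at most) $1$ as the Archimedean $y\to x$; if it exceeds $1$ along the sequence, the bound blows up and the estimate is vacuous. In the paper's architecture this uniform bound on $f_{a\beta}$ is a \emph{consequence} of the present lemma — it is exactly how Corollary \ref{cor_distortion_ball_bis} is deduced from it — so your argument is circular as it stands.

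To break the circularity you would have to prove directly that $\inf_{p\in D_{\beta,y}}|zp+t|_y$ stays bounded away from $0$ uniformly as $y\to x$, i.e.\ that the disk $\varphi(D_{\beta,y})$ keeps a uniform distance from the origin across the Archimedean/non-Archimedean transition. But that is precisely the hard content of the lemma, and the natural proof of it is the paper's computation: on Archimedean fibers the distance from $0$ to $D(\alpha(y),r(y))$ equals $|\alpha(y)|_y\bigl(1-(r(y)/|\alpha(y)|_y)^{1/\varepsilon(y)}\bigr)^{\varepsilon(y)}$, and since $\alpha\in\mathcal{O}(V)$ and $r$ are continuous (Proposition \ref{prop_coordinate_change}(i)) while $0\notin D(\alpha(x),r(x))$ forces $r(x)<|\alpha(x)|_x$ on the ultrametric fiber, one has $r(y)/|\alpha(y)|_y\leqslant c<1$ near $x$, whence $c^{1/\varepsilon(y)}\to 0$ and the distance converges to $|\alpha(x)|_x$. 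Once this computation is done, the distortion estimate is superfluous: the same formula yields the limits of both the infimum and the supremum directly, which is the paper's proof. A smaller point: your non-Archimedean formula $F_{a\beta}(y)=r_{a\beta,y}/r_{\beta,y}$ quietly assumes that the radius of the \emph{second-generation} disk $D_{a\beta,y}=aD_{\beta,y}$ varies continuously, which Proposition \ref{prop_coordinate_change} gives only for first-generation disks; it is cleaner to use $F_{a\beta}(y)=|ut-vz|_y/|zc_{\beta}(y)+t|_y^2$, which is manifestly continuous in $y$.
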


\begin{proof}  
Let $\begin{pmatrix}
u & v\\ z& t
\end{pmatrix}$ be a representative of $a$ in $\GL_2(\mathcal{O}(V))$. %
{Then $|a'(p)|_y=\dfrac{|ut-vz|_y}{|zp+t|_y^2}$ and
$|a''(p)/a'(p)|_y=\dfrac{|2z|_y}{|zp+t|_y}$}. Since $p \in D_{\beta,y},$ one has $a p \in D_{a\beta,y}$, and as we assume $D_{a \beta,y}$ is contained in the unit disk, $a p \neq \infty$, so $|zp+t|_y \neq 0$. 
 Hence, it suffices to show that $y \mapsto \sup_{p  \in D_{\beta , y} \cap \bP^1(\Hr(y))} \dfrac{1}{|zp+t|_y}$ and $y \mapsto \inf_{p \in D_{\beta , y} \cap \bP^1(\Hr(y))} \dfrac{1}{|zp+t|_y}$ are continuous.
By making the change of variable $\varphi: T \mapsto zT+t$ in $\bP^{\pan}_V$, we are reduced to proving the continuity of $g_{-}: y \mapsto \inf_{p \in D_{y} \cap \bP^1(\Hr(y))} |p|_y$ and $g_{+}:y \mapsto \sup_{p \in D_{y} \cap \bP^1(\Hr(y))} |p|_y$, where $D_y:=\varphi(D_{\beta,y})$. 
  Moreover, {note that} $0 \not \in D_y$  
  {(since $|zp+t|_y \neq 0$ for $p\in D_{\beta,y}$).}
  By Proposition \ref{prop_coordinate_change} (i), seeing as $D_{y}=\varphi(D_{\beta,y})$, there exists $\alpha \in \mathcal{O}(V)$ and $r: V \rightarrow \R_{\geqslant 0}$ continuous such that for $y\in V$, one has $D_y=D(\alpha(y), r(y))$.
 
We note that $g_{-}(y)$ corresponds to the distance of $0 \in \Hr(y)$ from the disk $D(\alpha(y), r(y))$. (Similarly, $g_{+}(y)$ corresponds to the largest distance between $0$ and a point of the disk $D_y$). Hence, when $\Hr(y)=(\C, |\cdot|_{\infty}^{\varepsilon(y)}),$ by making the computation over $(\C, |\cdot|_{\infty})$ and then scaling, 
one has \begin{equation} \label{mindist}
g_{\sigma}(y)=|\alpha(y)|_y \left(\sigma \left(\dfrac{r(y)}{|\alpha(y)|_y}\right)^{1/\varepsilon(y)} +1\right)^{\varepsilon(y)},
\end{equation}
{with $\sigma \in \{ +,-\}$.}
Since $0 \not \in D(\alpha(y), r(y)),$ we have $|\alpha(y)|_y>r(y).$

When $y$ is Archimedean such that $\Hr(y)=(\R, |\cdot|_{\infty}^{\varepsilon(y)})$, the disk $D(\alpha(y), r(y))$ induces uniquely a disk in $(\C, |\cdot|_{\infty}^{\varepsilon})$ of center $\alpha(y)$ in the real axis, and of radius $r(y).$ Moreover, this disk does not contain the origin. Thus, formula \eqref{mindist} is still applicable. Consequently, when intersecting~$V$ with the Archimedean part of $\eS_g$,  {the functions $g_{\pm}$ are}  continuous. 

When $y$ is non-Archimedean, then as $0 \not \in D(\alpha(y),r(y)),$ all elements of this disk have same norm in $\Hr(y)$. Otherwise, if $l_1, l_2 \in D(\alpha(y), r(y))$ are such that $|l_1|_y < |l_2|_y$, one obtains $|l_2|_y=|l_1-l_2|_y \leqslant r(y)$, and as $D(\alpha(y), r(y))=D(l_2, r(y))$, we get $0 \in D_y$, contradiction. Thus, $g_{\pm}(y)=|\alpha(y)|_y$. To conclude, we note that for Archimedean points $y \in V$, when $y\rightarrow x_0$ with~${x_0 \in V}$ non-Archimedean, then $\varepsilon(y) \rightarrow 0$, and by \eqref{mindist} and the continuity of $|\alpha(w)|_w$ on~$V$, we obtain that $g_{\pm}(y) \rightarrow g_{\pm}(x_0)$.
\end{proof}

\begin{cor}\label{cor_distortion_ball_bis}
Let $x \in \eS_g$ be a non-Archimedean point. There exist $N \geqslant 1$,  a neighborhood~${V \subseteq \eS_g}$ of $x$, a Schottky figure $\{D_{\g}\}_{\g \in \Ga_V \backslash \{\mathrm{id}\}}$ in  $\bP^{\pan}_{V}$  adapted to a basis~$\mathcal{B}_V$ of $\Ga_V$ and satisfying the properties of Proposition \ref{prop_coordinate_change} such that: whenever $a, \beta \in \Ga_V$ with $l(a)=1,$ $l(\beta) > N$, and $a\beta$ a reduced word, for $S_{a,\beta,y}:= \inf_{p \in  D_{\beta,y}} |a'(p)|_y, U_{a, \beta,y} := \sup_{p\in D_{\beta,y}} |a'(p)|_y $ when $y \in V$, one has:
\begin{equation*}
  \exp\left(-\frac{1}{4} r_{\beta,y}\right) \leqslant \dfrac{S_{a,\beta,y}}{U_{a, \beta,y}} \leqslant \exp\left(\frac{1}{4} r_{\beta,y}\right).
\end{equation*}
\end{cor}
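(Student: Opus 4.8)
The plan is to establish a uniform distortion estimate by combining the pointwise derivative bounds already proven (Lemma \ref{deriv2} in the non-Archimedean case, Proposition \ref{prop_distortion_A} in the Archimedean case) with the uniform radius decay from Lemma \ref{lem_uniform_bound_locus} and the continuity of the distortion functions from Lemma \ref{rel_uniform_cont_second_deriv}. First I would fix a neighborhood $V$ of $x$ and a Schottky figure as in Proposition \ref{prop_coordinate_change}; shrinking $V$, I may assume by Lemma \ref{lem_uniform_bound_locus} that there are constants $R>0, c \in (0,1)$ with $r_{\beta,y} \leqslant R c^{l(\beta)-1}$ for all $y \in V$ and all $\beta \in \Ga_y \backslash \{\mathrm{id}\}$. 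The key mechanism is the mean-value-type estimate controlling $\log(S_{a,\beta,y}/U_{a,\beta,y})$ by the product of the second-derivative-to-first-derivative ratio $|a''/a'|$ and the diameter of $D_{\beta,y}$, which is comparable to $r_{\beta,y}$.

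The main steps run as follows. For $y \in V$ non-Archimedean, I would invoke Lemma \ref{deriv2}: there $|a''(p)/a'(p)|_y = |2z_a|/|z_a c_\beta + t_a|_y < 1/r_{a^{-1},y}$, and by the ultrametric property $|a'(p)|_y$ is actually \emph{constant} on $D_{\beta,y} \cap \bP^1(\Hr(y))$, so $S_{a,\beta,y}/U_{a,\beta,y}=1$ and the inequality is trivially satisfied. The real content is in the Archimedean case, where I would apply Proposition \ref{prop_distortion_A} with the constant $C$ given by the supremum of $|a''/a'|_y$ over $D_{\beta,y}$, i.e. $C = f_{a\beta}(y)$ in the notation of Lemma \ref{rel_uniform_cont_second_deriv}. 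This yields
\begin{equation*}
\exp\left(-2\varepsilon(y) f_{a\beta}(y)^{1/\varepsilon(y)} r_{\beta,y}^{1/\varepsilon(y)}\right) \leqslant \frac{S_{a,\beta,y}}{U_{a,\beta,y}} \leqslant \exp\left(2\varepsilon(y) f_{a\beta}(y)^{1/\varepsilon(y)} r_{\beta,y}^{1/\varepsilon(y)}\right),
\end{equation*}
so it suffices to bound $2\varepsilon(y) (f_{a\beta}(y) r_{\beta,y})^{1/\varepsilon(y)}$ by $\tfrac14 r_{\beta,y}$. Since by Lemma \ref{rel_uniform_cont_second_deriv} the function $f_{a\beta}$ is continuous on $V$ for each of the finitely many pairs $(a,\beta)$ of length-one words, and since there are only finitely many such pairs, I would obtain a uniform bound $f_{a\beta}(y) \leqslant K$ on $V$ after shrinking; then I would choose $N$ large enough that $K r_{\beta,y} \leqslant K R c^{l(\beta)-1} < 1$ whenever $l(\beta) > N$, forcing $(K r_{\beta,y})^{1/\varepsilon(y)} \leqslant K r_{\beta,y}$ as $\varepsilon(y) \leqslant 1$.

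The hard part will be controlling the transition between Archimedean and non-Archimedean points, since the exponent $1/\varepsilon(y)$ blows up as $\varepsilon(y) \to 0$ while $r_{\beta,y}^{1/\varepsilon(y)}$ collapses, and one must verify the product $\varepsilon(y)(f_{a\beta}(y) r_{\beta,y})^{1/\varepsilon(y)}$ genuinely tends to $0$ faster than $r_{\beta,y}$ near a non-Archimedean $x_0 \in V$. Here I would use that once $K r_{\beta,y} \leqslant \theta < 1$ uniformly, the quantity $\varepsilon(y) \theta^{1/\varepsilon(y)}$ is bounded by a small constant times $\theta$ (indeed $\varepsilon \theta^{1/\varepsilon} \to 0$ as $\varepsilon \to 0^+$, and is bounded on $(0,1]$), and more precisely $2\varepsilon(y)(f_{a\beta}(y) r_{\beta,y})^{1/\varepsilon(y)} \leqslant 2\varepsilon(y) (K r_{\beta,y})^{1/\varepsilon(y)-1} K r_{\beta,y} \leqslant \tfrac14 r_{\beta,y}$ once $2K\varepsilon(y)(Kr_{\beta,y})^{1/\varepsilon(y)-1} \leqslant \tfrac14$, which holds for $N$ large uniformly in $y$ because $K r_{\beta,y} < 1$ makes the middle factor decay. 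Assembling the non-Archimedean triviality, the Archimedean estimate, and this uniform smallness across the mixed locus, and taking the minimum over the finitely many pairs $(a,\beta)$ and the common shrunken $V$, gives the claimed inequality for all $y \in V$ and all admissible $a,\beta$ with $l(\beta) > N$.
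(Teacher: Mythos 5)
Your strategy is the same as the paper's (non-Archimedean constancy of $|a'(p)|_y$ on Schottky disks, Proposition \ref{prop_distortion_A} at Archimedean points with the constant supplied by Lemma \ref{rel_uniform_cont_second_deriv}, uniform radius decay from Lemma \ref{lem_uniform_bound_locus}), but your final uniformity claim has a genuine gap, and it sits exactly where you did not look for it. The dangerous regime is not $\varepsilon(y)\to 0$ --- there $(Kr_{\beta,y})^{1/\varepsilon(y)}$ is super-exponentially small and the bound is easy --- but $\varepsilon(y)$ close to $1$. You only ever use $\varepsilon(y)\leqslant 1$, and at $\varepsilon(y)=1$ your required inequality $2K\varepsilon(y)(Kr_{\beta,y})^{1/\varepsilon(y)-1}\leqslant \tfrac{1}{4}$ reads $2K\leqslant\tfrac{1}{4}$, i.e. $K\leqslant 1/8$, which is false in general: the ``middle factor'' $(Kr_{\beta,y})^{1/\varepsilon(y)-1}$ equals $1$ when $\varepsilon(y)=1$ and does not decay no matter how large $N$ is. Equivalently, at $\varepsilon(y)=1$ the quantity to control is $2\varepsilon(y)(Kr_{\beta,y})^{1/\varepsilon(y)}=2Kr_{\beta,y}$, and both it and the target $\tfrac{1}{4}r_{\beta,y}$ scale linearly in $r_{\beta,y}$, so no choice of $N$ can help. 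Your parenthetical claim that $\varepsilon\,\theta^{1/\varepsilon}$ is ``bounded by a small constant times $\theta$'' on $(0,1]$ is incorrect for the same reason: that function is increasing in $\varepsilon$, with supremum exactly $\theta$ attained at $\varepsilon=1$.

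The missing ingredient --- and it is precisely the step the paper makes explicit --- is to shrink $V$ once more so that $\varepsilon(y)<\tfrac{1}{2}$ for every Archimedean $y\in V$. This is legitimate because $x$ is non-Archimedean and the projection to $\M(\Z)$ is continuous, so the closed set of points lying over $\{|\cdot|_{\infty}^{\varepsilon}:\varepsilon\geqslant \tfrac12\}$ can be removed from $V$ without losing $x$. Once $1/\varepsilon(y)\geqslant 2$, the paper writes the exponent as $(K\sqrt{r_{\beta,y}})^{1/\varepsilon(y)}\,r_{\beta,y}^{1/2\varepsilon(y)}\leqslant (K\sqrt{r_{\beta,y}})^{2}\,r_{\beta,y}$ and chooses $N$ via Lemma \ref{lem_uniform_bound_locus} so that $K\sqrt{r_{\beta,y}}<\tfrac12$ for $l(\beta)>N$, giving $2\varepsilon(y)(K\sqrt{r_{\beta,y}})^{1/\varepsilon(y)}r_{\beta,y}^{1/2\varepsilon(y)}\leqslant \tfrac{1}{4}r_{\beta,y}$; your own computation closes the same way, since with $\varepsilon(y)\leqslant\varepsilon_0<1$ one has $(Kr_{\beta,y})^{1/\varepsilon(y)-1}\leqslant (KRc^{N})^{1/\varepsilon_0-1}\to 0$ as $N\to+\infty$. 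Two smaller points: the constancy of $|a'(p)|_y$ at non-Archimedean $y$ is the content of Lemma \ref{deriv} (using that Proposition \ref{prop_coordinate_change} keeps the Gauss point outside the figure), not of Lemma \ref{deriv2}; and passing from the finitely many length-one bounds $f_{ab}(y)\leqslant K$ to $f_{a\beta}(y)\leqslant K$ for long $\beta$ uses the monotonicity $f_{a\beta}\leqslant f_{ab}$, $b$ the first letter of $\beta$ (a supremum over the smaller disk $D_{\beta,y}\subseteq D_{b,y}$), which you should state.
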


\begin{proof}
We may assume that $V$ satisfies the properties of Lemma \ref{lem_uniform_bound_locus}. 
If~${y \in V}$ is non-Archimedean, then the conclusion of Proposition \ref{prop_coordinate_change} guarantees that the Gauss point is not in $\bigcup_{\g \in \Ga_y \setminus \{\mathrm{id}\}} D_{\g,y}$, hence by Lemma \ref{deriv},  $|a'(p)|_y$ is independent of $p$, and so the quotient $S_{a, \beta,y}/U_{a, \beta,y}$ is always~$1$, meaning the inequalities hold. Thus, if $\La_x=\bP^{1}(\Hr(x))$, then it suffices to take a purely non-Archimedean neighborhood of $x$ for the statement to hold.

Assume $\La_x \neq \bP^{1}(\Hr(x))$. 
Let $a, b\in \Ga_V$ be words of length $1$ such that $ab$ is reduced.
{By Lemma \ref{rel_uniform_cont_second_deriv}, the function $f_{ab}: V \rightarrow \R_{\geqslant 0}, y \mapsto \sup_{p \in D_{b,y} \cap \bP^1(\Hr(y))} |a''(p)/a'(p)|_y$, is continuous.} By Lemmas~\ref{deriv} and~\ref{deriv2},  $f_{ab}(x) < \dfrac{1}{r_{a^{-1},x} } \leqslant \dfrac{1}{\max_{l(w)=1} r_{w,x}}=:K$.
Let us consider $f: V \rightarrow \R_{\geqslant 0},$
$y \mapsto \max_{l(a)=l(b)=1} f_{ab}(y)$. As there are only finitely many words of length $1$, we may assume $f$ is 
continuous on $V$  by Lemma~\ref{rel_uniform_cont_second_deriv}.  
We note that $f(x) < K$.
 As~$f$ is continuous, by shrinking~$V$, we may assume that for all $y \in V$, one has $f(y)< K$.

For any $\beta \in \Ga_V$ such that $l(\beta) > 1$,  let $\beta=b \alpha$ be reduced with $l(b)=1$. We remark that for~${y \in V}$ and $a \in \Ga_V$ with $l(a)=1$ and $a\beta$ reduced, the inequalities $f_{a \beta}(y) \leqslant f_{a b}(y) \leqslant {f(y)}<K$ hold seeing as $D_{\beta,y} \subsetneq D_{b,y}$. 
By Lemma \ref{lem_uniform_bound_locus}, by shrinking $V$ if necessary, there exists $N \geqslant 1$ such that if $\beta \in \Ga_V$ satisfies $l(\beta)>N$, then $K \sqrt{r_{\beta,y}}< \frac{1}{2}$ for all $y \in V$.

If $y \in V$ is Archimedean such that $\Hr(y)=(\C, |\cdot|_{\infty}^{\varepsilon(y)}),$ then by Proposition \ref{prop_distortion_A}, we obtain 
\begin{equation} \label{caseC} \exp(-2\varepsilon(y) (K \sqrt{r_{\beta,y}})^{1/\varepsilon(y)}{r_{\beta,y}^{1/2\varepsilon(y)}} ) \leqslant  \dfrac{S_{a,\beta,y}}{U_{a, \beta,y}} \leqslant \exp(2\varepsilon(y) (K \sqrt{r_{\beta,y}})^{1/\varepsilon(y)}{r_{\beta,y}^{1/2\varepsilon(y)}} ). \end{equation}
If $\Hr(y)=(\R, |\cdot|_{\infty}^{\varepsilon(y)}),$ each disk $D_{\g, y}$ induces uniquely a disk in $\C$ centered on the real axis, with same radius and center. Hence, the inequalities from \eqref{caseC} apply to this case as well.
By shrinking $V$ if necessary, we may assume that $\varepsilon(y)<\frac{1}{2}$ for all $y \in V$ Archimedean. Seeing as~${K \sqrt{r_{\beta,y}}<1}$, one has $(K \sqrt{r_{\beta,y}})^{1/\varepsilon(y)} \leqslant (K \sqrt{r_{\beta,y}})^2<1/4$ and $r_{\beta,y}^{1/2\varepsilon(y)} \leqslant r_{\beta,y}$, so we may conclude by applying these inequalities to \eqref{caseC}. 
\end{proof}

\subsection{Approximation of the Hausdorff dimension} \label{subsect_approxi}

Let $(k, |\cdot|)$ be a complete valued field. Let $\Ga$ be a Schottky group over $k$ endowed with a basis~$\mathcal{B}$ with respect to which we have a Schottky figure. Let $g \geqslant 2$ denote the rank of $\Ga$, {and~$\La_{\Ga}$ its limit set. 
Given $\g \in \Ga \backslash \{\mathrm{id}\}$, we denote by $D_{\g}$ the associated closed Schottky disk. By Proposition~\ref{prop_coordinate_change}, when $k$ is non-Archimedean, we may assume that the choice of coordinate on $\bP^{\pan}_k$ is such that the Gauss point $\eta$ is not contained in $\bigcup_{\g \in \Ga \backslash \{\mathrm{id}\}} D_{\g}.$}   

We recall and extend to $k$ the definition of certain real matrices by McMullen in \cite{mcmullen_hausdorff_3}, used to construct the \emph{eigenvalue algorithm} for obtaining an approximation of the Hausdorff dimension of~$\Ga$ in the case $k=\C$.

Let $m \in \N$. Let $\alpha = a_1 \cdots a_m$ and $\beta = b_1 \cdots b_m$ be two reduced words in $\Ga$ of length $m \in \N$ over the alphabet induced by $\mathcal{B}.$ We pick any point $p_{\alpha \beta}$ in $D_\alpha \cap a_1 D_{\beta} \cap \bP^1(k)$ provided this set is not empty. 
For $s\geqslant 0$, let
 \begin{equation} \label{matrixT}
 t_{\alpha \beta, m}^{(s)} := \left  \lbrace \begin{array}{ll}
 0 & \text{if \ } D_\alpha \cap a_1D_{\beta} \cap \La_{\Ga} = \emptyset, \\
 
|{a_1^{-1}}'(p_{\alpha \beta})|^{-s} &\text{otherwise. \ }
\end{array}  \right .
 \end{equation}
 Let $T_{m}^{(s)}$ be the matrix with entries $t_{\alpha \beta, m}^{(s)}$ for all such words $\alpha$ and $\beta$ of length~$m$. We note that~$T_m^{(s)}$ is a real matrix of size $2g(2g-1)^{m-1} \times 2g(2g-1)^{m-1}$, where the entries are indexed by the reduced words of length~$m$ in $\Ga$.  We also note that the coefficients of $T_{m}^{(s)}$ depend on the choice of the points $p_{\alpha \beta}$, though we do not emphasize that in this section. 
To simplify the notation, we write $T_{m}$ for the matrix $T_{m}^{(1)}$.

We also define the real number
\begin{equation} \label{mcrit}
\delta_{m} := \inf \{ s> 0 \  | \  \lambda(T_{m}^{(s)})\geqslant 1 \},
\end{equation}
where $\lambda (T_{m}^{(s)})$ denotes the spectral radius of the matrix $T_{m}^{(s)}$. McMullen shows in \cite[Theorem~2.2]{mcmullen_hausdorff_3} that if $k$ is $\R$  or $\C$, then 
 $\lim_{m \rightarrow +\infty} \delta_m$ is the Hausdorff dimension of the limit set~$\La_{\Ga}$ of~$\Ga$.

\begin{rem} \label{rem_case_1}
We note that when $m=1$, if $\alpha=\beta^{-1}$, then $D_{\alpha} \cap \alpha D_{\alpha}^{-1}=D_{\alpha} \cap (D_{\alpha}^{\circ})^c$ (\emph{cf.} Remark~\ref{rem_coding}~(ii)), and hence $D_{\alpha} \cap \alpha D_{\alpha}^{-1} \cap \La_{\Ga}=\emptyset$. If $\alpha \neq \beta^{-1}$, then $\alpha D_{\beta}=D_{\alpha \beta} \subsetneq D_{\alpha},$ so $D_{\alpha} \cap D_{\alpha\beta} \cap \La_{\Ga} \neq \emptyset.$ Thus, $D_{\alpha} \cap \alpha D_{\beta} \cap \La_{\Ga}=\emptyset$ if and only if $\alpha\beta$ is reduced. 
\end{rem}

Let us determine exactly when the entries of the matrix $T_{m}^{(s)}$ are zero.

\begin{lem} \label{zeroterms} Let $\alpha=a_1a_2 \cdots a_m$ and $\beta=b_1 b_2 \cdots b_m$ be two reduced words in $\Ga$ of length~${m > 1}$. Then  $D_{\alpha} \cap a_1D_{\beta} \neq \emptyset$ if and only if  $D_{\alpha} \cap a_1D_{\beta} \cap \La_{\Ga}\neq \emptyset$ if and only if $b_i=a_{i+1}$ for all $i=1,2,\dots, m-1$.  
\end{lem}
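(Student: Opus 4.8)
The plan is to analyze when the closed Schottky disk $D_\alpha$ and the translate $a_1 D_\beta$ intersect, and to show this purely combinatorial condition on the words coincides with the intersection containing a limit point. I will first reduce $D_\alpha \cap a_1 D_\beta$ to a statement about the word $a_1 \beta$ and the relationship between $\alpha$ and $a_1\beta$. Writing $\alpha = a_1 a_2 \cdots a_m$, note that $a_1 D_\beta = D_{a_1 \beta}$ when $a_1 \beta$ is reduced, by Definition \ref{defi_coding}; when $a_1\beta$ is \emph{not} reduced, i.e. when $b_1 = a_1^{-1}$, the relations \eqref{eq_pingpong} and Remark \ref{rem_coding} (ii) give $a_1 D_\beta = a_1 D_{b_1 \cdots b_m}$ which lands outside $D_{a_1}^\circ$, forcing $D_\alpha \cap a_1 D_\beta$ to be contained in the single boundary point $\partial D_{a_1}$ and hence to miss $\La_\Ga$. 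So I would dispose of the non-reduced case quickly and assume $a_1 \beta$ reduced, of length $m+1$.

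With $a_1\beta$ reduced, both $D_\alpha$ and $D_{a_1\beta} = a_1 D_\beta$ are genuine Schottky disks, $D_\alpha$ of generation $m$ and $D_{a_1\beta}$ of generation $m+1$. The key structural fact is Proposition \ref{coding} (i): two Schottky disks are either nested or disjoint, and nesting is governed entirely by the prefix relation on reduced words. Thus I would argue that $D_\alpha \cap D_{a_1\beta} \neq \emptyset$ forces one disk to contain the other. Since $a_1\beta = a_1 b_1 \cdots b_m$ has length $m+1 > m = l(\alpha)$, it cannot be that $D_\alpha \subseteq D_{a_1\beta}$ unless $\alpha$ extends $a_1\beta$ — impossible by length — so we must have $D_{a_1\beta} \subseteq D_\alpha$, which by Proposition \ref{coding} (i) holds if and only if $a_1\beta = \alpha w$ for some reduced word $w$, i.e. $\alpha = a_1 a_2 \cdots a_m$ is a prefix of $a_1 b_1 \cdots b_m$. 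Comparing letters, this prefix condition reads $a_{i+1} = b_i$ for $i = 1, 2, \dots, m-1$, exactly the asserted equality.

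It then remains to upgrade nonemptiness of the disk intersection to nonemptiness of its intersection with $\La_\Ga$. This is where Proposition \ref{coding} (ii) enters: whenever $D_{a_1\beta} \subseteq D_\alpha$ with $D_{a_1\beta}$ a strictly smaller Schottky disk (which it is, being of strictly larger word-length), one can extend $a_1\beta$ to a strictly decreasing sequence of Schottky disks whose intersection is a singleton in $\La_\Ga$ contained in $D_{a_1\beta} \subseteq D_\alpha \cap a_1 D_\beta$. Hence the intersection with $\La_\Ga$ is nonempty precisely when the disk intersection is nonempty, giving the chain of equivalences in the statement.

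I expect the main subtlety to be the careful bookkeeping in the non-reduced case $b_1 = a_1^{-1}$, ensuring that $D_\alpha \cap a_1 D_\beta$ really avoids the limit set rather than merely being small; this is handled by observing via Remark \ref{rem_coding} (ii) that $a_1 D_\beta$ then lies in the complement of the open disk $D_{a_1}^\circ$, so its intersection with $D_\alpha \subseteq D_{a_1}^\circ$ degenerates to (at most) the boundary point $\partial D_{a_1}$, which is a type-2 point of $\Hy_k$ and therefore not in $\La_\Ga \subseteq \bP^1(k)$. The reduced case is then essentially a direct translation of Proposition \ref{coding}, so the bulk of the genuine argument is the prefix-matching computation and this boundary-point exclusion.
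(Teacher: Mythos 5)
Your reduced-case argument is essentially right, but your handling of the non-reduced case $b_1 = a_1^{-1}$ contains a genuine gap that, as written, is fatal to the three-way equivalence. In that case the prefix condition fails (reducedness of $\alpha$ forces $a_2 \neq a_1^{-1} = b_1$), so the lemma asserts that $D_\alpha \cap a_1 D_\beta$ is \emph{empty}; yet you only argue that this intersection is contained in $\partial D_{a_1}$ and hence misses $\La_{\Ga}$. An argument that leaves open the possibility of a nonempty intersection disjoint from $\La_{\Ga}$ does not prove the statement --- taken at face value it would be a counterexample to the first equivalence. The repair is available from the facts you already cite: since $m>1$ and $\alpha = a_1\cdot(a_2\cdots a_m)$ is reduced with $\alpha \neq a_1$, Proposition \ref{coding} (i) gives the \emph{strict} inclusion $D_\alpha \subseteq D_{a_1}^\circ$, while $D_\beta \subseteq D_{a_1^{-1}}$ and Remark \ref{rem_coding} (ii) give $a_1 D_\beta \subseteq a_1 D_{a_1^{-1}} = (D_{a_1}^\circ)^c$; hence $D_\alpha \cap a_1 D_\beta = \emptyset$, as needed.

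Two further points. First, the claim that the intersection is ``contained in the single boundary point $\partial D_{a_1}$'' does not follow from $a_1D_\beta \subseteq (D_{a_1}^\circ)^c$ together with $D_\alpha \subseteq D_{a_1}$: the set $D_{a_1}\setminus D_{a_1}^\circ$ is the whole boundary circle when $k=\C$ (and the lemma is stated over an arbitrary complete valued field), and over a non-Archimedean field it contains, besides the point $\partial D_{a_1}$, every \emph{other} maximal open subdisk of $D_{a_1}$ --- recall the paper's remark that a non-Archimedean closed disk contains infinitely many maximal open disks, so $D\setminus D^\circ$ is much larger than the topological boundary $\{\partial D\}$. Second, in the reduced case your ``nested or disjoint'' dichotomy for Schottky disks is not literally Proposition \ref{coding} (i): over $\C$ two closed disks can overlap partially, so the dichotomy itself requires the pairwise disjointness of same-generation Schottky disks, which is proved by exactly the kind of induction the paper runs. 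Indeed, the paper's proof avoids all of this with one induction: apply $a_1^{-1}$ to get $D_\beta \cap D_{a_2\cdots a_m} \neq \emptyset$, use only the pairwise disjointness of the first-generation disks (part of the definition of a Schottky figure) to force $b_1 = a_2$, and iterate; the non-reduced case is then excluded automatically since $b_1 = a_1^{-1} \neq a_2$.
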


\begin{proof} 
If $a_{i+1}=b_i$ for all $i=1,2,\dots, m-1$, then $\alpha=a_1a_2\cdots a_m$ and $b=a_2a_3\cdots a_mb_m$, so clearly $D_{\alpha}\cap a_1D_{\beta} \neq \emptyset$.  On the other hand, if $a_1D_{\beta} \cap D_{\alpha} \neq \emptyset$, then $D_{\beta} \cap D_{a_2a_3\cdots a_m} \neq \emptyset.$ As $D_{\beta} \subseteq D_{b_1}$ and $D_{a_2a_3\cdots a_m} \subseteq D_{a_2}$, if the intersection is non-empty then $a_2=b_1$. Consequently, $D_{b_2b_3 \cdots b_m} \cap D_{a_3a_4\cdots a_m} {\neq \emptyset}$.
Continuing with the same reasoning, we obtain that $b_i=a_{i+1}$ for all $i=1,2,\dots, m-1$. Thus $D_{\alpha} \cap a_1D_{\beta} \neq \emptyset$ if and only if  $b_i=a_{i+1}$ for all $i=1,2,\dots, m-1$.

If $D_{\alpha} \cap a_1 D_{\beta} \neq \emptyset$, then $a_1D_{\beta}=D_{a_1 a_2\cdots a_m b_m} \subseteq D_{\alpha},$ and so $D_{\alpha} \cap a_1 D_{\beta} \cap \La_{\Ga} \neq \emptyset$. The other direction is immediate.
\end{proof}

By Lemma \ref{zeroterms}, for any $m \in \N_{\geqslant 2}$, any $s \geqslant 0,$ and any $\alpha, \beta \in \Gamma$ reduced words of length $m$, 
\begin{equation} \label{nonzeroterms}
t_{\alpha \beta,m}^{(s)} \neq 0 \iff \alpha=a_1a_2\cdots a_m \ \text{and} \ \beta=a_2a_3\cdots a_m b,
\end{equation}
where these are the reduced forms of the transformations $\alpha, \beta$.

\begin{lem} \label{lem_mcmullen_matrix_coef_NA} If the field $k$ is non-Archimedean, then for any $m \geqslant 1$, any reduced words $\alpha = a_1 \cdots a_m$, $\beta = b_1 \cdots b_m$ of length $m$, and any $p_{\alpha \beta} \in D_{\alpha} \cap a_1D_{\beta} \cap \bP^{1}(k),$ one has: 
\begin{equation}
t_{\alpha \beta, m}^{(s)} = \left  \lbrace \begin{array}{ll}
 0 & \text{if \ } D_\alpha \cap a_1D_{\beta} \cap \La_{\Ga}= \emptyset, \\
 e^{s(\rho(\eta, \partial{D}_{\beta})-\rho(\eta, \partial{D}_{a_1\beta}))}=e^{\rho(\eta, \partial D_{b_1}) - \rho(\eta, \partial D_{a_1})-\rho(\partial{D_{a_1^{-1}}}, \partial{D_{b_1}})}  &\text{otherwise. \ }
\end{array}  \right .
\end{equation}
In particular, $t_{\alpha\beta, m}^{(s)}$ does not depend on the choice of $p_{\alpha\beta}$, but only on the points $\partial{D}_{a_1}, \partial{D_{b_1}}$. 
\end{lem}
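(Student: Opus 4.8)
The plan is to reduce the statement to the derivative computation already established in Lemma \ref{deriv}, using the characterization of non-zero entries from Lemma \ref{zeroterms}. Recall that by definition \eqref{matrixT}, the entry $t_{\alpha\beta,m}^{(s)}$ equals $|{a_1^{-1}}'(p_{\alpha\beta})|^{-s}$ when $D_\alpha \cap a_1 D_\beta \cap \La_\Ga \neq \emptyset$, and $0$ otherwise. So the first step is to rewrite $|{a_1^{-1}}'(p_{\alpha\beta})|^{-s}$ in terms of the boundary points of the Schottky disks.

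First I would handle the zero case: by Lemma \ref{zeroterms}, the condition $D_\alpha \cap a_1 D_\beta \cap \La_\Ga = \emptyset$ coincides with $D_\alpha \cap a_1 D_\beta = \emptyset$, which matches the definition of the matrix, so nothing more is needed there. For the non-zero case, the key point is that $p_{\alpha\beta} \in D_\alpha \cap a_1 D_\beta$, and since the entry is non-zero we have (by Lemma \ref{zeroterms}) that $a_2 = b_1$, whence $p_{\alpha\beta} \in a_1 D_\beta = a_1 D_{b_1 \cdots b_m}$. Applying $a_1^{-1}$, the point $a_1^{-1} p_{\alpha\beta}$ lies in $D_\beta \subseteq D_{b_1}$. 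The natural move is then to apply Lemma \ref{deriv} with the letter $a := a_1^{-1}$ and the word $\beta$: since $a_1^{-1} \beta$ is reduced (because $a_1 \beta$ reduced forces $a_1^{-1}\beta$ reduced as $\beta$ starts with $b_1 = a_2 \neq a_1^{-1}$), Lemma \ref{deriv} gives
\begin{equation*}
|{a_1^{-1}}'(q)| = e^{\rho(\eta, \partial D_\beta) - \rho(\eta, \partial D_{a_1^{-1}\beta})}
\end{equation*}
for any $q \in D_\beta \cap \bP^1(k)$, independent of $q$. Raising to the power $-s$ and matching exponents yields the asserted formula $e^{s(\rho(\eta, \partial D_\beta) - \rho(\eta, \partial D_{a_1\beta}))}$ once one checks the sign and that $\partial D_{a_1^{-1}\beta}$ is converted correctly; here I would be careful to verify that the formula in the statement uses $a_1$ (not $a_1^{-1}$) and trace through the fact that $|{a_1^{-1}}'|^{-s}$ introduces the correct sign flip relative to the direct application of Lemma \ref{deriv} to $a_1^{-1}$.

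The last two equalities and the ``in particular'' clause follow directly from the third equality of Lemma \ref{deriv}: applying it to the letter $a_1^{-1}$ and observing that the first letter of $\beta$ is $b_1$, the chained formula $e^{\rho(\eta, \partial D_{b_1}) - \rho(\eta, \partial D_{a_1}) - \rho(\partial D_{a_1^{-1}}, \partial D_{b_1})}$ emerges, and since this expression involves only the boundary points $\partial D_{a_1}$ and $\partial D_{b_1}$, the independence from the choice of $p_{\alpha\beta}$ is immediate.

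The main obstacle I anticipate is purely bookkeeping: getting the direction of the sign and the $a_1$ versus $a_1^{-1}$ correspondence exactly right, since the matrix entry is defined through $|{a_1^{-1}}'|$ raised to the negative power $-s$, whereas Lemma \ref{deriv} is stated for a generic letter $a$ acting on a disk $D_\beta$ with $a\beta$ reduced. One must confirm that $\partial D_{a_1^{-1}\beta}$ and $\partial D_{a_1\beta}$ are being used consistently and that the exponent $-s$ on $|{a_1^{-1}}'|$ turns Lemma \ref{deriv}'s expression into the $+s$ coefficient appearing in the statement. Beyond this sign-tracking, every ingredient is already available, so no genuinely new estimate is required.
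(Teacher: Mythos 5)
Your overall plan (reduce to Lemma \ref{deriv} via Lemma \ref{zeroterms}) is the right one, but the specific reduction you chose fails, and the failure is precisely in the step you defer as ``bookkeeping''. You apply Lemma \ref{deriv} with the letter $a_1^{-1}$ and the word $\beta$, at points $q \in D_\beta$. Two things go wrong. First, the hypothesis of Lemma \ref{deriv} for this choice is that $a_1^{-1}\beta$ be reduced, i.e.\ $b_1 \neq a_1$; what Lemma \ref{zeroterms} actually gives you is $b_1 = a_2 \neq a_1^{-1}$, which makes $a_1\beta$ reduced --- a different condition. For example, for $\alpha = \gamma_1\gamma_1$ one has $b_1 = a_2 = a_1$, and then $a_1^{-1}\beta$ is not reduced, so the lemma you invoke does not apply at all. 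Second, and more fundamentally, even when $a_1^{-1}\beta$ happens to be reduced you are evaluating the derivative on the wrong disk: the matrix entry is $|(a_1^{-1})'(p_{\alpha\beta})|^{-s}$ with $p_{\alpha\beta} \in a_1D_\beta = D_{a_1\beta}$, whereas your formula computes $|(a_1^{-1})'(q)|$ for $q \in D_\beta$. In the non-Archimedean setting the derivative of a M\"obius map is constant on each Schottky disk but differs from disk to disk: by Lemma \ref{deriv}, $|(a_1^{-1})'| = r_{a_1^{-1}\beta}/r_\beta$ on $D_\beta$, while on $D_{a_1\beta}$ it equals $r_\beta/r_{a_1\beta}$. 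The ``matching of exponents'' you postpone would therefore require $r_\beta^2 = r_{a_1\beta}\, r_{a_1^{-1}\beta}$, an identity that has no reason to hold for a general Schottky figure; no amount of sign-tracking closes this gap.

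The fix is short and is exactly what the paper does: set $y := a_1^{-1}p_{\alpha\beta} \in D_\beta$ and use the chain rule for the inverse map, $|(a_1^{-1})'(p_{\alpha\beta})|^{-1} = |(a_1^{-1})'(a_1(y))|^{-1} = |a_1'(y)|$. This single identity simultaneously moves the evaluation point into $D_\beta$ and replaces the letter $a_1^{-1}$ by $a_1$, for which the hypothesis of Lemma \ref{deriv} ($a_1\beta$ reduced) is precisely what the non-vanishing of the entry guarantees (Lemma \ref{zeroterms} for $m \geqslant 2$, Remark \ref{rem_case_1} for $m=1$). Lemma \ref{deriv} then yields $|a_1'(y)| = e^{\rho(\eta,\partial D_\beta) - \rho(\eta,\partial D_{a_1\beta})} = e^{\rho(\eta, \partial D_{b_1}) - \rho(\eta, \partial D_{a_1}) - \rho(\partial D_{a_1^{-1}}, \partial D_{b_1})}$, and raising to the power $s$ gives the stated formula together with the independence of the choice of $p_{\alpha\beta}$.
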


\begin{proof}
If $D_\alpha \cap a_1D_{\beta} \cap \La_{\Ga} \neq \emptyset$, then for $p_{\alpha\beta} \in D_\alpha  \cap {a_1D_{\beta}}\cap \bP^1(k)$, and $y:= a_1^{-1} p_{\alpha\beta}$, one has
\begin{equation*}
|(a_1^{-1})' (p_{\alpha \beta})|^{-1} = \dfrac{1}{|(a_1^{-1})' (a_1(y))|} = |a_1'(y)|.
\end{equation*}
Since $y \in D_\beta$, we may conclude by Lemma \ref{deriv}. 
\end{proof}

\begin{lem} \label{calc} For $N \geqslant 1,$  
let $\alpha, \alpha_1, \dots, \alpha_{N-1}, \beta \in \Ga$ be reduced words of length $m \geqslant 1$. Then, $$t_{\alpha \alpha_1, m} t_{\alpha_1 \alpha_2,m}\cdots t_{\alpha_{N-1} \beta, m} \neq 0$$ if and only if there exist letters $a_1, a_2, \dots, a_{N+m}$ in $\Ga$ such that $\alpha_i=a_{i+1}a_{i+2} \cdots a_{m+i}$ for $i=0,1,\dots, N$, where $\alpha_0:=\alpha$, $\alpha_N:=\beta$ {and $a_1 \cdots a_{N+m}$ is a reduced word}.
In that case, if~$k$ is non-Archimedean, then: \begin{align*}
\log \prod_{i=0}^{N-1} t_{\alpha_{i} \alpha_{i+1}, m} &=\rho(\eta, \partial{D}_{a_{N+1}}) - \rho(\eta, \partial{D_{a_1}})- \sum_{i=1}^N \rho(\partial{D_{a_{i}^{-1}}}, \partial{D_{a_{i+1}}})\\
&= \rho(\eta, a_{N+1}\eta)-\rho(\eta, a_1\cdots a_{N+1} \eta).
\end{align*}

\end{lem}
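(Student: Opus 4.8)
The statement has two halves: a combinatorial characterization of when the product of matrix coefficients is non-zero, and, in the non-Archimedean case, an explicit formula for the logarithm of this product. I would treat the non-vanishing criterion first by induction on $N$. By \eqref{nonzeroterms}, each individual factor $t_{\alpha_i \alpha_{i+1}, m}$ is non-zero precisely when $\alpha_i$ and $\alpha_{i+1}$ overlap maximally, i.e. when writing $\alpha_i=c_1 c_2 \cdots c_m$ (reduced) one has $\alpha_{i+1}=c_2 c_3 \cdots c_m d$ for some letter $d$ making $\alpha_{i+1}$ reduced. Chaining these overlaps together is exactly a sliding-window condition: the $N+1$ words $\alpha_0, \alpha_1, \dots, \alpha_N$ each of length $m$ are consecutive length-$m$ windows of a single word $a_1 a_2 \cdots a_{N+m}$, with $\alpha_i = a_{i+1} \cdots a_{m+i}$. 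The only subtlety is to verify that the global word $a_1 \cdots a_{N+m}$ is \emph{reduced}: this follows because a cancellation $a_j a_{j+1}=\mathrm{id}$ would appear inside one of the windows $\alpha_i$ (as $N+m$ consecutive letters are covered by overlapping windows of length $m\geqslant 1$ there is no gap), contradicting that each $\alpha_i$ is reduced. Conversely, if such reduced $a_1 \cdots a_{N+m}$ exists, every factor is non-zero by \eqref{nonzeroterms}, giving the equivalence.

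For the formula in the non-Archimedean case, I would simply take logarithms of the product and apply Lemma \ref{lem_mcmullen_matrix_coef_NA} to each factor. For the factor $t_{\alpha_i \alpha_{i+1}, m}$ with $\alpha_i = a_{i+1}\cdots a_{m+i}$ (so its first letter is $a_{i+1}$) and $\alpha_{i+1}$ having first letter $a_{i+2}$, that lemma (at $s=1$) gives
\begin{equation*}
\log t_{\alpha_i \alpha_{i+1}, m} = \rho(\eta, \partial D_{a_{i+2}}) - \rho(\eta, \partial D_{a_{i+1}}) - \rho(\partial D_{a_{i+1}^{-1}}, \partial D_{a_{i+2}}).
\end{equation*}
Summing from $i=0$ to $N-1$, the terms $\rho(\eta, \partial D_{a_{i+1}})$ telescope: the sum of $\rho(\eta, \partial D_{a_{i+2}}) - \rho(\eta, \partial D_{a_{i+1}})$ collapses to $\rho(\eta, \partial D_{a_{N+1}}) - \rho(\eta, \partial D_{a_1})$, while the remaining sum $\sum_{i=0}^{N-1}\rho(\partial D_{a_{i+1}^{-1}}, \partial D_{a_{i+2}})$ is exactly $\sum_{i=1}^{N}\rho(\partial D_{a_i^{-1}}, \partial D_{a_{i+1}})$ after re-indexing. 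This yields the first displayed equality.

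The second equality, rewriting the whole expression as $\rho(\eta, a_{N+1}\eta) - \rho(\eta, a_1\cdots a_{N+1}\eta)$, is precisely the content of Lemma \ref{lem_calc_2}, applied with $x=\eta$. Here I would invoke the hypothesis of Proposition \ref{prop_coordinate_change} ensuring $\eta \notin \bigcup_{\g} D_{\g}$, so that $\eta \in \Hy_k \cap F$ with $F$ the fundamental domain of Lemma \ref{domain}, which is exactly the setting in which Lemma \ref{lem_calc_2} is stated. Thus the second equality is immediate from \emph{loc. cit.} with $N+1$ letters $a_1, \dots, a_{N+1}$.

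\textbf{Main obstacle.} The genuinely delicate point is the reducedness of the global word $a_1 \cdots a_{N+m}$ in the non-vanishing criterion; the telescoping and the appeal to Lemma \ref{lem_calc_2} are then routine. One must argue carefully that the overlapping-window structure forces a \emph{single} coherent reduced word rather than merely local compatibility, and that no hidden cancellation can occur at the junctions — this is where the hypothesis $m \geqslant 1$ and the reducedness of each window must be combined. Everything else reduces to bookkeeping with the already-established distance formulas.
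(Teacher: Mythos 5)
Your handling of the non-Archimedean formula is correct and matches the paper's route: you use the first-letter form of Lemma \ref{lem_mcmullen_matrix_coef_NA} and telescope, while the paper uses the full-word form and then redoes the tree-geometric reduction to first letters, but these are the same argument with different bookkeeping; the appeal to Lemma \ref{lem_calc_2} with $x=\eta$ (legitimate, since the standing choice of coordinate puts $\eta$ outside the Schottky figure) is exactly what the paper does.

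The genuine gap is the case $m=1$ of the combinatorial half. Equation \eqref{nonzeroterms} is stated --- and is only true --- for $m \geqslant 2$; for $m=1$ the correct criterion is Remark \ref{rem_case_1}, whose content is that $t_{\alpha\beta,1}\neq 0$ if and only if $\alpha\beta$ is reduced, i.e. $\beta \neq \alpha^{-1}$. Your reading of \eqref{nonzeroterms} at $m=1$ (``$\alpha_{i+1}=d$ for some letter $d$ making $\alpha_{i+1}$ reduced'') is vacuous, so it would predict that \emph{every} factor is non-zero; but $t_{\g_1\g_1^{-1},1}=0$. Likewise your mechanism for global reducedness --- ``a cancellation $a_j a_{j+1}=\mathrm{id}$ would appear inside one of the windows'' --- fails precisely when $m=1$: the windows are single letters, so two consecutive letters never lie in a common window, and reducedness of each window is automatic and says nothing. (For $m\geqslant 2$ your covering argument is fine: every consecutive pair of letters lies in some window, so global reducedness is indeed automatic there.) The fix, which is what the paper does, is to treat $m=1$ separately via Remark \ref{rem_case_1}: non-vanishing of each factor $t_{a_{i+1}a_{i+2},1}$ is then \emph{equivalent} to $a_{i+1}a_{i+2}$ being reduced, so the product is non-zero exactly when the whole word $a_1\cdots a_{N+1}$ is reduced --- this is where the reducedness hypothesis in the statement does real work, rather than being a consequence of the window structure as you claim. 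The case is not dismissible, since the lemma is later invoked for all $m\geqslant 1$ (e.g. in Propositions \ref{prop_mc_mullen_NA} and \ref{prop_power_T_uniform}).
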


\begin{proof}
The first part of the statement is immediate from \eqref{nonzeroterms} when $m \geqslant 2$, and Remark~\ref{rem_case_1} when $m=1$ (which is where the assumption that $a_1\cdots a_{N+m}$ is reduced is necessary).

Now assume $k$ is non-Archimedean. The second equality follows directly from Lemma~\ref{lem_calc_2}. 
For the first equality, set $S:=\log \prod_{i=0}^{N-1} t_{\alpha_i \alpha_{i+1},m}.$ By Lemma \ref{lem_mcmullen_matrix_coef_NA}, $S=\sum_{i=1}^N (\rho(\eta, \partial{D_{\alpha_i}})-\rho(\eta, \partial{D_{a_i\alpha_i}})),$ so  decomposing this sum:
\begin{align*} 
S= & \sum_{i=1}^{N}( \rho(\eta, \partial D_{a_{i+1}}) + \rho(\partial D_{a_{i+1}},\partial D_{\alpha_{i}})) - \sum_{i=1}^N \left (\rho(\eta, \partial D_{a_i} ) + \rho (\partial D_{a_i},\partial D_{a_i \alpha_{i}}) \right )   \\
= &  \rho(\eta, \partial{D_{a_{N+1}}}) -\rho(\eta, \partial{D_{a_1}}) + \sum_{i=1}^N (\rho(\partial D_{a_{i+1}}, \partial D_{\alpha_{i}})- \rho(\partial D_{a_i}, \partial D_{a_{i} \alpha_{i}})). 
\end{align*}
By the $\PGL_2(k)$-invariance of $\rho$, we have $ \rho(\partial D_{a_i}, \partial D_{a_{i} \alpha_{i}})=\rho(\partial{D_{a_i^{-1}}}, \partial{D_{\alpha_{i}}})$ (\emph{cf.} Remark~\ref{rem_coding}). As $D_{\alpha_{i}} \subseteq D_{a_{i+1}}$ and $D_{a_{i+1}} \cap D_{a_i^{-1}}=\emptyset,$ the boundary point $\partial{D}_{{a_{i+1}}}$ is contained in the injective path connecting $\partial{D_{a_{i}^{-1}}}$ and $\partial{D_{\alpha_i}}.$ Consequently, 
$\rho(\partial D_{a_{i+1}}, \partial D_{\alpha_{i}})- \rho(\partial{D_{a_i^{-1}}}, \partial{D_{\alpha_{i}}})=-\rho(\partial{D_{a_{i+1}}}, \partial{D}_{a_i^{-1}})$, as required for the first equality. 
\end{proof}

\begin{cor} \label{postcalcul}
Let $N \in \N_{\geqslant 2}$ and assume $m \geqslant N$. Then the entry $(\alpha, \beta)$ of $T_m^{N}$  equals $\prod_{i=0}^{N-1} t_{\alpha_i \alpha_{i+1}, m},$
where $\alpha_i:=a_{i+1}\cdots a_{m+i}$, $\alpha_0:=\alpha$, and $\alpha_{N}:=\beta$. {Consequently, the $(\alpha, \beta)$-entry of $(T_m^{(s)})^N$ is $\prod_{i=0}^{N-1} t_{\alpha_i \alpha_{i+1}, m}^{(s)}.$}
\end{cor}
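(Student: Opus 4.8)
The plan is to expand the matrix power directly from the definition of matrix multiplication and show that the resulting sum collapses to a single term. Writing $\alpha_0 := \alpha$ and $\alpha_N := \beta$, one has
\[
(T_m^N)_{\alpha\beta} = \sum_{\alpha_1, \dots, \alpha_{N-1}} \prod_{i=0}^{N-1} t_{\alpha_i \alpha_{i+1}, m},
\]
where the sum ranges over all reduced words $\alpha_1, \dots, \alpha_{N-1}$ of length $m$ in $\Ga$. First I would invoke Lemma \ref{calc}, which asserts that a summand $\prod_{i=0}^{N-1} t_{\alpha_i \alpha_{i+1}, m}$ is nonzero precisely when there exist letters $a_1, \dots, a_{N+m}$ with $\alpha_i = a_{i+1}\cdots a_{m+i}$ for every $i$ and with $a_1 \cdots a_{N+m}$ reduced. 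This reduces the claim to a counting statement about how many such letter sequences are compatible with the prescribed endpoints $\alpha$ and $\beta$.

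The key step is uniqueness. The word $\alpha = a_1 \cdots a_m$ fixes the letters $a_1, \dots, a_m$, while $\beta = \alpha_N = a_{N+1}\cdots a_{N+m}$ fixes the letters $a_{N+1}, \dots, a_{N+m}$. The hypothesis $m \geqslant N$ is exactly what guarantees that the two index sets $\{1, \dots, m\}$ and $\{N+1, \dots, N+m\}$ together cover all of $\{1, \dots, N+m\}$ with no gap (since $N+1 \leqslant m+1$), so that every letter $a_j$ is determined either by $\alpha$ or by $\beta$. Consequently there is at most one sequence $a_1, \dots, a_{N+m}$, hence at most one choice of intermediate words $\alpha_i = a_{i+1}\cdots a_{m+i}$ producing a nonzero summand. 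When the constraints imposed by $\alpha$ and $\beta$ are incompatible on the overlap $\{N+1, \dots, m\}$, or when $a_1 \cdots a_{N+m}$ fails to be reduced, no such sequence exists and both sides vanish. Otherwise the unique nonzero summand is precisely $\prod_{i=0}^{N-1} t_{\alpha_i \alpha_{i+1}, m}$ with $\alpha_i := a_{i+1}\cdots a_{m+i}$, which is the claimed value.

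For the final assertion I would observe that the entire argument depends only on the zero/nonzero pattern of the entries, which by \eqref{nonzeroterms} and Lemma \ref{zeroterms} is independent of $s$: the vanishing condition $D_\alpha \cap a_1 D_\beta \cap \La_{\Ga} = \emptyset$ does not involve $s$. Since matrix multiplication and the collapse of the sum to a single term are insensitive to the precise value of each nonzero entry, the identical computation applied to $(T_m^{(s)})^N$ gives its $(\alpha, \beta)$-entry as $\prod_{i=0}^{N-1} t_{\alpha_i \alpha_{i+1}, m}^{(s)}$. The main obstacle, and the only real content, is the uniqueness of the connecting path; once that is isolated, both statements follow at once, and the role of $m \geqslant N$ becomes transparent, since for $m < N$ the middle letters $a_{m+1}, \dots, a_N$ would be left unconstrained and the sum could contain several nonzero terms.
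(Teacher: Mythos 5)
Your proof is correct and follows essentially the same route as the paper: expand the matrix power, use Lemma \ref{calc} to characterize the nonzero summands, and observe that $m \geqslant N$ forces the letters $a_1,\dots,a_{N+m}$ (hence the intermediate words $\alpha_1,\dots,\alpha_{N-1}$) to be uniquely determined by $\alpha$ and $\beta$, so the sum collapses to a single term. Your write-up is more explicit than the paper's (notably on the degenerate case where no compatible chain exists and on why the $s$-version follows from the $s$-independence of the vanishing pattern), but the key idea is identical.
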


\begin{proof}
By Lemma \ref{calc}, it suffices to show that the letters $a_1, a_2, \dots, a_{N+m}$ are all uniquely determined by $\alpha$ and $\beta$. Here $\alpha=a_1a_2\cdots a_m$ and $\beta=a_{N+1}\cdots a_{N+m}$, meaning the letters $a_1, \dots, a_N$ are all uniquely determined by $\alpha$ and the rest by $\beta$ seeing as $m \geqslant  N$.  
\end{proof}

\begin{lem} \label{lem_perron}  For $m \geqslant 1$ and $s\geqslant 0$, the matrix $T_m^{(s)}$ is a Perron matrix.
\end{lem}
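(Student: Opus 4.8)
The matrix $T_m^{(s)}$ is non-negative by construction, since each entry $t_{\alpha\beta,m}^{(s)}$ is either $0$ or a positive real of the form $|(a_1^{-1})'(p_{\alpha\beta})|^{-s}$. To conclude that it is a Perron matrix, it therefore suffices to prove that it is \emph{primitive}, i.e. that some power $(T_m^{(s)})^N$ has all entries strictly positive. The plan is to read off positivity of powers from the combinatorial description of the nonzero entries established in the preceding lemmas, and then to exhibit an explicit bridging word for every pair of indices.

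First I would recall, via Lemma \ref{calc}, that for reduced words $\alpha,\beta$ of length $m$ the $(\alpha,\beta)$-entry of $(T_m^{(s)})^N$ is a sum of products $\prod_{i=0}^{N-1} t_{\alpha_i\alpha_{i+1},m}^{(s)}$ over chains $\alpha=\alpha_0,\alpha_1,\ldots,\alpha_N=\beta$. As all factors are non-negative, this entry is strictly positive if and only if at least one chain has all factors nonzero; and by the first part of Lemma \ref{calc} (valid for arbitrary $N\geqslant 1$, so that one need not restrict to $m\geqslant N$ as in Corollary \ref{postcalcul}), such a chain exists if and only if there is a reduced word $a_1 a_2\cdots a_{N+m}$ over the alphabet induced by $\mathcal{B}$ whose first $m$ letters spell $\alpha$ and whose last $m$ letters, in positions $N+1$ through $N+m$, spell $\beta$.

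I would then take $N=m+1$ and show that such a bridging word always exists. Writing $\alpha=a_1\cdots a_m$ and $\beta=c_1\cdots c_m$ in reduced form, consider the length-$(2m+1)$ word $a_1\cdots a_m\,x\,c_1\cdots c_m$, which automatically has the prescribed first and last $m$ letters. Its only possible sites of cancellation are the two new junctions, so it is reduced precisely when $x\neq a_m^{-1}$ and $x\neq c_1^{-1}$. Since $g\geqslant 2$, the alphabet $\{\gamma_i^{\pm 1}\}$ has $2g\geqslant 4$ letters, so at most two are excluded and a valid $x$ always exists. The intermediate windows $\alpha_i=a_{i+1}\cdots a_{i+m}$ are subwords of a reduced word, hence reduced words of length $m$ indexing rows and columns of $T_m^{(s)}$, so the chain is admissible; for $m=1$ this recovers, through Remark \ref{rem_case_1} and \eqref{nonzeroterms}, the positivity of $(T_1^{(s)})^2$. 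Consequently $(T_m^{(s)})^{m+1}$ has strictly positive entries, whence $T_m^{(s)}$ is primitive, hence a Perron matrix.

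The only genuinely delicate point is the combinatorial bridging construction together with the verification of reducedness, and this is exactly where the hypothesis $g\geqslant 2$ enters essentially: for $g=1$ the alphabet consists of two mutually inverse letters, no bridging letter is available, and primitivity fails. Conceptually, the nonzero pattern of $T_m^{(s)}$ is the transition matrix of the higher-block presentation of the subshift of finite type of reduced words in the free group of rank $g\geqslant 2$; this subshift is irreducible and aperiodic, and the associated transition matrices are primitive, which is precisely the statement to be proved.
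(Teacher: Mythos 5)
Your proof is correct and follows essentially the same route as the paper: both arguments establish primitivity by choosing, for any pair of length-$m$ words $\alpha = a_1\cdots a_m$ and $\beta = b_1\cdots b_m$, a bridging letter $x$ with $x \neq a_m^{-1}$ and $x \neq b_1^{-1}$ (which exists since $g \geqslant 2$ gives at least four letters), and then reading off from \eqref{nonzeroterms} and Remark \ref{rem_case_1} that the chain of length-$m$ windows of the reduced word $\alpha\, x\, \beta$ makes the $(\alpha,\beta)$ entry of $(T_m^{(s)})^{m+1}$ strictly positive. Your extra observation that the existence criterion from Lemma \ref{calc} holds for all $N$ (unlike the uniqueness statement of Corollary \ref{postcalcul}, which needs $m \geqslant N$) is a fair point of care, but the paper sidesteps it by working directly with the nonvanishing of a single product of entries.
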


\begin{proof}
Take any two elements $\alpha,\beta \in \Gamma$ of length $m$. Let $\alpha = a_1 \cdots a_m$ and $\beta = b_1 \cdots b_m$ be their reduced forms over the alphabet induced by the basis~$\mathcal{B}$. Let $b \in \mathcal{B} \cup \mathcal{B}^{-1}$ be such that~${b \neq a_m^{-1}}$ and $b \neq b_1^{-1}$. Since $g \geqslant 2$, there are at least $4$ length one words, so such an element $b$ exists. 

When $m=1$, we have $t_{\alpha b,m}^{(s)} t_{b \beta,m}^{(s)} \neq 0$ by Remark \ref{rem_case_1}. 
If $m>1$, set ${\alpha_1 = a_2 a_3 \cdots a_m b}$, ${\alpha_2 = a_3 a_4 \cdots a_m b b_1}$, $\dots,$ $\alpha_m =  b b_1 \cdots b_{m-1}$, $\alpha_{m+1} =  b_1 \cdots b_{m} = \beta$. 
By \eqref{nonzeroterms}, $t_{\alpha \alpha_1,m}^{(s)}, t_{\alpha_i \alpha_{i+1}, m}^{(s)}$ are all non-zero for $i\leqslant m$.

In both cases, as the entries of $T_m^{(s)}$ are all non-negative, the $(\alpha, \beta)$ entry of $(T_m^{(s)})^{m+1}$ is strictly positive. This shows that the $(m+1)$-th power of $T_m^{(s)}$ has all strictly positive entries, as required.  
\end{proof}

We recall the real number $\delta_m$ associated to the matrix $T_m^{(s)}$ defined in \eqref{mcrit}.

\begin{prop} \label{prop_mc_mullen_NA} Assume $k$ is non-Archimedean. Then for any ${m \in \N_{\geqslant 1}}$, the number $\delta_{m}$ is the Hausdorff dimension of the limit set of the Schottky group $\Ga$. 
\end{prop}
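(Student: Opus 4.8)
The plan is to reduce the non-Archimedean statement to the spectral-radius characterization of the Hausdorff dimension already established in Corollary \ref{cor_spectral_formula}, rather than appealing directly to McMullen's limit $\lim_{m \to +\infty}\delta_m$, which was only proven for $k = \R$ or $\C$. The strategy is to show that for \emph{every} $m \geqslant 1$, the critical exponent $\delta_m$ of the matrix $T_m^{(s)}$ coincides with the critical exponent $d_{\Ga} = \Hdim \La_{\Ga}$. Concretely, I would prove that $\lambda(T_m^{(s)}) = 1$ if and only if $\lambda(M(s)) = 1$, where $M(s)$ is the matrix of Definition \ref{matrixM}; since both $T_m^{(s)}$ (by Lemma \ref{lem_perron}) and $M(s)$ (for $g>1$) are primitive, and since the entries of each are decreasing in $s$, their spectral radii are strictly decreasing continuous functions of $s$, so matching the locus where the spectral radius equals $1$ identifies $\delta_m$ with $d_{\Ga}$.

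The key computational input is Corollary \ref{postcalcul} together with Lemma \ref{calc}: for $m \geqslant N$, the $(\alpha,\beta)$-entry of $(T_m^{(s)})^N$ equals $\prod_{i=0}^{N-1} t_{\alpha_i\alpha_{i+1},m}^{(s)}$, and by Lemma \ref{calc} the logarithm of this product is $\rho(\eta, a_{N+1}\eta) - \rho(\eta, a_1\cdots a_{N+1}\eta)$, expressed via the same boundary-point distances $\rho(\partial D_{a_i^{-1}}, \partial D_{a_{i+1}})$ that define the entries of $M(s)$. Thus the nonzero entries of $(T_m^{(s)})^N$ are governed by products of entries of $M(s)$ along admissible paths in the word graph. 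I would exploit this to relate the growth rate of $\operatorname{tr}((T_m^{(s)})^N)$ or of the entry sums as $N \to +\infty$ to powers of $M(s)$: using that $\lambda(T_m^{(s)}) = \lim_{N}\big(\text{entry of }(T_m^{(s)})^N\big)^{1/N}$ for a primitive matrix, the telescoping in Lemma \ref{calc} shows the boundary terms $\rho(\eta,\partial D_{a_{N+1}}) - \rho(\eta,\partial D_{a_1})$ are uniformly bounded and drop out in the $N$-th root limit, leaving exactly the Perron eigenvalue of $M(s)$. Hence $\lambda(T_m^{(s)}) = \lambda(M(s))$ for all $m$, or at minimum they cross the value $1$ simultaneously.

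Alternatively, and perhaps more cleanly, I would connect $T_m^{(s)}$ directly to the Poincaré series. By Lemma \ref{lem_mcmullen_matrix_coef_NA} the nonzero entries encode $e^{s(\rho(\eta,\partial D_\beta) - \rho(\eta,\partial D_{a_1\beta}))}$, and summing the entries of $(T_m^{(s)})^N$ over all admissible $(\alpha,\beta)$ reconstructs, up to the bounded boundary factors, the partial Poincaré sums $\sum_{l(\g)=N+\text{const}} e^{-s\rho(\eta,\g\eta)}$. Since $\Po_\eta(s)$ converges precisely when $s > d_{\Ga}$ (Corollary \ref{cor_critical_exponent_hausdorff}) and converges iff $\lambda(M(s)) < 1$ (Proposition \ref{PoM2}), the same convergence dichotomy transfers to the series $\sum_N \mathbf{1}^t (T_m^{(s)})^N \mathbf{1}$, giving $\lambda(T_m^{(s)}) < 1 \iff s > d_{\Ga}$ and $\lambda(T_m^{(s)}) \geqslant 1 \iff s \leqslant d_{\Ga}$. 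Combined with continuity and monotonicity of $s \mapsto \lambda(T_m^{(s)})$, this yields $\delta_m = d_{\Ga} = \Hdim\La_{\Ga}$.

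The main obstacle I anticipate is the careful handling of the boundary correction terms $\rho(\eta,\partial D_{a_{N+1}}) - \rho(\eta,\partial D_{a_1})$ from Lemma \ref{calc}: one must verify they are uniformly bounded over all admissible word sequences (which follows from there being only finitely many length-one generators, so finitely many possible values of $\partial D_{a_1}, \partial D_{a_{N+1}}$), so that they neither affect the exponential growth rate nor the convergence threshold of the associated series. A secondary subtlety is ensuring the equivalence works uniformly in $m$, including the base case $m=1$ where one must invoke Remark \ref{rem_case_1} to identify the admissible transitions; here the reduced-word condition plays the role that \eqref{nonzeroterms} plays for $m \geqslant 2$. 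Once these bounded-error estimates are in place, the identification $\delta_m = \Hdim\La_{\Ga}$ for every $m$ is immediate from Corollary \ref{cor_spectral_formula}.
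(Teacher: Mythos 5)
Your proposal is correct and, in its second (and preferred) formulation, is essentially the paper's own proof: the paper likewise uses Lemma \ref{calc} to identify the entry sums $U^t(T_m^{(s)})^N U$ with the partial Poincar\'e sums $\sum_{l(\g)=N+1}e^{-s\rho(\eta,\g\eta)}$ up to bounded multiplicative factors coming from the finitely many length-one boundary terms, then applies the spectral radius formula and concludes via Corollary \ref{cor_critical_exponent_hausdorff}. Your first variant (matching $\lambda(T_m^{(s)})$ with $\lambda(M(s))$ and invoking Corollary \ref{cor_spectral_formula}) is only a cosmetic repackaging of the same comparison, since that corollary is itself proved through the Poincar\'e series.
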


\begin{proof}
Let $N \in \N$. Let us consider the sum $U^{t}(T_m^{(s)})^NU$ of all the entries of the matrix $(T_{m}^{(s)})^N$, where $U$ is the vector with all entries $1$. By Lemma~\ref{calc},
 $$U^{t}(T_m^{(s)})^NU=\sum_{a_1, a_2, \dots, a_{N+m}} e^{s(\rho(\eta, a_{N+1}\eta)-\rho(\eta, a_1\cdots a_{N+1}\eta))},$$
where the sum is taken over length one words $a_i$ such that $a_1a_2\cdots a_{N+m}$ is a reduced word in~$\Ga$. As $a_{N+2}, \dots, a_{N+m}$ don't appear in the general term of the sum, we obtain that $$U^{t}(T_m^{(s)})^NU=(2g-1)^{m-1} \sum_{a_1, a_2, \dots, a_{N+1}} e^{s(\rho(\eta, a_{N+1}\eta)-\rho(\eta, a_1\cdots a_{N+1}\eta))},$$
where the sum is now taken over all $a_1, \dots, a_{N+1}$ such that $a_1\cdots a_{N+1}$ is a reduced word. 

Let $C_1, C_2>0$ be such that $\max_{a}  e^{\rho(\eta, a\eta)} \leqslant C_1$ and $\min_{a}  e^{\rho(\eta, a\eta)}\geqslant C_2$, where~$a$ is a length one word in $\Ga$. Then, 
$${C_2^s}{(2g-1)^{m-1}} \sum_{a_1, \dots, a_{N+1}} e^{-s\rho(\eta, a_1\cdots a_{N+1}\eta)} \leqslant U^{t}(T_m^{(s)})^NU \leqslant {C_1^s}{(2g-1)^{m-1}} \sum_{a_1, \dots, a_{N+1}} e^{-s\rho(\eta, a_1\cdots a_{N+1}\eta)},$$ 
which is equivalent to 
$${C_2^s}{(2g-1)^{m-1}} \sum_{l(\g)=N+1} e^{-s\rho(\eta, \g \eta)} \leqslant  U^{t}(T_m^{(s)})^NU \leqslant {C_1^s}{(2g-1)^{m-1}}  \sum_{l(\g)=N+1} e^{-s\rho(\eta, \g \eta)}.$$
Consequently,  $(U^t (T_{m}^{(s)})^N U)^{1/N}$ is equivalent to $(\sum_{l(\gamma)=N+1} e^{-s \rho(\eta, \gamma \eta)})^{1/N}$. 
Using the spectral radius formula, this shows that the critical exponent of the Poincaré series $\mathcal{P}_{\eta}(s)$ (Definition~\ref{poincareseries}) is precisely~$\delta_m$, and we can conclude by Corollary \ref{cor_critical_exponent_hausdorff}.   
\end{proof}

We end this section with an application of the Perron-Frobenius theorem and of the implicit function theorem, which gives information on the behavior of the ``critical exponent'' $\delta_m$ of $T_m$ (\emph{cf.} \eqref{mcrit}) with respect to its entries. The proof follows closely the estimates for the pressure of Markov operators in the thermodynamic formalism (see \cite[\S~20.2]{katok}).

For a matrix $P=(p_{ij}) \in M_{n \times n}(\R)$, let $\delta_P:=\inf\{s>0 \ | \ \lambda(P^{(s)}) \geqslant 1\}$, where $\lambda(\cdot)$ denotes the spectral radius and $P^{(s)}:=(p_{ij}^{s})$ for $s \in \R_{\geqslant 0}$.

\begin{prop} \label{prop_conseq_perron}
Let $\xi>1$. Let $E \subseteq M_{n \times n}(\R)$ consist of matrices with entries in $(0,\xi^{-1})$. Let $P \in E$ be such that $\delta_P>0$.   
There exists a neighborhood $U \subseteq E$ of $P$ such that the function $\delta: U \rightarrow \R_{\geqslant 0}, M \mapsto \delta_M$, is continuous.
\end{prop}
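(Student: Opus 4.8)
The plan is to deduce continuity of $\delta_M$ from the continuity of the spectral radius together with the monotonicity and strict variation of $s \mapsto \lambda(M^{(s)})$. First I would observe that for any matrix $M \in E$ with strictly positive entries (which is the case for matrices in $E$, since all entries lie in $(0, \xi^{-1})$ and are in particular nonzero), the Perron-Frobenius theorem guarantees that $\lambda(M^{(s)})$ is a simple eigenvalue depending continuously — in fact real-analytically, via the implicit function theorem applied to the characteristic polynomial — on the entries of $M^{(s)}$, hence jointly continuously on $(M, s) \in E \times \R_{\geqslant 0}$. Since each entry $m_{ij}^{s} = e^{s \log m_{ij}}$ with $\log m_{ij} < -\log \xi < 0$ is a strictly decreasing function of $s$, the matrix $M^{(s)}$ is entrywise strictly decreasing in $s$, and as the spectral radius of a nonnegative matrix is monotone in its entries, the map $s \mapsto \lambda(M^{(s)})$ is decreasing. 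For a primitive (equivalently, here, positive) matrix this monotonicity is in fact strict as long as $\lambda(M^{(s)}) > 0$, because the Perron eigenvalue of a positive matrix depends strictly increasingly on each entry.

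Next I would use these facts to pin down $\delta_M$ as the unique solution of $\lambda(M^{(s)}) = 1$. Fix $P \in E$ with $\delta_P > 0$. Strict monotonicity of $s \mapsto \lambda(P^{(s)})$ implies that $\delta_P$ is characterized by the equation $\lambda(P^{(\delta_P)}) = 1$, with $\lambda(P^{(s)}) > 1$ for $s < \delta_P$ and $\lambda(P^{(s)}) < 1$ for $s > \delta_P$. Given $\varepsilon > 0$ small enough that $\delta_P - \varepsilon > 0$, strict monotonicity yields values $\lambda(P^{(\delta_P - \varepsilon)}) =: 1 + a$ with $a > 0$ and $\lambda(P^{(\delta_P + \varepsilon)}) =: 1 - b$ with $b > 0$. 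By joint continuity of $(M, s) \mapsto \lambda(M^{(s)})$ at the two points $(P, \delta_P \pm \varepsilon)$, there is a neighborhood $U \subseteq E$ of $P$ such that for all $M \in U$ one has $\lambda(M^{(\delta_P - \varepsilon)}) > 1$ and $\lambda(M^{(\delta_P + \varepsilon)}) < 1$. For each such $M$, the intermediate value theorem applied to the continuous decreasing function $s \mapsto \lambda(M^{(s)})$ produces a crossing of the value $1$ inside $(\delta_P - \varepsilon, \delta_P + \varepsilon)$, and this crossing is exactly $\delta_M$; hence $|\delta_M - \delta_P| < \varepsilon$ for all $M \in U$. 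This establishes continuity of $\delta$ at $P$.

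I expect the main subtlety to be the \emph{strict} monotonicity of $s \mapsto \lambda(M^{(s)})$, since mere monotonicity would only give $\lambda(M^{(s)}) \geqslant 1$ for $s \leqslant \delta_M$ and would leave open the possibility of a plateau at the value $1$, which would wreck the intermediate-value argument and the uniqueness of the crossing. To handle this cleanly I would invoke the standard fact from the thermodynamic formalism that for a positive (or merely primitive) matrix the logarithm of the Perron eigenvalue $s \mapsto \log \lambda(M^{(s)})$ is \emph{strictly convex} and strictly decreasing as a function of $s$ wherever the entries are not all equal, which is automatic here since the off-diagonal structure forces genuine variation; this is precisely the pressure-function estimate referenced via \cite[\S~20.2]{katok}. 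Strict convexity guarantees the crossing of $1$ is transverse and unique, so $\delta_M$ is well-defined and the implicit function theorem could even upgrade the conclusion to local real-analyticity of $\delta$, though continuity is all that is needed here. The only care required is to ensure $U$ is chosen small enough that every $M \in U$ still has $\delta_M > 0$, which follows automatically from $\lambda(M^{(\delta_P - \varepsilon)}) > 1$ together with $\delta_P - \varepsilon > 0$.
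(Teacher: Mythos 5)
Your proof is correct, but it takes a genuinely different route from the paper's. You argue by monotonicity plus the intermediate value theorem: entrywise strict decrease of $s \mapsto M^{(s)}$ together with the strict monotonicity of the Perron root of a positive matrix in its entries pins down $\delta_M$ as the unique root of $\lambda(M^{(s)})=1$, and then two applications of continuity of the spectral radius, at the parameters $\delta_P\pm\varepsilon$, trap $\delta_M$ in $(\delta_P-\varepsilon,\delta_P+\varepsilon)$ for all $M$ near $P$. The paper instead runs a quantitative implicit-function-theorem argument: it invokes Kato's perturbation theory \cite{kato} to get real-analyticity of $g(T,s)=\lambda(T^{(s)})$, proves uniform convergence of $\frac{1}{n}\log\bigl(U^t(T^{(s)})^nU\bigr)$ to $\log g(T,s)$, uses the hypothesis that all entries are $<\xi^{-1}$ to obtain the uniform derivative bound $\partial_s \log g(T,s)\leqslant -\log\xi<0$, and then applies the implicit function theorem to produce the continuous (indeed analytic) solution $s=\delta_T$ of $g(T,s)=1$. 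Your approach is more elementary --- no perturbation theory, no convergence-of-derivatives step, no implicit function theorem --- and it needs strictly less than the hypotheses provide: entries in $(0,1)$ suffice, since $\xi$ plays no quantitative role for you. What the paper's heavier machinery buys is regularity beyond continuity: the explicit negative bound on $\partial_s\log g$ makes $\delta$ locally a real-analytic function of the matrix entries, and this extra output is exploited later (Remark \ref{rem_analytic}) to discuss regularity of $x \mapsto \Hdim \La_x$ over non-Archimedean Schottky spaces; your argument would not yield that.

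Three minor points. First, both you and the paper read $\delta_M$ as the unique root of $\lambda(M^{(s)})=1$ (the paper's proof says exactly this); this is the intended meaning, even though the displayed definition with an infimum does not literally say it. Second, the statement asks for continuity on a whole neighborhood $U$, not just at $P$: your closing observation that $\delta_M>\delta_P-\varepsilon>0$ for every $M\in U$ is precisely what lets you rerun your argument at each point of $U$, but you should say so explicitly rather than stop at continuity at $P$. Third, your fallback appeal to strict convexity of $s\mapsto\log\lambda(M^{(s)})$ is redundant (your entrywise-monotonicity argument already gives strict decrease) and slightly inaccurate as stated: for a matrix all of whose entries are equal, this function is affine, and nothing in the definition of $E$ forces the ``genuine variation'' you invoke --- strict decrease, which always holds here, is all you need.
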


\begin{proof}
Consider the analytic map $\phi : E \times (\delta_P/2, +\infty) \rightarrow E, T \mapsto T^{(s)}$.  Let also $g : E \times (\delta_P/2 , +\infty) \to \R$, 
\begin{equation*}
    g(T,s):= \lambda(\phi(T,s))=\lambda(T^{(s)}).
\end{equation*}
Since each $T \in E$ is a Perron matrix, the spectral radius $\lambda(T^{(s)})$ has multiplicity one. By the proof of Theorem 5.16, pg. 119 of \cite{kato}, $g(T,s)$ is analytic in $(T,s)$.

Since the roots (with their multiplicities) of the characteristic polynomial of $T^{(s)}$ vary continuously in $(T,s)$ (see \emph{e.g.} \cite[Theorem 5.14]{kato}), we can choose an open neighborhood $W \subseteq E \times  (\delta_P/2, +\infty)$  of $(P,\delta_P)$ such that for any $(T,s)\in W$, the eigenvalues $\mu$ of $T^{(s)}$ distinct from the spectral radius $g(T,s)$ satisfy $|\mu|/ g(T,s) < \alpha < 1 $.

 The function $g$ is continuous in $W$ and by the spectral radius formula, for all $(T,s) \in W$, $g(T,s)=  \lim_{n\rightarrow +\infty} (U^t (T^{(s)})^n U)^{1/n},$
 where $U^t:= (1,\ldots , 1)$ is the unit vector. 
Set $f_n(T,s):=U^t (T^{(s)})^n U$ for $(T,s) \in W$. We show that $\log (f_n(T,s))/n$ converges uniformly toward $\log g(T,s)$ on $W$. 
 Using Perron-Frobenius, consider the projection $\pi_{T,s}$ onto the subspace associated with the largest eigenvalue of $T^{(s)}$. It is continuous (even analytic) in $(T,s)$ by \cite[Formula~1.17, p.68]{kato}, since the multiplicity of the largest eigenvalue is $1$. By shrinking $W$, we can thus assume that $(T,s) \mapsto \log U^t \pi_{T,s}(U)$ is bounded on $W$.  
 Since the other eigenvalues $\mu$ of $T^{(s)}$ satisfy  $|\mu| < \alpha g(T,s)$ for $(T,s) \in W$, we have that for all $n$:
 \begin{align*}
    \dfrac{1}{n} \log f_{n}(T,s) &= \dfrac{1}{n} \log \left(  g(T,s)^n U^t \pi_{T,s}(U) + O(\alpha^n g(T,s)^n) \right ) \\
     & = \log(g(T,s)) + \dfrac{1}{n} \log (U^t \pi_{T,s}(U) + O(\alpha^n)),
 \end{align*}
 where the terms in $O(\cdot)$ depend on the projection onto the eigenspaces not corresponding to~$g(T,s)$. 
 This shows that the convergence is uniform by our boundedness assumption on $W$. 
 
 By the uniform convergence  $\log \frac{f_n(T,s)}{n} \rightarrow \log g(T,s)$ on $W$, we obtain that  $\dfrac{\partial(\log\frac{f_n(T,s)}{n})}{\partial{s}}$ converges to $\dfrac{\partial{\log{g(T,s)}}}{\partial{s}}$ when $n \rightarrow +\infty$. Let $T=(t_{ij})_{i,j}$ so that $T^{(s)}=(t_{ij}^s)_{i,j}.$ As $t_{ij} \in (0, \xi^{-1}),$ we have
\begin{align*}
\dfrac{\partial_s{f_n(T,s)}}{nf_n(T,s)}=& \dfrac{\sum_{i_0, i_1, \dots, i_{n}} \log(\prod_{j=0}^{n-1} t_{i_ji_{j+1}})(\prod_{j=0}^{n-1} t_{i_ji_{j+1}})^s}{nf_n(T,s)} < \dfrac{f_n(T,s) \log \xi^{-n}} {nf_n(T,s) }=-\log{\xi}.
\end{align*}
By passing to the limit, we obtain that $\dfrac{\partial_s g(T,s)}{g(T,s)} = {\partial_s (\log  g(T,s))} \leqslant -\log \xi < 0$, so in particular $\partial_s g(T,s) \neq 0$ for any $(T,s) \in W$.

As all matrices in $E$ have positive entries, the spectral radius is a continuous strictly increasing function on their entries. Hence $\delta_T$ is the unique solution to $g(T,s)=1$ for any $T \in E$. 
By the implicit function theorem, there exists a neighborhood $U \subseteq E$ of $P$ and a unique continuous function $u: E \rightarrow \R$ such that $g(T, u(T))=1.$ Consequently, $u(T)=\delta_T$, and the function $\delta$ is continuous on $U$.
\end{proof}

\subsection{Uniform variation of approximations}

Let us now consider the matrices constructed in Section \ref{subsect_approxi} in families. For $g \geqslant 2,$ given $x \in \eS_g,$ $m \in \N,$ and $s\in \R_{\geqslant 0}$, we denote by $T_{m,x}^{(s)}$ the matrix constructed as in \eqref{matrixT} and associated to the Schottky group $\Ga_x$ over $\Hr(x)$ equipped with a Schottky figure. (We will usually make the basis precise from the very beginning, and even though the matrix $T_{m,x}^{(s)}$ depends on it, we omit this from the notation.) Similarly, let $(t_{\alpha \beta, m, x}^{(s)})_{\alpha, \beta}$ denote the entries of $T_{m,x}^{(s)}$, and $\delta_{m,x}$  the associated real number defined as in~\eqref{mcrit}. 
Recall also that $T_{m,x}:=T_{m,x}^{(1)}$.

The matrices $T_{m,x}^{(s)}$ depend on the points $(p_{\alpha\beta})_{\alpha, \beta}$ in the fixed Schottky figure. We now make a choice of these points so that certain properties are satisfied, and  which we will respect throughout the rest of the section.

\begin{lem} \label{lem_choice_p}
Let $x \in \eS_g$ {be a non-Archimedean point}. There exists a neighborhood $V \subseteq \eS_g$ of~$x$ and a basis $\mathcal{B}$ of $\Ga_V$  satisfying the conditions of Proposition \ref{prop_coordinate_change} such that for any $m \geqslant 1$: if $\alpha=a_1a_2 \cdots a_m, \beta=b_1b_2\cdots b_m \in \Ga_V$ are reduced words of length $m$, then for $y \in V$, there exists 
$p_{\alpha\beta,y} \in D_{\alpha,y} \cap a_1 D_{\beta,y} \cap \bP^{1}(\Hr(y))$ whenever $D_{\alpha,y} \cap a_1 D_{\beta,y} \cap \La_y \neq \emptyset$,
such that the entries of the associated $T_{m,y}$ vary continuously in $y$.
\end{lem}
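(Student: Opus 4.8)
Looking at this statement, the goal is to make a coherent choice of points $p_{\alpha\beta,y}$ across a neighborhood so that the resulting matrix entries $t_{\alpha\beta,m,y}$ depend continuously on $y$. Let me sketch my approach.

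\textbf{The plan.} The key observation is that continuity of the entries should be handled by splitting into the Archimedean and non-Archimedean regimes, exactly as was done for the modulus function in Proposition \ref{prop_modulus_extension} and the distortion estimates in Lemma \ref{rel_uniform_cont_second_deriv}. First I would fix a neighborhood $V$ of $x$ together with a basis $\mathcal{B}$ of $\Ga_V$ and a relative Schottky figure satisfying Proposition \ref{prop_coordinate_change}; in particular the disks $D_{\g,y}$ have continuously varying centers $c_{\g}\in\mathcal{O}(V)$ and radii $r_{\g}\colon V\to\R_{>0}$, and at non-Archimedean points the Gauss point avoids the figure. By Lemma \ref{zeroterms} and equation \eqref{nonzeroterms}, whether an entry $t_{\alpha\beta,m,y}$ is zero depends only on a combinatorial condition on the words $\alpha,\beta$ (that $\beta$ is obtained from $\alpha$ by deleting the first letter and appending one), which is independent of $y$. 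So the support of the matrix is constant on $V$, and I only need to verify continuity of the nonzero entries.

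\textbf{Key steps.} For the nonzero entries, I would exploit the reduction already visible in the code: the entry $t_{\alpha\beta,m,y}^{(s)}$ is $|(a_1^{-1})'(p_{\alpha\beta,y})|_y^{-s} = |a_1'(a_1^{-1}p_{\alpha\beta,y})|_y^{s}$, and the point $a_1^{-1}p_{\alpha\beta,y}$ lies in $D_{\beta,y}$. The cleanest way to make a continuous choice is to select $p_{\alpha\beta,y}$ functorially from the figure: for instance, since $D_{\alpha,y}\cap a_1 D_{\beta,y}=D_{a_1\beta,y}$ is itself a Schottky disk with center in $\mathcal{O}(V)$ (composing $a_1\in\PGL_2(\mathcal{O}(V))$ with $c_{\beta}$), I would take $p_{\alpha\beta,y}:=a_1(c_{\beta}(y))$ where $c_{\beta}(y)\in D_{\beta,y}\cap\bP^1(\Hr(y))$ is the chosen center, so that $a_1^{-1}p_{\alpha\beta,y}=c_{\beta}(y)$ is a \emph{single} section in $\mathcal{O}(V)$ independent of $\alpha$ beyond its first letter. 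Then the entry becomes $|a_1'(c_{\beta}(y))|_y^{s}$ with $c_{\beta}\in\mathcal{O}(V)$, and continuity in $y$ of $y\mapsto |a_1'(c_{\beta}(y))|_y$ follows from the explicit formula $|a_1'(p)|_y = |ut-vz|_y/|zp+t|_y^2$ for a representative $\left(\begin{smallmatrix} u & v\\ z & t\end{smallmatrix}\right)$ of $a_1$ in $\GL_2(\mathcal{O}(V))$, since $u,v,z,t,c_{\beta}\in\mathcal{O}(V)$ and the map $y\mapsto |f(y)|_y$ is continuous for any $f\in\mathcal{O}(V)$ (this is precisely the defining continuity of the functions $\varphi_P$ on a Berkovich space over $\Z$, extended to $\mathcal{O}(V)$ by Definition of the structural sheaf). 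The nonvanishing of $|zc_{\beta}(y)+t|_y$ is guaranteed because $a_1\beta$ reduced forces $c_{\beta}(y)\notin D_{a_1^{-1},y}$, where $a_1^{-1}\infty = -t/z$ lives, as in the proof of Lemma \ref{deriv2}.

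\textbf{Main obstacle.} The delicate point is the transition between non-Archimedean and Archimedean points. At a non-Archimedean $y$, Lemma \ref{lem_mcmullen_matrix_coef_NA} shows $|a_1'(p)|_y$ is \emph{independent} of the point $p\in D_{\beta,y}$ and equals $e^{\rho_y(\eta,\partial D_{b_1,y})-\rho_y(\eta,\partial D_{a_1,y})-\rho_y(\partial D_{a_1^{-1},y},\partial D_{b_1,y})}$, an expression governed by the metric $\rho_y$; at an Archimedean $y$ the derivative genuinely varies with $p$. I would therefore argue, exactly as in Proposition \ref{prop_modulus_extension} and the scaling argument of Corollary \ref{cor_distortion_ball_bis}, that as an Archimedean point $y$ with $\Hr(y)=(\C,|\cdot|_\infty^{\varepsilon(y)})$ approaches a non-Archimedean $x_0$ (so $\varepsilon(y)\to 0$), the quantity $|a_1'(c_{\beta}(y))|_y = |a_1'(c_{\beta}(y))|_\infty^{\varepsilon(y)}$ converges to the non-Archimedean value dictated by $\rho_{x_0}$; the continuity of the center and radius functions together with the already-established continuity of $h_{ab}$ and $R_{\g}$ from Lemma \ref{lem_NA_center_radius} should feed the limit. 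The case $\La_x=\bP^1(\Hr(x))$ is easier since then $V$ can be taken purely non-Archimedean, where the entries are continuous functions of the $\rho_y$-distances by Lemma \ref{lem_NA_center_radius}. Assembling these, since $V$ has open Archimedean part and continuity holds separately on each regime and across the boundary, the entries $y\mapsto t_{\alpha\beta,m,y}$ are continuous on all of $V$, for every $m\geqslant 1$ and every pair of reduced words $\alpha,\beta$ of length $m$.
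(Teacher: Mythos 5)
Your proposal is correct and follows the paper's own proof: constancy of the support of $T_{m,y}$ via \eqref{nonzeroterms} and Remark \ref{rem_case_1}, the case $\La_x=\bP^{1}(\Hr(x))$ handled on a purely non-Archimedean neighborhood through Lemmas \ref{lem_mcmullen_matrix_coef_NA} and \ref{lem_NA_center_radius}, and otherwise the choice $p_{\alpha\beta,y}=a_1\bigl(b_1\cdots b_{m-1}\cdot c_{b_m}(y)\bigr)$ (a section in $\mathcal{O}(V)$ — not literally a ``center'' at Archimedean points, since Möbius maps do not send centers to centers, but only membership in $a_1D_{\beta,y}$ matters) with continuity read off the explicit formula for $|a_1'|_y$ using a representative in $\GL_2(\mathcal{O}(V))$. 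Your ``main obstacle'' paragraph is superfluous: once the entry is written as a ratio of values $|f(y)|_y$ with $f\in\mathcal{O}(V)$ and nonvanishing denominator, continuity across the Archimedean/non-Archimedean locus is automatic from the definition of the topology on the analytic space, so no Proposition \ref{prop_modulus_extension}-type limiting argument is needed there.
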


\begin{proof}
Let $V$ be as in Proposition \ref{prop_coordinate_change}. By \eqref{nonzeroterms} and Remark~\ref{rem_case_1}, for any $y \in V$ and any choice of $p_{\alpha\beta,y}$, we have that  $t_{\alpha\beta, m, x}=0$ if and only if $t_{\alpha\beta,m,y}=0$ for all~${y \in V.}$ Thus, we may assume that the $(\alpha,\beta)$-entry of $T_{m,y}$ is non-zero for any $y \in V$.

If $\La_x=\bP^{1}(\Hr(x))$, we may assume that $V$ contains only non-Archimedean points. Let~${y \in V}$.
By {Lemma \ref{lem_mcmullen_matrix_coef_NA}}, the entry $t_{\alpha\beta, m,y}$ %
of $T_{m,y}$ does not depend on the choice of $p_{\alpha\beta,y},$ %
but only on $\rho_y(\eta_y, \partial{D_{a_1,y}})$ and $\rho_y(\eta_y, \partial{D_{b_1,y}}).$
By Lemma \ref{lem_NA_center_radius}, up to shrinking~$V$, we may assume $t_{\alpha\beta, m,y}$ is continuous in $y$.

Assume $\La_x \neq \bP^{1}(\mathcal{H}(x)).$ Let $\beta=\g b_m$, where $\g:=b_1\cdots b_{m-1}$ (with the convention that $\gamma=\mathrm{id}$ when $m=1$). Then $D_{\beta}=\g D_{b_m}$, and by Proposition \ref{prop_coordinate_change}, there exists $c \in \mathcal{O}(V)$ such that for any $ y\in V$, the point $c(y) \in \mathcal{H}(y)$ is a center of~$D_{b_m, y}$. We note that $\g \cdot c(y) \in D_{\beta,y}$ and $a_1  \g \cdot c(y) \in D_{a_1,y}$.

Let  $\begin{pmatrix}
    u & v\\ z& t\\ 
\end{pmatrix}, {\begin{pmatrix}
   e & f\\i & h \\
\end{pmatrix} } \in \GL_2(\mathcal{O}(V))$ be representatives of $\g$ and $a_1$, respectively. As $D_{b_m,y}$ and $D_{a_1,y}$ are contained in the unit disk, they do not contain the point $\infty \in \bP^1(\Hr(y))$, so $|zc(y)+t|_y \neq 0$ and $|i(\g \cdot c(y)) + h|_y \neq 0$ for all $y \in V.$ Thus, $zc+t, i(\g \cdot c) + h \in \mathcal{O}(V)^{\times}$. 
Set $d :=\g\cdot c= \dfrac{uc+v}{zc+t} \in \mathcal{O}(V)$ and $p_{\alpha\beta, y}:=a_1 d(y) \in {D_{\alpha, y} \cap} a_1D_{\beta,y} \cap \bP^{1}(\Hr(y))$. Then $$t_{\alpha\beta, m, y}=|a_1'(d(y))|_y=\dfrac{|e h- fi|_y}{|id+h|_y^2},$$
which is continuous in $y$.
\end{proof}

From now on, whenever we use the matrices $T_{m,y}^{(s)}$, it will implicitly be with respect to a choice of points $(p_{\alpha\beta,y})_{\alpha, \beta}$ satisfying the properties of Lemma \ref{lem_choice_p}.

\begin{prop} \label{prop_power_T_uniform} Let $x\in \eS_{g}$ be a non-Archimedean point. For $\xi>1$ and large enough~${N \in \N}$, there exists  a neighborhood $V \subseteq \eS_{g}$ of $x$ and a basis $\mathcal{B}$ of $\Ga_V$ satisfying the properties of Proposition~\ref{prop_coordinate_change},  and so that for any $m \in \N_{\geqslant 1}$, the entries of $T_{m,y}^N$ are  strictly bounded above by~$\xi^{-1}$ for all $y \in V$. 
\end{prop}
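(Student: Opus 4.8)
The plan is to reduce the $(\alpha,\beta)$-entry of $T_{m,y}^N$ to an explicit geometric sum, to bound each summand by a quantity that decays exponentially in $N$ and does \emph{not} depend on $m$, and then to transfer the resulting bound from the central non-Archimedean fibre to all nearby fibres by continuity of the entries together with the uniform distortion estimates.

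First I would fix, via Proposition \ref{prop_coordinate_change} and Lemma \ref{lem_choice_p}, a connected neighborhood $V$ of $x$, a basis $\mathcal{B}$ of $\Ga_V$, and a coordinate function such that in every fibre the Gauss point $\eta_y$ lies outside the Schottky figure, the first-generation disks lie in the open unit disk, and the entries $y\mapsto t_{\alpha\beta,m,y}$ depend continuously on $y$. I may also arrange that $V$ satisfies the conclusions of Lemma \ref{lem_uniform_bound_locus} and Corollary \ref{cor_distortion_ball_bis}. By Corollary \ref{postcalcul}, for reduced words $\alpha,\beta$ the entry $(T_{m,y}^N)_{\alpha\beta}$ is a sum of products $\prod_{i=0}^{N-1} t_{\alpha_i\alpha_{i+1},m,y}$ indexed by the admissible intermediate words $\alpha_1,\dots,\alpha_{N-1}$; by Lemma \ref{calc} (using Lemma \ref{lem_calc_2}) each nonzero product, at a non-Archimedean fibre, equals $\exp\bigl(\rho_y(\eta_y,a_{N+1}\eta_y)-\rho_y(\eta_y,a_1\cdots a_{N+1}\eta_y)\bigr)$ for the associated reduced word $a_1\cdots a_{N+m}$.

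The heart of the estimate is to bound such a product uniformly in $m$. By the length-expansion formula of Lemma \ref{lem_length_expansion}, this product equals $\exp\bigl(\rho_y(\eta_y,x_{a_{N+1}})-\rho_y(\eta_y,x_{a_1})-\sum_{i=1}^N \rho_y(x_{a_i^{-1}},x_{a_{i+1}})\bigr)$. The prefactor is bounded by $e^{R_+-R_-}$, where $R_\pm$ are the extremal values of $\rho_y(\eta_y,x_a)$ over the finitely many length-one words, while each of the $N$ remaining terms is bounded below by $C_1:=\min_{a\neq b}\rho_y(x_a,x_b)>0$ from Proposition \ref{Hdimbounds}; equivalently, by the strong discreteness bound in the proof of Lemma \ref{stdis}, $\rho_y(\eta_y,a_1\cdots a_{N+1}\eta_y)$ grows at least linearly in $N$. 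Since $\alpha$ fixes $a_1,\dots,a_m$ and $\beta$ fixes $a_{N+1},\dots,a_{N+m}$, there are at most $N-m\leqslant N-1$ free letters, hence at most $(2g-1)^{N-1}$ admissible words, so that $(T_{m,y}^N)_{\alpha\beta}\leqslant e^{R_+-R_-}(2g-1)^{N-1}e^{-NC_1}$, uniformly in $m$. For $N$ large this is $<\xi^{-1}$. \textbf{The main obstacle} is precisely controlling this competition: the geometric decay per step must dominate the branching factor $2g-1$. I would handle this by recognizing that the governing quantity is the per-step growth rate of the restricted sum, i.e. the tail $\sum_{l(\gamma)=N+1}e^{-\rho_y(\eta_y,\gamma\eta_y)}$ of the Poincaré series $\Po_{\eta_y}(1)$ (\emph{cf.} Definition \ref{poincareseries}), and by invoking Corollary \ref{cor_local_bound_hausdorff} to bound this rate uniformly over $V$ before choosing $N$ large; this is the step that must be argued most carefully.

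Finally I would pass from the non-Archimedean fibre to the Archimedean ones. At $x$ the entries are given by the explicit formula of Lemma \ref{lem_mcmullen_matrix_coef_NA}; at a nearby Archimedean point $y$ with $\mathcal{H}(y)=(\C,|\cdot|_\infty^{\varepsilon(y)})$ the entries $t_{\alpha\beta,m,y}$ are moduli of derivatives of Möbius maps, which I control through the uniform distortion estimate of Corollary \ref{cor_distortion_ball_bis} together with the fact that $\varepsilon(y)\to 0$ as $y\to x$. Since the finitely many entry-functions $y\mapsto t_{\alpha\beta,m,y}$ are continuous (Lemma \ref{lem_choice_p}) and a product over the fixed number $N$ of steps depends continuously on them, the bound obtained at $x$ for the fixed large $N$ persists, after shrinking $V$, for every $y\in V$ and every $m\geqslant 1$. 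The delicate point here, as throughout the section, is that all estimates must be made uniform \emph{simultaneously} in the discrete parameter $m$ and across the Archimedean/non-Archimedean transition, which is exactly what forces the use of the continuous Schottky figure of Proposition \ref{prop_coordinate_change} and the uniform radius and distortion bounds.
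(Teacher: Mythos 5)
Your route is the same as the paper's: bound the entries of $T_{m,x}^N$ at the central non-Archimedean fibre via Lemma \ref{calc}, then transfer to a neighborhood by the continuity of the entries (Lemma \ref{lem_choice_p}). The gap is in the central estimate. Your count of admissible intermediate words, at most $(2g-1)^{N-m}$ when $m<N$, is correct, but the ensuing claim that $e^{R_+-R_-}(2g-1)^{N-1}e^{-NC_1}<\xi^{-1}$ for $N$ large is false in general: it requires $(2g-1)e^{-C_1}<1$, which by Proposition \ref{Hdimbounds} forces $\Hdim \La_x\leqslant \log(2g-1)/C_1<1$, and non-Archimedean points where the dimension exceeds $1$ do exist (the paper's example after Remark \ref{rem_hdim_loccomp}, pushed into $\eS_2$ via Lemma \ref{lem_basechangemoduli}, has all boundary distances equal to $L=\alpha\log p$ and dimension $\log 3/L>1$ for small $\alpha$). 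At such a point your bound is not merely lossy: for $m=1$ each nonzero path product equals $e^{-NL}$, so every entry of $T_{1,x}^N$ is $e^{-NL}$ times the number of admissible words, which grows like $(2g-1)^N$; hence the entries of $T_{1,x}^N$ tend to $+\infty$ and no choice of $N$ can satisfy the conclusion in the regime $m<N$. Your proposed repair does not close this gap either: the exponential growth rate in $N$ of $\sum_{l(\gamma)=N+1}e^{-\rho_y(\eta_y,\gamma\eta_y)}$ is $<1$ precisely when $\delta_y<1$ (Proposition \ref{PoM2} and Corollary \ref{cor_spectral_formula}), whereas Corollary \ref{cor_local_bound_hausdorff} only supplies a finite bound $\delta_y\leqslant A$ with no reason to have $A<1$; so the ``per-step growth rate'' you want to dominate can itself be $>1$.

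The paper's proof never has to beat the branching factor, because it bounds the entry by a \emph{single} product, $e^{\rho(\eta,\partial D_{a_{N+1}})-\rho(\eta,\partial D_{a_1})-NL}$, which tends to $0$ as $N\to\infty$ regardless of the value of the Hausdorff dimension. That step is fully justified exactly when $m\geqslant N$: by Corollary \ref{postcalcul} the intermediate words $\alpha_1,\dots,\alpha_{N-1}$ are then uniquely determined by $(\alpha,\beta)$, so the entry \emph{is} a single product and no sum over $(2g-1)^{N-m}$ terms appears. (The case $m<N$, which your counting rightly flags as the problematic one, is also the case that is never used downstream: both Corollary \ref{cor_continuity_deltam} and Theorem \ref{thm_uniform_convergence_mcmullen} invoke this proposition only in tandem with Corollary \ref{postcalcul}, under $m\geqslant N$.) So the viable argument is yours restricted to $m\geqslant N$, where your sum collapses to one term and your bound becomes $e^{R_+-R_-}e^{-NC_1}$; the continuity transfer of Lemma \ref{lem_choice_p} then concludes as you describe. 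Two small further remarks: the distortion estimates of Corollary \ref{cor_distortion_ball_bis} and the limit $\varepsilon(y)\to 0$ play no role in this proposition (they belong to the proof of Theorem \ref{thm_uniform_convergence_mcmullen}); and in the transfer step one should note that, for a fixed $N$, the entries of $T_{m,y}^N$ range over a family of continuous functions indexed by all $m$, so the shrinking of $V$ must be argued uniformly in $m$ --- a point your write-up (like the paper's) passes over quickly.
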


\begin{proof}
Let $V$ be as in Proposition~\ref{prop_coordinate_change}.  This induces a {basis} $\mathcal{B}$ and associated Schottky figures $(D_{\g, y})_{\g \in \Ga_y \backslash \{\mathrm{id}\}}$ for $\Ga_y$ in~$\bP^{\pan}_{\Hr(y)}$, $y \in V$. 

{We first establish the statement at the  point $x$.}
Let $m \geqslant 1$ and $N \geqslant 2$.  
Let $\alpha, \beta$ be two elements of $\Ga_x$ of length $m$ with reduced forms $\alpha=a_1a_2\cdots a_m$ and $\beta=a_{N+1}a_{N+2}\cdots a_{N+m}$. Let $S_{\alpha, \beta, x}$ denote the entry of $T_{m,x}^N$ corresponding to $(\alpha, \beta).$
Set $L:= \min \rho(\partial D_{u^{-1},x}, \partial D_{v,x})$, where the minimum is taken over $u, v\in \mathcal{B} \cup \mathcal{B}^{-1}$ such that $u^{-1} \neq v$. By Lemma \ref{calc}, {for any $m \geqslant 1$}: \begin{equation*}
S_{\alpha, \beta,x} \leqslant Ne^{\rho(\eta, \partial D_{a_{N+1}})-\rho(\eta, \partial D_{a_1}) - N L},
\end{equation*}
where the right-hand side converges to $0$ as $N\rightarrow +\infty$.
Given $\xi>1$, we choose $N$ large enough so that $S_{\alpha, \beta, x} < \xi^{-1}$. 
Hence, the entries of the matrix $T_{m,x}^{N}$ are all strictly bounded above by~$\xi^{-1}$ for any $m \geqslant 1$.

By Lemma \ref{lem_choice_p}, we may assume that $V$ is such that the matrices $T_{m,y}^N$ vary continuously in~${y \in V}$. As the entries of $T_{m,x}^N$ are strictly bounded above by $\xi^{-1}$, by appropriately shrinking~$V$ (\emph{i.e.} depending on $N$), one can ensure that the entries of $T_{m,y}^N$ are also all strictly bounded above by $\xi^{-1}$.\end{proof}

We now show that the approximating functions $\delta_{m,y}$ are continuous.

\begin{cor} \label{cor_continuity_deltam}  
Let $x \in \eS_g$ be a non-Archimedean point. There exists $N  \in \N$ such that for any $m > N$, there exists a neighborhood $V \subseteq \eS_g$  of $x$  and a basis $\mathcal{B}$ of $\Ga_V$ satisfying the properties of Proposition \ref{prop_coordinate_change}, for which $y \mapsto \delta_{m,y}$ is continuous on $V$.
\end{cor}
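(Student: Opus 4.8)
The plan is to combine the local bound on the Hausdorff dimension (Corollary \ref{cor_local_bound_hausdorff}), the uniform control on powers of the McMullen matrices (Proposition \ref{prop_power_T_uniform}), and the continuity result for the critical exponent of a Perron matrix (Proposition \ref{prop_conseq_perron}). The idea is that $\delta_{m,y}$ is, by Proposition \ref{prop_mc_mullen_NA} at the central non-Archimedean point and by McMullen's theorem at Archimedean points, the value $s$ for which the spectral radius of $T_{m,y}^{(s)}$ equals $1$; so it suffices to show the entries of $T_{m,y}^{(s)}$ vary nicely and that we stay in the regime where Proposition \ref{prop_conseq_perron} applies.

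First I would invoke Proposition \ref{prop_power_T_uniform}: fix $\xi>1$, and obtain $N \in \N$ and a neighborhood $V$ of $x$ (with a basis $\mathcal{B}$ of $\Ga_V$ as in Proposition \ref{prop_coordinate_change}) such that for every $m \geqslant 1$ the entries of $T_{m,y}^N$ are strictly bounded above by $\xi^{-1}$ for all $y \in V$. Replacing $T_{m,y}$ by its $N$-th power changes the matrix but not the relevant critical exponent: since $\lambda((T_{m,y}^N)^{(s)})$ and $\lambda((T_{m,y}^{(s)})^N) = \lambda(T_{m,y}^{(s)})^N$ have the same threshold-crossing point (the entries of $(T_{m,y}^{(s)})^N$ being products of entries of $T_{m,y}^{(s)}$, as in Corollary \ref{postcalcul}), the value $s$ at which the spectral radius reaches $1$ is unchanged. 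More precisely, for $m > N$ the $(\alpha,\beta)$-entry of $(T_{m,y}^{(s)})^N$ equals $\prod_{i} (t_{\alpha_i \alpha_{i+1}, m, y})^{s}$, so $(T_{m,y}^{(s)})^N = (T_{m,y}^N)^{(s)}$ entrywise. Hence $\delta_{m,y} = \delta_{T_{m,y}^N}$ in the notation of Proposition \ref{prop_conseq_perron}.

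Next I would apply Proposition \ref{prop_conseq_perron} to the matrix $P := T_{m,x}^N$, whose entries lie in $(0,\xi^{-1})$ by the above, and which satisfies $\delta_P = \delta_{m,x} > 0$ by Proposition \ref{prop_mc_mullen_NA} and Proposition \ref{Hdimbounds} (the Hausdorff dimension at $x$ being strictly positive for $g \geqslant 2$). This gives a neighborhood $U$ in the space of matrices on which $M \mapsto \delta_M$ is continuous. By Lemma \ref{lem_choice_p} and Proposition \ref{prop_power_T_uniform}, the map $y \mapsto T_{m,y}^N$ is continuous from $V$ into this matrix space and lands in the region of matrices with entries in $(0,\xi^{-1})$; by shrinking $V$ we may assume its image lies in $U$. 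Composing, $y \mapsto \delta_{m,y} = \delta_{T_{m,y}^N}$ is continuous on $V$.

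The main obstacle I anticipate is the bookkeeping needed to guarantee that Proposition \ref{prop_conseq_perron} genuinely applies \emph{uniformly enough}, and in particular the requirement $\delta_P > 0$ together with the positivity of all entries. The positivity of entries is precisely why one passes to the $N$-th power (so that the Perron matrix becomes strictly positive, \emph{cf.} Lemma \ref{lem_perron}), and the hypothesis $m > N$ is what makes Corollary \ref{postcalcul} applicable so that $(T_{m,y}^N)^{(s)} = (T_{m,y}^{(s)})^N$. One must also check that the value $\delta_{m,y}$ really does not depend on the choice of the points $p_{\alpha\beta,y}$ at non-Archimedean fibers (true by Lemma \ref{lem_mcmullen_matrix_coef_NA}) while the chosen continuous selection of Lemma \ref{lem_choice_p} handles the Archimedean fibers; reconciling these two descriptions across the mixed neighborhood, via the continuity of the entries established in Lemma \ref{lem_choice_p}, is the delicate point, but it is already packaged into the cited lemmas.
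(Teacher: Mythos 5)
Your proposal is correct and follows essentially the same route as the paper's proof: both pass to the $N$-th power via Proposition \ref{prop_power_T_uniform} (with positivity from Lemma \ref{lem_perron}), identify $\delta_{m,y}=\delta_{T_{m,y}^N}$ through Corollary \ref{postcalcul}, verify $\delta_{T_{m,x}^N}>0$ via Propositions \ref{prop_mc_mullen_NA} and \ref{Hdimbounds}, and conclude by composing the continuous map $y \mapsto T_{m,y}^N$ from Lemma \ref{lem_choice_p} with the continuity of $\delta$ from Proposition \ref{prop_conseq_perron}. The points you flag as delicate (the entrywise identity $(T_{m,y}^{(s)})^N=(T_{m,y}^N)^{(s)}$ for $m>N$ and the choice of $p_{\alpha\beta,y}$ across mixed neighborhoods) are handled exactly as in the paper.
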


\begin{proof} 
By Lemma \ref{lem_perron} and Proposition \ref{prop_power_T_uniform}, for~$\xi>1$ and $N$ large enough, there exists a neighborhood $V'$ of $x$ such that the entries of $T_{m,y}^{N}$ are in $(0, \xi^{-1})$ for all $m \geqslant 1.$

By Corollary \ref{postcalcul}, for any $y \in \eS_g,$ if $m \geqslant N$, we have that $(T_{m,y}^N)^{(s)}=(T_{m,y}^{(s)})^N$, so
$\lambda(T_{m,y}^{(s)})=\lambda((T_{m,y}^N)^{(s)})$, where the entries of $(T_{m,y}^N)^{(s)}$ are those of $T_{m,y}^N$ raised to the power $s \in  \R_{\geqslant 0}$. Consequently, $\delta_{m,y}=\delta_{T_{m,y}^N}$, with $\delta_{T_{m,y}^N}$ defined as for Proposition \ref{prop_conseq_perron}. By Propositions~\ref{prop_mc_mullen_NA} and~\ref{Hdimbounds}, $\delta_{T_{m,x}^N}>0$.
 Hence, by Proposition~\ref{prop_conseq_perron}, there exists a neighborhood $U \subseteq M_{n \times n}(\R)$ of~$T_{m,x}^N$ on which the function $\delta$ is continuous, where $n:=2g(2g-1)^{m-1}.$ 
 
By Lemma \ref{lem_choice_p}, we may assume that the choice of the points $p_{\alpha\beta, y}$ is such that the entries of~$T_{m,y}$ are all continuous in $y \in V'$. Hence, the map $t: V' \rightarrow M_{n \times n}(\R), y \mapsto T_{m,y}^N$, is continuous. Consequently, there exists a neighborhood $V$ of $x$ such that $V \rightarrow \R_{\geqslant 0},$ ${y \mapsto \delta_{T_{m,y}^N}=\delta_{m,y}},$ is continuous.  
\end{proof}

In the following proof, we reproduce and extend an argument of McMullen for Archimedean places from \cite[Theorem 2.2]{mcmullen_hausdorff_3} and combine it with our uniform estimates. 
\begin{thm} \label{thm_uniform_convergence_mcmullen} For $x \in \eS_{g}$ which is non-Archimedean, there exists a neighborhood $V \subseteq \eS_{g}$ and a basis $\mathcal{B}$ for $\Ga_V$ such that the sequence of functions $(y \mapsto \delta_{m,y})_{m \in \N}$ converges uniformly toward the function $y \mapsto \delta_y$, where $\delta_y:=\Hdim \La_{y}$ and $\La_{y}$ is the limit set associated to $\Ga_y$. 
\end{thm}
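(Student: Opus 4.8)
The plan is to reduce the statement to a uniform comparison between the ``pressure'' functions attached to McMullen's matrices $T_{m,y}^{(s)}$, and then to convert uniform convergence of these functions into uniform convergence of their critical exponents $\delta_{m,y}$. First I would fix $\xi>1$ and the integer $N$ and shrink $V$ so that, simultaneously, the basis $\mathcal{B}$ of $\Ga_V$ satisfies Proposition \ref{prop_coordinate_change}, the uniform radius bound $r_{\g,y}\leqslant Rc^{\,l(\g)-1}$ of Lemma \ref{lem_uniform_bound_locus} holds, the uniform Hausdorff bound $\delta_y\leqslant A$ of Corollary \ref{cor_local_bound_hausdorff} holds, the uniform distortion estimate of Corollary \ref{cor_distortion_ball_bis} holds, the entries of $T_{m,y}^{N}$ are $<\xi^{-1}$ for all $m\geqslant 1$ and $y\in V$ (Proposition \ref{prop_power_T_uniform}), and the points $p_{\alpha\beta,y}$ are chosen as in Lemma \ref{lem_choice_p}. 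On the non-Archimedean locus of $V$ there is nothing to prove: Proposition \ref{prop_mc_mullen_NA} gives $\delta_{m,y}=\delta_y$ for every $m$, so the error vanishes identically there. The whole difficulty is therefore concentrated on the Archimedean points of $V$, and on making the bound uniform across the Archimedean/non-Archimedean interface.

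For the Archimedean analysis I would introduce, for $m> N$ and $y\in V$, the function $g_{m,y}(s):=\log\lambda(T_{m,y}^{(s)})$, so that $g_{m,y}(\delta_{m,y})=0$ by the definition \eqref{mcrit} (using continuity and monotonicity of the spectral radius of a Perron matrix). Since $(T_{m,y}^{N})^{(s)}=(T_{m,y}^{(s)})^{N}$ for $m\geqslant N$ by Corollary \ref{postcalcul}, one has $N\,g_{m,y}(s)=\log\lambda\big((T_{m,y}^{N})^{(s)}\big)$; applying the derivative estimate established inside the proof of Proposition \ref{prop_conseq_perron} to the matrix $T_{m,y}^{N}$, whose entries lie in $(0,\xi^{-1})$, yields the uniform slope bound $\partial_s g_{m,y}(s)\leqslant -\tfrac{\log\xi}{N}<0$ for all $s\geqslant 0$, $m>N$, $y\in V$. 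This fixed, $y$-independent lower bound on $|\partial_s g_{m,y}|$ is what will let me pass from functions to their zeros.

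Next I would prove that the family $(g_{m,y})_{m}$ is \emph{uniformly Cauchy} on $[0,A+1]\times V$. Using the spectral radius formula $\lambda(T_{m,y}^{(s)})=\lim_{n}\big(U^{t}(T_{m,y}^{(s)})^{n}U\big)^{1/n}$ together with Lemma \ref{calc} and Corollary \ref{postcalcul}, the quantity $U^{t}(T_{m,y}^{(s)})^{n}U$ is a sum, over reduced words of length $n+m$, of products of single-letter derivative factors evaluated at the chosen points inside the Schottky disks. For two scales $m'>m\geqslant N$ the corresponding products differ only through the disk in which each single-letter derivative is sampled; by Lemma \ref{lem_uniform_bound_locus} those disks have radius $\leqslant Rc^{\,m-1}$, so Corollary \ref{cor_distortion_ball_bis} controls each such discrepancy by a factor $\exp(\pm\tfrac14 Rc^{\,m-1}s)$. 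After taking $\tfrac1n\log(\cdot)$ and letting $n\to+\infty$, the multiplicity factor $(2g-1)^{m-1}$ and the finitely many boundary constants wash out, leaving
\begin{equation*}
|g_{m,y}(s)-g_{m',y}(s)|\ \leqslant\ \tfrac14 R\,c^{\,m-1}\,s\ \leqslant\ \tfrac{A+1}{4}\,R\,c^{\,m-1},
\end{equation*}
uniformly in $s\in[0,A+1]$, in $m'>m$, and in $y\in V$. Crucially, the estimate of Corollary \ref{cor_distortion_ball_bis} is independent of the residue characteristic and of $\varepsilon(y)$, which is exactly what guarantees uniformity as an Archimedean $y$ degenerates towards a non-Archimedean point.

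Finally I would convert this into the statement. Combining the uniform Cauchy bound with the slope bound gives, for $m'>m\geqslant N$,
\begin{equation*}
|\delta_{m,y}-\delta_{m',y}|\ \leqslant\ \frac{N}{\log\xi}\,\big|g_{m,y}(\delta_{m',y})-g_{m',y}(\delta_{m',y})\big|\ \leqslant\ \frac{N(A+1)R}{4\log\xi}\,c^{\,m-1},
\end{equation*}
after checking, via the covering argument of Corollary \ref{cor_local_bound_hausdorff}, that all $\delta_{m,y}$ lie in $[0,A+1]$. Hence $(\delta_{m,y})_m$ converges uniformly on $V$; since McMullen's \cite[Theorem 2.2]{mcmullen_hausdorff_3} identifies its pointwise limit at each Archimedean $y$ as $\delta_y=\Hdim\La_y$, and Proposition \ref{prop_mc_mullen_NA} gives the same limit, exactly, at every non-Archimedean $y$, the uniform limit is $y\mapsto\delta_y$ on all of $V$. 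I expect the main obstacle to be the bookkeeping in the uniformly Cauchy step: one must verify that the per-letter distortion factor $\exp(\pm\tfrac14 Rc^{\,m-1}s)$ genuinely survives the normalization $\tfrac1n\log(\cdot)$ as a single additive $O(c^{\,m-1})$ term, while the word-length-dependent multiplicities and the Archimedean boundary constants do not, and that this holds with constants independent of $y$ right up to the Archimedean/non-Archimedean boundary.
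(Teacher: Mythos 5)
Your proposal is correct in substance but takes a genuinely different route from the paper. The paper never forms the pressure functions $g_{m,y}(s)=\log\lambda(T_{m,y}^{(s)})$ and never compares two scales $m<m'$ to each other: instead it anchors each $\delta_{m,y}$ directly at $\delta_y$ by introducing the infimum/supremum matrices $W_{m,y}^{(s)}$, $U_{m,y}^{(s)}$ and a Patterson--Sullivan measure $\mu$ supported on $\La_y$, whose conformality of exponent $\delta_y$ yields $W_{m,y}^{(\delta_y)}v_{m,y}\leqslant v_{m,y}\leqslant U_{m,y}^{(\delta_y)}v_{m,y}$ for $v_{m,y}=(\mu(D_{\alpha,y}))_\alpha$, hence $\lambda((W_{m,y}^{(\delta_y)})^N)\leqslant 1\leqslant\lambda((U_{m,y}^{(\delta_y)})^N)$; the uniform distortion estimate (Corollary \ref{cor_distortion_ball_bis}) and the entry bound $\xi^{-1}$ (Proposition \ref{prop_power_T_uniform}) then convert the multiplicative error $\xi^{\pm\eta}$ into the shift $s\mapsto s\mp\eta$, giving $\delta_{m,y}\in[\delta_y-\eta,\delta_y+\eta]$ outright, so pointwise convergence is re-proved rather than quoted. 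Your route replaces the measure-theoretic anchor by (i) a uniform Cauchy estimate between scales, obtained from the same distortion input --- and your bookkeeping is right: for nested words the two sample points lie in a common generation-$(m+1)$ disk, each of the $n$ letter-factors is distorted by at most $\exp(\pm\tfrac14 sRc^{m-1})$ after converting $|(a^{-1})'(p)|^{-1}$ into $|a'(a^{-1}p)|$ so that Corollary \ref{cor_distortion_ball_bis} applies to a reduced pair, and the multiplicity $(2g-1)^{m'-m}$ is constant in $n$ and killed by the $\tfrac1n\log$ normalization --- together with (ii) the slope bound $\partial_s g_{m,y}\leqslant -\log\xi/N$ to turn closeness of pressures into closeness of their zeros, the pointwise limit being outsourced to \cite[Theorem 2.2]{mcmullen_hausdorff_3} at Archimedean points and to Proposition \ref{prop_mc_mullen_NA} at non-Archimedean ones. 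What you buy is the elimination of Patterson--Sullivan theory and an explicit Cauchy rate $O(c^{m-1})$; what you pay is black-box dependence on McMullen's convergence theorem (applied on each Archimedean fiber, which is harmless since both $\delta_{m,y}$ and $\delta_y$ rescale by $1/\varepsilon(y)$ under $|\cdot|_\infty^{\varepsilon(y)}$).

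Two points need patching, both fixable in a few lines. First, for $m>N$ the matrix $T_{m,y}^N$ has many vanishing entries (by Corollary \ref{postcalcul} its $(\alpha,\beta)$ entry is nonzero only when the length-$(m-N)$ prefix of $\beta$ matches the corresponding suffix of $\alpha$), so its entries lie in $[0,\xi^{-1})$, not $(0,\xi^{-1})$; the derivative estimate inside Proposition \ref{prop_conseq_perron} still applies because zero-weight paths contribute nothing to $f_n$ or $\partial_s f_n$, every nonzero path weight is $<\xi^{-n}$, and $T_{m,y}^N$ remains primitive with no zero rows --- but this must be said explicitly (the paper makes the same slip in Corollary \ref{cor_continuity_deltam}). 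Second, the a priori bound on the evaluation point $\delta_{m',y}$ cannot be quoted from Corollary \ref{cor_local_bound_hausdorff}, which bounds $\delta_y$ rather than $\delta_{m,y}$; instead, note that each row of $T_{m',y}^{(0)}$ has exactly $2g-1$ ones, so $g_{m',y}(0)=\log(2g-1)$, and your own slope bound then gives $\delta_{m',y}\leqslant N\log(2g-1)/\log\xi$ uniformly on $V$. With these repairs your argument goes through.
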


\begin{proof}
Let $V \subseteq \eS_g$ be a neighborhood of $x$, and $\mathcal{B}$ a basis of $\Ga_V$, such that the statements of  Proposition~\ref{prop_coordinate_change}, Lemma~\ref{lem_uniform_bound_locus}, and Proposition~\ref{prop_power_T_uniform} hold. This means there exist constants $c \in (0,1)$ and $R>0$ such that $r_{\g, y} \leqslant Rc^{l(\g)-1}$ for any $\g \in \Ga_y$ and any $y \in V$; there also exists~${\xi>1}$ such that for a large enough $N$ and any $m \in \N_{\geqslant 1}$, the entries of $T_{m,y}^N$ are strictly bounded above by $\xi^{-1}$ for all $y \in V$. We remark here that for $\g \in \Ga_y$ of large enough length (uniform on all of $V$), $r_{\g,y} \leqslant Rc^{l(\g)-1}<1$ for all $y \in V$. If $y$ is non-Archimedean, then $\delta_{m,y}=\delta_y$ for all $m \in \N_{\geqslant 1}$ by Proposition \ref{prop_mc_mullen_NA}. 

Now assume $y$ is Archimedean. Since $V$ is chosen so that Proposition \ref{prop_coordinate_change} holds, we can assume that the Schottky figure is contained in the open unit disk.
We consider two matrices $W_{m,y}^{(s)}$ and $U_{m,y}
^{(s)}$ whose entries are indexed by reduced words in $
\Ga_y$ of length~$m$. As in the definition of $T_{m}^{(s)}$ in \eqref{matrixT}, for $\alpha=a_1\cdots a_m$ and $
\beta=b_1 \cdots b_m$ such words, let the entry $w_{\alpha\beta, m,y} 
^{(s)}$ of $W_{m,y}^{(s)}$ be defined as
\begin{equation}
w_{\alpha\beta, m, y}^{(s)}=
\begin{cases}
0 & \text{if } D_{\alpha,y} \cap {a_1D_{ \beta,y} \cap \La_{y}} \neq \emptyset, \\
\inf_{p \in D_{\alpha,y} \cap a_1 D_{\beta, y} \cap \bP^1(\Hr(y))} |(a_1^{-1})'(p)|_y^{-s} & \text{otherwise.}
\end{cases}
\end{equation}
We define similarly the entry $u_{\alpha \beta, m,y}^{(s)}$ of the matrix $U_{m,y}^{(s)}$ by taking the supremum instead of the infimum. We note that $(W_{m,y}^{(s)})^n \leqslant (T_{m,y}^{(s)})^n \leqslant (U_{m,y}^{(s)})^n$ for any $n \in \N$, meaning these inequalities are true entry-wise.

Following McMullen's \cite[Theorem 2.2]{mcmullen_hausdorff_3}, consider the vector $v_{m,y}$ whose entries are~$\mu(D_{\alpha,y})$ for $\alpha \in \Gamma_y$ of length $m$. Here $\mu$ denotes a \emph{Patterson-Sullivan measure} on $\bP^{\pan}_{\Hr(y)}$ induced by the group $\Gamma_y$. We recall that $\mu$ is supported on the limit set ${\La_{y}}$.

As $D_{\alpha,y} \cap \Lambda_{y}=\bigsqcup_{l(\beta)=m} (D_{a_1\beta,y} \cap \La_{y})$ whenever ${D_{\alpha,y}} \cap {a_1D_{\beta,y}} \cap \La_{y}\neq \emptyset$, using the quasi-invariance property of $\mu$, we have for all $y \in V$ and all such $\beta \in \Ga_V$:  %
\begin{align*}
\mu(D_{\alpha,y}) &= \sum_{l(\beta)=m} \mu(D_{a_1\beta,y})
   = \sum_{l(\beta)=m} {(a_1^{-1}})_* \mu (D_{\beta,y}) 
 = \sum_{l(\beta)=m} \int_{D_{\beta,y}} |{a_1^{-1}}'(z)|_y^{-\delta_{y}} d\mu  \\ & \leqslant \sum_{l(\beta)=m} u_{\alpha \beta m, y}^{\delta_{y}}\int_{D_{\beta,y}} d\mu  \leqslant  \sum_{l(\beta)=m} u_{\alpha \beta, m, y}^{\delta_{y}} \mu(D_{\beta,y}) = r_{\alpha,m,y} v_{m,y}, 
\end{align*}
where $r_{\alpha,m,y}$ is the row of $U_{m,y}^{(\delta_y)}$ corresponding to the word $\alpha$.
Similarly, $\mu(D_{\alpha,y}) \geqslant r_{\alpha,m,y}' v_{m,y}$, where $r_{\alpha,m,y}'$ is the $\alpha$-row of $W_{m,y}^{(\delta_y)}$. 
Since this is true for all $\alpha$, \emph{i.e.} for all rows of~$U_{m,y}^{(\delta_y)}$ and~$W_{m,y}^{(\delta_y)}$, we obtain
$W_{m,y}^{(\delta_y)} v_{m,y} \leqslant v_{m,y} \leqslant U_{m,y}^{(\delta_y)} v_{m,y}$.
As these matrices have non-negative entries, 
{ $(W_{m,y}^{(\delta_y)})^N v_{m,y} \leqslant v_{m,y} \leqslant (U_{m,y}^{(\delta_y)})^N v_{m,y}.$}
The Perron-Frobenius theorem then implies that \begin{equation} \label{PF}
     \lambda((W_{m,y}^{(\delta_y)})^N) \leqslant 1 \leqslant \lambda((U_{m,y}^{(\delta_y)})^N).
\end{equation}
By Corollary \ref{cor_distortion_ball_bis}, we have for $m$ large enough and any $y \in V$:
\begin{equation*}
e^{-r_{\beta,y}\delta_y/4} \leqslant \dfrac{w_{\alpha\beta, m,y}^{(\delta_y)}}{u_{\alpha \beta, m,y}^{(\delta_y)}} \leqslant e^{r_{\beta,y}\delta_y/4}. 
\end{equation*}
Now using the uniform upper-bound $r_{\beta,y} \leqslant Rc^{m-1}$ of Lemma \ref{lem_uniform_bound_locus} with $m$ large enough so that $Rc^{m-1}<1$,  we get: 
\begin{equation} \label{fraccalc}
\exp\left(-\dfrac{R c^{m-1}\delta_y}{4}\right) u_{\alpha \beta, m,y}^{(\delta_y)} \leqslant w_{\alpha \beta, m,y}^{(\delta_y)} \leqslant \exp\left(\dfrac{R c^{m-1}\delta_y}{4} \right) u_{\alpha \beta, m,y}^{(\delta_y)}.  
\end{equation}

Denote by $\tilde{t}_{\alpha \beta, m,y}^{(s)}$, $\tilde{w}_{\alpha \beta, m,y}^{(s)}$, $\tilde{u}_{\alpha \beta, m,y}^{(s)}$ the coefficients of the matrices $(T_{m,y}^{(s)})^N$, $(W_{m,y}^{(s)})^N$, and $(U_{m,y}^{(s)})^N$ respectively. 
By Corollary \ref{postcalcul}, for $m \geqslant N$, we can apply \eqref{fraccalc} $N$-times and  obtain that for all $y\in V$ and all $\alpha, \beta \in \Gamma_y$ chosen  as above: 
\begin{equation} \label{eq_conseq_disto}
\exp\left(-\dfrac{NR c^{m-1}\delta_y}{4} \right) \leqslant \dfrac{\tilde{w}_{\alpha \beta, m,y}^{(\delta_y)}}{\tilde{u}_{\alpha \beta, m,y}^{(\delta_y)}}  \leqslant \exp\left(\dfrac{NR c^{m-1}\delta_y}{4} \right). 
\end{equation}

{By Corollary \ref{cor_local_bound_hausdorff}, we may assume  there exists $A>0$ such that  for any $y \in V$, we have~${\delta_y \leqslant A}$. 
Let $\varepsilon>0$. %
Let $M \in \N$ be large enough  {so that  for} 
$\eta:=\frac{A N R c^{M-1}}{{4}\log \xi} \leqslant \varepsilon,$}
we have $$\xi^{\eta}=\exp \left(\dfrac{AN R c^{M-1}}{4} \right) \geqslant \exp \left(\dfrac{\delta_y N R c^{m-1}}{4} \right)$$ for any~${m \geqslant M}$.   
By \eqref{eq_conseq_disto}, we obtain that 
$\xi^{-\eta} \leqslant \dfrac{\tilde{w}_{\alpha \beta, m,y}^{(\delta_y)}}{\tilde{u}_{\alpha \beta, m,y}^{(\delta_y)}}.$
As the bound depends only on $V$,
we have that $\xi^\eta (W_{m,y}^{(\delta_y)})^N \geqslant (U_{m,y}^{(\delta_y)})^N$, meaning these inequalities are true entry-wise for any $y \in V$ and any $m \geqslant \max(M,N)$.

Assume $\delta_y -\eta>0$. For $\alpha,\beta \in \Ga_y$ reduced words of length $m$, by Proposition \ref{prop_power_T_uniform}, each entry of $T_{m,y}^N$ is bounded by $\xi^{-1}$. As $m \geqslant N$,
by Corollary \ref{postcalcul}, {we obtain that $\tilde{t}_{\alpha\beta, m, y}^{(s)} \leqslant \xi^{-s}$ for $s \in \{\eta, \delta_y - \eta, \delta_y\}$, and $\tilde{t}_{\alpha\beta, m, y}^{(\eta)} \tilde{t}_{\alpha\beta, m, y}^{(\delta_y-\eta)} = \tilde{t}_{\alpha\beta, m, y}^{(\delta_y)}$.  Hence, ${\tilde{t}}_{\alpha \beta, m,y}^{(\eta)} \leqslant \xi^{-\eta}$ and  ${\tilde{t}}_{\alpha \beta, m,y}^{(\delta_y -\eta)} \xi^{-\eta} \geqslant {\tilde{t}}_{\alpha \beta, m,y}^{(\delta_y)}$, implying: 
\begin{equation*}
(T_{m,y}^{(\delta_y - \eta)})^N \geqslant \xi^{\eta} (T_{m,y}^{(\delta_y)})^N \geqslant \xi^{\eta} (W_{m,y}^{(\delta_y)})^N \geqslant (U_{m,y}^{(\delta_y)})^N.  
\end{equation*} 
As the entries of all these matrices are non-negative, the spectral radius is an increasing function, meaning $\lambda((T_{m,y}^{(\delta_y-\eta)})^N) \geqslant \lambda((U_{m,y}^{(\delta_y)})^N) \geqslant 1$ by \eqref{PF}.
}
Similarly, we have ${\tilde{t}}_{\alpha \beta, m, y}^{(\delta_y+\eta)} \leqslant \xi^{-\eta} {\tilde{t}}_{\alpha \beta, m, y}^{(\delta_y)},$ and so $(T_{m,y}^{(\delta_y+\eta)})^N \leqslant \xi^{-\eta} (U_{m, y}^{(\delta_y)})^N \leqslant (W_{m,y}^{(\delta_y)})^N$.
As a consequence, $\lambda((T_{m,y}^{(\delta_y+\eta)})^N) \leqslant \lambda((W_{m,y}^{(\delta_y)})^N) \leqslant 1$ by \eqref{PF}.

In summary, for any $y \in V$ such that $\delta_y>\eta$ and any $m \geqslant \max(M, N)$, we have \begin{equation} \label{specineq}
\lambda((T_{m,y}^{(\delta_y - \eta)})^N) \geqslant 1 \geqslant \lambda((T_{m,y}^{(\delta_y +\eta)})^N).
\end{equation}

By the Perron-Frobenius theory, the function $\lambda(T_{m,y}^{(s)})$ is continuous and strictly decreasing in~$s$, hence $\delta_{m,y}$ is the unique solution to $\lambda(T_{m,y}^{(s)})=1$. By \eqref{specineq},  if $y \in V$ is such that $\delta_y>\eta$, then $\delta_{m,y} \in [\delta_y - \eta, \delta_y + \eta]$. If $y\in V$ is such that $\delta_y \leqslant \eta$, then one still has $\delta_{m,y} \in [\delta_y-\eta, \delta_y+\eta]$, seeing as $\delta_{m,y}\geqslant 0$ and $\delta_{m,y} \leqslant \delta_y+\eta$.
As $\eta \leqslant \varepsilon$, we have thus proved that for any $y \in V$ and any $m \geqslant \max(M,N)$, we have $|\delta_{m,y}-\delta_y| \leqslant \varepsilon$, meaning the convergence is uniform in $m$. 
\end{proof}

\subsection{Continuity of the Hausdorff dimension}

{We refer to Section \ref{section_modulispace} for the definition of the moduli space $\eS_g$ of rank $g$ Schottky groups for~${g \geqslant 1}$}. We now have all the necessary tools to prove the following:

\begin{thm} \label{thm_continuity} Let $g \geqslant 1.$
The function $d: \eS_g \rightarrow \R_{\geqslant 0}, x \mapsto \Hdim{\La_{x}}$, is continuous, where $\Hdim{\La_x}$  denotes the Hausdorff dimension of the limit set $\La_x$ of the Schottky group $\Ga_x$.
\end{thm}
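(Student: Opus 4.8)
The plan is to prove continuity of $d$ by distinguishing cases according to the nature of the point $x \in \eS_g$, exploiting that the Archimedean part of $\eS_g$ is open. Recall that $\eS_g$ carries a structural morphism $\pi: \eS_g \to \M(\Z)$, and by Example~\ref{ex_MZ} the set of Archimedean points is open. On a purely Archimedean open, the continuity (in fact real-analyticity) of $d$ is classical, following from Ruelle's work and McMullen's analysis over $\C$ (\emph{cf.} \cite{ruelle}, \cite{mcmullen_hausdorff_3}); so the heart of the matter is continuity at non-Archimedean points, which is where all the uniform machinery built in this section comes to bear.

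The first case I would treat is $g=1$. Here $\La_x$ consists of exactly two points (the attractive and repelling fixed points of the single generator), so $\Hdim \La_x = 0$ identically, and $d \equiv 0$ is trivially continuous. Thus I may assume $g \geqslant 2$ for the remainder.

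For the main case, let $x \in \eS_g$ be a non-Archimedean point. The strategy is to combine Theorem~\ref{thm_uniform_convergence_mcmullen} with Corollary~\ref{cor_continuity_deltam}. By Theorem~\ref{thm_uniform_convergence_mcmullen}, there is a neighborhood $V \subseteq \eS_g$ of $x$ and a basis $\mathcal{B}$ of $\Ga_V$ such that the approximating functions $y \mapsto \delta_{m,y}$ converge \emph{uniformly} on $V$ to $y \mapsto \delta_y = \Hdim \La_y$ as $m \to +\infty$. By Corollary~\ref{cor_continuity_deltam}, for each sufficiently large $m$ the function $y \mapsto \delta_{m,y}$ is continuous on a (possibly smaller) neighborhood of $x$. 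Since a uniform limit of continuous functions is continuous, I conclude that $d$ restricted to this neighborhood is continuous at $x$. This establishes continuity of $d$ at every non-Archimedean point.

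It remains to handle continuity at Archimedean points, which completes the proof since every point of $\eS_g$ is either Archimedean or non-Archimedean. As noted, the set of Archimedean points is an open subset $\eS_g^{\mathrm{ar}}$ of $\eS_g$ (Example~\ref{ex_MZ}); on this open, by the homeomorphism identifications of Section~\ref{section_modulispace} and the scaling behavior of the Hausdorff dimension (Remark~\ref{rem_scaling}), continuity reduces to continuity over the complex moduli space $\eS_{g,\C}$, where the result follows from Ruelle's real-analyticity theorem \cite{ruelle}. Combining the two cases yields continuity of $d$ on all of $\eS_g$. The main obstacle in this argument is entirely absorbed into the preceding subsections: it is the uniform convergence statement of Theorem~\ref{thm_uniform_convergence_mcmullen}, whose proof required the uniform distortion estimates (Corollary~\ref{cor_distortion_ball_bis}), the uniform bound on the Hausdorff dimension (Corollary~\ref{cor_local_bound_hausdorff}), and the continuity of McMullen's eigenvalue approximations across the mixed Archimedean/non-Archimedean locus. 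Given that groundwork, the present theorem is a clean assembly of a pointwise-continuity-at-non-Archimedean-points statement with the openness of the Archimedean locus.
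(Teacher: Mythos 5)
Your proposal is correct and follows essentially the same route as the paper: the $g=1$ case is dismissed as trivial, continuity at non-Archimedean points comes from combining Corollary~\ref{cor_continuity_deltam} with the uniform convergence of Theorem~\ref{thm_uniform_convergence_mcmullen}, and continuity at Archimedean points uses openness of the Archimedean locus, fiberwise continuity over $\C$, and the scaling of Remark~\ref{rem_scaling}. The only difference is one of detail: the paper makes the cross-fiber step explicit (the neighborhood $V=\{y^{\alpha/\alpha_0}: y \in V_0,\ \alpha \in (\alpha_0-c,\alpha_0+c)\}$ together with the triangle-inequality estimate controlling $|1-\alpha_0/\alpha|\,d(y)$), whereas you leave that reduction implicit — but the ingredients you invoke are exactly the ones the paper uses to carry it out.
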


\begin{proof} If $g=1$, then $d$ is the constant zero function. Let us assume $g \geqslant 2.$

If $x \in \eS_g$ is Archimedean, let $\pi(x)=|\cdot|_{\infty}^{\alpha_0}$ for $\alpha_0 \in (0,1]$, where $\pi: \eS_g \rightarrow \M(\Z)$ denotes the projection. Let $\varepsilon>0$. By \cite[Theorem 3.1]{mcmullen_hausdorff_3}, the function $d$ is continuous on $\pi^{-1}(|\cdot|_{\infty}^{\alpha_0})\cong \eS_{g, \Hr(x)}$, so there exists a neighborhood $V_0 \subseteq \pi^{-1}(|\cdot|_{\infty}^{\alpha_0})$ of $x$ such that for any $y \in V_0$ one has $|d(x) -d(y)|<\varepsilon/2$. 

Let $c>0$ be such that $\frac{c}{\alpha_0-c}(d(x)+ \varepsilon/2) < \varepsilon/2$. By \cite[\S~2.3]{poineau_turcheti_universal}, the set $V:=\{y^{\alpha/\alpha_0}: y \in V_0, \alpha \in (\alpha_0-c, \alpha_0+c)\}$ is an open subset of $\eS_g$ containing $x$. We note that for any $z=y^{\alpha/\alpha_0} \in V$, one has $d(z)=\frac{\alpha_0}{\alpha} d(y)$ (see Remark \ref{rem_scaling}). Then $$|d(x)-d(z)|\leqslant |d(x) -d(y)| + |1-\frac{\alpha_0}{\alpha}|d(y) \leqslant \varepsilon/2+ 
\frac{c}{\alpha} d(y) \leqslant  \varepsilon/2+ \frac{c}{\alpha_0-c}(d(x)+ \varepsilon/2) < \varepsilon,$$ 
showing that $d$ is continuous on the (purely Archimedean) neighborhood $V$ of $x$. 

Assume $x$ is non-Archimedean. Then, by Theorem~\ref{thm_uniform_convergence_mcmullen}, there exists a neighborhood $V \subseteq \eS_g$ of $x$ such that the sequence of functions $(y \mapsto \delta_{m,y})_{m \in \N}$ on $V$ converges uniformly to the function~$d_{|V}.$ By Corollary \ref{cor_continuity_deltam}, there exists $N \in \N$ such that the functions $y \mapsto \delta_{m,y}$ are continuous at~$x$ for all $m > N.$ As a uniform limit of continuous functions, $d$ is continuous at~$x$.

\end{proof}
We now prove that, more generally, the Hausdorff dimension of the limit sets varies continuously over the moduli space of Schottky groups over \emph{any} Banach ring (\emph{cf.} Definition \ref{def_banachring}). We recall the construction of this moduli space in Section~\ref{section_modulispace}. 

\begin{thm} \label{thm_contbanach}
Let $g \geqslant 1.$ Let $(A, \|\cdot\|)$ be a Banach ring. The function $d_A: \eS_{g, A} \rightarrow \mathbb{R}_{\geqslant 0}$, ${x \mapsto \Hdim \La_x}$, is continuous. Here $\La_x$ denotes the limit set of the Schottky group $\Ga_x$ associated to~$x \in \eS_{g,A}$. 
\end{thm}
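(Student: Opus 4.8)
The plan is to deduce the general Banach ring case from the already-established case $A = \mathbb{Z}$ (Theorem \ref{thm_continuity}) by exploiting the continuous projection morphism $\pi_A \colon \Af_A^{m, \an} \rightarrow \Af_{\Z}^{m, \an}$ from Remark \ref{rem_banachring}, where $m = 1$ if $g=1$ and $m = 3g-3$ otherwise. The key structural fact, established in Lemma \ref{lem_basechangemoduli}, is that $\pi_A^{-1}(\eS_g) = \eS_{g, A}$, so that $\pi_A$ restricts to a continuous map $\eS_{g, A} \rightarrow \eS_g$. First I would observe that for any point $x \in \eS_{g, A}$, setting $y := \pi_A(x)$, the projection induces a complete valued field extension $\Hr(x)/\Hr(y)$, and moreover $\Ga_y$ is the preimage of $\Ga_x$ under the inclusion $\PGL_2(\Hr(y)) \subseteq \PGL_2(\Hr(x))$; this is exactly the setup recorded in the proof of Lemma \ref{lem_basechangemoduli}.

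The crucial point is then that the Hausdorff dimension is invariant under this base change. Indeed, by Lemma \ref{cor_hausdorff_dim_extension} (applied to the complete valued field extension $\Hr(x)/\Hr(y)$, with the projective lines endowed with their respective spherical metrics, for which the restriction of the projection to $\pi_{\Hr(x)/\Hr(y)}^{-1}(\bP^1(\Hr(y)))$ is an isometry), one has $\Hdim \La_x = \Hdim \La_y$. In other words, $d_A = d \circ \pi_A$, where $d \colon \eS_g \rightarrow \R_{\geqslant 0}$ is the Hausdorff dimension function over $\Z$ from Theorem \ref{thm_continuity}. Since $d$ is continuous by Theorem \ref{thm_continuity} and $\pi_A$ is continuous by Remark \ref{rem_banachring}, the composite $d_A = d \circ \pi_A$ is continuous, which is precisely the assertion.

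The only genuine subtlety — and the step I expect to require the most care — is verifying that Lemma \ref{cor_hausdorff_dim_extension} applies fiberwise in the relative setting. One must check that the identification of $\La_x$ with a subset of a single fiber $\bP^1(\Hr(x))$ is compatible with the field extension $\Hr(x)/\Hr(y)$ determined by $\pi_A$, and that the limit set $\La_y$ of the group $\Ga_y$ over $\Hr(y)$ maps isometrically onto $\La_x$ under the induced projection $\pi_{\Hr(x)/\Hr(y)} \colon \bP^{\pan}_{\Hr(x)} \rightarrow \bP^{\pan}_{\Hr(y)}$. This is guaranteed by Lemma \ref{lem_schottky_field_extension}, which shows that $\Ga$ is Schottky over $\Hr(x)$ if and only if it is Schottky over $\Hr(y)$, and that the restriction of the projection to $\La_x$ is a homeomorphism onto $\La_y$; combined with Lemma \ref{cor_hausdorff_dim_extension}, this gives the desired equality $\Hdim \La_x = \Hdim \La_y$ without any additional hypotheses on $A$. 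The argument is uniform in the nature (Archimedean or non-Archimedean) of the points involved, since both Lemma \ref{lem_schottky_field_extension} and Lemma \ref{cor_hausdorff_dim_extension} hold for arbitrary complete valued field extensions.

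\begin{proof}
If $g = 1$, then $d_A$ is identically zero and there is nothing to prove, so assume $g \geqslant 2$ and set $m = 3g - 3$. Let $\pi_A \colon \Af_A^{m, \an} \rightarrow \Af_{\Z}^{m, \an}$ be the continuous projection of Remark \ref{rem_banachring}, and let $d \colon \eS_g \rightarrow \R_{\geqslant 0}$ be the Hausdorff dimension function from Theorem \ref{thm_continuity}, which is continuous by \emph{loc.\ cit.} By Lemma \ref{lem_basechangemoduli}, $\pi_A^{-1}(\eS_g) = \eS_{g, A}$, so $\pi_A$ restricts to a continuous map $\eS_{g, A} \rightarrow \eS_g$, which we continue to denote $\pi_A$.

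Fix $x \in \eS_{g, A}$ and set $y := \pi_A(x) \in \eS_g$. The projection $\pi_A$ induces a complete valued field extension $\Hr(x)/\Hr(y)$, and as in the proof of Lemma \ref{lem_basechangemoduli}, the Schottky group $\Ga_y \subseteq \PGL_2(\Hr(y))$ is identified with $\Ga_x \subseteq \PGL_2(\Hr(x))$ via the inclusion $\PGL_2(\Hr(y)) \subseteq \PGL_2(\Hr(x))$. By Lemma \ref{lem_schottky_field_extension}, the limit set $\La_x$ over $\Hr(x)$ and the limit set $\La_y$ over $\Hr(y)$ are identified via the projection $\pi_{\Hr(x)/\Hr(y)} \colon \bP^{\pan}_{\Hr(x)} \rightarrow \bP^{\pan}_{\Hr(y)}$, whose restriction to $\La_x$ is a homeomorphism onto $\La_y$. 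Endowing $\bP^1(\Hr(x))$ and $\bP^1(\Hr(y))$ with their respective spherical metrics, the restriction of $\pi_{\Hr(x)/\Hr(y)}$ to $\pi_{\Hr(x)/\Hr(y)}^{-1}(\bP^1(\Hr(y)))$ is an isometry onto $\bP^1(\Hr(y))$, so Lemma \ref{cor_hausdorff_dim_extension} yields
\begin{equation*}
d_A(x) = \Hdim \La_x = \Hdim \La_y = d(\pi_A(x)).
\end{equation*}
Thus $d_A = d \circ \pi_A$. As a composition of continuous maps, $d_A$ is continuous.
\end{proof}
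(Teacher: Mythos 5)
Your proposal is correct and follows essentially the same route as the paper: the paper's proof likewise writes $d_A = d_{\Z} \circ \pi_A$ using the continuous projection from Remark \ref{rem_banachring} and Lemma \ref{lem_basechangemoduli}, and invokes Lemma \ref{cor_hausdorff_dim_extension} (resting on Lemma \ref{lem_schottky_field_extension}) to get $\Hdim \La_x = \Hdim \La_{\pi_A(x)}$, then concludes by Theorem \ref{thm_continuity}. Your separate handling of $g=1$ and your explicit verification of the fiberwise compatibility are harmless elaborations of the same argument.
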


\begin{proof}
 Let $\pi_A: \eS_{g, A} \rightarrow \eS_g$ be the \emph{continuous} projection morphism obtained from Remark \ref{rem_banachring}  and Lemma \ref{lem_basechangemoduli}. Let $x \in \eS_{g, A}$ and $y:=\pi_A(x) \in \eS_g$. Let $\Ga_x \subseteq \PGL_2(\Hr(x))$, respectively $\Ga_y \subseteq \PGL_2(\Hr(y))$, denote the Schottky group associated to $x$, respectively $y$. By construction, $\Ga_y$ is the preimage of $\Ga_x$ via $\PGL_2(\Hr(y)) \subseteq \PGL_2(\Hr(x))$. By Lemma \ref{cor_hausdorff_dim_extension} (see also Remarks~\ref{rem_sphericalmetric} and~\ref{sphericalm}), one obtains that $\Hdim \La_x=\Hdim \La_y$. Consequently, $d_A=d_{\Z} \circ \pi_A$. By Theorem~\ref{thm_continuity},~$d_{\Z}$ is continuous, so $d_A$ is also a continuous function.  
\end{proof} 

\begin{rem} \label{rem_analytic} \begin{enumerate}
\item  It was shown by Ruelle in \cite{ruelle} that $d_{|\eS_{g, \C}}$ is real-analytic. 
\item In a similar direction, if $k$ is non-Archimedean, the combination of 
Propositions \ref{prop_mc_mullen_NA}, \ref{prop_power_T_uniform} and \ref{prop_conseq_perron} 
  implies that $d_{|\eS_{g,k}}$ is continuous. But, moreover, in the proof of Proposition~\ref{prop_conseq_perron}, we see that it can locally be written as $g \circ (f_1, \dots, f_l)$, where $g$ is a real analytic function $U \rightarrow \R$, with $U \subseteq \R^l$ for some $l \geqslant 1$, and $(f_1, \dots, f_l)$ sends a point $y$ to $T_{m,y}^N$ for some $N \in \N$. By Lemma~\ref{lem_mcmullen_matrix_coef_NA},  $f_i: y \mapsto \dfrac{r_{a^i,y}}{r_{b^i, y}}e^{-\modu(D_{(a^i)^{-1},y}, D_{b^i, y})}$, where $a^i, b^i$ are length one words, and~$r_{a,y}$ denotes the radius of a relative twisted Ford disk in the fiber $\bP^{\pan}_{\Hr(y)}$. One notes that due to the definition of these disks, the quantities~$r_{a,y}$ and $e^{-\modu(D_{a,y}, D_{b,y})}$
are of the form $|\alpha|_y$ for some $\alpha \in \mathcal{O}(V)$, where $V \subseteq \eS_{g,k}$ is an open and~$y \in V$.
\end{enumerate}
\end{rem}

\section{Degeneration of Schottky groups} \label{section_degen}

We use the continuity of the Hausdorff dimension obtained in Section \ref{section_unifoorm} to study the asymptotic behavior of the Hausdorff dimension of degenerating families of Schottky groups over $\C$. We then give some examples.

\subsection{Asymptotic behavior of the Hausdorff dimension}
Following \cite{berkovich_non_archimedean} and \cite[Appendix A.3]{boucksom_jonsson_degeneration}, for $r:=\frac{1}{e}$ let us consider the Banach ring
$$A_r := \{ f = \sum c_\alpha t^\alpha \in \C((t)) \ | \  ||f||_{\mathrm{hyb}}:= \sum_{\alpha \in \mathbb{Z}} |c_\alpha|_{\mathrm{hyb}} r^\alpha < +\infty \}. $$
In Section \ref{section_berkovich_Z}, we recalled the notion of the Berkovich spectrum $\M(A_r)$ of $A_r$.

For $s>0$, let $D_s, \overline{D}_s \subseteq \C$ denote the open, respectively closed, disk centered at $0$ and of radius $s$. Set $D_s^{*}:=D_s \backslash \{0\}$ and $\overline{D_s}^*:=\overline{D_s} \backslash \{0\}.$
By \cite[Proposition~A.4]{boucksom_jonsson_degeneration}, there is a homeomorphism $\overline{D}_r \rightarrow \M(A_r),$ $z \mapsto |\cdot|_z$, where for $f \in A_r,$
$$|f|_z=\begin{cases}
e^{-\mathrm{ord}_0(f)} & \text{ if } z=0, \\
|f^{\mathrm{hol}}(z)|_{\infty}^{-\frac{1}{\log |z|_{\infty}}} & \text{ if } z \neq 0.\\
\end{cases}$$
Here $|\cdot|_{\infty}$ denotes the absolute value on $\C$, and 
$f^{\mathrm{hol}}$ denotes the holomorphic function on the open disk $D_r$ induced by the formal Laurent series $f \in A_r$. 

For $z \in {\overline{D}_r}^*$, set $\varepsilon(z)=-\frac{1}{\log |z|_{\infty}}.$
We note that for any such $z$ the completed residue field of~${|\cdot|_z}$ (\emph{cf.} Section \ref{section_berkovich_Z}), is $\mathcal{H}(|\cdot|_z)=(\C, |\cdot|_{\infty}^{\varepsilon(z)})$. For $z=0$, the completed residue field of $|\cdot|_0$ is $\Hr(|\cdot|_0)=(\C((t)), |\cdot|_t)$, where $|\cdot|_t=e^{-\mathrm{ord}_0(\cdot)}$.

Let $g>1$ be an integer and $r'$ a real number such that $0<r'\leqslant r$. Let $p: D \rightarrow \C^{3g-3}$ be a function continuous on $\overline{D}_{r'}^*$, holomorphic on ${D_{r'}}^*$, and meromorphic at $0$. Let us denote its coordinate functions $D \rightarrow \C$ by $\alpha_3, \dots, \alpha_g, \beta_2, \dots, \beta_g, y_1, \dots, y_g$. We will sometimes simply denote $p(z)=(\underline{\alpha}(z), \underline{\beta}(z), \underline{y}(z))$ for~${z \in D}$. 

Note that there is a natural isomorphism $N_{z}: (\C, |\cdot|_{\infty}) \rightarrow (\C, |\cdot|_{\infty}^{\varepsilon(z)}).$
Hence, the point $p(z) \in \C^{3g-3}=\Af_{\C, |\cdot|_{\infty}}^{3g-3, \an}$ induces uniquely a point $N_z(p(z)) \in \Af_{\C, |\cdot|_{\infty}^{\varepsilon(z)}}^{3g-3, \an}=\Af_{\Hr(|\cdot|_z)}^{3g-3, \an}$ for $z \neq 0$. 

As $p$ is meromorphic at $0$, the coordinate functions induce elements of $\C((t))$ (when replacing the complex variable $z$ by a formal indeterminate $t$), so $p(0)$ determines uniquely an element of $\Af_{\C((t))}^{3g-3, \an},$ which is moreover a $\C((t))$-point. 

Recall that $\Af_{\Hr(|\cdot|_z)}^{3g-3, \an}$ can always be embedded in $\Af_{A_r}^{3g-3, \an}$ as the fiber over $|\cdot|_z$ of the projection $\Af_{A_r}^{3g-3, \an} \rightarrow \M(A_r)$, where $z \in \overline{D_r}$.

\begin{prop} \label{prop_continuous_section}
As above, let $p=(\underline{\alpha}, \underline{\beta}, \underline{y}): D \rightarrow \C^{3g-3}$ be a function continuous on $\overline{D}_{r'}^*,$ holomorphic on $D_{r'}^*$, and meromorphic at $0$ for some positive $r'\leqslant r.$ Assume moreover that for any $i=1,2,\dots, g$:  
\begin{enumerate}
\item[(i)] \label{contsect1} $y_i(z)$ are not constant and $y_i(0)=0$,
\item[(ii)] \label{contsect2} for all $u(z), v(z) \in \{\alpha_1(z), \beta_1(z), \dots, \alpha_g(z), \beta_g(z)\} \backslash \{\alpha_i(z), \beta_i(z)\}$, 
$$\mathrm{ord}_{0} ( y_i(z) \cdot [u(z), v(z); \alpha_i(z), \beta_i(z)]) >0,$$
where $\alpha_1(z):=0$, $\beta_1(z):=\infty$, $\alpha_2(z):=1$ for all $z \in D$,  and $[a, b; c, d]$ denotes the cross-ratio of $a,b,c,d \in \bP^1(\C)$, 
\item[(iii)] \label{contsect3} the functions $\alpha_i(z), \beta_i(z)$, $i=1,2,\dots, g$, are all different. 
\end{enumerate}

Let $\widetilde{p}: \overline{D}_{r'} \rightarrow \Af_{A_r}^{3g-3, \an}$ be defined by 
$$z \mapsto \begin{cases}
N_z(p(z)) & \text{ if } z \neq 0,\\
p(0) & \text{ if } z=0.
\end{cases}$$
Then $\widetilde{p}$ is continuous and there exists $ \eta \in (0, r']$ such that $\widetilde{p}_{|D_{\eta}}: D_{\eta} \rightarrow \eS_{g, A_r}$ is well-defined. 
Moreover, $\Hr(N_z(p(z)))=(\C, |\cdot|_{\infty}^{\varepsilon(z)})$ for $z \in \overline{D_{r'}}^*,$ and $\Hr(p(0))=(\C((t)), |\cdot|_t)$.
\end{prop}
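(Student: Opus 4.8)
The plan is to prove the three assertions in the order: completed residue fields, continuity of $\widetilde{p}$, and finally that $\widetilde{p}$ lands in $\eS_{g,A_r}$ near $0$. The residue field claims I expect to be immediate from the construction: for $z\in\overline{D}_{r'}^{*}$ the point $N_z(p(z))$ is an $\Hr(|\cdot|_z)$-rational point of the fiber $\Af^{3g-3,\an}_{\Hr(|\cdot|_z)}$ over $|\cdot|_z$, so its completed residue field is that of the base point, namely $\Hr(|\cdot|_z)=(\C,|\cdot|_{\infty}^{\varepsilon(z)})$ by \cite[Proposition A.4]{boucksom_jonsson_degeneration}; similarly $p(0)$ is the $\C((t))$-rational point given by the Laurent expansions of the coordinate functions, whence $\Hr(p(0))=\Hr(|\cdot|_0)=(\C((t)),|\cdot|_t)$. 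The substance of the statement is continuity together with the final assertion.

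For continuity, since the topology on $\Af_{A_r}^{3g-3,\an}$ is the coarsest making every evaluation map continuous, I would reduce to showing that $z\mapsto |P|_{\widetilde{p}(z)}$ is continuous on $\overline{D}_{r'}$ for each $P\in A_r[\underline{T}]$. Writing $P=\sum_{\underline{d}} c_{\underline{d}}\,\underline{T}^{\underline{d}}$ with $c_{\underline{d}}\in A_r$ and setting $F:=P(p(0))=\sum_{\underline{d}} c_{\underline{d}}\,\hat{p}^{\,\underline{d}}\in\C((t))$ (substituting the Laurent germs $\hat{p}_i$ of the coordinate functions), the definition of $|\cdot|_z$ gives, for $z\ne 0$,
\[
|P|_{\widetilde{p}(z)}=\Big|\sum_{\underline{d}} c_{\underline{d}}^{\mathrm{hol}}(z)\,p(z)^{\underline{d}}\Big|_{\infty}^{\varepsilon(z)},
\]
while $|P|_{\widetilde{p}(0)}=e^{-\ord_0 F}$. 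At a point $z_0\in\overline{D}_{r'}^{*}$ continuity is routine, using continuity of $\varepsilon$, of $p$ on $\overline{D}_{r'}^{*}$, and of the $c_{\underline{d}}^{\mathrm{hol}}$. The only delicate point — and the one I expect to be the main obstacle — is continuity at $z=0$, which is precisely the \emph{hybrid interpolation} computation. On the annulus $0<|z|<r'$ the germs $\hat{p}_i$ converge and agree with $p_i$, so the displayed quantity equals $|F^{\mathrm{hol}}(z)|_{\infty}^{\varepsilon(z)}$; assuming $F\ne 0$ with $m:=\ord_0 F$, I would write $F^{\mathrm{hol}}(z)\sim a_m z^m$ and compute
\[
\varepsilon(z)\log|F^{\mathrm{hol}}(z)|_{\infty}=\frac{-1}{\log|z|_{\infty}}\big(\log|a_m|_{\infty}+m\log|z|_{\infty}+o(1)\big)\xrightarrow[z\to 0]{}-m,
\]
so that $|P|_{\widetilde{p}(z)}\to e^{-m}=|P|_{\widetilde{p}(0)}$; the case $F=0$ is trivial since both sides vanish. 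This is exactly the statement that the Archimedean norms rescaled by $\varepsilon(z)\to 0$ converge to the $t$-adic norm, and handling it carefully (including poles, i.e.\ $m<0$) is the heart of the argument.

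For the last assertion I would first check that $\widetilde{p}(0)=p(0)\in\eS_{g,A_r}$. Its associated group is generated in $\PGL_2(\C((t)))$ by the transformations of Koebe coordinates $(\hat{\alpha}_i,\hat{\beta}_i,\hat{y}_i)$. Hypothesis (iii) makes the $\hat{\alpha}_i,\hat{\beta}_i$ distinct in $\bP^1(\C((t)))$, and hypothesis (i) gives $\ord_0\hat{y}_i\geq 1$, hence $0<|\hat{y}_i|_t<1$. Since $(\C((t)),|\cdot|_t)$ is non-Archimedean with $|\cdot|_t=e^{-\ord_0}$, the criterion of Corollary \ref{cor_charac_Schottky}(ii), $|\hat{y}_i|_t\cdot|[u,v;\hat{\alpha}_i,\hat{\beta}_i]|_t<1$, is equivalent to $\ord_0\big(y_i\cdot[u,v;\alpha_i,\beta_i]\big)>0$, which is precisely hypothesis (ii); therefore the group is Schottky and $\widetilde{p}(0)\in\eS_{g,A_r}$. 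Finally I would invoke that $\eS_g$ is open in $\Af_{\Z}^{3g-3,\an}$ by Theorem \ref{opennes}, that the projection $\pi_{A_r}$ is continuous, and that $\eS_{g,A_r}=\pi_{A_r}^{-1}(\eS_g)$ by Lemma \ref{lem_basechangemoduli}, so $\eS_{g,A_r}$ is open. By the continuity just established, $\widetilde{p}^{-1}(\eS_{g,A_r})$ is then an open neighborhood of $0$ in $\overline{D}_{r'}$, hence contains a disk $D_\eta$ with $0<\eta\leq r'$, which yields the well-defined map $\widetilde{p}_{|D_\eta}\colon D_\eta\to\eS_{g,A_r}$.
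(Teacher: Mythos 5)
Your proposal is correct and follows essentially the same route as the paper's proof: reduce continuity to the maps $z \mapsto |P|_{\widetilde{p}(z)}$ for $P \in A_r[\underline{T}]$, prove convergence at $z=0$ by factoring out $z^{\ord_0 P(p)}$ so that $|P(p(z))|_{\infty}^{\varepsilon(z)} \to e^{-\ord_0 P(p)}$, identify the residue fields from the fact that $\widetilde{p}(z)$ is a rational point of its fiber, and conclude via Corollary \ref{cor_charac_Schottky} at $z=0$ together with the openness of $\eS_{g,A_r}$ and continuity of $\widetilde{p}$. The only (harmless) additions are that you treat the degenerate case $P(p)=0$ explicitly and spell out the openness of $\eS_{g,A_r}$ via Lemma \ref{lem_basechangemoduli} and Theorem \ref{opennes}, both of which the paper leaves implicit.
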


\begin{proof} For $z \neq 0$,
the point $N_z(p(z)) \in \Af_{A_r}^{3g-3, \an}$ corresponds to the semi-norm $|\cdot|_{x_z}$ on $A_r[T_1, \dots, T_{3g-3}]=:A_r[\underline{T}]$ given by $P(\underline{T}) \mapsto |P(p(z))|_{\infty}^{\varepsilon(z)}$. Clearly, $\widetilde{p}$ is continuous on $\overline{D}_{r'}^*$. When restricted to $\C$, the semi-norm $|\cdot|_{x_z}$ induces $|\cdot|_{\infty}^{\varepsilon(z)}$, so $\Hr(|\cdot|_{x,z})=(\C, |\cdot|_{\infty}^{\varepsilon(z)})$ by the Gelfand-Mazur Theorem.

The point $p(0)$ corresponds to the semi-norm $|\cdot|_{x_0}$ given by $P(\underline{T}) \mapsto |P(\underline{\alpha}, \underline{\beta}, \underline{y})|_t$, where $(\underline{\alpha}, \underline{\beta}, \underline{y})$ denotes the element in $\C((t))^{3g-3}$ induced by the Laurent series of $p(z)$ as a meromorphic function on $0$. We note that $\C((t))[T_1, \dots, T_{3g-3}]/(\underline{T}-(\underline{\alpha}, \underline{\beta}, \underline{y}))$ is isomorphic to $\C((t))$, and the quotient norm obtained from $|\cdot|_{x_0}$ is precisely $|\cdot|_t$. Hence, $\Hr(|\cdot|_{x_0}) = (\C((t)), |\cdot|_t).$

As $p$ is holomorphic on $D_{r'}^*$  and meromorphic on $\overline{D_{r'}}$, so is $P(p(z))$ seeing as the coefficients of $P \in A_r[\underline{T}]$ satisfy the same properties on $D_r.$ Hence, $P(p(z))$  can be written as $P(p(z))=z^{\mathrm{ord}_0(P(p(z)))}u(z)$, where $u$ is continuous on $\overline{D}_{r'}$, holomorphic on $D_{r'}$, and $u(0) \neq 0$. As $z \rightarrow 0$, one obtains that $|P(p(z))|_{x_z}=|P(p(z))|_{\infty}^{\varepsilon(z)} \rightarrow e^{-\mathrm{ord}_0(P(p))}=|P(p)|_{x_0}$ for any polynomial $P \in A_r[\underline{T}]$. Consequently, $|\cdot|_{x_z} \rightarrow |\cdot|_{x_0}$ as $z \rightarrow 0$, and so $\widetilde{p}$ is continuous on $\overline{D}_{r'}$. 
 
Finally, by Corollary \ref{cor_charac_Schottky}, $\widetilde{p}(0) \in \eS_{g, A_r}$. We can now conclude seeing as $\eS_{g, A_r}$ is open in~$\Af_{A_r}^{3g-3, \an}$ and $\widetilde{p}$ is continuous.  
\end{proof}

Using similar notation, we can now prove a degeneration result.

\begin{thm} \label{degeneration} Let $p \colon D \to \C^{3g-3}$ be a %
function satisfying the hypotheses of Proposition \ref{prop_continuous_section}. 
There exists $\eta > 0 $ such that $p(z) \in \eS_{g, \C}$  for $0<|z| < \eta$ and the Hausdorff dimension~$s(z)$ of the limit set $\Lambda_z$ associated to $p(z)$ satisfies: 
\begin{equation*}
s(z) \sim - \dfrac{s_0}{\log|z|},
\end{equation*}
as $z \rightarrow 0$, where $s_0>0$ is the Hausdorff dimension of the limit set in $\mathbb{P}^{\pan}_{\C((z))}$ of the Schottky group with generators whose Koebe coordinates are determined by the Laurent series induced by~$p(z)$.
\end{thm}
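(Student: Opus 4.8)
The plan is to use the scaling function $\widetilde{p}$ from Proposition \ref{prop_continuous_section} to interpret the complex Schottky groups $\Ga_{p(z)}$ as points of the moduli space $\eS_{g, A_r}$ over the hybrid Banach ring $A_r$, and then exploit the continuity of the Hausdorff dimension function $d_{A_r}$ (Theorem \ref{thm_contbanach}) at the central fiber $z=0$. First I would invoke Proposition \ref{prop_continuous_section} to obtain $\eta \in (0, r']$ such that $\widetilde{p}_{|D_{\eta}}: D_{\eta} \to \eS_{g, A_r}$ is well-defined and continuous, with $\widetilde{p}(0) = p(0)$ lying over the non-Archimedean point $|\cdot|_0 \in \M(A_r)$ whose completed residue field is $(\C((t)), |\cdot|_t)$. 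Shrinking $\eta$ if necessary, I would note that for $0 < |z| < \eta$ the point $N_z(p(z))$ lies in the Archimedean fiber $\eS_{g, (\C, |\cdot|_{\infty}^{\varepsilon(z)})}$, so in particular $p(z) \in \eS_{g, \C}$ after the canonical identification, giving the first assertion of the theorem.

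The key computation is relating the Hausdorff dimension over the scaled field to the one over the usual absolute value. By Remark \ref{rem_scaling}, for $z \neq 0$ the Schottky group with Koebe coordinates $p(z)$ over $(\C, |\cdot|_{\infty}^{\varepsilon(z)})$ has limit set of Hausdorff dimension $\frac{1}{\varepsilon(z)} s(z)$, where $s(z)$ is the dimension over $(\C, |\cdot|_{\infty})$. Since $\varepsilon(z) = -\frac{1}{\log |z|_{\infty}}$, this scaled dimension equals $-s(z) \log |z|$. On the other hand, $d_{A_r}(\widetilde{p}(z))$ is exactly this scaled dimension (it is the Hausdorff dimension of the limit set computed in the fiber $\bP^{\pan}_{\Hr(\widetilde{p}(z))}$, with the spherical/scaled metric, \emph{cf.} Remark \ref{rem_sphericalmetric}). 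Thus I would set $G(z) := d_{A_r}(\widetilde{p}(z)) = -s(z)\log|z|$ for $z \neq 0$, and $G(0) = d_{A_r}(\widetilde{p}(0)) = s_0$, where $s_0 = \Hdim \La_{p(0)}$ is the Hausdorff dimension of the limit set of the Schottky group over $(\C((t)), |\cdot|_t)$ determined by the Laurent series of $p$. Note $s_0 > 0$ by Proposition \ref{Hdimbounds}, since $g > 1$.

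The conclusion then follows by continuity: as $z \to 0$, the continuity of $d_{A_r}$ (Theorem \ref{thm_contbanach}) combined with the continuity of $\widetilde{p}$ at $0$ (Proposition \ref{prop_continuous_section}) yields $G(z) \to G(0) = s_0$. Rewriting, $-s(z) \log|z| \to s_0$, which is precisely the asymptotic
\begin{equation*}
s(z) \sim -\frac{s_0}{\log|z|}
\end{equation*}
as $z \to 0$. I would also remark that the identification of $s_0$ with the dimension over $\C((z))$ rather than $\C((t))$ is harmless: substituting the formal indeterminate $t$ by $z$ gives a tautological isomorphism of the two valued fields, preserving Koebe coordinates and hence the limit set and its Hausdorff dimension.

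The main obstacle I anticipate is not the continuity argument itself, which is immediate once the framework is set up, but rather the careful bookkeeping that ensures $d_{A_r}(\widetilde{p}(z))$ really computes the \emph{scaled} dimension $-s(z)\log|z|$ and not $s(z)$ directly. This requires being precise about which metric on $\bP^1$ is being used in each fiber, and verifying that the homeomorphic identification $\eS_{g,(\C,|\cdot|_\infty)} \cong \eS_{g,(\C,|\cdot|_\infty^{\varepsilon})}$ intertwines correctly with the scaling of the metric, so that Remark \ref{rem_scaling} applies cleanly. A secondary point requiring attention is confirming that $\widetilde{p}(z) \in \eS_{g,A_r}$ for all small $z \neq 0$ and not merely at $z=0$; this is handled by openness of $\eS_{g,A_r}$ in $\Af_{A_r}^{3g-3,\an}$ together with the continuity of $\widetilde{p}$, as already recorded in Proposition \ref{prop_continuous_section}.
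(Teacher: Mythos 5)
Your proof is correct and follows essentially the same route as the paper's: Proposition \ref{prop_continuous_section} to place the family in $\eS_{g,A_r}$ and get $p(z)\in\eS_{g,\C}$, Theorem \ref{thm_contbanach} for continuity of the dimension at the non-Archimedean limit point, and Remark \ref{rem_scaling} to identify the fiberwise dimension with $-s(z)\log|z|$, whence the asymptotic. Your added observations (positivity of $s_0$ via Proposition \ref{Hdimbounds} and the bookkeeping of metrics across the identification $\eS_{g,(\C,|\cdot|_\infty)}\cong\eS_{g,(\C,|\cdot|_\infty^{\varepsilon})}$) are correct refinements of details the paper leaves implicit.
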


\begin{proof} By Proposition \ref{prop_continuous_section}, there exists $\eta>0$ such that for any $z$ with $0<|z|< \eta$, one has $N_z(p(z)) \in \eS_{g,(\C, |\cdot|_z^{\varepsilon(z)})}$. Consequently, $p(z) \in \eS_{g, \C}$. 

Moreover, by {Theorem \ref{thm_contbanach}}, the map $D_{\eta} \rightarrow \eS_{g, A_r} \rightarrow \R_{\geqslant 0}, z \mapsto N_z(p(z)) \mapsto \Hdim_{\Hr(|\cdot|_z)} \Lambda_{z}$, is continuous, where $\Hdim_{|\cdot|}$ denotes the Hausdorff dimension with respect to the norm $|\cdot|$.  Hence, when $z \rightarrow 0$, we have that $\Hdim_{\Hr(|\cdot|_z)} \Lambda_{z} \rightarrow s_0$. 

We recall that $\Hr(|\cdot|_z)=(\C, |\cdot|_{\infty}^{\varepsilon(z)})$ with  $\varepsilon(z)=-\frac{1}{\log |z|_{\infty}}$ when $z \neq 0$. By Remark \ref{rem_scaling}, $\Hdim_{\Hr(|\cdot|_z)} \Lambda_{z}=\Hdim_{|\cdot|_{\infty}^{\varepsilon(z)}} \La_z=\dfrac{1}{\varepsilon(z)} \Hdim_{|\cdot|_{\infty}} \La_z=s(z)$, and we may conclude.
\end{proof}

\subsection{Examples of degenerations} \label{sect_examples}
To conclude, we give some examples of degenerations to which Theorem \ref{degeneration} applies.
To illustrate how the results can be applied, we show in detail how to treat the degenerations of \emph{Schottky reflection groups}, and then give some further examples for which the same method works.

Fix $g \geqslant 2$, distinct $g+1$ points $c_1, \ldots , c_{g+1}\in \mathbb{S}^1$  on the unit circle, and some integers $k_1, \ldots, k_{g+1} \geqslant 1$.  
Fix $z \in \R^{\times}$ and let $D_{j,z}$ be the disks bounded by circles orthogonal to $\mathbb{S}^1$ at the points $c_j \exp(iz^{k_j}/2), c_j \exp({-iz^{k_j}/2})$ for $j=1,2, \dots, g+1$. 

Let $\sigma_{j}$ be the reflection (\emph{i.e.} inversion) with respect to the boundary of the disk $D_{j,z}$. We set $D_{j,z}' = \sigma_{g+1}(D_{j,z})$ and $\gamma_{j,z} :=  \sigma_j\sigma_{g+1}$ for all $j =1,2,\dots, g$.  
Then for $z$ small enough, the disks $(D_{1,z}, D_{1,z}', \ldots, D_{g,z}, D_{g,z}')$ are disjoint, and define a Schottky figure for the group $\Ga_z$ generated by $\gamma_{1,z}, \ldots, \gamma_{g,z}$ (see \emph{e.g.} \cite[\S~2]{guillope_lin_zworski}). 
The group $\Ga_z$, $z \neq 0,$ is called a \textit{Schottky reflection group} associated to $\underline{c} := (c_1, \ldots, c_{g+1}) \in (\mathbb{S}^1)^{g+1}$ and the weights $\underline{k}:= (k_1, \ldots, k_{g+1}) \in \Z_{\geqslant 1}^{g+1}$.

\begin{thm} \label{thm_explicit} For $g \geqslant 2$, let  $c_1, c_2 \ldots , c_{g+1} \in \mathbb{S}^1$ be distinct points, and $k_1, k_2$ $\ldots$, ${k_{g+1}\geqslant 1}$ any integers. Let $\Ga_z, z \neq 0,$ denote the Schottky reflection group associated to $\underline{c}$ and $\underline{k}$ for $z \in \R^{\times}$ small enough. Then:
\begin{enumerate}
\item[(i)] The family $\Gamma_z, z \in \R^{\times},$ induces a function satisfying the conditions of Theorem \ref{degeneration}, whose image is $\{\Ga_z\}_{z}$ for $|z|$ small enough.

    \item[(ii)]   If $k_i = k$ for all $i \leqslant g$, then the Hausdorff dimension $s(z)$ of the limit set of $\Ga_z$ satisfies: 
  \begin{equation*}
      s(z) \sim \dfrac{-\log u}{2 \log |z^{-1}|},
  \end{equation*}
  as $z\rightarrow 0$, where $u$ is the largest root of $X^{k+k_{g+1}} + (1 - 1/g) X^k - 1/g$. 
  \item[(iii)] If $k_i =1$ for all $i \leqslant g+1$, then the Hausdorff dimension of the limit set of $\Ga_z$ satisfies: 
  \begin{equation*}
      s(z) \sim \dfrac{\log g}{2 \log |z^{-1}|},
  \end{equation*}
  as $z\rightarrow 0$. 
\end{enumerate}
\end{thm}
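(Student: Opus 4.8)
The plan is to realize the family $(\Gamma_z)_z$ as (a conjugate of) the image of a holomorphic map $p\colon D\to\mathbb{C}^{3g-3}$ meeting the hypotheses of Theorem \ref{degeneration}, and then to identify the constant $s_0$ by computing, via Corollary \ref{cor_spectral_formula}, the spectral radius of the Perron matrix $M(s)$ of Definition \ref{matrixM} for the non-Archimedean Schottky group over $\mathbb{C}((z))$ produced by the degeneration. Part (i) is the construction and verification of the hypotheses; parts (ii), (iii) are the eigenvalue computation.

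First I would make the geometry explicit. The circle orthogonal to $\mathbb{S}^1$ through $c_je^{\pm iz^{k_j}/2}$ has center $a_j(z)=c_j\sec(z^{k_j}/2)$ and radius $\rho_j(z)=\tan(z^{k_j}/2)$, and $\sigma_j$ is $w\mapsto a_j+\rho_j^2/\overline{(w-a_j)}$. For $z\in\mathbb{R}$ one has $\overline{a_j}=c_j^{-1}\sec(z^{k_j}/2)$ and $\rho_j^2=\tan^2(z^{k_j}/2)$, so the holomorphic map $\gamma_{j,z}=\sigma_j\sigma_{g+1}$ admits a $\mathrm{GL}_2$-representative whose entries are explicit functions of $z$, holomorphic near $0$; these formulas give the holomorphic extension to complex $z$ and thus the coordinate map $p$. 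The associated Koebe coordinates satisfy $\alpha_j(0)=c_j$ and $\beta_j(0)=\lim_z\sigma_{g+1}(c_j)=c_{g+1}$ with $\mathrm{ord}_0(\beta_j-c_{g+1})=2k_{g+1}$, while $|y_j|=e^{-l_j}$ with $l_j$ the translation length, equal to twice the modulus of the annulus between the $j$-th and $(g+1)$-st circles; by Lemma \ref{lem_complexmodulus} this modulus is $\sim(k_j+k_{g+1})\log|z^{-1}|$, whence $\mathrm{ord}_0 y_j=2(k_j+k_{g+1})$. After conjugating by the Möbius map normalizing $(\alpha_1,\beta_1,\alpha_2)$ to $(0,\infty,1)$—which keeps the coordinates meromorphic at $0$ and holomorphic on $D^*$—I would verify (i)--(iii): (i) is immediate from $\mathrm{ord}_0 y_j>0$; (iii) holds since the $\alpha_i$ have distinct limits $c_i$, the $\beta_i$ are distinct functions with distinct leading coefficients at order $2k_{g+1}$, and $c_i\neq c_{g+1}$; and (ii) follows from a short case analysis of $\mathrm{ord}_0[u,v;\alpha_i,\beta_i]$ (the cases $u,v\in\{\alpha_*\}$, one $\alpha$ and one $\beta$, both $\beta$ give orders $0$, $2k_{g+1}$, $0$), each strictly dominated by $\mathrm{ord}_0 y_i>0$. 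As cross-ratios and multipliers are conjugation invariant, these checks are valid in the geometric coordinates, yielding part (i).

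Next I would compute $s_0=\Hdim$ of the limit set over $(\mathbb{C}((z)),|\cdot|_t)$. By Proposition \ref{prop_continuous_section} the non-Archimedean Schottky figure is the limit of the disks $D_{j,z},D_{j,z}'$, so by the scaling of Example \ref{ex_scale} (Berkovich radius $=\lim_z(\text{radius})^{\varepsilon(z)}$, $\varepsilon(z)=-1/\log|z|$) the disk $D_{\gamma_j}$ has radius $e^{-k_j}$ and center near $c_j$, whereas $D_{\gamma_j^{-1}}=\sigma_{g+1}(D_{j,z})$ has radius $e^{-(2k_{g+1}+k_j)}$ and center near $c_{g+1}$ at distance $e^{-2k_{g+1}}$ from it. In the Berkovich tree all $c_i$ are at mutual distance $1$, so by Remark \ref{rem_radius} the primary points $\partial D_{\gamma_i}$ branch at the Gauss point with $\rho(\eta,\partial D_{\gamma_i})=k_i$, while the secondary points $\partial D_{\gamma_j^{-1}}$ branch along the $c_{g+1}$-direction, meeting at $\eta_{c_{g+1},e^{-2k_{g+1}}}$, with $\rho(\eta,\partial D_{\gamma_j^{-1}})=2k_{g+1}+k_j$. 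This gives $\rho(\partial D_{\gamma_i},\partial D_{\gamma_j})=\rho(\partial D_{\gamma_i^{-1}},\partial D_{\gamma_j^{-1}})=k_i+k_j$ and $\rho(\partial D_{\gamma_i},\partial D_{\gamma_j^{-1}})=k_i+k_j+2k_{g+1}$ for all $i,j$ (including $i=j$, recovering the translation length $2(k_i+k_{g+1})$). Specializing to $k_i=k$ for $i\leqslant g$ and writing $a=e^{-2ks}$, $b=e^{-(2k+2k_{g+1})s}$, the matrix of Definition \ref{matrixM} takes the block form $M(s)=\bigl(\begin{smallmatrix} bJ & a(J-I)\\ a(J-I) & bJ\end{smallmatrix}\bigr)$ with $J$ the all-ones $g\times g$ matrix; the symmetric/antisymmetric splitting reduces $M(s)$ to $(a+b)J-aI$ and $(b-a)J+aI$, with eigenvalues $\{(a+b)g-a,\,-a\}$ and $\{(b-a)g+a,\,a\}$, so $\lambda(M(s))=(g-1)a+gb$. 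By Corollary \ref{cor_spectral_formula}, $s_0$ solves $(g-1)e^{-2ks}+ge^{-(2k+2k_{g+1})s}=1$, and setting $u=e^{-2s_0}$ turns this into $u^{k+k_{g+1}}+(1-1/g)u^k-1/g=0$, with $u$ the largest (the unique one in $(0,1)$) root. Since Theorem \ref{degeneration} gives $s(z)\sim s_0/\log|z^{-1}|=-\tfrac{\log u}{2\log|z^{-1}|}$, part (ii) follows; part (iii) is the case $k=k_{g+1}=1$, where $X^2+(1-1/g)X-1/g$ has largest root $1/g$, giving $s(z)\sim \log g/(2\log|z^{-1}|)$.

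The main obstacle is the first step: reconciling the anti-holomorphic reflections with the holomorphic framework of Theorem \ref{degeneration}. The key is that although each $\sigma_j$ involves conjugation, the products $\gamma_{j,z}$ are holomorphic, and for $z\in\mathbb{R}$ the conjugated data collapse through $\overline{c_j}=c_j^{-1}$ to genuine holomorphic functions of $z$, extending to a neighborhood of $0$ and meromorphic there; care is needed to confirm that extracting fixed points and multipliers (roots of quadratics with holomorphic coefficients whose discriminant is nonzero at $0$, since $\alpha_j,\beta_j$ have distinct limits) keeps the coordinates honestly meromorphic rather than merely Puiseux. The only other delicate point is the justification that the non-Archimedean Schottky-disk radii are the limits of the Archimedean ones, which is precisely the continuity of $\widetilde{p}$ into $\eS_{g,A_r}$ from Proposition \ref{prop_continuous_section} combined with the scaling of Example \ref{ex_scale}.
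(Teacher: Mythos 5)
Your overall route coincides with the paper's: an explicit holomorphic family of matrices for the $\gamma_{j,z}=\sigma_j\sigma_{g+1}$, meromorphy of the Koebe coordinates at $0$ with $\ord_0 y_i=2(k_i+k_{g+1})$, verification of the hypotheses of Theorem \ref{degeneration} via the order computations $\ord_0(\alpha_j-\alpha_i)=\ord_0(\alpha_j-\beta_i)=0$, $\ord_0(\beta_j-\beta_i)=2k_{g+1}$, and then identification of $s_0$ through Corollary \ref{cor_spectral_formula} applied to exactly the same block matrix, with the same spectral radius $(g-1)e^{-2ks}+ge^{-2(k+k_{g+1})s}$ (the paper gets it from the all-ones positive eigenvector plus Perron--Frobenius rather than your symmetric/antisymmetric splitting; both are fine). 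Your alternative computation of $\ord_0 y_i$ via translation length $=$ twice the modulus of the separating annulus (Lemma \ref{lem_complexmodulus}) is a legitimate substitute for the paper's trace/determinant argument, since meromorphy is established separately.

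There are, however, two places where your justification does not go through as written. First, and most substantively: you derive the non-Archimedean Schottky figure (disks of radius $e^{-k_j}$ near $c_j$ and of radius $e^{-(2k_{g+1}+k_j)}$ near $c_{g+1}$) from ``Proposition \ref{prop_continuous_section} says the figure is the limit of the $D_{j,z}, D_{j,z}'$.'' That proposition only asserts continuity of the point $\widetilde{p}$ in $\Af^{3g-3,\an}_{A_r}$ and that $\widetilde{p}(0)\in\eS_{g,A_r}$; it says nothing about disks or figures converging, and Corollary \ref{cor_charac_Schottky} gives existence of \emph{some} adapted figure but not the explicit one you need to write down $M(s)$. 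The paper closes this by checking directly that $D(u_i,e^{-k_i})$ and $D(v_i,e^{-(2k_{g+1}+k_i)})$ over $\C((z))$ are twisted Ford disks for $\gamma_i$ (citing Lemmas II.3.29--II.3.30 of Poineau--Turchetti, which apply because $\ord_0(\alpha_i-\beta_i)=0$) and that they are pairwise disjoint; without some such verification of the ping-pong property your matrix computation has no Schottky figure underneath it. Your distances $k_i+k_j$, $k_i+k_j+2k_{g+1}$ are correct once this is supplied. Second, a smaller slip: in checking hypothesis (ii) of Theorem \ref{degeneration}, the mixed case $\{u,v\}=\{\alpha_j,\beta_m\}$ yields cross-ratio order $\pm 2k_{g+1}$ depending on which of $u,v$ sits in which slot, and the condition must hold for \emph{both} orderings; the case of order $-2k_{g+1}$ is exactly the one where the hypothesis has content, and it is saved only because $\ord_0 y_i=2(k_i+k_{g+1})>2k_{g+1}$. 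Listing the orders as $0$, $+2k_{g+1}$, $0$ and saying each is ``dominated'' by $\ord_0 y_i>0$ glosses over this; as stated, your check would also ``prove'' the condition for a family with $\ord_0 y_i$ arbitrarily small, which is false.
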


\begin{rem} Note that when $g=2$ and the points $c_i$ form an equilateral triangle, then we recover McMullen's \cite[Theorem 3.5]{mcmullen_hausdorff_3}. 
\end{rem}

\begin{proof}[Proof of Theorem \ref{thm_explicit}]
Let us prove assertion (i). Without loss of generality, by applying a rotation, we may assume that $c_{g+1}=1.$ Set $l = k_{g+1}$.
The reflections $\sigma_i(z)$ can be given by the matrices $$M_i(z):=\begin{pmatrix}
    1 + \exp(i z^{k_i}) & - 2 c_i \exp(i z^{k_i}/2)  \\
    2 c_i^{-1} \exp(i z^{k_i}/2) & -1 - \exp(i z^{k_i}) 
\end{pmatrix}.$$
 The transformations $\g_i(z)=\sigma_i \sigma_{g+1}$ then have analytic entries in $z$ for any $i=1,2, \dots, g.$ Let $y_i(z)$ denote the third Koebe coordinate of $\g_i(z)$. To determine it, one needs to find the eigenvalues of the matrices $N_i(z):=M_i(z) M_{g+1}(z)$ and compute their quotient. 
Seeing as $\det(M_{g+1}(0)) = \det(M_i(0))= 0$ and  ${\mathrm{Tr} (N_i(0)) = 4 (2 - c_i - c_{i}^{-1}) \in \R_{>0}}$, the discriminant $\mathrm{Tr}(N_i(z))^2 - 4\det N_i(z)$ of the characteristic polynomial of $N_i(z)$ admits a square root which is analytic near zero.
The eigenvalues of $\gamma_i$ are thus analytic, and their quotient is meromorphic at $z=0$.

Since one eigenvalue of $N_i(z)$ vanishes at $z=0$ and the other is $\mathrm{Tr}(N_i(0)) \neq 0$, we obtain $y_i(0)=0$, and hence $y_i(z)$ is analytic at $z=0$. The  two eigenvalues being distinct and analytic on a neighborhood of zero, their eigenspaces $\alpha_i(z), \beta_i(z)$ are also analytic there (see \emph{e.g.} \cite[p.68]{kato}). Hence, the Koebe coordinates $(\alpha_i(z), \beta_i(z), y_i(z))$ of $\g_i(z)$ are analytic at a neighborhood of zero for all $i=1,2,\dots, g$. Moreover, seeing as $\ord_0 \det N_i(z)=\ord_0(M_i(z)M_{g+1}(z))=2l+2k_i$, one has that $\ord_0 y_i(z)=2l+2k_i$.

Let $D_{i,z} := D(u_i(z), r_i(z))$. As the length of the arc passing through $c_ie^{iz^{k_i}/2}$ and $c_i e^{-iz^{k_i}/2}$ is~$z^{k_i}$, and $u_i(z)$ lies in the half-line emanating from the center of the unit disk and passing through~$c_i$, we obtain that $u_i(z)=\dfrac{c_i}{\cos z^{k_i}/2}$ and $r_i(z)=\tan z^{k_i}/2 \sim z^{k_i}/2$ for $i=1,2,\dots, g+1$.  We note that $\ord_0(u_i-u_j)=0$ and $\ord_0 r_i=k_i$ for all~${i \neq j}$ and $i, j \in \{1,2, \dots, g\}$.

{If a circle of center $c \in \C$ and radius $a$ is reflected with respect  to the circle with center $p \in \C$ and radius $b$, then one obtains the circle centered at $p + (c-p)w$ and of radius $a |w|$, where $$w:=\frac{b^2}{|c-p|^2-a^2}.$$ }
Let $D_{i,z}'=:D(v_i(z), R_i(z))$. Then, by using the above paragraph, one can show that ${\ord_0(v_i)=0}$ and $\ord_0 R_i=k_i+2l$. Moreover,  
$\ord_0 (u_i - v_j) = \ord_0 (u_i - v_i) = 0,$ and
$\ord_{0}(v_{i}- v_j) = 2l$, whenever $i \neq j$ and $i, j=1,2,\dots, g.$ We remark that the functions $u_i(z), v_i(z), r_i(z)$ and $R_i(z)$ are all analytic on the unit disk.

Since $\alpha_i(z) \in D_{i,z} $ and $\beta_i(z) \in D_{i,z}'$, we have  ${\ord_0(\beta_i - v_i) \geqslant k_i +2l}$, $\ord_0 (\alpha_i - u_i) \geqslant k_i$. Hence, for $j\neq i$, using the ultrametric inequality: $${\ord_0 ( \beta_j - \beta_i) = \min (\ord_0( \beta_j - v_j), \ord_0(v_j - v_i), \ord_0(v_i - \beta_i)) = 2l}.$$ Here, the equality is due to the fact that $\ord_0(v_j-v_i)< \min(\ord_0(\beta_j-v_j), \ord_0(v_i- \beta_i)).$ Similarly, one has $\ord_0(\alpha_j - \beta_i) = \ord_0(\alpha_j - \alpha_i)= 0$ for \emph{any} $i,j=1,2,\dots, g$. 
Consequently, for $u,v \in \{ \alpha_1, \beta_1, \ldots, \alpha_g, \beta_g \} \setminus \{ \alpha_i, \beta_i\}$: $$\ord_0 (y_i [u,v;\alpha_i,\beta_i])>0,$$ so by Corollary \ref{cor_charac_Schottky}, the transformations $\g_{i,z}, i=1,\dots, g$, generate a Schottky group over~$\C((z))$.
 
As $\ord_0(\alpha_i-\beta_i)=0$ for $i=1,2,\dots, g$, by Lemmas II.3.29 and II.3.30 of \cite{poineau_turcheti_2}, the closed disks $D_i:=D(u_i, e^{-k_i})$ and $D_i':=D(v_i, e^{-(2l+k_i)})$ are twisted Ford disks for $\g_i$ in $\bP^{\pan}_{\C((z))}$. The family of disks $\{D_i, D_i'\}_i$ is mutually disjoint, so it forms a Schottky figure for $\Ga_0$ over $\C((z))$ generated by the $\g_i$. In particular, Theorem \ref{degeneration} is applicable, so assertion (i) holds.  Moreover, $s(z) \sim s_0 / \log|z^{-1}|$ as $z\rightarrow 0$ where $s_0$ is the Hausdorff dimension of the limit set $\Lambda_0$ on $\mathbb{P}^{\pan}_{\C((z))}$.

To show (ii), let us assume that $k_i =k$ for $i \leqslant g$. Let $M(s)$ be the Perron matrix from Definition \ref{matrixM} associated to the Schottky figure $D_i, D_i'$ in~$\bP^{\pan}_{\C((t))}.$ In this case, it is a  $g\times g$ block matrix of the form: 
\begin{equation*}
    M(s) = \left ( \begin{array}{lllll}
    D(s) & A(s) & A(s) &\cdots &A(s) \\ 
    A(s) & D(s) & A(s) & \cdots &A(s) \\ 
    \vdots & \vdots & \vdots && \vdots \\ 
    A(s) & A(s) & A(s) & \cdots & D(s) 
    \end{array} \right ) ,
\end{equation*}
where the $2\times 2$ blocks are  $A(s) := \left ( \begin{array}{ll}
    e^{-2s(k+l)} & e^{-2sk}  \\
    e^{-2sk } & e^{-2s(k+l)}
\end{array} \right )$ and $D(s):= \left ( \begin{array}{cc}
     e^{-2s(k+l)}& 0  \\
     0&e^{-2s(k+l)} 
\end{array} \right)  $.
By Corollary \ref{cor_spectral_formula}, $s=s_0$  if and only if the spectral radius of $M(s_0)$ is $1$. 
Observe that the vector $(1 , \ldots, 1)$ is a positive eigenvector for all $s$, and since $M(s)$ is a Perron matrix, it is the eigenvector for the largest eigenvalue. This implies that $s_0$ is a solution to the equation $g e^{-2 s(k+l)}+ (g-1)e^{-2sk} = 1$, and hence that $s_0 = -\log u /2$, where $u $ is the largest root of $X^{k+l} + (1 - 1/g)X^k -1$.   
 We have thus showed that $s(z) \sim -\log(u)/(2 \log |z^{-1}|)$ as $z\rightarrow 0$. 

Assertion (ii) implies (iii): if $k_i=1$ for $i \leqslant g+1$, then $1/g$ and $-1$ are the roots of the polynomial $X^2 + (1 - 1/g)X - 1/g$, hence $u=1/g$ in (ii) yields the conclusion of (iii). 
\end{proof}

In Table~\ref{table_degen}, we give some further examples where the computation was worked out. 
Let $\alpha_1, \ldots, \alpha_g$,  $\beta_0, \ldots, \beta_g$ be a collection of distinct complex numbers with $\beta_1 \beta_2 \ldots \beta_g \neq 0$, and $y_1, \ldots, y_g \in \C^{\times}$. Let also~${k \in \N_{\geqslant 1}}$. 
To simplify the exposition we choose the Koebe coordinates in $\C^{3g}$ without marking the three points $0,1,\infty$. We continue denoting by $s(z)$ the Hausdorff dimension of the limit set of the given Schottky group $\Ga_z$.

\begin{rem}
   When $g=2$, all  possible geometric collisions are covered in Table~1. 
\end{rem} 

\begin{rem}
    In the half collision case, although  the repelling points collide to the same point as in the case of Schottky reflection groups, the speed of collision differs. 
\end{rem}

\begin{rem}
Note in each of the situations of Theorem \ref{thm_explicit} and the first four situations in Table~\ref{table_degen}, if one takes $g \rightarrow +\infty$, and considers the limit set $\Lambda_{g,z}$ of the group $\Ga_{g,z}$, then the double limit $$\lim_{g\rightarrow +\infty}\lim_{z \rightarrow 0} \dfrac{\Hdim \Lambda_{g,z} \log|z^{-1}|}{\log g}  $$
exists and is a positive rational number. 
\end{rem}

\begin{table}[H] 
    \centering
    \begin{tabular}{|c|c|c|}
    \hline
        Geometric situation &Koebe coordinates of $\Ga_z$  &  Asymptotic formula for $s(z)$ \\
       \hline
       \multirow{2}{*}{All points collide} & $(z \alpha_1, \ldots , z \alpha_g, z\beta_1, \ldots, z\beta_g,$   & \multirow{2}{*}{ $s(z) \sim \dfrac{\log(2g-1)}{ \log|z^{-1}|}$ }\\
       &  $zy_1, \ldots , z y_g )$ & \\
       \hline 
      \multirow{2}{*}{Half collide} & $(\alpha_1, \ldots, \alpha_g, \beta_0 + z \beta_1, \ldots , \beta_0 + z\beta_g,$ & \multirow{2}{*}{$s(z) \sim \dfrac{\log g}{\log |z^{-1}|}$}\\ 
      &   $z^2 y_1, \ldots , z^2 y_g)$ 
      &  \\
       \hline 
       \multirow{2}{*}{Pair by pair collision} & $(\alpha_1 , \ldots, \alpha_g, \alpha_1 + z \beta_1, \ldots ,\alpha_g+ z \beta_g,$ & $s(z) \sim \dfrac{-\log u}{\log |z^{-1}|}$, with $u \in \R$  \\ 
        & $z y_1, \ldots , zy_g)$
        & \text{a root of} $X^3 + \dfrac{X}{2g-2} - \dfrac{1}{2g-2}$ \\ 
        \hline
       \multirow{2}{*}{No collision} & $(\alpha_1, \ldots , \alpha_g, \beta_1, \ldots , \beta_g,$ &  \multirow{2}{*}{$s(z) \sim\dfrac{\log(2g-1)}{k \log |z^{-1}|} $}\\
       &  $z^k y_1, \ldots , z^k y_g)$ & \\
       \hline
       \multirow{1}{*}{$g=2$, one pair collision} & $(\alpha_1 , \alpha_2, \alpha_1 + z\beta_1, \beta_2, z y_1, zy_2)$
       & \multirow{2}{*}{ $s(z)\sim \dfrac{\log 2}{\log |z^{-1}|}$} \\
       \cline{1-2}
       $g=2$, one mixed collision & $(\alpha_1, \alpha_2, \alpha_2 + z \beta_1, \beta_2, z^2 y_1, z^2 y_2)$ 
       & \\
       \hline 
       {$g=2$, three point collision } & {$(\alpha_1, \alpha_2, \alpha_2+ z\beta_1, \alpha_2 + z\beta_2, zy_1, zy_2)$
       } 
       & {$s(z) \sim \dfrac{\log 3}{\log |z^{-1}|}$ }\\
       \hline
    \end{tabular}
    \caption{Some examples of degenerations}
    \label{table_degen}
\end{table}

\addcontentsline{toc}{section}{References}
\begin{small}
\bibliographystyle{alpha}
\bibliography{biblio}
\end{small}

\bigskip

\bigskip

{\footnotesize%
 \textsc{Nguyen-Bac Dang}, Université Paris-Saclay, CNRS, LMO, 91405, Orsay, France \par
  \textit{E-mail address}: \texttt{nguyen-bac.dang@universite-paris-saclay.fr} 
}

\

{\footnotesize%
 \textsc{Vlerë Mehmeti}, Sorbonne Université and Université Paris Cité, CNRS, IMJ-PRG, F-75005 Paris, France 
 \par
  \textit{E-mail address}: \texttt{vlere.mehmeti@imj-prg.fr} 
}

\end{document}